\newif\ifpersonal
\numberwithin{equation}{section}
\theoremstyle{plain}
\newtheorem{theorem}[equation]{Theorem}
\newtheorem{lemma}[equation]{Lemma}
\newtheorem*{lemma*}{Lemma}
\newtheorem*{claim*}{Claim}
\newtheorem{proposition}[equation]{Proposition}
\newtheorem*{proposition*}{Proposition}
\newtheorem{corollary}[equation]{Corollary}
\newtheorem*{corollary*}{Corollary}
\theoremstyle{definition}
\newtheorem{definition}[equation]{Definition}
\newtheorem{definition-theorem}[equation]{Definition-Theorem}
\newtheorem{definition-lemma}[equation]{Definition-Lemma}
\newtheorem{construction}[equation]{Construction}
\newtheorem{assumption}[equation]{Assumption}
\newtheorem{notation}[equation]{Notation}
\newtheorem{remark}[equation]{Remark}
\newtheorem*{remark*}{Remark}
\theoremstyle{definition}
\numberwithin{equation}{section}
\renewenvironment{abstract}{%
  \quotation
  \small
  \textbf{\textit{\abstractname.}} 
}{\endquotation}
\newcommand{\personal}[1]{\textcolor[rgb]{0,0,1}{(Personal: #1)}}
\newcommand{\todo}[1]{\textcolor{red}{(Todo: #1)}}
\newcommand{\personal}[1]{\ignorespaces}
\newcommand{\discussion}[1]{\ignorespaces}
\newcommand{\todo}[1]{\ignorespaces}
\newcommand{\bbA}{\mathbb A}
\newcommand{\bbC}{\mathbb C}
\newcommand{\bbG}{\mathbb G}
\newcommand{\bbN}{\mathbb N}
\newcommand{\bbQ}{\mathbb Q}
\newcommand{\bbR}{\mathbb R}
\newcommand{\bbZ}{\mathbb Z}
\newcommand{\bbk}{\Bbbk}
\newcommand{\fD}{\mathfrak D}
\newcommand{\fM}{\mathfrak M}
\newcommand{\fU}{\mathfrak U}
\newcommand{\fX}{\mathfrak X}
\newcommand{\fd}{\mathfrak d}
\newcommand{\cA}{\mathcal A}
\newcommand{\cF}{\mathcal F}
\newcommand{\cM}{\mathcal M}
\newcommand{\cO}{\mathcal O}
\newcommand{\cP}{\mathcal P}
\newcommand{\cU}{\mathcal U}
\newcommand{\cX}{\mathcal X}
\newcommand{\cY}{\mathcal Y}
\newcommand{\cZ}{\mathcal Z}
\newcommand{\bA}{\mathbf A}
\newcommand{\bM}{\mathbf M}
\let\save@mathaccent\mathaccent
\newcommand*\if@single[3]{%
	\setbox0\hbox{${\mathaccent"0362{#1}}^H$}%
	\setbox2\hbox{${\mathaccent"0362{\kern0pt#1}}^H$}%
	\ifdim\ht0=\ht2 #3\else #2\fi
}
\newcommand*\rel@kern[1]{\kern#1\dimexpr\macc@kerna}
\newcommand*\widebar[1]{\@ifnextchar^{{\wide@bar{#1}{0}}}{\wide@bar{#1}{1}}}
\newcommand*\wide@bar[2]{\if@single{#1}{\wide@bar@{#1}{#2}{1}}{\wide@bar@{#1}{#2}{2}}}
\newcommand*\wide@bar@[3]{%
	\begingroup
	\def\mathaccent##1##2{%
		\let\mathaccent\save@mathaccent
		\if#32 \let\macc@nucleus\first@char \fi
		\setbox\z@\hbox{$\macc@style{\macc@nucleus}_{}$}%
		\setbox\tw@\hbox{$\macc@style{\macc@nucleus}{}_{}$}%
		\dimen@\wd\tw@
		\advance\dimen@-\wd\z@
		\divide\dimen@ 3
		\@tempdima\wd\tw@
		\advance\@tempdima-\scriptspace
		\divide\@tempdima 10
		\advance\dimen@-\@tempdima
		\ifdim\dimen@>\z@ \dimen@0pt\fi
		\rel@kern{0.6}\kern-\dimen@
		\if#31
		\overline{\rel@kern{-0.6}\kern\dimen@\macc@nucleus\rel@kern{0.4}\kern\dimen@}%
		\advance\dimen@0.4\dimexpr\macc@kerna
		\let\final@kern#2%
		\ifdim\dimen@<\z@ \let\final@kern1\fi
		\if\final@kern1 \kern-\dimen@\fi
		\else
		\overline{\rel@kern{-0.6}\kern\dimen@#1}%
		\fi
	}%
	\macc@depth\@ne
	\let\math@bgroup\@empty \let\math@egroup\macc@set@skewchar
	\mathsurround\z@ \frozen@everymath{\mathgroup\macc@group\relax}%
	\macc@set@skewchar\relax
	\let\mathaccentV\macc@nested@a
	\if#31
	\macc@nested@a\relax111{#1}%
	\else
	\def\gobble@till@marker##1\endmarker{}%
	\futurelet\first@char\gobble@till@marker#1\endmarker
	\ifcat\noexpand\first@char A\else
	\def\first@char{}%
	\fi
	\macc@nested@a\relax111{\first@char}%
	\fi
	\endgroup
}
\newcommand{\hA}{\widehat A}
\newcommand{\hR}{\widehat R}
\newcommand{\hS}{\widehat S}
\newcommand{\hGamma}{\widehat\Gamma}
\newcommand{\tA}{\widetilde A}
\newcommand{\tD}{\widetilde D}
\newcommand{\tS}{\widetilde S}
\newcommand{\tX}{\widetilde X}
\newcommand{\tbeta}{\widetilde\beta}
\newcommand{\dbb}[1]{[\![#1]\!]}
\newcommand{\dbp}[1]{(\!(#1)\!)}
\newcommand\virt{\mathrm{virt}}
\newcommand\ev{\mathrm{ev}}
\newcommand\vir{\mathrm{vir}}
\newcommand{\sm}{\mathrm{sm}}
\newcommand{\trop}{\mathrm{trop}}
\DeclareMathOperator{\dom}{dom} 
\DeclareMathOperator{\NB}{NB} 
\DeclareMathOperator{\NT}{NT} 
\newcommand{\tr}{\mathrm{tr}} 
\newcommand{\hh}{\widehat{h}}
\newcommand{\hbeta}{\widehat{\beta}}
\newcommand{\hdelta}{\widehat{\delta}}
\newcommand{\tSigma}{\widetilde{\Sigma}}
\newcommand{\bw}{\mathbf{w}}
\newcommand{\ghost}[1]{\overline{#1}}
\newcommand{\fs}{\mathrm{fs}} 
\newcommand{\gp}{\mathrm{gp}} 
\DeclareMathOperator{\Aut}{Aut}
\DeclareMathOperator{\coker}{coker}
\newcommand{\tors}{\mathrm{tors}} 
\newcommand{\bsigma}{\boldsymbol{\sigma}}
\newcommand{\bu}{\boldsymbol{\mathrm{u}}}
\newcommand{\btau}{\boldsymbol{\tau}}
\newcommand{\bomega}{\boldsymbol{\omega}}
\newcommand{\bgamma}{\boldsymbol{\gamma}}
\newcommand{\rhobar}{\overline{\rho}}
\DeclareMathOperator{\NE}{NE}
\DeclareMathOperator{\Div}{Div} 
\DeclareMathOperator{\Int}{Int}
\DeclareMathOperator{\SP}{SP}
\DeclareMathOperator{\rk}{rk}
\newcommand{\inc}{\mathrm{in}}
\newcommand{\out}{\mathrm{out}}
\DeclareMathOperator{\Spf}{Spf}
\DeclareMathOperator{\Sp}{Sp} 
\DeclareMathOperator{\Spec}{Spec} 
\DeclareMathOperator{\Irr}{Irr} 
\DeclareMathOperator{\Wall}{Wall} 
\DeclareMathOperator{\Sing}{Sing} 
\newcommand{\tk}{\widetilde{k}}
\DeclareMathOperator{\an}{an} 
\DeclareMathOperator{\Sk}{Sk} 
\newcommand{\Skbar}{\overline{\Sk}} 
\newcommand{\ess}{\mathrm{ess}} 
\begin{document}
\title[Non-archimedean cylinder counts are logarithmic Gromov-Witten invariants]{Non-archimedean cylinder counts are logarithmic Gromov-Witten invariants}

\author{Thorgal Hinault}
\address{Thorgal Hinault, Department of Mathematics, M/C 253-37, Caltech, 1200 E.\ California Blvd., Pasadena, CA 91125, USA}
\email{thinault@caltech.edu}
\author{Tony Yue YU}
\address{Tony Yue YU, Department of Mathematics, M/C 253-37, Caltech, 1200 E.\ California Blvd., Pasadena, CA 91125, USA}
\email{tyy@caltech.edu}


\date{October 20, 2025}

\maketitle

\begin{abstract}
    We establish a comparison result relating non-archimedean cylinder counts and logarithmic cylinder counts in a smooth affine log Calabi-Yau variety.
    Using the decomposition theorem and the gluing formula from log Gromov-Witten theory, we can express logarithmic cylinder counts in terms of wall type invariants. 
    As a corollary, we show that in the surface case the non-archimedean scattering diagram from Keel-Yu and the logarithmic scattering diagram from Gross-Siebert coincide, and deduce that the two mirror constructions agree. 
    Along the way, we prove the exponential formula, expressing the non-archimedean wall-crossing function as the exponential of a generating series of punctured log Gromov-Witten invariants.
    This provides the first explicit formula relating counts of non-archimedean curves with boundary to punctured log invariants.
\end{abstract}

\personal{Personal comments are shown!}

\tableofcontents

\addtocontents{toc}{\protect\setcounter{tocdepth}{1}}
\section{Introduction}

\personal{Plan of the intro:
\begin{enumerate}
    \item Mirror symmetry and reconstruction problem, SYZ.
    \item SYZ in NA geometry.
    \item Formal models and special fiber. 
    \item Logarithmic construction.
    \item Comparison, scattering diagrams, comparison of mirrors.
\end{enumerate}}

\subsection{Motivations}
Mirror symmetry is one of the most mysterious dualities in mathematics \cite{Hori_Mirror_symmetry}. 
One fundamental problem in mirror symmetry is the construction of mirror manifolds \cite[\S 4]{Cox_Mirror_symmetry_and_algebraic_geometry}.
Explicit constructions of mirror manifolds exist in the toric context via combinatorial dualities \cite{Batyrev_Dual_polyhedra_and_mirror_symmetry,Batyrev_On_Calabi-Yau_complete_intersections}.
In order to go beyond the toric case, the guiding conjecture is the \emph{SYZ conjecture} by Strominger-Yau-Zaslow \cite{Strominger_Mirror_symmetry_is_T-duality}, and its refinements by Gross-Wilson \cite{Gross_Large_complex_structure_limits} and Kontsevich-Soibelman \cite{Kontsevich_Homological_mirror_symmetry}.
Roughly, the conjecture states that a (maximally degenerating) Calabi-Yau manifold admits a torus fibration over a smooth compact base $B$, called the \emph{SYZ fibration}, and that the mirror manifold should be constructed from the counts of holomorphic disks with boundaries on the torus fibers of the SYZ fibration (called \emph{instanton corrections}).

While the existence of the SYZ fibration remains a wide open conjecture in differential geometry, Kontsevich-Soibelman \cite{Kontsevich_Homological_mirror_symmetry} had a remarkable insight that the SYZ fibration resembles the retraction to the skeleton in non-archimedean analytic geometry constructed by Berkovich \cite{Berkovich_Spectral_theory,Berkovich_Smooth_p-adic_analytic_spaces_are_locally_contractible}.
\personal{The skeleton of a non-archimedean analytic space depends on a choice of formal model, but in the Calabi-Yau case it is possible to construct a canonical skeleton referred to as the \emph{essential skeleton} (see \cite{Kontsevich_Affine_structures}).}
Nicaise-Xu-Yu \cite{Nicaise_Xu_Yu_The_non-archimedean_SYZ_fibration} realized the SYZ fibration in the non-archimedean context by establishing that the retraction to the essential skeleton is an affinoid torus fibration outside codimension $2$.

The enumerative geometry for non-archimedean SYZ fibrations has been developed in \cite{Yu_Enumeration_of_holomorphic_cylinders_I,Yu_Enumeration_of_holomorphic_cylinders_II,Keel_Yu_The_Frobenius,Keel-Yu_Log-CY-mirror-symmetry}.
In particular, following the spirit of the SYZ conjecture, the mirror manifold is constructed as the spectrum of a commutative associative algebra with a canonical basis, whose structure constants are given by counts of non-archimedean analytic disks in the non-archimedean SYZ fibration.
In addition, they constructed a canonical scattering diagram to encode the instanton corrections.
It is a collection of walls in the essential skeleton, decorated with \emph{wall-crossing functions}, which are generating series of non-archimedean \emph{infinitesimal cylinder counts}.

Towards the same goal of general mirror construction as in the SYZ conjecture, but based on different technical foundations, the \emph{Gross-Siebert program} (see \cite{Gross_Affine_manifolds,Gross_From_real_affine_geometry,Gross_Mirror_symmetry_via_logarithmic_degeneration_data_I,Gross_Mirror_symmetry_via_logarithmic_degeneration_data_II,Gross_Toric_degenerations_and_Batyrev-Borisov_duality,Gross_Mirror_symmetry_and_the_SYZ}) has motivated the development of logarithmic and punctured log Gromov-Witten theory, culminating in the intrinsic mirror construction of \cite{Gross_Intrinsic_mirror_symmetry} and the canonical logarithmic scattering diagram of \cite{Gross_Canonical-wall-structure-and-intrinsic-mirror-symmetry}.
Via punctured log Gromov-Witten theory, Gross and Siebert are able to define counts analogous to Maslov index $0$ disks, referred to as \emph{wall type invariants}.
The wall-crossing functions in the logarithmic scattering diagram are defined as the exponential of a generating series of wall type invariants.

Both the non-archimedean approach and the logarithmic approach possess their own strengths and advantages.
Comparison of the two enumerative theories can lead to nontrivial results in both fields.
For example, the non-archimedean counts are independent of choice of compactification, while it is not obvious in the logarithmic setting where log curves can have entire components lying in the boundary.
Non-archimedean moduli spaces are often smoother and the resulting counts are more likely to be nonnegative integers instead of rational numbers, which is hard to tell from logarithmic virtual fundamental classes.
On the other hand, logarithmic Gromov-Witten theory is well-established, providing powerful tools such as decomposition and splitting (\cite{Abramovich_Decomposition_degenerate_GW_invariants,Wu_Splitting-log-Gromov-Wittten-Invariants,Gross_Notes-gluing-log-maps}) that are not available in non-archimedean geometry.

\medskip

In this paper, we establish the comparison result relating non-archimedean cylinder counts and logarithmic cylinder counts in a smooth affine log Calabi-Yau variety, see \cref{thm:intro-main-comparison}.
As an application, we deduce the equivalence of the non-archimedean and logarithmic scattering diagrams in the surface case (\cref{thm:intro-exponential-formula}), and hence the equivalence of the non-archimedean and logarithmic mirror constructions (\cref{coro:intro-mirror-algebras}).
The comparison boils down to the comparison of wall-crossing functions, which follows from \cref{prop:intro-infinitesimal-cylinder-counts-to-log-counts}.
Along the way, we prove the exponential formula, expressing the non-archimedean wall-crossing function as the exponential of a generating series of punctured log Gromov-Witten invariants.
This provides the first explicit formula relating counts of non-archimedean curves with boundary to punctured log invariants.
It paves the way for more general comparison results in our subsequent works.

\subsection{Main results}
We now provide a more detailed description of the main results of the paper.

\subsubsection{Non-archimedean cylinder counts are logarithmic Gromov-Witten invariants}
Let $U$ be a connected smooth affine log Calabi-Yau variety over $\bbC$, and $(X,D)$ a simple normal crossing compactification.
We view the pair $(X,D)$ as a log scheme with divisorial log structure.
On the other hand, we can take the Berkovich non-archimedean analytification $U^{\an}$ of $U$ with respect to the trivial valuation on $\bbC$ (see \cite{Berkovich_Spectral_theory}).
We have an essential skeleton $\Sk(U)\subset U^{\an}$, homeomorphic to the subcomplex of the tropicalization $\Sigma(X,D)$ of the log scheme $(X,D)$ spanned by essential divisors.
We now relate the non-archimedean curve counts in $U^{\an}$ and the logarithmic curves counts in $(X,D)$.

On the non-archimedean side, we consider non-archimedean cylinder counts as defined in \cite{Keel-Yu_Log-CY-mirror-symmetry}.
They are refinements of usual $3$-pointed Gromov-Witten invariants obtained by constraining the spines of the stable maps.
The spine is the restriction of the associated tropical curve to the convex hull of the marked points, see also \cite[\S 8]{Keel_Yu_The_Frobenius} for an intrinsic definition.
The precise definition of cylinder spines is given in \cref{def:cylinder-spine}, and a typical cylinder spine is depicted in \cref{fig:cylinder-spine}.
In the end, the \emph{non-archimedean cylinder count} $N(S,A)$ associated to a cylinder spine $S$ and a curve class $A\in \NE (X,\bbZ )$ is defined as the degree of an \'{e}tale map, see \eqref{eq:def-analytic-cylinder-counts}.

The non-archimedean cylinder counts correspond to logarithmic invariants of $(X,D)$ with constrained tropical types. 
We define (decorated) \emph{tropical cylinder types} in $\Sigma(X,D)$ (\cref{def:S-type}), and associate to such a decorated type $\btau$ a \emph{logarithmic cylinder count}
\[N_{\btau} = \frac{\deg[\cM (X,\btau )]^{\vir}}{\vert \Aut (\btau )\vert }. \]
Our first main result is the following comparison theorem, which expresses non-archimedean cylinder counts in terms of logarithmic cylinder counts.

\begin{theorem}[{Main comparison, \cref{thm:main-comparison}}] \label{thm:intro-main-comparison}
    Let $U$ be a connected smooth affine log Calabi-Yau variety satisfying the assumptions of \cref{subsec:log-setup}.
    Let $S$ be a cylinder spine in $\Sk (U)$ of type $\beta = (A,\bu )$.
    Then the non-archimedean cylinder count $N(S,A)$ decomposes as a sum of logarithmic cylinder counts
    \[N(S,A ) = \sum_{\btau} k_{\tau} N_{\btau} , \]
    where the sum is over decorated tropical cylinder types $\btau$ of total curve class $A$, contact orders specified by $\beta$, and spine of type $S$.
\end{theorem}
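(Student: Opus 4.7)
The plan is to compare the non-archimedean moduli space underlying $N(S,A)$ with the logarithmic moduli spaces $\cM(X,\btau)$ via tropicalization and the decomposition theorem of logarithmic Gromov-Witten theory. The first step is to identify the non-archimedean cylinder moduli space with an analytic subdomain of the Berkovich analytification $\cM(X,\beta)^{\an}$ of the stack of stable log maps to $(X,D)$ with three marked points of contact orders specified by $\beta = (A,\bu)$. The spine constraint $S$ translates, via the tropicalization map $\cM(X,\beta)^{\an}\to \Sigma(\cM(X,\beta))$, into the combinatorial condition that the restriction of the tropical curve to the convex hull of its marked points equals $S$.

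Next, I would stratify $\cM(X,\beta)^{\an}$ by decorated tropical type. For each decorated tropical type $\btau$ refining $\beta$, the tautological map $j_{\btau}\colon \cM(X,\btau)\to \cM(X,\beta)$ analytifies to a map whose image lands in the stratum of tropical type $\btau$. Since two tropical types refining $\beta$ yield the same spine exactly when they are decorated cylinder types with spine $S$, the subdomain carved out by the spine constraint decomposes into analytic strata indexed precisely by decorated cylinder types of spine $S$ and total class $A$.

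Then I would invoke the decomposition theorem of Abramovich-Chen-Gross-Siebert to write $[\cM(X,\beta)]^{\vir}$ as a weighted sum $\sum_{\btau} m_{\btau}\,(j_{\btau})_\ast [\cM(X,\btau)]^{\vir}$ over rigid decorated tropical types $\btau$ refining $\beta$, and restrict this decomposition to the strata compatible with $S$. Matching the non-archimedean étale degree stratum-by-stratum against this logarithmic decomposition yields the identity, with $k_{\tau}$ packaging the tropical multiplicity $m_{\btau}$, the lattice-index contribution of the tropical refinement, and the automorphism factor $|\Aut(\btau)|$ appearing in the definition of $N_{\btau}$.

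The main obstacle will be the stratum-by-stratum compatibility of the non-archimedean étale count with the logarithmic virtual count. The count $N(S,A)$ is defined via the degree of an étale map from a smooth non-archimedean moduli space, whereas $N_{\btau}$ comes from an obstructed virtual class. Making the comparison precise requires a local analysis of both moduli spaces at each tropical type $\btau$: identifying the non-archimedean contribution of the corresponding stratum with a multiple of a classical three-pointed log intersection number, verifying that this number agrees with $N_{\btau}$ up to the combinatorial factor $k_{\tau}$, and reconciling the automorphism bookkeeping on both sides. Once this is established stratum-by-stratum, the identity follows by summing over decorated cylinder types of spine $S$ and total class $A$.
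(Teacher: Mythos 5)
There is a genuine gap, and it sits exactly where you flag "the main obstacle": your plan never supplies the mechanism that turns the spine constraint into a constraint on tropical types, nor a situation in which a decomposition over tropical types actually holds. The decomposition theorem you invoke applies to log stable maps into the special fiber of a degeneration over the standard log point; here the target $(X,D)$ is a fixed log scheme with trivial base, and for a generic point constraint the relevant tropical moduli space is just the vertex type, so there is no nontrivial decomposition of $[\cM(X,\beta)]^{\vir}$ indexed by cylinder types to restrict. Worse, the spine of a non-archimedean stable map is \emph{not} read off from the tropicalization of the corresponding trivially-valued log stable map: a map with smooth domain meeting $D$ only at the two marked points has trivial tropical type in $\Sigma(\cM(X,\beta))$ but a highly nontrivial spine, since the spine records the retraction of the curve to the skeleton $\Sk(U)$, i.e.\ behavior over a valuation ring, which the log structure of $(X,D)$ alone does not see. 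So your proposed stratification of $\cM(X,\beta)^{\an}$ "by decorated tropical type" does not carve out the locus $\Sp^{-1}(S)$, and the stratum-by-stratum matching of the \'etale degree with virtual counts is precisely the content that remains unproved.

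What the paper does instead is degenerate the \emph{point constraint}, not stratify a fixed moduli space: it builds a family of point-constrained moduli spaces $\cM_T$ over $T=\Spec\bbC\dbb{t}$ in which the interior marked point moves into a $0$-stratum of $D$ in the special fiber. On the special fiber the constrained virtual class decomposes into logarithmic cylinder counts (\cref{prop:decomposition-special-fiber}), while on the generic fiber the moduli space is \'etale over $\eta$ and computes the non-archimedean count. The spine enters through \cref{prop:spine-tropical-type-special-fiber}: using the constant model $X\times T$ and the theory of skeleta for formal pairs, the spine of $f_{\eta}^{\an}$ is identified with the spine type of the limiting log map in $X_s$; combined with the rigidity of transverse spines (\cref{lemma:degeneration-transverse-spine}) and the formal-GAGA substack of \cref{lemma:construct-substack}, this yields a refined family $\cM(S)_T$ whose generic fiber is exactly $\Sp^{-1}(S)\cap\cM_{\eta}^{\an}$ (\cref{prop:spine-refinement-generic-fiber}), and the identity follows from deformation invariance of the virtual class, together with a toric-blowup step and the birational invariance of log cylinder counts (\cref{prop:birational-invariance-log-cylinders}) to return to $(X,D)$. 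To repair your argument you would need to introduce some such degeneration (of the constraint or of the target) relating the valuation-theoretic spine to logarithmic tropical types; without it the proposed comparison cannot get off the ground.
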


The theorem is proved using a degenerate point constraint as in \cite{Johnston_Comparison}.
We review this construction and its main properties in \cref{subsec:family-moduli-space}, and obtain a family of point-constrained moduli spaces $\cM_T$ defined over $T = \Spec \bbC\dbb{t}$.
The generic fiber $\cM_{\eta}$ corresponds to a generic point constraint, while on the special fiber $\cM_s$ the point constraint degenerates to a $0$-stratum of the boundary $D$ (see \cref{fig:point-constraint}).
We reformulate the description of tropical types appearing in $\cM_s$ in \cite{Johnston_Comparison} into a decomposition of the virtual fundamental class $[\cM_s]^{\vir}$ indexed by tropical cylinder types in \cref{prop:decomposition-special-fiber}.
While the toric model assumption in \cite{Johnston_Comparison} is not necessary for this decomposition to work, the semi-positivity condition (\cref{assumption:semipositivity}) is crucial to compare with counts in the general fiber.
We then refine this decomposition according to the spine type in \cref{subsubsec:spine-refinement}.
Given a cylinder spine $S$, we restrict the special fiber to the components corresponding to tropical cylinder types with spine type specified by $S$.
The degree of the virtual fundamental class of this stack gives the right hand side of the formula.
Using \cref{lemma:construct-substack}, we then produce a substack of $\cM_T$ and prove that its generic fiber is exactly the space used to define the non-archimedean invariants (see \cref{prop:spine-refinement-generic-fiber}).
We prove this by considering the constant model $X\times T$ of the analytification $X_{\eta}^{\an}$, and use the theory of skeleta for pairs (see \cite{Gubler_Skeletons_and_tropicalizations}) to relate the spine of non-archimedean stable maps in $X_{\eta}^{\an}$ to the tropical type of stable maps to the log scheme $(X_s , D_s)$ (see \cref{prop:spine-tropical-type-special-fiber}). 
The main comparison then follows from the deformation invariance property of the virtual fundamental class.
The proof of \cref{prop:spine-refinement-generic-fiber} relies on the study of the combinatorial properties of tropical cylinder types, which is carried out in \cref{subsubsec:log-cylinder-counts}.

\personal{Our strategy of proof builds on Johnston's work in \cite{Johnston_Comparison}, where a direct comparison of the mirror algebras was achieved by expressing the relevant non-archimedean invariants as the degree of the evaluation map for some moduli space of log stable maps. 
A key element of this comparison is the use of a degenerate point constraint parameterized by the spectrum of a DVR $B = \Spec k^{\circ}$, which constrains the tropical types of log stable maps that appear in the generic fiber.
By considering the constant model $X\times B$ of $X$, we recast this construction into a point constraint for log stable maps in $X\times B$ over $B$, producing a point-constrained moduli space for stable maps in $X\times B$ over $B$ parameterized by $B$.
We then use logarithmic structures to describe the virtual fundamental class of the special fiber of this moduli space.
Using the theory of formal models and skeleta for pairs (see \cite{Gubler_Skeletons_and_tropicalizations}), we relate the skeleton of stable maps in the analytification of the generic fiber to the tropical types of log stable maps in the special fiber, from which we deduce our main comparison.}

\begin{figure}
    \centering
    \setlength{\unitlength}{0.5\textwidth}
    \begin{picture} (1.65,1)
    \put(0.0,0.1){\includegraphics[height=.9\unitlength]{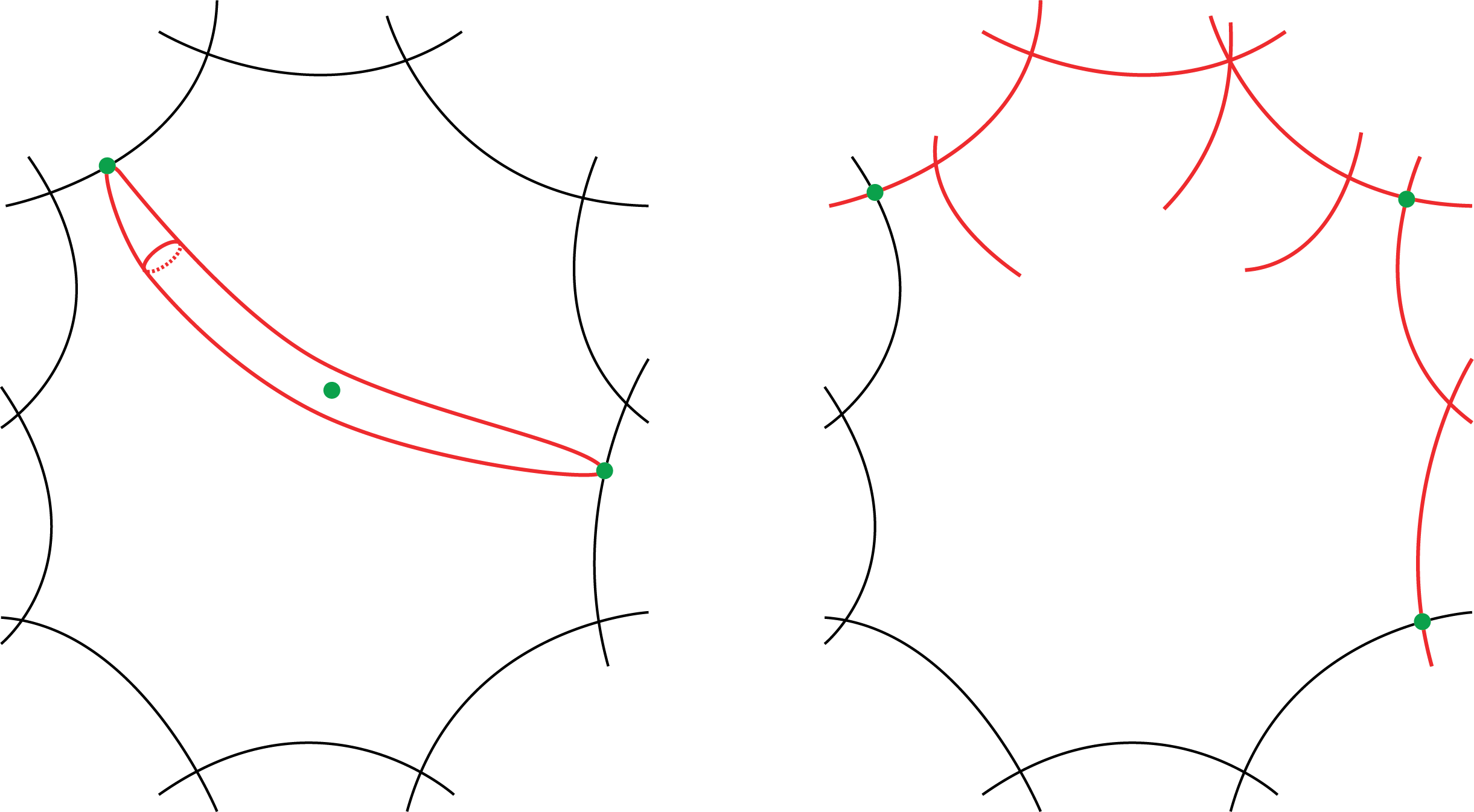}}
    \put(0.1,0.84){$p_1$}
    \put(0.685,.48){$p_2$}
    \put(0.36,0.61){$p_i$}
    \put(0.13,0){Generic point constraint}
    \put(0.97,0.815){$p_1$}
    \put(1.58,0.34){$p_2$}
    \put(1.58,0.8){$p_i$}
    \put(0.98,0){Degenerate point constraint}
    \end{picture}
    \caption{Typical curves appearing on the generic and special fiber of the point constrained moduli spaces constructed in \S \ref{subsec:family-moduli-space}.
    The point constraint is at the marked point $p_i$, while $p_1$ and $p_2$ are mapped to specified components of the boundary $D$.}
    \label{fig:point-constraint}
\end{figure}

\subsubsection{Comparison of mirror symmetry constructions for affine log Calabi-Yau surfaces}

As an application of the comparison of cylinder counts, we compare the non-archimedean and logarithmic scattering diagrams associated to a connected smooth affine log Calabi-Yau surface in \cref{sec:application-scattering-diagram}.
The non-archimedean scattering diagram is constructed using \emph{infinitesimal cylinder counts}, a non-archimedean count of curves with boundary subject to extendability conditions.
On the other hand, the logarithmic scattering diagram is constructed using \emph{wall type invariants}, a special kind of punctured log Gromov-Witten invariants. 
In the surface case, infinitesimal cylinder counts are easily expressed in terms of cylinder counts (corresponding to counts of proper analytic curves).
Applying our main comparison theorem then leads to the following formula.

\begin{proposition}[{\cref{prop:infinitesimal-cylinder-counts-to-log-counts}}] \label{prop:intro-infinitesimal-cylinder-counts-to-log-counts}
    Let $U$ be a connected smooth affine log Calabi-Yau surface with a compactification $(X,D)$ as in \cref{assumption:compactification}.
    Let $S$ be an infinitesimal cylinder spine in $\Sk (U)$ of type $\beta =(A,\bu )$ with $A\in\NE (X,\bbZ )$ and $\bu = (\bu_1 ,\bu_2, 0)$.
    Then 
    \[N(S,A) =  \sum_{\ell\geq 0} \sum_{\mu_1,\dots ,\mu_{\ell}\geq 0}\sum_{\substack{\btau_1,\dots ,\btau_{\ell}  }} \frac{\prod_{i=1}^{\ell}  \big( k_{\tau_i} W_{\btau_i}\big)^{\mu_i}}{\mu_1!\cdots \mu_{\ell}!}, \]
    where the last sum is over decorated wall types $\btau_1,\dots ,\btau_{\ell}$ with contact order $u_{\tau_i}$ and total curve class $A_i$ that satisfy: 
    \begin{align*}
        \mu_1 u_{\tau_1} + \cdots +\mu_{\ell} u_{\tau_{\ell}}  &= \bu_1 +\bu_2 , \\
        \mu_1 A_1 + \cdots + \mu_{\ell} A_{\ell} &= A. 
    \end{align*}
\end{proposition}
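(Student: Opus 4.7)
The plan is to derive this formula by combining the main comparison theorem with the logarithmic gluing formula and the combinatorial structure of infinitesimal cylinder spines. First, I would apply \cref{thm:intro-main-comparison} to rewrite $N(S,A)$ as a sum of logarithmic cylinder counts $k_{\tau'} N_{\btau'}$ indexed by decorated tropical cylinder types $\btau'$ with spine type $S$. This reduces the problem to an enumeration of tropical cylinder types with infinitesimal spine, together with an evaluation of each such $N_{\btau'}$.

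Next, I would analyze the shape of decorated tropical cylinder types $\btau'$ whose spine coincides with the infinitesimal spine $S$. Since $S$ has both boundary edges (of contact orders $\bu_1$ and $\bu_2$) and the point-constraint leg concentrated at a single vertex, and since we are working on a surface at genus zero, the underlying tropical curve of $\btau'$ must consist of a single central vertex from which several outgoing tropical arms emerge. Each such arm is precisely a wall type $\btau_i$ in the sense of Gross-Siebert. Collecting the arms by decorated isomorphism class yields distinct wall types $\btau_1,\dots,\btau_\ell$ with multiplicities $\mu_1,\dots,\mu_\ell$; the constraints $\sum \mu_i u_{\tau_i} = \bu_1 + \bu_2$ and $\sum \mu_i A_i = A$ are exactly the balancing and curve class conditions at the central vertex.

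To evaluate $N_{\btau'}$ for such a star-shaped configuration, I would apply the gluing/splitting formula for punctured log Gromov-Witten invariants from \cite{Wu_Splitting-log-Gromov-Wittten-Invariants,Gross_Notes-gluing-log-maps} at the central vertex. Each arm contributes the corresponding wall type invariant $W_{\btau_i}$, while the automorphism group of the decorated type decomposes as $|\Aut(\btau')| = \prod_{i=1}^\ell \mu_i!\,|\Aut(\btau_i)|^{\mu_i}$, with the factor $\mu_i!$ arising from permutations among the $\mu_i$ identical arms. The tropical multiplicities factor compatibly as $k_{\tau'} = \prod_i k_{\tau_i}^{\mu_i}$ up to contributions that are naturally absorbed into the gluing. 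Rearranging the resulting product over $\btau'$ as a sum over tuples $(\btau_1,\dots,\btau_\ell;\mu_1,\dots,\mu_\ell)$ then reproduces the claimed multinomial formula.

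The main obstacle is the careful bookkeeping of the combinatorial factors: verifying that the combination of $k_{\tau'}$ with $|\Aut(\btau')|$ reproduces exactly the prefactor $1/(\mu_1!\cdots \mu_\ell!)$, and that the gluing formula contributes precisely $\prod_i W_{\btau_i}^{\mu_i}$ with no spurious invariants from internal edges or higher-valence vertices. A secondary point is to confirm, via the surface assumption together with dimension counts for the relevant log moduli spaces, that no other degenerations of infinitesimal spines contribute; in particular, this rules out tropical types with more than one central vertex or with arms that fail to be of pure wall type.
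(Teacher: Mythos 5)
There is a genuine gap at the very first step. The main comparison (\cref{thm:main-comparison}) applies to cylinder spines in the sense of \cref{def:cylinder-spine}, i.e.\ spines of \emph{proper} curves whose legs go off to the boundary; an infinitesimal cylinder spine $S$ has finite boundary legs, and its count $N(S,A)$ is defined via a tail condition, not as a count covered by the comparison theorem. You cannot simply ``apply \cref{thm:intro-main-comparison} to $N(S,A)$'': the paper first invokes the extension procedure (\cref{constr:extension-spine}) to replace $S$ by the extended cylinder spine $\hS$ and, crucially, replaces the curve class $A$ by $\hA = A + \hdelta_1 + \hdelta_2$, and only then applies the comparison to $N(\hS,\hA)$. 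Because your argument never introduces the extension classes $\hdelta_1,\hdelta_2$, the curve class bookkeeping $\sum_i \mu_i A_i = A$ cannot be derived the way you claim: the cylinder types actually contributing have total class $\hA$, and the discrepancy is accounted for precisely by the two ``straight'' broken line pieces carrying $\hdelta_1$ and $\hdelta_2$.

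The structural analysis of the contributing types is also off. A decorated cylinder type with spine $\hS$ is not star-shaped with wall-type arms hanging off a single central vertex where the point-constraint leg and both boundary legs meet. Splitting at the $3$-valent vertex $v_i$ (\cref{lemma:splitting-cylinder-type}) produces two broken line types; one of them is straight, hence contributes $k_{\omega}N_{\bomega}=1$ by \cref{lemma:computation-straight-broken-line}, while the other bends exactly once, at the $2$-valent bending vertex mapping to $x$, and it is there --- not at $v_i$ --- that the wall types $\btau_1,\dots,\btau_\ell$ are attached and the balancing relation $\sum_i \mu_i u_{\tau_i} = \bu_1+\bu_2$ holds. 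Consequently the multinomial prefactor $1/(\mu_1!\cdots\mu_\ell!)$ is not obtained from an ad hoc factorization of $|\Aut(\btau')|$ and $k_{\tau'}$ at a central vertex (which would require justification you do not supply), but comes packaged in the Gross--Siebert propagation rule for broken lines applied at the bending vertex, combined again with \cref{lemma:computation-straight-broken-line} to kill the remaining straight piece. Your proposal would need both the extension step and this two-stage splitting (at $v_i$, then at the bending vertex) to close the argument.
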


In the surface case, the non-archimedean and logarithmic scattering diagrams are determined by the wall-crossing functions at points $x\in \Sk (U)\setminus 0$.
The \emph{non-archimedean wall-crossing function} at $x$ is the following generating series of infinitesimal cylinder counts (see \eqref{eq:wall-crossing-function-NA})
\begin{equation*}
    f_x^{\an} (t,z) = \sum_{\substack{\bw\in n^{\perp}\\ A\in \NE (X,\bbZ)}} N(S,A) t^A z^{\bw} ,
\end{equation*}
where $n$ is a fixed normal vector to the ray from $0$ to $x$, $S$ is any infinitesimal cylinder spine with contact orders $(\bu_1,\bu_2 , 0)$ such that $\bu_1+\bu_2=\bw$ and $\langle n ,\bu_1\rangle = 1$, and with the bending vertex mapped to $x$.
The \emph{logarithmic wall-crossing function} at $x$ is obtained as the exponential of a generating series of wall types invariants (see \eqref{eq:wall-crossing-function-log})
\begin{equation*}
    f_x^{\log} (t,z)= \exp\bigg( \sum_{\btau} k_{\tau} W_{\btau} t^A z^{-u_{\tau}} \bigg),
\end{equation*}
where the sum is over decorated wall types $\btau$ with total curve class $A$ and contact order $u_{\tau}$ at the unique leg $L_{\out}$, which satisfy $x\in h_{\tau} (\tau_{\out} )$.

From \cref{prop:intro-infinitesimal-cylinder-counts-to-log-counts} we deduce the second main theorem, which provides the equivalence of the two scattering diagrams.
We call it the \emph{exponential formula}, because it expresses the non-archimedean wall-crossing function as the exponential of a generating series of wall type invariants.

\begin{theorem}[{Exponential formula, \cref{thm:exponential-formula}}] \label{thm:intro-exponential-formula}
    Let $U$ be a connected smooth affine log Calabi-Yau surface.
    For any $x\in \Sk (U)\setminus 0$, the logarithmic and non-archimedean wall-crossing functions at $x$ agree: 
    \[\sum_{\substack{\bw\in n^{\perp}\\ A\in \NE (X,\bbZ)}} N(S,A) t^A z^{\bw} = \exp\bigg( \sum_{\btau} k_{\tau} W_{\btau} t^A z^{-u_{\tau}} \bigg).\]
    Consequently, the non-archimedean scattering diagram from \cite{Keel-Yu_Log-CY-mirror-symmetry} and the logarithmic scattering diagram from \cite{Gross_Canonical-wall-structure-and-intrinsic-mirror-symmetry} coincide.
    In particular, for any curve class $A\in \NE( X,\bbZ )$, we have $\Wall_A^{\an} = \Wall_A^{\log}$.
\end{theorem}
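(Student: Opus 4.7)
The plan is to derive the exponential formula directly from \cref{prop:intro-infinitesimal-cylinder-counts-to-log-counts} by packaging the identity of coefficients into a generating series and recognizing the combinatorial expansion of an exponential in a commutative complete power-series ring. The equivalence of scattering diagrams then follows at once, since in the surface case each diagram is determined by its wall-crossing functions.

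Concretely, I would set
\[g_x(t,z) \;\coloneqq\; \sum_{\btau} k_{\tau}\, W_{\btau}\, t^{A_{\tau}} z^{-u_{\tau}},\]
where the sum ranges over decorated wall types $\btau$ (of total curve class $A_\tau$ and contact order $u_\tau$) whose image $h_\tau(\tau_{\out})$ contains $x$. Working in the appropriate $\fm$-adic completion of $\bbZ[\NE(X,\bbZ)][z^{\pm}]$, commutativity gives
\[f_x^{\log}(t,z) \;=\; \exp(g_x) \;=\; \prod_{\btau}\, \sum_{\mu\ge 0} \frac{(k_{\tau}W_{\btau})^{\mu}}{\mu!}\, t^{\mu A_{\tau}} z^{-\mu u_{\tau}}.\]
Extracting the coefficient of $t^{A} z^{\bw}$ yields a sum indexed by finitely-supported multiplicity functions $\mu\colon\{\btau\}\to\bbZ_{\ge 0}$ subject to two linear relations of the form $\sum \mu(\btau) A_{\tau} = A$ and $\sum \mu(\btau) u_{\tau} = \pm\bw$. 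Grouping by the set $\{\btau_1,\dots,\btau_\ell\}$ of wall types appearing with positive multiplicity $\mu_i = \mu(\btau_i)$ reproduces verbatim the right-hand side of \cref{prop:intro-infinitesimal-cylinder-counts-to-log-counts}. Applying that proposition identifies the coefficient with $N(S,A)$, proving $f_x^{\an}=f_x^{\log}$ for every $x\in\Sk(U)\setminus 0$.

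For the consequence on scattering diagrams, I would argue that in dimension two both the Keel-Yu non-archimedean scattering diagram and the Gross-Siebert logarithmic scattering diagram are entirely determined by their attached wall-crossing functions on rays from $0$: a ray supports a non-trivial wall precisely when the corresponding $f_x$ has a non-constant contribution compatible with that ray, and the attached function is $f_x$ itself. Since $f_x^{\an}=f_x^{\log}$ at every $x$, the supports and attached functions of the two diagrams agree, so they coincide. Restricting to a fixed curve class $A\in\NE(X,\bbZ)$ then yields $\Wall_{A}^{\an}=\Wall_{A}^{\log}$.

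The main obstacle I anticipate is the careful bookkeeping of sign and orientation conventions: one must match the non-archimedean contact orders $(\bu_1,\bu_2,0)$ and the normalization $\langle n,\bu_1\rangle=1$ against the logarithmic contact direction $u_\tau$ together with the minus sign in $z^{-u_\tau}$, so that the non-archimedean constraints $\bw\in n^{\perp}$ and $\bu_1+\bu_2=\bw$ precisely reproduce the relation $\sum_i\mu_i u_{\tau_i}=\bu_1+\bu_2$ in \cref{prop:intro-infinitesimal-cylinder-counts-to-log-counts}. Once this matching is established once and for all, and the parametrization of the triple sum in the proposition is identified with an unordered choice of distinct wall types carrying multiplicities, the remaining manipulation is purely formal.
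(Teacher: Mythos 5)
Your proposal is correct and follows essentially the same route as the paper: the paper's proof of \cref{thm:exponential-formula} is exactly the observation that, by the definitions \eqref{eq:wall-crossing-function-NA} and \eqref{eq:wall-crossing-function-log}, expanding the exponential and extracting the coefficient of $t^A z^{\bw}$ reproduces verbatim the right-hand side of \cref{prop:infinitesimal-cylinder-counts-to-log-counts}, which you have just made explicit as a formal power-series computation in the completed ring. The sign and orientation bookkeeping you flag is already resolved inside the paper's proof of that proposition (the balancing condition at the bending vertex yields $\sum_i \mu_i u_{\tau_i} = \bu_1+\bu_2 = \bw$), so nothing further is needed.
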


Since the scattering diagrams determine the mirror algebra through their theta functions, we deduce as a corollary that the non-archimedean and logarithmic mirror constructions agree. 

\begin{corollary}[{\cref{coro:mirror-algebras}}] \label{coro:intro-mirror-algebras}
    Let $U$ be a connected smooth affine log Calabi-Yau surface.
    Then the non-archimedean and logarithmic mirror constructions provide the same mirror algebra.
\end{corollary}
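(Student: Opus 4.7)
The plan is to deduce the corollary formally from the identification of scattering diagrams established in \cref{thm:intro-exponential-formula}. Both the non\-archimedean mirror algebra of Keel--Yu and the logarithmic mirror algebra of Gross--Siebert are defined, in the surface case, as a free module on $\Sk(U,\bbZ)\cap B(\bbZ)$ equipped with a product whose structure constants are read off from broken line / theta function computations on the relevant consistent scattering diagram. The input the two constructions need from the enumerative theory is exactly a scattering diagram with walls decorated by wall-crossing functions; once this input is fixed, the combinatorial rules producing the product are the same formal recipe on both sides.

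Concretely, first I would recall the broken line formalism: given a consistent scattering diagram $\mathfrak{D}$ in $\Sk(U)$, a theta function $\vartheta_m$ for $m\in \Sk(U,\bbZ)$ is defined as a generating series over broken lines with prescribed asymptotic direction $m$ and endpoint at a generic basepoint $Q$, each broken line contributing a monomial obtained by successive transport across the walls of $\mathfrak{D}$ via the wall-crossing automorphisms associated to the wall functions. The structure constants $c(m_1,m_2,m)$ in $\vartheta_{m_1}\cdot \vartheta_{m_2} = \sum_m c(m_1,m_2,m)\,\vartheta_m$ are then read off by counting pairs of broken lines meeting at $Q$ with appropriate incidence conditions. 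This formalism is literally the same in \cite{Keel-Yu_Log-CY-mirror-symmetry} and in \cite{Gross_Canonical-wall-structure-and-intrinsic-mirror-symmetry}, so it suffices to compare the data of the two scattering diagrams.

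Next I would invoke \cref{thm:intro-exponential-formula}: it gives the equality $\Wall^{\an}_A = \Wall^{\log}_A$ for every $A\in \NE(X,\bbZ)$, i.e.\ the underlying support and wall-crossing functions of the two scattering diagrams coincide as formal power series in the Novikov and monomial variables. Plugging this identity into the broken line formulas shows that each broken line contributes the same monomial on both sides, hence the structure constants agree. Therefore the two mirror algebras are canonically isomorphic as $\bbZ[\NE(X,\bbZ)]$-algebras with their theta bases.

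The main obstacle, and the only one that is not purely bookkeeping, is ensuring that the normalizations implicit in the two constructions truly match — in particular the convention for incoming vs.\ outgoing directions on walls, the sign of contact orders appearing in wall-crossing automorphisms, and the treatment of the origin $0\in \Sk(U)$. These conventions are already reconciled by \cref{thm:intro-exponential-formula}, whose statement precisely matches the non\-archimedean wall-crossing function $f^{\an}_x$ with the exponential $\exp\bigl(\sum_{\btau} k_\tau W_{\btau} t^A z^{-u_\tau}\bigr)$ used to define the logarithmic wall. Once this dictionary is in place, the passage from scattering diagram to mirror algebra is identical in both theories, and the corollary follows.
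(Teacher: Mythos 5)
Your proposal is correct and follows essentially the same route as the paper: both deduce the corollary from \cref{thm:exponential-formula} (equality of the two scattering diagrams) together with the fact that each mirror algebra is recovered from its consistent scattering diagram via the broken line / theta function formalism, so identical wall-crossing data yields identical structure constants. Your extra remarks on matching conventions are a reasonable elaboration but do not change the argument.
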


\subsection{Future directions}
We indicate future directions of research which are natural extensions of the present paper.
We intend to study these in subsequent works.
\begin{enumerate}[wide]
    \item Consider curves with more than two contact points with the boundary.
    These invariants require a choice of constraint for the domain modulus.
    Johnston \cite{Johnston_Comparison} studied the comparison without spine refinements.
    We are working on a more direct comparison with specified spine types.
    \item Consider non-archimedean curves with boundaries and punctured log curves.
    We need to impose generalized tail conditions as in \cite{Keel-Yu_Log-CY-mirror-symmetry} in the non-archimedean setting and relate them to logarithmic constructions.
    This would lead to a direct comparison of the non-archimedean mirror construction and the logarithmic mirror construction, as we did in the current paper for the two-dimensional case.
    \item Extend the comparison to allow virtual fundamental classes in the non-archimedean setting.
\end{enumerate}

\subsection*{Acknowledgements}
We would like to thank Dan Abramovich, H\"{u}lya Arg\"{u}z, Pierrick Bousseau, Qile Chen, Tom Graber, Mark Gross, Zhaoxing Gu, Sam Johnston, Soham Karwa, Maxim Kontsevich, Sean Keel, Bernd Siebert, Yixian Wu, Weihong Xu, Chi Zhang, Shaowu Zhang \todo{insert names} for valuable discussions.
The authors were partially supported by NSF grants DMS-2302095 and DMS-2245099.

\addtocontents{toc}{\protect\setcounter{tocdepth}{2}}

\section{Non-archimedean and logarithmic cylinders} \label{sec:preliminaries}

Let $\bbk$ be an algebraically closed field of characteristic $0$.
Let $k^{\circ}$ be a noetherian $\bbk$-algebra which is a complete DVR of height $1$.
We denote by $k$ the fraction field of $k^{\circ}$, and by $\tk$ its residue field.
We let $B\coloneqq \Spec k^{\circ}$, and denote by $s = \Spec \tk$ (resp. $\eta = \Spec k$) its closed point (resp. generic point).
\personal{Concrete case: $\bbk =\bbC$, $k^{\circ} = \bbC\dbb{t}$, $k = \bbC\dbp{t}$, $\tk  =\bbC$.} \\

In this section, after formulating the assumptions on the log Calabi-Yau pair $(X,D)$, we use the constant model $X\times\Spec \bbk$ for $X_{\eta}^{\an}$ to relate the spine of non-archimedean stable maps in $X_{\eta}^{\an}$ to the tropical type of log stable maps in $X_s$, equipped with the divisorial log structure coming from $D_s\subset X_s$ (\cref{prop:spine-tropical-type-special-fiber}).
In \S\ref{subsec:walls-balancing} we recall the construction of the non-archimedean wall structure considered in \cite{Keel-Yu_Log-CY-mirror-symmetry}, and construct its logarithmic analogue using wall types (\cref{constr:logarithmic-walls}).
In \S\ref{subsec:cylinder-counts}, we review the construction of non-archimedean cylinder counts, and proceed to define their logarithmic counterparts after defining tropical cylinder types in \cref{def:S-type}.
We study the deformation and transversality properties of tropical cylinder types, and prove that a certain sum of logarithmic cylinder counts is birationally invariant under log-\'{e}tale modifications in \cref{prop:birational-invariance-log-cylinders}. 
All of these results are needed for the main comparison in \S\ref{sec:comparison-counts}.

\subsection{Affine log Calabi-Yau pair} \label{subsec:log-setup}
In this subsection, we state the assumptions on the log Calabi-Yau pair $(X,D)$, and compare logarithmic stable maps in $(X,D)$ with non-archimedean stable maps in $X_{\eta}^{\an}$ using a constant model for $X$ (see \cref{prop:spine-tropical-type-special-fiber}).
\subsubsection{Assumptions}

\begin{definition}[Log Calabi-Yau variety]
    A variety $U$ over $\bbk$ is a \emph{log Calabi-Yau variety} if there exists an snc compactification $U\subset X$ with boundary $D\coloneqq X\setminus U$, such that $K_X+D$ is numerically equivalent to an effective $\bbQ$-divisor supported on $D$.
    The pair $(X,D)$ is called a \emph{log Calabi-Yau pair}.
\end{definition}

For the rest of the paper, we fix a $d$-dimensional connected smooth affine log Calabi-Yau variety $U$ over $\bbk$ and a log Calabi-Yau pair $(X,D)$ which compactifies $U$.
We write $D = \sum_{i\in I_D} D_i$, and fix a choice of rational numbers $a_i\geq 0$ such that
\begin{equation}\label{eq:log-CY-canonical-class}
    K_X + D = \sum_{i\in I_D} a_i D_i.
\end{equation}
The \emph{essential part of the boundary} $D^{\ess}\subset D$ is defined as
\[D^{\ess}\coloneqq \sum_{i\colon a_i = 0} D_i ,\]
and we denote by $I_{D^{\ess}}\subset I_D$ the indices for which $a_i = 0$.
The stratum associated to a subset $I\subset I_D$ is
\[D_I\coloneqq \bigcap_{i\in I} D_i .\]

Throughout the paper, we make the following assumptions on $U$ and $(X,D)$, following \cite{Gross_Canonical-wall-structure-and-intrinsic-mirror-symmetry,Keel-Yu_Log-CY-mirror-symmetry}.

\begin{assumption}[Minimal model] \label{assumption:minimal-model}
    There exists a projective compactification $U\subset X'$ with boundary $D' = U\setminus X'$, such that $K_{X'} + D'$ is divisorial log terminal (dlt) and trivial.
\end{assumption}

\begin{assumption} \label{assumption:compactification}
We assume the following on the compactification $(X,D)$.
\begin{enumerate}[wide]
    \item (Log Kodaira dimension $0$) \label{assumption:log-kodaira-dimension}
    The right-hand side of \eqref{eq:log-CY-canonical-class} is integral, with Kodaira dimension $0$.
    \item (Maximality)\label{assumption:maximal-essential-skeleton}
    The essential part of the boundary $D^{\ess}$ contains a $0$-dimensional stratum.
    \item (Almost minimality) \label{assumption:almost-minimal}
    The pair $(X,D)$ is almost minimal, in the sense that every $1$-dimensional stratum of $D^{\ess}$ only intersects irreducible components of $D^{\ess}$.
    \item (Connectedness)  \label{assumption:connected-essential-skeleton} For every subset $I\subset I_{D^{\ess}}$ of cardinality at most $d-2$, the following divisor on $D_I$ is connected:
    \[\sum_{j\in I_{D^{\ess}}\setminus I}D_j \cap D_I.\]
\end{enumerate}
\end{assumption}

Finally, we make the following semi-positivity assumption which will allow us to relate logarithmic stable maps to relative stable maps.

\begin{assumption}[Semi-positivity] \label{assumption:semipositivity}
    There exists a nef divisor $D'$ supported on $D$, i.e. $D' =\sum_{i\in I_D} b_i D_i$ with some $b_i >0$.
\end{assumption}

\begin{remark}
    We comment on the relevance of each assumption.
    \begin{enumerate}[wide]
        \item \cref{assumption:compactification}(1)-(2) imply that $U$ has a unique regular volume form $\omega$, with at worst simple poles, and that $D^{\ess}$ is the polar locus. 
        \item Under \cref{assumption:minimal-model} and \cref{assumption:compactification}(1)-(2), we can always find a log Calabi-Yau pair $(X,D)$ which compactifies $U$ and satisfies \cref{assumption:compactification}(3) (see \cite[Proposition 2.12]{Keel-Yu_Log-CY-mirror-symmetry}).
        \item \cref{assumption:compactification}(2)-(4) correspond to Assumption 1.1 of \cite{Gross_Canonical-wall-structure-and-intrinsic-mirror-symmetry}, which is required to define the canonical wall structure through log Gromov-Witten theory.
        \item Conjecturally, \cref{assumption:minimal-model} always holds, and \cref{assumption:compactification}(1) implies \cref{assumption:compactification}(2)-(4).
        \item \cref{assumption:semipositivity} is used to apply the arguments of \cite{Johnston_Comparison}, and rule out the possibility of log stable maps in $(X,D)$ mapping unmarked components to $D$.
        It is always satisfied in the surface case by \cite[Lemma 6.9]{Gross_Mirror_symmetry_for_log_Calabi-Yau_surfaces_I_published}, since we assume $U$ affine.
    \end{enumerate}
\end{remark}

We introduce the group $\Div_D (X)\coloneqq \bigoplus_{i\in I_D} \bbZ[D_i]$, and its dual $\Div_D (X)^{\ast}$ with basis $[D_i]^{\ast}$.
The dual intersection complex of $(X,D)$ is the cone complex
\[\Sigma \coloneqq  \bigcup_{I\subset I_D} \bigg\lbrace \sum_{i\in I} t_i [D_i]^{\ast}\mid t_i\geq 0 \text{ and }D_I\neq \emptyset\bigg\rbrace \subset \Div_D (X)^{\ast}\otimes_{\bbZ}\bbR\simeq \bbR^{I_D} ,\]
with the natural face inclusions.
Here, we view each cone as a cone inside $\Div_D (X)^{\ast}\otimes_{\bbZ}\bbR$ and identify $\Sigma$ with its support $\vert \Sigma\vert$.
We denote by $\Sigma^{\ess}\subset \Sigma$ the subcone complex spanned by components of $D^{\ess}$, its support is contained in $\bbR^{I_{D^{\ess}}}\subset \bbR^{I_D}$.

We recall that if we equip $X$ with the divisorial log structure coming from $D$, then the tropicalization $\Sigma (X ,D)$ is identified with $\Sigma$ as a cone complex.
On the other hand, if we equip $X$ with the divisorial log structure coming from $D^{\ess}$, then $\Sigma (X, D^{\ess})$ is identified with $\Sigma^{\ess}$.
The inclusion $D^{\ess} \subset D$ induces a morphism of log schemes 
\[(X,D^{\ess})\longrightarrow (X, D )\]
which tropicalizes to the inclusion $\Sigma^{\ess}\subset \Sigma$.

We introduce the base-changed varieties
\begin{align*}
    X_B \coloneqq X\times_{\bbk} B ,\;
    D_B\coloneqq D\times_{\bbk} B , \; U_B \coloneqq U\times_{\bbk} B
\end{align*}
which will be used as a bridge between the analytic geometry of the analytification $X_{\eta}^{\an}$, and the logarithmic geometry of $(X_s, D_s)$.

\subsubsection{Skeleton and essential skeleton}
The completion of the pair $(X_B,D_B)$ along the special fiber produces a formal strictly semistable pair $(\fX_B,\fD_B )$ in the sense of \cite{Gubler_Skeletons_and_tropicalizations}.
Since $X_{\eta}$ is proper, the analytic generic fiber of $\fX_B$ coincides with the analytication $X_{\eta}^{\an}$.
This pair produces a \emph{skeleton} $\Sk (X,D)\subset U_{\eta}^{\an}$ and a retraction 
\begin{equation} \label{eq:retraction-skeleton}
    \rho\colon U_{\eta}^{\an}\longrightarrow \Sk (X,D) .
\end{equation}
Taking the closure of $\Sk (X,D)$ in $X_{\eta}^{\an}$ produces the compactified skeleton $\Skbar (X,D)\subset X_{\eta}^{\an}$.
The retraction $\rho$ extends to $\rhobar\colon X_{\eta}^{\an}\rightarrow \Skbar (X,D)$.
The skeleton $\Sk (X,D)$ is naturally identified with (the support of) the dual intersection complex $\vert \Sigma\vert$.

The \emph{essential skeleton} $\Sk (U)\subset U_{\eta}^{\an}$ is defined intrinsically as the maximum locus of the Temkin seminorm $\Vert\omega\Vert\colon U_{\eta}^{\an}\rightarrow \bbR_{\geq 0}$ associated to the volume form $\omega$ on $U_{\eta}^{\an}$.
It is included in $\Sk (X,D)$, and corresponds to the subcomplex $\Sigma^{\ess}\subset \Sigma$ under the identification $\Sk (X,D)\simeq\vert \Sigma\vert$.
Using those identifications, the projection $\bbR^{I_D}\rightarrow \bbR^{I_{D^{\ess}}}$ produces a diagram
\begin{equation} \label{cd:essential-skeleton}
    \begin{tikzcd}
        U_{\eta}^{\an}\ar[r,"\rho"]\ar[rd, "\rho^{\ess}"] & \Sk(X,D)\ar[d, shift right = 1ex] \\
        & \Sk (U) \ar[u,hook, shift right= 1ex]
    \end{tikzcd}
\end{equation}
where only the inner triangle is commutative.
We also have the compactified versions $\Skbar^{\ess} (X,D)\subset X_{\eta}^{\an}$ and $\rhobar^{\ess}\colon X_{\eta}^{\an}\rightarrow \Skbar^{\ess} (X,D)$, which depend on the compactification.

\personal{We denote by $\mathsf{B}^{\ess}\subset \mathsf{B}$ the support of $\Sigma^{\ess}$.}

We briefly discuss the integral affine structure on the skeleton, or equivalently on $\Sigma$.
The lattice $\Div_D (X)^{\ast}$ induces a piecewise linear integral manifold structure on $\Sk (X,D)$, and in particular there is a well-defined integral tangent space $T\sigma$ to any cone $\sigma\in \Sigma$.
If $\sigma$ corresponds to a subset $I_{\sigma}\subset I_D$, then there is an identification 
\begin{equation}\label{eq:naive-tangent-lattice}
    T\sigma\simeq \bbZ^{I_{\sigma}}\subset \bbZ^{I_D}.
\end{equation}

Under \cref{assumption:minimal-model} and the maximal boundary condition on $(X,D)$ (\cref{assumption:compactification}(2)), we can use the compactification $U\subset X$ to produce an affine structure on 
\begin{equation} \label{eq:remove-codim-2-locus}
    \Sigma_0\coloneqq \Sigma^{\ess}\setminus \bigcup_{\substack{\sigma\in\Sigma \\\dim \sigma\leq d-2}} \sigma
\end{equation}
which extends the affine structure on the maximal cones in $\Sigma$ (see \cite[\S 1.3]{Gross_Canonical-wall-structure-and-intrinsic-mirror-symmetry} and \cite[\S 2.2]{Keel-Yu_Log-CY-mirror-symmetry}).
When we discuss the balancing condition for tropical curves in $\Sigma$ (or in $\Sk (X,D)$), we will only impose conditions on vertices mapping to $\Sigma_0$ using this extended affine structure.
We denote by $\Sing (\Sk (U))\subset \Sk (U)$ the locus corresponding to $\Sigma^{\ess}\setminus \Sigma_0$.

\personal{We denote by $\mathsf{B}$ the support of $\Sigma$.}

\subsubsection{Logarithmic stable maps in $X_B/B$}

To prove the main comparison result (\cref{thm:main-comparison}), which expresses non-archimedean counts in $X_{\eta}^{\an}$ in terms of logarithmic Gromov-Witten invariants of $X_s$, we study log stable maps in $X_B/B$, where $X_B$ is equipped with the divisorial log structure coming from $D_B$.
The non-archimedean counts rely on the notion of spine.

\begin{definition}[Spine of a non-archimedean stable map] \label{def:analytic-spine}
    Let $f\colon (C, (p_1,\dots, p_n))\rightarrow X_{\eta}^{\an}$ be an analytic stable map, with $C$ a proper analytic curve.
    If $f^{-1} (D_{\eta}^{\an} )$ is supported on the marked points, we define the \emph{spine of $f$} as the $\Sigma^{\ess}$-piecewise linear map
    \[\overline{\Sp} (f)\colon \Gamma^s\longrightarrow \Skbar^{\ess} (X,D)\]
    obtained by restricting $f$ to $\Gamma^s\subset C$, the convex hull of the marked points, and by composing with the retraction $\rhobar^{\ess}$.
    We also refer to the following map as the spine of $f$:
    \[\Sp (f)\colon \Gamma^s\setminus \lbrace p_i\rbrace_{1\leq i\leq n}\longrightarrow \Sk (U) .\]
\end{definition}

Consider an algebraic stable map in $X_B$ relative to $B$:
\begin{equation} \label{cd:stable-map-over-B}
\begin{tikzcd}
    (C_B , (p_1,\dots, p_n))\ar[r,"f"] \ar[rd] & X_B \ar[d] \\
    & B.
\end{tikzcd}
\end{equation}
The completion of $f$ along the special fiber produces a formal model for the analytification of the generic fiber $f_{\eta}^{\an}$.
If $f_{\eta}^{-1} (D_{\eta} )$ is supported on the marked points, then by functoriality of the skeleton (see \cite{Gubler_Skeletons_and_tropicalizations}) we obtain an induced map between skeleta
\[\overline{h}\colon (\Gamma ,(v_1,\dots, v_n))\longrightarrow \Skbar (X,D) ,\]
where $\Gamma\subset C_{\eta}^{\an}$ is the compactified skeleton associated to the strictly semistable pair $(C_B, (p_1,\dots, p_n))$.
Here, $\Gamma$ is a metric graph and $(v_1,\dots, v_n)$ are distinguished $1$-valent vertices whose adjacent edges have infinite length.
We denote by $h\colon (\Gamma_{\circ} , (v_1,\dots, v_n))\rightarrow \vert\Sigma\vert\simeq \Sk (X,D)$ the uncompactified version, where $\Gamma_{\circ} = \overline{h}^{-1} (\Sk (X,D))$.
We refer to infinite edges of $\Gamma_{\circ}$ with endpoint removed as legs.
The map $h$ is $\Sigma$-piecewise linear, and we have the following description of the slope along an edge or a leg of $\Gamma_{\circ}$.

\begin{lemma} \label{lemma:contact-order-skeleton-map}
  Let $f\colon (C_B,(p_1,\dots p_n))\rightarrow X_B$ be a stable map as in \eqref{cd:stable-map-over-B}, such that $f_{\eta}^{-1} (D_{\eta})$ is supported on the marked points.
  Let $h\colon (\Gamma_{\circ}, (v_1,\dots , v_n))\rightarrow \Sigma$ denote the associated piecewise linear map.
  Let $v$ be a vertex in $\Gamma_{\circ}$  mapping to the relative interior of a cone $\sigma \subset \Sigma$.
  We identify the tangent lattice $T\sigma$ with a sublattice of $\Div_D (X)^{\ast}\simeq \bbZ^{I_D}$ as in \eqref{eq:naive-tangent-lattice}.
  \begin{enumerate}[wide]
    \item Let $\ell$ be a leg adjacent to $v$, oriented away from $v$, corresponding to a marked point $p$.
    Then $d_{\ell} h\in T\sigma\subset \bbZ^{I_D}$ has $i$-th component given by the contact order of $f$ at $p$ along $D_i$.
    \item Let $e$ be an edge adjacent to $v$, oriented away from $v$, corresponding to a node $q\in C_s$.
    Then $d_e h\in T\sigma\subset \bbZ^{I_D}$ has $i$-th component given by the contact order of $f$ at $q$ along $D_i$.
  \end{enumerate}
\end{lemma}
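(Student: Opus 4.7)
The plan is to compute the slopes directly from the local factorization of $f^{\ast} z_i$ near the marked points and nodes, using the functoriality of the skeleton construction for the formal strictly semistable pair $(\fX_B, \fD_B)$ (cf.\ \cite{Gubler_Skeletons_and_tropicalizations}). Under this functoriality, points of $\Gamma_{\circ}$ correspond to monomial valuations on $C_B$, and the piecewise linear map $h$ is obtained by pushing them forward via $f$: for such a valuation $\xi$, the $i$-th coordinate of $h(\xi) \in \Sigma \simeq \bbR^{I_D}$ is $\xi(f^{\ast} z_i)$, where $z_i$ is a local equation for $D_i$. The slopes $d_{\ell} h$ and $d_e h$ at $v$ are then obtained by differentiating this expression along natural monomial parameterizations of $\ell$ and $e$.

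For part (1), I would choose a local coordinate $x$ on $C_B$ near the section $p$ such that $p = \{x = 0\}$, and parameterize $\ell$ by $s \in [0, +\infty)$ via the monomial valuations $\xi_s$ determined by $\xi_s(t) = 1$ and $\xi_s(x) = s$. The hypothesis that $f_{\eta}^{-1}(D_{\eta})$ is supported on the marked points yields a local factorization $f^{\ast} z_i = x^{c_i} u$ with $u$ a unit, where $c_i$ is by definition the contact order of $f$ at $p$ along $D_i$. Computing $\xi_s(f^{\ast} z_i) = c_i s$ directly gives $(d_{\ell} h)_i = c_i$, as claimed.

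For part (2), near the node $q$ I would choose local coordinates such that $C_B$ is defined by $xy = t^m$, where $m$ is the length of $e$. The edge is parameterized by $s \in [0, m]$ via the monomial valuations $\xi_s(x) = m - s$, $\xi_s(y) = s$, $\xi_s(t) = 1$, with $v$ at $s = 0$ corresponding to the branch $\{x = 0\}$. Using that $f$ is a morphism of log schemes with respect to the divisorial log structures on $(X_B, D_B)$ and on $C_B$ (coming from the marked points and nodes), one obtains a factorization $f^{\ast} z_i = x^{a_i} y^{b_i} u$ with $u$ a unit and $a_i, b_i \geq 0$ the log-theoretic contact orders on each branch. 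Then $\xi_s(f^{\ast} z_i) = a_i(m - s) + b_i s$, so the slope of $h$ along $e$ in the direction away from $v$ has $i$-th component $b_i - a_i$, which is the oriented contact order of $f$ at $q$ along $D_i$.

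The main obstacle is justifying the local factorization $f^{\ast} z_i = x^{a_i} y^{b_i} u$ at a node when both branches may lie inside $D_i$: this requires invoking the log structure induced on $C_B$ by $f$ and the chart description of morphisms with log smooth target, which guarantees the clean monomial form. The containment $d h \in T\sigma \subset \bbZ^{I_D}$ via \eqref{eq:naive-tangent-lattice} then follows from the observation that the exponents $c_i$, $a_i$, $b_i$ vanish for all indices $i$ outside the minimal stratum of $D$ containing the image of the relevant component of $C_s$, so the slope lies in the tangent lattice of the cone containing the edge or leg.
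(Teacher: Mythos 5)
Your strategy is the same as the paper's: compute the slopes from the explicit monomial-valuation description of the induced map on skeleta (the paper does this via \'etale charts to the standard formal models and Gubler's formula; you do it by evaluating $\xi_s(f^{\ast}z_i)$ directly). However, both of your claimed factorizations are wrong as stated, for the same reason: the hypothesis that $f_{\eta}^{-1}(D_{\eta})$ is supported on the marked points only controls the \emph{horizontal} part of the divisor of $f^{\ast}z_i$. Components of the special fiber $C_s$ may map into $D_i$ — indeed this is the typical situation in which the lemma is later applied — so near $p$ the correct factorization is $f^{\ast}z_i = x^{c_i}\,t^{m_i}\,u$ and near the node it is $f^{\ast}z_i = x^{a_i} y^{b_i}\, t^{\gamma_i}\, u$, with $u$ a unit. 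As written, your identity $\xi_s(f^{\ast}z_i) = c_i s$ would force $h(v)=0$, contradicting the hypothesis that $v$ maps to the relative interior of an arbitrary cone $\sigma$. The repair is harmless for the statement: since $\xi_s(t)=1$ is constant along the leg or edge, the extra $t$-power only shifts the value $h(v)$ and leaves the slopes $c_i$ and $b_i - a_i$ unchanged — but the intermediate claims need this correction.

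Second, your resolution of what you call the main obstacle does not work as stated. The lemma concerns a plain algebraic stable map over $B$ (no log enhancement is assumed), so there is no ``log structure induced on $C_B$ by $f$'' available; and even for a log stable map the chart description would only produce monomials in $x$, $y$ \emph{and} elements of the base monoid (powers of $t$), not $x^{a_i}y^{b_i}u$. The factorization you need has a purely algebraic justification: marked sections specialize to smooth points of $C_s$, so no horizontal component of $\mathrm{div}(f^{\ast}z_i)$ passes through $q$, hence near $q$ this divisor is supported on the two branches of $C_s$; the local ring of $C_B$ at $q$, with model $xy = t^m$, is normal, so $f^{\ast}z_i$ equals a monomial $x^{a}y^{b}t^{c}$ with the same divisor times a unit. (Alternatively one can simply quote the explicit local description of the map on skeleta, as the paper does.) With that fix your computation goes through, and your use of the general model $xy=t^m$ is fine: the slope $b_i-a_i$ equals the difference of the vanishing orders of $f^{\ast}z_i$ along the two components divided by the edge length, which is the quantity to be identified with the contact order at the node.
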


\begin{proof}
  This follows from the explicit description of the induced map on skeleta in \cite[\S 5]{Gubler_Skeletons_and_tropicalizations}.
  More precisely, in case (1) we can find open neighbourhoods $U\subset X_B$ and $V\subset C_B$ of $f(p)$ and $p$ such that $f(V)\subset U$, and \'{e}tale maps
  \begin{align*}
    \psi_V &\colon V\longrightarrow \Spf k^{\circ}\langle x_0,x_1\rangle / (x_0 -t) ,\\
    \psi_U &\colon U\longrightarrow \Spf k^{\circ} \langle y_0,\dots, y_d \rangle / (y_0 -t) ,
  \end{align*}
  such that the marked point $p$ is given by the divisor $\psi_V^{\ast}(x_1)$ and $D\vert_{U}$ is given by the functions $\psi_U^{\ast} (y_1),\dots ,\psi_U^{\ast} (y_r )$.
  For $0\leq i\leq r$, let $f_i\coloneqq f_{\eta}^{\ast}(\psi_U)_{\eta}^{\ast} (y_i)$.
  The restriction of $h$ to $\ell$ is given by 
  \begin{equation}\label{eq:local-form-map-skeleta}
      (-\log \vert f_0\vert ,\dots, -\log \vert f_r\vert )\colon \ell\longrightarrow \sigma .
  \end{equation}
  In the affine charts $\ell\simeq \bbR_+$ and $\sigma\simeq \bbR_+^{I_{\sigma}}\subset \bbR^{I_D}$, the component $-\log \vert f_i\vert$ corresponds to $t\mapsto h(v) + n_it$, where $n_i$ is the order of vanishing of $f_i$ at $p$ \cite[(5.3.1)]{Gubler_Skeletons_and_tropicalizations}.
  This proves (1).

  For the case of a node, the target of $\psi_V$ is $\Spf k^{\circ} \langle x_0, x_1\rangle /(x_0x_1 - t)$.
  The map $h$ restricted to $e$ has the same components as \eqref{eq:local-form-map-skeleta}.
  An affine chart for the edge $e$ is given by $e\simeq \lbrace (t,1-t) , t\in [0,1]\rbrace\subset \bbR_+^2$.
  Let $C_{s,1}$ and $C_{s,2}$ denote the two irreducible components of $C_s$ containing $p$.
  The component $-\log \vert f_i\vert$ corresponds to $t\mapsto n_{i,1} t + n_{i,2}(1-t)$, where $n_{i,1}$ (resp. $n_{i,2}$) is the order of vanishing of $f_i$ along $C_{s,1}$ (resp. $C_{s,2}$).
  Then the slope of $h$ in the direction of $D_i$ is $n_{i,1}-n_{i,2}$, which equals the contact order of $f$ at $q$ along $D_i$.
  This proves (2).
\end{proof}

Under the identification $\Sk (X,D)\simeq \Sigma$, the skeleton of an analytic stable map produces a tropical type $\tau_f = (\Gamma_{\circ} , \bsigma_f, \bu_f)$ to $\Sigma \simeq \Sigma (X_s,D_s)$.
In this notation, $\Gamma_{\circ}$ is a combinatorial graph (we forget the metric), $\bsigma_f$ associates to each vertex, edge and leg its image cone in $\Sigma$, and $\bu_f$ records the slope of $h$ along each edge and leg. 
We establish some language regarding types of tropical maps to $\Sigma$.

\begin{definition}[Simple type, spine type]\label{def:spine-type}
    Let $\tau = (G,\bsigma, \bu )$ be a tropical type in $\Sigma$.
    \begin{enumerate}[wide]
        \item A vertex $v\in V(G)$ is called a \emph{bending vertex} if $\sum_{e\ni v} \bu (e)\neq 0$, where all the edges are oriented away from $v$ and the sum is taken in $\bbZ^{I_D}$.
        \item A vertex $v\in V(G)$ is called \emph{redundant} if it is $2$-valent, not a bending vertex, and $\bsigma (e) = \bsigma (e')$, where $e,e'\in E(G)\cup L(G)$ denote the edges or legs adjacent to $v$.
        \item The type $\tau$ is \emph{simple} if it has no redundant vertices.
        The \emph{simplification of $\tau$} is the type obtained by removing redundant $2$-valent vertices.
        \item The \emph{spine type of $\tau$}, denoted by $\Sp (\tau )$, is the simplification of the type obtained by restricting $\tau$ to the convex hull of $L(G)$ in $G$.
    \end{enumerate}
\end{definition}

Given a family of log stable maps in $X_B/B$, the following proposition provides a link between the spine of the induced analytic stable map in $X_{\eta}^{\an}$ and the spine type of the induced log stable map in $X_s$.
The assumption on the spine type of the special fiber is necessary because the spine of an analytic stable map is defined using $\rho^{\ess}$ (Eq. \eqref{cd:essential-skeleton}) which includes the projection to $\Sk (U)$.

\begin{proposition} \label{prop:spine-tropical-type-special-fiber}
  Let $f\colon (C_B, (p_1,\dots, p_n))\rightarrow X_B$ be a log stable map in $X_B/B$ as in \eqref{cd:stable-map-over-B}, denote by $\tau = (G,\bsigma,\bu)$ the tropical type in $\Sigma$ induced by $f_s$.
  Assume that $f_{\eta}^{-1} (D_{\eta} )$ is supported on the marked points, and that $\Sp (\tau)$ is induced by a tropical type in $\Sigma^{\ess}$.
  Then the type $\Sp (\tau)$ is given by $\Sp (f_{\eta}^{\an} )$.
\end{proposition}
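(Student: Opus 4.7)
The plan is to read off the spine of $f_\eta^{\an}$ from the formal model $f\colon C_B\to X_B$ using the skeleton machinery of Gubler--Rabinoff--Werner, and then identify the resulting PL map with the spine type of $\tau$. First I would invoke functoriality of the skeleton for the strictly semistable pair $(C_B,(p_1,\dots,p_n))\to (X_B,D_B)$: this yields a $\Sigma$-piecewise linear map $\bar h\colon (\Gamma,(v_1,\dots,v_n))\to \Skbar(X,D)$, where $\Gamma$ is the compactified skeleton of $C_\eta^{\an}$ attached to the formal model. Because $f_\eta^{-1}(D_\eta)$ is supported on the marked points by hypothesis, the preimage $\bar h^{-1}(\Sk(X,D))$ recovers the open skeleton $\Gamma_\circ$, so $h\colon (\Gamma_\circ,(v_1,\dots,v_n))\to |\Sigma|$ is well defined.

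Next I would verify that the underlying combinatorial data of $h$ coincide with those of $\tau$. The vertices of $\Gamma_\circ$ are in bijection with the irreducible components of $C_s$, the edges with the nodes of $C_s$, and the legs with the marked points, matching exactly the vertex/edge/leg set of the dual graph $G$ underlying $\tau$. For the cone assignment $\bsigma$, a vertex $w\in V(\Gamma_\circ)$ corresponding to a component $C_{s,w}\subset C_s$ is sent under $h$ to the image of that component under logarithmic tropicalization, which is the cone $\bsigma(w)$ determined by the stratum of $D_s$ containing $f_s(C_{s,w})$. Similarly for edges and legs. For the slopes $\bu$, \cref{lemma:contact-order-skeleton-map} identifies the slope of $h$ along a leg or an edge with the contact order along each boundary divisor, which is precisely the contact order of the log stable map $f_s$ at the corresponding marked point or node. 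Hence $h$ realizes the tropical type $\tau$.

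Now I would pass from $h$ to the analytic spine. By definition, $\Sp(f_\eta^{\an})$ is obtained by restricting $f_\eta^{\an}$ to the convex hull of the marked points in $C_\eta^{\an}$ and composing with the retraction $\rho^{\ess}\colon U_\eta^{\an}\to \Sk(U)$. The convex hull of $\lbrace p_i\rbrace$ inside $C_\eta^{\an}$ retracts onto the convex hull $\Gamma^s\subset \Gamma_\circ$ of the legs $(v_1,\dots,v_n)$, and by the factorization $\rho^{\ess}=\pi\circ\rho$ through the projection $\pi\colon \Sk(X,D)\to \Sk(U)$ (diagram \eqref{cd:essential-skeleton}), the analytic spine equals $\pi\circ h|_{\Gamma^s}$. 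On the tropical side, the spine type $\Sp(\tau)$ is by \cref{def:spine-type} the simplification of the restriction $\tau|_{\Gamma^s}$, which after identifying $\Sigma\to \Sigma^{\ess}$ with $\pi$ yields the simplification of $\pi\circ h|_{\Gamma^s}$.

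It remains to check that passing to the simplification does not lose information, i.e.\ that the PL map $\pi\circ h|_{\Gamma^s}$ and its simplification carry the same underlying map. This is where the hypothesis that $\Sp(\tau)$ is induced by a tropical type in $\Sigma^{\ess}$ enters: it guarantees that every vertex of $\Gamma^s$ maps under $\pi\circ h$ to $\Sigma^{\ess}$ and that no edge of $\Gamma^s$ is contracted by $\pi$ to a point inside a cone of smaller dimension in an incompatible way, so the redundant vertices removed in $\Sp(\tau)$ are precisely the $2$-valent non-bending vertices whose adjacent cells share a common cone in $\Sigma^{\ess}$, and the PL map descends unchanged. The expected main obstacle is precisely this last compatibility: one must rule out that after projection to $\Sk(U)$ the analytic spine could genuinely differ from $\Sp(\tau)$ by a vertex mapping into a stratum of $D\setminus D^{\ess}$; the assumption on $\Sp(\tau)$ is what excludes this pathology, and with it the equality $\Sp(f_\eta^{\an})=\Sp(\tau)$ follows.
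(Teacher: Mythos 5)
Your proposal is correct and follows essentially the same route as the paper: functoriality of the skeleton for the formal model, identification of the combinatorial type of the induced PL map with $\tau$ via \cref{lemma:contact-order-skeleton-map}, and then using the hypothesis that $\Sp(\tau)$ is induced by a type in $\Sigma^{\ess}$ to conclude that the spine factors through the essential skeleton, so that composing with the projection $\Sigma\to\Sigma^{\ess}$ does not change the type. The paper's proof is exactly this argument, stated a bit more compactly in the last step.
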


\begin{proof}
    Since $f_{\eta}^{-1} (D_{\eta} )$ is supported on the marked points, by functoriality of the skeleton we obtain a $\Sigma$-piecewise linear map 
    \[h\colon (\Gamma_{\circ} ,(v_1,\dots, v_n))\longrightarrow \Sigma \]
    producing a type $\tau_f = (\Gamma_{\circ} , \bsigma_f ,\bu_f)$ to $\Sigma$.
    By construction of the skeleton, the graph $\Gamma_{\circ}$ corresponds to the dual intersection complex of $C_s$ together with infinite legs corresponding to marked points.
    Hence $\Gamma_{\circ}  =G$ as combinatorial graphs. 
    By construction of the induced map on skeleta, we also have $\bsigma = \bsigma_f$.
    Finally, \cref{lemma:contact-order-skeleton-map} implies that $\bu = \bu_f$.
    We deduce that $\tau  =\tau_f$ as tropical types to $\Sigma$.

    It remains to prove that $\Sp (\tau_f)$ coincides with the simplification of the type associated to $\Sp (f_{\eta}^{\an})$.
    Let $\Gamma_{\circ}^s$ denote the convex hull of the marked points in $\Gamma_{\circ}\subset C_{\eta}^{\an}$, it is the domain of $\Sp (f_{\eta}^{\an} )$.
    The assumption on $\tau$ and the previous paragraph imply that $\bsigma_f (x)\in \Sigma^{\ess}$ for all $x\in V(\Gamma_{\circ}^s)\cup E(\Gamma_{\circ}^s) \cup L(\Gamma_{\circ}^s)$.
    Thus the restriction of $f_{\eta}^{\an}$ to $\Gamma_{\circ}^s$ factors through the essential skeleton, and hence composing with the projection $\Sigma\rightarrow \Sigma^{\ess}$ does not change the type of $\Sp (\tau_f)$.
    This concludes the proof.
\end{proof}

Let $\beta = (A, \bu )$ denote the data of a curve class $A\in \NE (X,\bbZ )$ and a tuple of divisorial contact orders $\bu = (\bu_1,\dots, \bu_n)$ with divisorial centers in $D^{\ess}$, by which we mean that $\bu_i\in T\sigma$ for a cone $\sigma\in \Sigma^{\ess}$.
By taking $\sigma$ to be minimal we obtain a unique decomposition
\begin{equation}\label{eq:contact-orders}
    \bu_i = \sum_j m_{ij} v_{ij},
\end{equation}
where $\lbrace v_{ij}\rbrace_j$ are primitive generators of the rays of $\sigma$.

In order to define non-archimedean counts, we introduce the following open substacks of the moduli stack of \emph{relative} stable maps in $(X_{\eta},D_{\eta})$, which does not involve any log structure.
We note that since the domain curves have no automorphisms, those stacks are in fact schemes.

\begin{definition}
    The moduli space $\cM (U_{\eta},\beta )$ is the stack of relative stable maps $f\colon (C, (p_1,\dots, p_n))\allowbreak\rightarrow X_{\eta}$ to $(X_{\eta},D_{\eta})$ of type $\beta$ satisfying the following conditions:
    \begin{enumerate}[wide]
        \item for each $i\in \lbrace 1,\dots, n\rbrace$ such that $\bu_i\neq 0$, and each component $D_{\eta, j}\subset D_{\eta}$ corresponding to $v_{ij}$ in \eqref{eq:contact-orders}, we require that $f^{-1} (D_{\eta ,j})$ vanish scheme theoretically at $p_i$ to order at least $m_{ij}$, and $\deg f^{\ast} (D_{\eta, j}) =m_{ij}$,
        \item no irreducible component of $C$ maps into $D_{\eta}$, and
        \item the domain curve $(C,(p_1,\dots, p_n))$ is stable.
    \end{enumerate}
\end{definition}

    For $[f\colon (C,(p_1,\dots, p_n))\rightarrow X_{\eta}]\in \cM (X_{\eta} ,\beta )$, keeping the notations of (2) in the previous definition, we have $f^{-1} (D_{\eta , j}) =m_{ij}p_i$ scheme theoretically.
    In particular we can define $\Sp (f^{\an} )$ as in \cref{def:analytic-spine}.

\begin{definition}
    The moduli space $\cM^{\sm} (U_{\eta},\beta )\subset \cM (U_{\eta} ,\beta )$ is the open subscheme of relative stable maps $f\colon (C, (p_1,\dots, p_n))\rightarrow X_{\eta}$ to $(X_{\eta},D_{\eta})$ such that the pullback bundle $f^{\ast} T_{X_{\eta}} (-\log D_{\eta} )$ is trivial.
\end{definition}

The relevance of this space to define non-archimedean counts lies in the following proposition.

\begin{proposition}[{\cite[Lemma 4.24]{Keel-Yu_Log-CY-mirror-symmetry}}]
    Let $i\in\lbrace 1,\dots, n\rbrace$ such that $\bu_i = 0$.
    Then the map
    \[\Phi_i\coloneqq (\dom ,\ev_i)\colon \cM^{\sm} (U_{\eta},\beta )\longrightarrow \overline{\cM}_{0,n}\times X_{\eta}\]
    taking the domain and evaluating at $p_i$ is \'{e}tale.
\end{proposition}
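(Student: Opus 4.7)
The plan is to show that both $\cM^{\sm}(U_\eta,\beta)$ and $\overline{\cM}_{0,n}\times X_\eta$ are smooth of dimension $\dim\overline{\cM}_{0,n}+d$, and that $d\Phi_i$ is an isomorphism at every closed point; between smooth schemes, this suffices for $\Phi_i$ to be étale. Smoothness of the target is immediate; smoothness of the source will follow from the smoothness of the forgetful map $\pi\colon\cM^{\sm}(U_\eta,\beta)\to\overline{\cM}_{0,n}$.

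Fix $[f\colon(C,(p_1,\dots,p_n))\to X_\eta]\in\cM^{\sm}(U_\eta,\beta)$. The standard deformation theory of relative stable maps identifies the relative tangent and obstruction spaces of $\pi$ at $[f]$ with $H^0(C,f^{\ast}T_{X_\eta}(-\log D_\eta))$ and $H^1(C,f^{\ast}T_{X_\eta}(-\log D_\eta))$, respectively. The defining triviality $f^{\ast}T_{X_\eta}(-\log D_\eta)\simeq\cO_C^{\oplus d}$ combined with $H^1(C,\cO_C)=0$---which holds because the dual graph of a genus-zero semistable curve is a tree, so $p_a(C)=0$---yields $H^1=0$ and $\dim H^0=d$. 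Hence $\pi$ is smooth of relative dimension $d$, and $\cM^{\sm}(U_\eta,\beta)$ is smooth of dimension $\dim\overline{\cM}_{0,n}+d$.

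The resulting short exact sequence
\[ 0\longrightarrow H^0(C,f^{\ast}T_{X_\eta}(-\log D_\eta))\longrightarrow T_{[f]}\cM^{\sm}(U_\eta,\beta)\longrightarrow T_{[C,(p_j)]}\overline{\cM}_{0,n}\longrightarrow 0 \]
shows that the $\overline{\cM}_{0,n}$-component of $d\Phi_i$ projects the quotient isomorphically onto $T_{[C,(p_j)]}\overline{\cM}_{0,n}$. The assumption $\bu_i=0$ ensures $f(p_i)\in U_\eta$, so the fiber of $f^{\ast}T_{X_\eta}(-\log D_\eta)$ at $p_i$ coincides with $T_{f(p_i)}X_\eta$; evaluation at $p_i$ of a global section of a trivial bundle on the connected projective curve $C$ is an isomorphism $H^0\xrightarrow{\sim}T_{f(p_i)}X_\eta$. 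Combining the two, $d\Phi_i$ is injective on each of the two graded pieces and, by dimension count, an isomorphism.

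The single step requiring genuine care is the identification of the relative deformation-obstruction complex of $\pi$ with $R\Gamma(C,f^{\ast}T_{X_\eta}(-\log D_\eta))$ in the presence of nontrivial contact orders $\bu_j$: one must verify that the prescribed higher-order tangencies along $D_\eta$ do not introduce additional obstructions beyond those captured by the log tangent bundle. This is the content of classical relative deformation theory, after which the remainder of the argument is formal.
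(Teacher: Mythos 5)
Your argument is correct and is essentially the argument behind the stated result: the paper itself gives no proof, simply quoting \cite[Lemma 4.24]{Keel-Yu_Log-CY-mirror-symmetry}, whose proof rests on the same ingredients you use — identifying the deformation theory of relative stable maps with prescribed tangencies with $R\Gamma(C, f^{\ast}T_{X_{\eta}}(-\log D_{\eta}))$, exploiting the triviality of this bundle together with $H^1(C,\cO_C)=0$ in genus zero, and evaluating the (constant) global sections at the interior marked point $p_i$, which lies over $U_{\eta}$ since $\bu_i=0$. The one step you flag — that because all intersection with each $D_{\eta,j}$ is concentrated at the marked points with exactly the prescribed orders, the tangency-preserving deformations are controlled by the log tangent bundle rather than some smaller subsheaf — is indeed the only nontrivial input, and it is exactly what the cited lemma supplies.
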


We denote by $\cM (X_{\eta} ,\beta )$ the moduli space of log stable maps of type $\beta$ to $X_{\eta}$.
To relate non-archimedean counts to logarithmic counts, we will use the following embedding.

\begin{proposition}[{\cite[Lemma 5.2]{Johnston_Comparison}}]\label{prop:embedding-smooth-locus}
    There is an open embedding $\cM^{\sm} (U_{\eta},\beta )\subset \cM (X_{\eta} , \beta )$.
\end{proposition}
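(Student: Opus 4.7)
The plan is to produce the morphism by the standard logarithmic enhancement of relative stable maps, verify that it is a monomorphism, and then show that both openness and étaleness follow from the hypotheses packaged in the definition of $\cM^{\sm}(U_\eta,\beta)$. The starting point is that a relative stable map $f\colon (C,(p_1,\dots,p_n))\to X_\eta$ representing a point of $\cM^{\sm}(U_\eta,\beta)$ satisfies $f^{-1}(D_{\eta,j}) = m_{ij} p_i$ scheme-theoretically at each marked point and has no component mapping into $D_\eta$. Equipping $C$ with the divisorial log structure at the marked points $p_i$ with multiplicities $m_{ij}$, and $X_\eta$ with the divisorial log structure from $D_\eta$, the contact order conditions guarantee that $f$ lifts canonically to a log morphism. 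Adding the minimal log structure on the base produces a basic log stable map of type $\beta$, and this construction is functorial in the test scheme; this defines the morphism $\cM^{\sm}(U_\eta,\beta)\to \cM(X_\eta,\beta)$.

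The next step is to check that this morphism is a monomorphism, i.e.\ that the logarithmic enhancement is unique on the locus where no component maps into $D_\eta$. For a basic log stable map whose underlying map is a relative stable map of type $\beta$ with no vertical components, the tropical type has no vertices in the interior of any cone $\sigma \in \Sigma$ of positive dimension, so the minimal monoid on the base is forced and the log structure on $C$ coincides with the divisorial one coming from the marked points with the prescribed multiplicities. Consequently, the functor of points of $\cM^{\sm}(U_\eta,\beta)$ is a full subfunctor of $\cM(X_\eta,\beta)$, so the constructed morphism is indeed a monomorphism.

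The final step is to identify the image as an open subscheme. Openness decomposes into two open conditions on $\cM(X_\eta,\beta)$: (i) no irreducible component of the domain maps into $D_\eta$, which is the standard openness of the non-degenerate (vertical-free) locus for log stable maps, and (ii) triviality of $f^*T_{X_\eta}(-\log D_\eta)$, which is open by upper semicontinuity of cohomology combined with a degree count on the genus $0$ components, so that the appropriate $h^1$ vanishes and the bundle is forced to be trivial in a neighbourhood. On this common open locus, the deformation theory of log stable maps is controlled by $H^\bullet(C,f^*T_{X_\eta}(-\log D_\eta))$ subject to the contact order constraints, which matches exactly the deformation theory of relative stable maps in $\cM^{\sm}(U_\eta,\beta)$; triviality of the log tangent pullback makes both sides unobstructed with matching tangent spaces, so the monomorphism is étale and hence an open immersion.

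\textbf{Main obstacle.} The subtle point I expect to be the hardest is the uniqueness of the minimal log enhancement in step two: one must argue carefully that even after base change to arbitrary log schemes the basic log structure is uniquely pinned down by the underlying relative stable map, rather than merely up to isomorphism at closed points. A clean way to handle this is to compute the tropical type of any log lift and observe that the absence of vertical components, together with the prescribed contact orders, leaves no freedom in the basic monoid, so the enhancement is rigid and the functor is a full subfunctor rather than merely essentially surjective onto its image.
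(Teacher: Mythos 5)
The paper offers no argument of its own for this statement (it is quoted from Johnston's Lemma 5.2), and your overall strategy --- canonical log enhancement of a transverse relative stable map, uniqueness/rigidity of that enhancement, then openness of the defining conditions --- is the expected route. But two of your steps are stated incorrectly or incompletely. First, for $n\geq 4$ the definition of $\cM^{\sm}(U_{\eta},\beta)$ allows nodal (stable) domains, and there your rigidity argument is off: the log structure on $C$ is that of a log smooth curve, with contributions at the nodes and not just the divisorial structure at the markings, and the basic monoid is \emph{not} trivial --- since every vertex maps to the zero cone and every edge has contact order $0$, the edge lengths are unconstrained and the basic monoid is $\bbN^{E}$, free on the set of nodes. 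So ``no freedom in the basic monoid'' is not the right justification. What saves the argument is that this basic monoid coincides with the basic monoid of the underlying prestable curve (the ghost stalks of $X_{\eta}$ at the generic points of the images of all components vanish), so uniqueness of the enhancement and the absence of extra automorphisms reduce to F.~Kato's equivalence between prestable curves and basic log smooth curves, together with the fact that $f^{\flat}$ is pinned down at the markings by the pullbacks of local equations of $D_{\eta}$ (where the structure map of the log structure is injective). As written, your argument is literally correct only over the smooth-domain locus, which covers the three-pointed case used in this paper but not the proposition as stated.

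Second, the image is not cut out by your two open conditions alone: $\cM(X_{\eta},\beta)$ imposes stability of the \emph{map}, while $\cM^{\sm}(U_{\eta},\beta)$ requires stability of the marked \emph{domain}, which is strictly stronger, so this (open) condition must be added. One should also check that on the locus with no component in $D_{\eta}$ the nodes cannot map into $D_{\eta}$ either (a short ghost-sheaf/tropical argument: a node on $D$ with both adjacent vertices at the origin forces either a vanishing smoothing parameter or a unit pulling back to a function vanishing at the node), so that the underlying map genuinely lies in $\cM(U_{\eta},\beta)$. Finally, your étaleness step is asserted rather than proved --- ``the deformation theories match'' is close to the statement being established. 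The cleaner finish is the functor-of-points route you already set up: uniqueness of the enhancement in families plus the rigidity above shows the forgetful map from the open substack (no component in $D_{\eta}$, stable domain, trivial $f^{\ast}T_{X_{\eta}}(-\log D_{\eta})$) to $\cM^{\sm}(U_{\eta},\beta)$ is an equivalence of stacks, which gives the open embedding without any deformation theory.
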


\subsection{Walls and balancing condition} \label{subsec:walls-balancing}
In this subsection, we define non-archimedean and logarithmic walls, see \cref{constr:analytic-walls} and \cref{constr:logarithmic-walls} respectively. 
We define the notion of spine and transverse spine with respect to a set of walls.
We also review the definition of wall types invariants and broken line invariants, and establish the transversality and balancing properties for broken line types (\cref{lemma:transversality-broken-lines,lemma:balancing-condition-broken-line}).

We introduce the following notion of wall, which contains strictly less data than the wall structures considered in \cite{Gross_Canonical-wall-structure-and-intrinsic-mirror-symmetry}.
At this stage, instead of decorating the support of walls with a wall-crossing function we decorate them with an integral tangent vector.

\begin{definition}[Wall]
    A \emph{wall} in $\Sigma$ is a pair $(\fd , v)$ where $\fd\subset \Sigma$ is a closed rational polyhedral cone of codimension $1$, contained in a cone of $\Sigma^{\ess}$, and $v$ lies in the integral tangent space to $\fd$ for the affine structure on $\Sigma_0$ (see \eqref{eq:remove-codim-2-locus}).
    The \emph{support of the wall} is the cone $\fd$.
\end{definition}

To a fixed set of walls, we associate a balancing condition.

\begin{definition}[Balancing condition]
    Let $W$ be a set of walls in $\Sigma$.
    Let $h\colon \Gamma_{\circ}\rightarrow \Sk (U)\simeq \vert \Sigma^{\ess}\vert$ be a $\Sigma$-piecewise linear map, where $\Gamma_{\circ}$ is a metric tree. 
    \begin{enumerate}[wide]
        \item Let $v\in h^{-1} (\vert \Sigma_0\vert)$ be a vertex.
        The \emph{bend of $h$ at $v$} is the integral tangent vector $\NB_v\coloneqq\sum_{e\ni v} d_eh$, where edges are oriented towards $v$.
        Here we use the integral affine structure on $\Sigma_0$.
        \item We say that $h$ satisfies the \emph{balancing condition with respect to $W$} if for each vertex $v\in h^{-1} (\vert \Sigma_0\vert)$ the bend $\NB_v$ is either $0$, or there exists $(\fd, w)\in W$ such that $h(v)\in \fd$ and $\NB_v =w$.
    \end{enumerate}
\end{definition}

\subsubsection{Non-archimedean walls and transverse spines}

The next proposition follows from \cite[Proposition 3.13]{Keel-Yu_Log-CY-mirror-symmetry}, after identifying $\Sk (U)$ with $\vert \Sigma^{\ess}\vert$.

\begin{proposition}[Non-archimedean walls] \label{constr:analytic-walls}
    Fix a curve class $A\in \NE (X,\bbZ )$.
    There exists a finite set of walls $\Wall_A^{\an}$ in $\Sigma$ such that for any $n\geq 1$ and any analytic stable map $f\colon (C,(p_1,\dots, p_n))\rightarrow X_{\eta}^{\an}$ with $f_{\eta}^{-1} (D_{\eta}^{\an} )$ supported on $\lbrace p_i\rbrace_{1\leq i\leq n}$ and $A-f_{\ast} [C]$ effective, the spine $\Sp (f)$ is satisfies the balancing condition with respect to $\Wall_A^{\an}$.
\end{proposition}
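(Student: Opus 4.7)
The plan is to reduce to Proposition 3.13 of \cite{Keel-Yu_Log-CY-mirror-symmetry}, which produces a finite set of walls inside $\Sk(U)$ satisfying such a balancing condition; identifying $\Sk(U)$ with $\vert\Sigma^{\ess}\vert$ through the retraction $\rho^{\ess}$ of diagram \eqref{cd:essential-skeleton} then transports this set into $\Sigma$ to yield $\Wall_A^{\an}$.

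To explain the balancing directly, I would argue in two stages. At a vertex $v$ of the spine whose image lies in the smooth locus $\Sigma_0$ and at which no off-spine component of $C$ is glued to $\Gamma^s$, standard tropical balancing forces $\NB_v = 0$: locally around such a point, $\rho^{\ess}$ is an affinoid torus fibration by the theorem of Nicaise-Xu-Yu, so a proper analytic curve meeting this fibration is tropically balanced at $v$. A nonzero bend can therefore only come from off-spine components $C'\subset C$ attached at $v$, and $\NB_v$ equals the sum of contact orders of these branches along the essential boundary divisors, computed via an analytic analog of \cref{lemma:contact-order-skeleton-map}.

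Finiteness of the resulting set of walls is then controlled by the class $A$. Each off-spine component $C'$ has class bounded by $A$ in $\NE(X,\bbZ)$ since $A - f_{\ast}[C]$ is effective, and only finitely many effective classes are bounded by $A$. Boundedness of the moduli of stable maps with bounded class then yields finitely many possible pairs $(\fd, w)$, where $\fd\subset \Sigma^{\ess}$ is the image cone of the attachment vertex (necessarily of codimension one outside the singular locus $\Sing(\Sk(U))$) and $w\in T\fd$ is the corresponding bend. The main obstacle is the first stage: rigorously computing $\NB_v$ in terms of the contact orders of off-spine branches requires the theory of formal models and skeleta for strictly semistable pairs from \cite{Gubler_Skeletons_and_tropicalizations}; once this analytic analog of \cref{lemma:contact-order-skeleton-map} is in hand, the remainder of the argument is standard.
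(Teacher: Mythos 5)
Your first paragraph is exactly the paper's argument: the paper gives no independent proof, stating only that the proposition follows from Proposition 3.13 of \cite{Keel-Yu_Log-CY-mirror-symmetry} after identifying $\Sk(U)$ with $\vert\Sigma^{\ess}\vert$, which is precisely your reduction. The remaining two paragraphs are an (essentially accurate) sketch of why the cited result holds — bends coming from off-spine twigs with contact orders bounded by $A$ — but this is content of the reference itself rather than a different route.
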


The fact that analytic spines satisfy a balancing condition motivates the following general definition.
It is convenient to have a general notion of spine independent of realizability by an analytic stable map.

\begin{definition}[Abstract spine]
    Let $W$ be a finite set of walls in $\Sigma$.
    Let $h\colon \Gamma_{\circ}\rightarrow \vert \Sigma^{\ess}\vert$ be a continuous map, with $\Gamma_{\circ}$ a metric graph of genus $0$, equal to the convex hull of its infinite legs.
    We say that $h$ is a \emph{$W$-spine} if
    \begin{enumerate}[wide]
        \item $h$ is $\Sigma$-piecewise linear, 
        \item the tropical type associated to $h$ is simple (see \cref{def:spine-type}),
        \item $h$ satisfies the balancing condition with respect to $W$, and
        \item the derivative of $h$ on each leg is an integral tangent vector in $\Sigma^{\ess}$ (for the piecewise linear affine structure).
    \end{enumerate}
\end{definition}

To define non-archimedean counts, the notion of transverse spine is introduced.

\begin{definition}[Transverse spine]
    Let $W$ be a finite set of walls in $\Sigma$.
    A $W$-spine $h\colon \Gamma_{\circ}\rightarrow \vert \Sigma^{\ess}\vert$ is called \emph{$W$-transverse} if:
    \begin{enumerate}[wide]
        \item $h(\Gamma_{\circ} )$ is transverse to $W$, i.e. $h(\Gamma )$ intersects supports of walls at finitely many points and contains no points in $(d-2)$-dimensional strata,
        \item every vertex of $\Gamma_{\circ}$ whose image lies in the support of a wall is $2$-valent, 
        \item $h(\Gamma )$ does not intersect $\vert \Sigma^{\ess}\vert\setminus \vert \Sigma_0\vert$.
    \end{enumerate}
\end{definition}

For $W$ a finite set of walls in $\Sigma$ and a finite set $J$, we denote by $\SP_J (\Sigma , W)$ the set of $W$-spines with leaves indexed by $J$ and by $\SP_J^{\tr} (\Sigma, W)\subset \SP_J (\Sigma ,W)$ the subset of $W$-transverse spines.
Both spaces are equipped with a natural Hausdorff topology (see \cite[Definition 4.19]{Keel_Yu_The_Frobenius}).
We also denote by $\overline{M}_{0,J}^{\trop}$ the space of extended nodal metric trees with leaves indexed by $J$, and with no finite $2$-valent vertices.
Transverse spines satisfy the following rigidity property.

\begin{proposition}[{\cite[Proposition 3.25]{Keel-Yu_Log-CY-mirror-symmetry}}] \label{prop:rigidity-transverse-spines}
    Let $W$ be a finite set of walls in $\Sigma$ and $J$ a finite set.
    Let $\SP_J^{tr} (\Sigma ,W)$ be the set of $W$-transverse spines with leaves indexed by $J$, and let $u$ be a leaf of $\Gamma_{\circ}$ with associated contact order $0$.
    Then the map 
    \[(\dom ,\ev_u)\colon \SP_J^{\tr} (\Sigma ,W)\longrightarrow \overline{M}_{0,J}^{\trop} \times \vert \Sigma^{\ess}\vert,\]
    taking the simplification of the domain and evaluating at the leaf $u$ is a local homeomorphism.
\end{proposition}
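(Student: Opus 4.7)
The plan is to show that near every $h_0 \in \SP_J^{\tr}(\Sigma, W)$ the map $(\dom, \ev_u)$ admits an explicit local inverse, by exploiting the rigidity of $W$-transverse spines of fixed combinatorial type. The first step is to observe that each of the three transversality conditions in the definition of $\SP_J^{\tr}(\Sigma, W)$ is an open condition at $h_0$: finiteness of $W$ and of the cone complex $\Sigma$ implies that small perturbations of $h_0$ preserve the properties of meeting each wall transversally at interior points of edges, of avoiding $|\Sigma^{\ess}| \setminus |\Sigma_0|$, and of having only $2$-valent vertices over wall supports. Hence in a neighborhood of $h_0$ every spine has the same combinatorial type $\tau_0 = (G, \bsigma_0, \bu_0)$ as $h_0$, with in particular the same slopes on every edge and leg.

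Next I would parametrize the stratum of type-$\tau_0$ transverse spines. Since the contact order on the leg $u$ is zero, one has $\ev_u(h) = h(v_u)$, where $v_u$ denotes the vertex of $G$ adjacent to $u$. Starting from $v_u$ and propagating outward along the tree $G$, a spine of type $\tau_0$ is completely determined by $h(v_u)$ together with the edge lengths. At each $2$-valent bending vertex $v$, the wall-incidence condition $h(v) \in \fd_v$, combined with transversality of the slope on the incoming edge to $\fd_v$, forces the length of that edge to be the unique value making the propagated trajectory land on $\fd_v$. Consequently the free continuous parameters are $h(v_u)$ together with one length parameter per maximal chain of bending vertices, which in turn correspond bijectively to the edges $\tilde{e}$ of the simplified tree $\tilde{G}$. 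The simplified edge length $\tilde{\ell}(\tilde{e})$ is the sum of the original lengths in the corresponding chain, so the passage from the free parameters $(h(v_u), \{\ell^{\mathrm{free}}(\tilde{e})\})$ to $(\ev_u, \{\tilde{\ell}(\tilde{e})\})$ is an affine change of coordinates whose coefficients depend continuously on the wall intersection data, hence invertible in a neighborhood of $h_0$.

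The main obstacle is to confirm that the inverse reconstruction produces genuine $W$-transverse spines for all nearby target data, not merely abstract $\Sigma$-piecewise linear maps. This requires showing that for $(\dom, \ev_u)$ sufficiently close to $(\dom(h_0), \ev_u(h_0))$, the propagated trajectory still crosses each wall transversally in the interior of the correct edge, avoids $|\Sigma^{\ess}| \setminus |\Sigma_0|$, and does not accidentally meet any additional wall. All of these are open conditions satisfied at $h_0$, so they persist on a sufficiently small neighborhood. Continuity of the resulting inverse then follows from the explicit affine formulae above, completing the proof that $(\dom, \ev_u)$ is a local homeomorphism at $h_0$; since $h_0$ was arbitrary, the proposition follows.
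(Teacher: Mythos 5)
The paper does not actually prove this proposition: it is imported verbatim from \cite[Proposition 3.25]{Keel-Yu_Log-CY-mirror-symmetry}, so there is no internal argument to compare yours against. Judged on its own, your propagation scheme does capture the right rigidity mechanism: at a $2$-valent vertex lying on a wall, transversality of the incoming slope to the wall support forces the length of the incoming edge, so the only continuous freedom along a simplified edge is its total length, and the spine is reconstructed from $\ev_u$ and the simplified metric by marching outward from $v_u$. This is the correct local inverse.

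However, two steps are genuinely unproved as written. First, your opening claim that every spine in a neighborhood of $h_0$ has the same combinatorial type $(G,\bsigma_0,\bu_0)$, ``in particular the same slopes on every edge and leg,'' does not follow from openness of the three transversality conditions; those conditions do not fix the type. Nearby transverse spines can acquire or lose $2$-valent cell-crossing vertices when an edge or the vertex $v_u$ moves across a codimension-$1$ cone of $\Sigma$ that is not a wall, and a leg can pick up a new wall crossing far from the vertices (the balancing condition permits bend $0$ there, so such a map is still a transverse spine). Worse, local injectivity itself is at stake: a spine that does bend at such a distant crossing has the same image under $(\dom,\ev_u)$, and whether it is or is not ``close'' to $h_0$ is decided entirely by the topology on $\SP_J(\Sigma,W)$ from \cite[Definition 4.19]{Keel_Yu_The_Frobenius}, which your argument never engages; the assertions that leg slopes and bending patterns are locally constant are exactly the statements that need that topology. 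Second, your parametrization treats only the stratum where the simplified domain tree and the wall-crossing pattern are constant. The target $\overline{M}_{0,J}^{\trop}\times\vert\Sigma^{\ess}\vert$ contains degenerate trees, and if $\dom(h_0)$ has a vertex of valence at least $4$ (necessarily balanced, since by transversality it is off the walls), local surjectivity requires producing transverse spines realizing the nearby resolved trees, which introduces new edges not accounted for by ``one length per edge of $\tilde G$.'' Similarly, when new crossings or cell-crossing vertices appear, the coordinate change is only piecewise affine, so the clean ``affine change of coordinates'' argument must be replaced by a case analysis or by an argument intrinsic to the affine structure on $\Sigma_0$. Until these points are addressed, the proposal establishes a local homeomorphism only on the open, generic stratum rather than at every point of $\SP_J^{\tr}(\Sigma,W)$.
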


\personal{
\begin{proposition}[{\cite[Proposition 3.13]{Keel-Yu_Log-CY-mirror-symmetry}}]
    Fix a curve class $A\in\NE (X,\bbZ )$.
    There exists a finite collection of walls in $\Sigma$ such that for any $n\in\bbN$ and any analytic stable map $f\colon (C, (p_1,\dots, p_n))\rightarrow X_{\eta}^{\an}$ satisfying $f^{-1} (D_{\eta}^{\an})$ is supported on the marked points and $A-f_{\ast}[C]$ is effective, the following holds:
    \begin{enumerate}[wide]
        \item Let $h\colon T\rightarrow \Sk (U)\simeq \vert \Sigma^{\ess}\vert$ be a twig associated to $f$.
        For each pair $(v,e)$ of vertex and incident edge in $v$, there exists a wall $(\fd, w)$ such that $h(v)\in\fd$ and $w = d_e h$, where $e$ is oriented away from the root.
        \item Let $\Sp (f)\colon \Gamma_{\circ}^s\rightarrow \Sk (U)$ be the spine of $f$.
        For any $x\in \Gamma_{\circ}^s$, either $\Sp (f)$ is balanced at $x$, or there is a wall $(\fd ,v)$ with $\Sp(f)(x)\in\fd$ and either $\Sp (f)(x)\in \Sing (\Sk (U))\simeq \Sigma^{\ess}\setminus \Sigma_0$, or the bend of $\Sp (f)$ at $x$ is equal to $v$.
    \end{enumerate}
\end{proposition}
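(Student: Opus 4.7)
The plan is to produce $\Wall_A^{\an}$ by enumerating tropical types that can arise from analytic stable maps of class dominated by $A$, and then reading off spine bends from the tropical balancing of the ambient curve. First, for any $f$ as in the hypothesis, functoriality of the skeleton (as set up in \S\ref{subsec:log-setup}) applied to a strictly semistable formal model of $(C,(p_1,\dots,p_n))$ produces a $\Sigma$-piecewise linear map $h_f\colon(\Gamma_{\circ},(v_1,\dots,v_n))\to|\Sigma|$ with tropical type $\tau_f=(G,\bsigma_f,\bu_f)$, and \cref{lemma:contact-order-skeleton-map} identifies the slopes on legs and edges with contact orders of $f$ at marked points and nodes respectively. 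The spine $\Sp(f)$ is then $h_f$ restricted to the convex hull $\Gamma_{\circ}^s\subset\Gamma_{\circ}$ of the legs and post-composed with the projection $\Sigma\to\Sigma^{\ess}$.

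Next, I would show that the collection $\cT_A$ of such tropical types is finite as $f$ ranges over all analytic stable maps with $A-f_{\ast}[C]$ effective. The effectivity bound controls every intersection number $f_{\ast}[C]\cdot D_i$, hence the total leg multiplicities and the leg slopes of $\bu_f$. Genus zero together with stability of the domain bounds the number of vertices and edges of $G$. After fixing $G$ and the cone assignment $\bsigma$ (finitely many choices), the interior-edge slopes are determined up to finitely many possibilities by balancing at interior vertices in $\Sigma_0$ combined with the fixed leg data. Rather than verify this by hand in the analytic setting, the cleanest route is to translate the finiteness into log Gromov--Witten theory via the constant-model construction of \S\ref{subsec:log-setup} and invoke the standard boundedness of tropical types for log stable maps with bounded curve class and contact orders.

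Finally, I would read off the walls. For $\tau\in\cT_A$ and a vertex $v\in V(G^s)$ of the spine subgraph with $h_f(v)$ in the interior of a codimension-one cone of $\Sigma^{\ess}$, tropical balancing in $T\bsigma(v)$ with edges oriented towards $v$ yields
\[
\sum_{e\in E(G^s)\cup L(G^s),\;e\ni v} d_e h_f \;=\; -\sum_{e\in E(G)\setminus E(G^s),\;e\ni v} d_e h_f,
\]
and projecting via $\pi\colon T\bsigma(v)\to T\bsigma(v)\cap\bbR^{I_{D^{\ess}}}$ produces the bend $\NB_v(\Sp(f))$. Setting
\[
\Wall_A^{\an}:=\bigl\{\,(\bsigma(v),\NB_v(\Sp(f)))\;\bigm|\;\tau\in\cT_A,\;v\in V(G^s),\;\codim_{\Sigma^{\ess}}\bsigma(v)=1,\;\NB_v\neq 0\,\bigr\}
\]
gives a finite wall set with the required property: any spine coming from a map satisfying the hypothesis is either balanced at $v$ or has its bend caught by the wall $(\bsigma(v),\NB_v)$.

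The main obstacle is the finiteness of $\cT_A$, which rests on boundedness of moduli of stable maps of bounded class and on making precise the relationship between formal models of analytic stable maps over the trivially valued base and tropical types of log stable maps in $(X_s,D_s)$. A secondary subtlety is that spine vertices mapping to cones of codimension $\geq 2$ in $\Sigma^{\ess}$ land in $\Sing(\Sk(U))$, where the balancing condition is vacuous by definition; they therefore need not contribute to $\Wall_A^{\an}$, which is exactly what forces the support of each wall to be codimension one.
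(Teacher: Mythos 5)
The paper does not actually prove this statement: it is imported wholesale from \cite[Proposition 3.13]{Keel-Yu_Log-CY-mirror-symmetry}, and the version recorded as \cref{constr:analytic-walls} is justified only by that citation (plus the identification $\Sk(U)\simeq\vert\Sigma^{\ess}\vert$). So your attempt to reprove it from scratch is a genuinely different route, but as written it has two concrete gaps. First, your wall set is wrong in shape: you only record pairs $(\bsigma(v),\NB_v)$ with $\bsigma(v)$ a codimension-one \emph{cone of} $\Sigma^{\ess}$. But the bend of a spine is caused by twigs attached to it, and twigs can (and typically do) attach at points in the \emph{interior of maximal cones} -- this is exactly where scattering walls live, as one already sees from wall types, whose images $h(\tau_{\out})$ are $(d-1)$-dimensional cones that are generally not cones of $\Sigma$. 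A spine bending at such an interior point is caught by none of your walls, and you cannot repair this by taking the support to be the maximal cone itself, since wall supports must have codimension one. The actual content of Keel--Yu's proposition is that the attachment points of twigs of a fixed combinatorial type sweep out a rational polyhedral locus of codimension at least one (a dimension count using the log Calabi--Yau condition, parallel to $\dim h(\tau_{\out})=d-1$ for wall types), and these swept loci, decorated by the twig slopes, are the walls; this step is entirely absent from your construction. Relatedly, your wall set says nothing about part (1) of the statement, which constrains every vertex--edge pair of a twig, not just the resulting spine bend.

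Second, the finiteness of your set $\cT_A$ of tropical types is asserted rather than proved, and the sketch does not go through as stated. Stability of the domain does not bound the number of vertices and edges of $\Gamma_{\circ}$ (one must argue via non-contracted components and the degree bounds $A\cdot D_i$, and twig vertices mapping to the singular locus or to low-dimensional cones are not pinned down by balancing, which is only imposed over $\Sigma_0$). Moreover, the proposed shortcut -- translating an arbitrary analytic stable map over the trivially valued field into a log stable map on the constant model and invoking boundedness of log tropical types -- is not something you can simply cite: the statement quantifies over all analytic stable maps (with no algebraic family over $B$ given), and the passage between such maps and log stable maps in $(X_s,D_s)$, including ruling out components falling into $D$, is precisely the delicate comparison this paper develops under \cref{assumption:semipositivity}, not a standard black box. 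In short, the reduction you gesture at is circular with respect to the machinery being built, and the key polyhedrality/codimension statement for twig loci is missing; the paper avoids both issues by quoting the result from \cite{Keel-Yu_Log-CY-mirror-symmetry}.
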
}

\subsubsection{Logarithmic walls and broken lines}

In the logarithmic setting, the canonical wall structure on $\Sigma\simeq \Sigma (X_s,D_s)$ is defined using wall types (see \cite{Gross_Canonical-wall-structure-and-intrinsic-mirror-symmetry}).
We define a collection of walls in $\Sigma$ with the same support, and using contact orders of wall types as decorations.

\begin{definition}[Wall type]
    A \emph{wall type} is a type $\tau = (G,\bsigma, \bu)$ of tropical map to $\Sigma (X_s,D_s)$ such that:
    \begin{enumerate}[wide]
        \item $G$ is a genus zero graph with $L(G) = \lbrace L_{\out}\rbrace$ with $\bsigma (L_{\out})\in \Sigma^{\ess}$ and $\bu (L_{\out} )\neq 0$,
        \item $\tau$ is realizable and balanced, and
        \item  $\dim \tau = \dim h(\tau_{\out} ) = d-1$, where $\tau_{\out}$ is the cone parametrizing the leg $L_{\out}$.
    \end{enumerate}
\end{definition}

For a decorated wall type $\btau$ the moduli space $\cM (X_s , \btau )$ is proper of virtual dimension $0$, producing the wall type invariant 
\[W_{\btau}\coloneqq \frac{\deg [\cM (X_s, \btau )]^{\vir}}{\vert \Aut (\btau )\vert} .\]
The restriction $h\vert_{\tau_{\out}}\colon \tau_{\out}\rightarrow \bsigma (L_{\out} )$ induces a morphism $h_{\ast}\colon \Lambda_{\tau_{\out}}\rightarrow \Lambda_{\bsigma (L_{\out} )}$ between tangent lattices, and we define 
\[k_{\tau} \coloneqq \vert \coker (h_{\ast} )_{\tors} \vert = \Bigg\vert \bigg(\frac{\Lambda_{h(\tau_{\out})}}{h_{\ast} (\Lambda_{\tau_{\out}} )}\bigg)_{\tors}\Bigg\vert .\]
Note that this is an integer which does not depend on the curve class decoration, justifying the notation. 
We have the following result, analogous to \cite[Proposition 3.8]{Keel-Yu_Log-CY-mirror-symmetry}.

\begin{lemma} \label{lemma:finitely-many-wall-types}
    Fix an effective curve class $A$.
    There are finitely many decorated wall types $\btau$ with total curve class $A'$ such that $A-A'$ is effective and $W_{\btau}\neq 0$.
\end{lemma}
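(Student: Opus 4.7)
The plan is to bound every piece of data in a decorated wall type $\btau = (G,\bsigma,\bu,\bA)$ contributing nontrivially, where the vertex decoration $\bA = (A_v)_{v\in V(G)}$ satisfies $A' \coloneqq \sum_v A_v$ and $A - A' \in \NE(X,\bbZ)$. I would proceed in three steps: (i) finiteness of the total class $A'$, (ii) finiteness of the underlying combinatorial data $(G,\bsigma,\bu)$ for each $A'$, (iii) finiteness of vertex decorations for each $(G,\bsigma,\bu)$ and $A'$.

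For (i) and (iii), fix any ample divisor $H$ on $X$. Then $0 \leq H\cdot A' \leq H\cdot A$ puts $A'$ in a finite subset of $\NE(X,\bbZ)$, and the same inequality $H\cdot A_v \leq H\cdot A$ applied vertex-wise gives finitely many choices of each $A_v$, hence finitely many additive decompositions $\sum_v A_v = A'$. These two steps are purely the standard boundedness of effective curve classes on a projective variety and do not use anything specific to wall types.

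The heart of the proof is step (ii), where I would exploit \cref{assumption:semipositivity}. Let $D' = \sum_i b_i D_i$ be the nef divisor with some $b_i > 0$. By the standard argument (as in \cite{Johnston_Comparison}) which underlies the role of this assumption, no unmarked component of a stable log map contributing to $W_{\btau}$ can map entirely into $D_s$. Consequently, for any stable log map realizing $\tau$ and any boundary component $D_i$, the divisor $f^{\ast}D_i$ on the source is effective of degree $A'\cdot D_i$, with support concentrated at the single puncture $L_{\out}$ (contact orders at internal nodes cancel pairwise). This yields the identity
\[
\bu(L_{\out})\cdot [D_i] \;=\; A'\cdot D_i \;\geq\; 0,
\]
and nef-ness of $D'$ gives $\sum_i b_i (A'\cdot D_i) = A'\cdot D' \leq A\cdot D'$, bounding $\bu(L_{\out})$ in absolute value. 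Applying the same reasoning to each sub-curve obtained by cutting $G$ along an edge bounds every internal contact order $\bu(e)$ in terms of the relevant $A_v\cdot D_i$. Since $G$ has genus zero with a single leg, balancing at every vertex together with bounded contact orders caps the total number of edges and vertices. Combined with finiteness of $\Sigma^{\ess}$ this gives finitely many triples $(G,\bsigma,\bu)$.

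The main obstacle is the justification in (ii) that no unmarked component maps into $D_s$ for nonzero $W_{\btau}$: without this exclusion the identity $\bu(L_{\out})\cdot [D_i] = A'\cdot D_i$ can fail and contact orders are no longer controlled by $A'$. Setting up the precise version of this vanishing at the level of virtual classes, rather than just on the level of the underlying stable maps, is the one technical point that needs care; fortunately it is exactly the reason \cref{assumption:semipositivity} is imposed, so the argument reduces to importing the standard Johnston-type reasoning into the wall-type framework.
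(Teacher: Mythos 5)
Your three-step skeleton (bound the total class, bound the combinatorial type, bound the vertex decorations) matches the paper's proof, and steps (i) and (iii) are fine. The genuine gap is in the mechanism you propose for step (ii). You try to import the Johnston-type exclusion of components in $D$ via \cref{assumption:semipositivity}, but that exclusion is neither available nor needed here: $W_{\btau}$ is a \emph{punctured} log Gromov--Witten invariant, and punctured maps contributing to wall types can perfectly well have components mapping into the boundary (this is precisely the phenomenon punctured theory is designed to handle), while the contact order at the puncture $L_{\out}$ is typically ``negative'', so your intermediate inequality $A'\cdot D_i\geq 0$ is false in general. The identity you actually need, $A'\cdot D_i=\langle\bu(L_{\out}),D_i\rangle$, holds \emph{unconditionally} for punctured log maps by \cite[Corollary 1.14]{Gross_Intrinsic_mirror_symmetry} (equivalently \cite[Corollary 2.30]{Abramovich_Punctured_logarithmic_maps}); this is exactly what the paper uses, also componentwise to get $\bA'(v)\cdot D_i=\langle\bu(e),D_i\rangle$ at $1$-valent vertices and to determine the total bend at every vertex from the decoration. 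So the ``one technical point'' you flag does not reduce to Johnston-type reasoning, and building the argument on it would make the proof fail; semipositivity plays no role in this lemma.

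A second, smaller gap is your claim that ``balancing at every vertex together with bounded contact orders caps the total number of edges and vertices.'' Balancing does not exclude long chains of vertices with zero bend, so bounded contact orders alone do not bound the graph. The paper instead bounds the number of $1$-valent vertices by stability (such a vertex corresponds to a component with one node and at most one marking, hence non-contracted, hence with nonzero class) combined with the finiteness of effective decompositions of classes dominated by $A$, and then concludes finiteness of the graphs and of $\bu$ as in \cite[Proposition 3.8]{Keel-Yu_Log-CY-mirror-symmetry}. You would need some version of this stability argument; cutting along edges and re-running your semipositivity argument on sub-curves inherits the same flaw as above.
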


\begin{proof}
    \todo{Double check proof.}

    Let $\btau = (\tau ,\bA')$ with $\tau  =(G_{\tau} ,\bsigma, \bu )$ be a decorated wall type with total curve class $A'$, such that $A-A'$ is effective and $W_{\btau}\neq 0$.
    In particular, $\btau$ is the type of a punctured log stable map $f\colon (C,x_{\out})\rightarrow X$.
    Using \cite[Corollary 1.14]{Gross_Intrinsic_mirror_symmetry} to relate contact orders to the curve class decoration, we produce a logarithmic version of the arguments of \cite[Proposition 3.8]{Keel-Yu_Log-CY-mirror-symmetry}.
    
    First, we prove that there are finitely many topological types for the domain by bounding the number of $1$-valent vertices.
    By strict convexity of the Mori cone, there are finitely many curve classes $A'\in \NE (X,\bbZ )$ such that $A-A'$ is effective. 
    Similarly, for any subset of vertices $V'\subset V(G_{\tau} )$ there are finitely many possiblities for $\sum_{v\in V'} \bA '(v)$ with a uniform bound depending on $A$ only. 
    A $1$-valent vertex $v$ corresponds to an irreducible component $C_v\subset C$ with a single node and at most one marked point.
    By stability, it is not contracted by $f$ and so $\bA' (v) = f_{\ast} [C_v]\neq 0$.
    In this way, we obtain a bound depending on $A$ on the number of $1$-valent vertices in $G_{\tau}$.
    It follows that there are finitely many topological types for the domain $G_{\tau}$.

    Next, we prove that there are finitely many possibilities for the contact orders on edges adjacent to $1$-valent vertices and on the leg $L_{\out}$.
    We have
    \begin{align*}
        A'\cdot D_i &= \langle \bu (L_{\out}) , D_i\rangle, \\
        \bA' (v)\cdot D_i &= \langle \bu (e) , D_i\rangle
    \end{align*}
    for each irreducible component $D_i$ of $D$ and pair $(v,e)$ of $1$-valent vertex and edge containing $v$ oriented away from $v$.
    Hence, the contact orders $\bu (L_{\out} )$ and $\bu (e)$ are uniquely determined by $A'$ and $\bA' (v)$.
    Since there are finitely many choices for those curve classes, there are also finitely many choices for the contact orders.

    Finally, we prove that there are finitely many possibilities for the function $\bu$. 
    For a vertex $v\in V(G_{\tau} )$, the total bend $\sum_{e\ni v} \bu (e)$ at $v$ (where edges are oriented away from $v$) is uniquely determined by the intersection numbers of the curve class decoration $\bA '(v)$ with the irreducible components of $D$.
    It follows that for any subset of vertices $V'\subset V(G_{\tau} )$, there are finitely many possibilities for $\sum_{v\in V'}\sum_{e\ni v} \bu (e)$.
    We conclude as in \cite{Keel-Yu_Log-CY-mirror-symmetry} that there are finitely many possibilities for $\bu$.
    The proof is complete.
\end{proof}

The previous lemma determines a finite set of walls $\Wall_A^{\log}$ for each effective curve class $A$, which we describe in the following construction. 

\begin{construction}[Logarithmic walls] \label{constr:logarithmic-walls}
    Fix an effective curve class $A\in \NE (X, \bbZ )$.
    To each decorated wall type $\btau$ as in \cref{lemma:finitely-many-wall-types} we associate a wall $(\fd_{\tau} , -u_{\tau} )\coloneqq (h(\tau_{\out} ) , -\bu (L_{\out} ))$.
    We then define
    \begin{align*}
        \Wall_A^{\log} \coloneqq \bigcup_{k\geq 1} \bigcup_{\substack{\tau_1,\dots, \tau_k \\ \dim \fd_{\tau_1}\cap\cdots\cap \fd_{\tau_k} = d-1}} \lbrace (\fd_{\tau_1}\cap \cdots \cap \fd_{\tau_k} ,-( u_{\tau_1} + \cdots + u_{\tau_k} ))\rbrace ,
    \end{align*}
    where the second union is over (undecorated) wall types which arise from \cref{lemma:finitely-many-wall-types}.  
    In particular, there are finitely many walls in $\Wall_A^{\log}$.
\end{construction}

We recall the notion of broken line type, which we will use when describing tropical cylinders.

\begin{definition}[Broken line type]
    A \emph{(non-trivial) broken line type} is a type $\omega = (G,\bsigma, \bu )$ of tropical map to $\Sigma (X_s,D_s)$ such that: 
    \begin{enumerate}[wide]
        \item $G$ is a genus zero graph with $L(G) = \lbrace L_{\inc} , L_{\out} \rbrace$ with $\bsigma (L_{\out} )\in \Sigma^{\ess}$ and 
        \[\bu (L_{\out} )\neq 0 ,\quad \bu (L_{\inc} )\in \bsigma (L_{\inc} )\setminus \lbrace 0\rbrace ,\]
        \item $\omega$ is realizable and balanced, and
        \item $\dim \omega = d-1$ and $\dim h(\omega_{\out} ) = d$, where $\omega_{\out}$ is the cone parametrizing the leg $L_{\out}$.
    \end{enumerate}
    \todo{Check if we need trivial broken line types in the paper (Single leg and no vertices), or degenerate ones ($\dim \omega = n-2$).}
\end{definition}

For a (non-trivial) decorated broken line type $\bomega$ the moduli space $\cM (X_s ,\bomega )$ is proper of virtual dimension $0$, producing the broken line invariant 
\[N_{\bomega} \coloneqq \frac{\deg [\cM (X_s ,\bomega )]^{\virt}}{\vert \Aut (\bomega )\vert} .\]
Similar to wall types, the restriction $h\vert_{\omega_{\out}}\colon \omega_{\out}\rightarrow \bsigma (L_{\out} )$ induces a morphism $h_{\ast}\colon \Lambda_{\omega_{\out}}\rightarrow \Lambda_{\bsigma (L_{\out} )}$ between tangent lattices, this time of finite index, and we define 
\[k_{\omega} \coloneqq \vert \coker (h_{\ast} ) \vert = \Big\vert\frac{\Lambda_{\bsigma (L_{\out})}}{h_{\ast} (\Lambda_{\omega_{\out}} )}\Big\vert .\]

The next lemma establishes a balancing condition analogous to the one satisfied by spines of analytic stable maps.
\personal{Balancing condition: for each vertex in the spine whose image cone is not contained in the singular locus, either $v$ is balanced, or it is contained in a wall.}
\begin{lemma} \label{lemma:balancing-condition-broken-line}
    Let $\bomega$ be a decorated broken line type of total curve class $A$, and assume $N_{\bomega}\neq 0$.    
    Denote by $G_{\omega}'\subset G_{\omega}$ the convex hull of the legs.
    Each vertex $v$ in $G_{\omega}'$ satisfies one of the following:
    \begin{enumerate}[wide,label = (\roman*)]
        \item $v$ is $2$-valent (in $G_{\omega}$) and balanced, or
        \item there exists $(\fd , u)\in \Wall_A^{\log}$ such that $h(\omega_v)\subset \fd$ and $\sum_{e\ni v} \bu (e) = u$, where the sum is over edges in $G_{\omega}'$ oriented away from $v$.
    \end{enumerate}
    \personal{balancing condition makes sense because $\dim\bsigma (v)\geq n-1$.}
\end{lemma}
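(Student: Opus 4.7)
The plan is a case analysis at $v$ based on the local combinatorial structure of $G_\omega$. Since $G_\omega$ is a tree and $G_\omega'$ is the convex hull of the two legs $L_{\inc}$ and $L_{\out}$, $G_\omega'$ is a path, and every vertex $v\in G_\omega'$ has exactly two adjacencies (edges or legs) within $G_\omega'$. Let $T_1,\dots,T_k$ denote the connected components of $G_\omega\setminus G_\omega'$ attached to $v$, each joined to $v$ by a single edge $e_i$. When $k=0$, the vertex $v$ is $2$-valent in $G_\omega$ and the global balancing of $\omega$ at $v$ collapses to its $G_\omega'$-part, yielding case (i).

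When $k\geq 1$, I will construct for each branch a decorated wall type $\btau_i$ by taking $T_i$, replacing the attaching edge $e_i$ with a new outgoing leg $L_{\out,\tau_i}$ rooted at the endpoint of $e_i$ inside $T_i$, and inheriting the decoration $(\bsigma,\bu,\bA)$ from $\bomega$. The wall-type axioms --- genus zero, a unique leg, realizability, balancing at interior vertices, and the dimension constraint $\dim\tau_i=\dim h_{\tau_i}(\tau_{i,\out})=d-1$ --- all follow by restriction from the corresponding properties of $\omega$, with the dimension count absorbing the loss of one tropical parameter from contracting $e_i$ into the freedom of letting the new leg extend to infinity. The curve class of $\btau_i$ is dominated by $A$ because the decorations of $\bomega$ are effective. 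The crucial input is $W_{\btau_i}\neq 0$: this should follow from the logarithmic gluing formula applied to the splitting of $\omega$ at $v$, which expresses $N_{\bomega}$, up to a combinatorial factor, as the product of a central punctured log invariant and $\prod_i W_{\btau_i}$; the hypothesis $N_{\bomega}\neq 0$ then forces each factor to be nonzero, so each $\btau_i$ is recorded by \cref{lemma:finitely-many-wall-types}.

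Given the $\btau_i$, the pair $(\fd,u):=(\bigcap_i \fd_{\tau_i}, -\sum_i u_{\tau_i})$ is a wall in $\Wall_A^{\log}$ by \cref{constr:logarithmic-walls}; the intersection has codimension one because $h(v)$ lies on each $\fd_{\tau_i}$, since $L_{\out,\tau_i}$ is the extension of $e_i$ past $v$. In particular $h(\omega_v)\subset \fd$. The balancing equality $\sum_{e\ni v,\, e\in G_\omega'}\bu(e)=u$ then follows by splitting the global balancing $\sum_{e\ni v}\bu(e)=0$ of $\omega$ at $v$ into its $G_\omega'$-part and its branch-part, and matching the orientation convention on each $e_i$ with that on the corresponding new leg $L_{\out,\tau_i}$; the resulting sign flip identifies the branch-contribution with $\pm\sum_i u_{\tau_i}$, reproducing the wall decoration. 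The main obstacle will be the application of the logarithmic gluing formula to establish $W_{\btau_i}\neq 0$; the dimension count for each $\tau_i$ and the sign bookkeeping for the balancing identity are routine once this splitting picture is in place.
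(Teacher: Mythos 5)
Your proposal follows essentially the same route as the paper: split the broken line type at the spine to obtain wall types, use the splitting/propagation formula of \cite[Eq.\ (4.2)]{Gross_Canonical-wall-structure-and-intrinsic-mirror-symmetry} to deduce $W_{\btau_i}\neq 0$ from $N_{\bomega}\neq 0$, feed these into \cref{lemma:finitely-many-wall-types} and \cref{constr:logarithmic-walls}, and read off (ii) from balancing at $v$. Two points where the paper supplies a justification you elide deserve attention. First, you invoke ``the global balancing of $\omega$ at $v$'' without explaining why the balancing condition holds there at all: for tropical maps to a cone complex it is only guaranteed at vertices mapping to cones of codimension at most one, and the paper secures this by citing \cite[Lemma 2.5]{Gross_Canonical-wall-structure-and-intrinsic-mirror-symmetry} to get $\dim h_{\omega}(\omega_v)=d-1$, hence $\dim\bsigma(v)\geq d-1$. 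Second, your argument that $\bigcap_i\fd_{\tau_i}$ has dimension $d-1$ ``because $h(v)$ lies on each $\fd_{\tau_i}$'' is insufficient (a common point of several $(d-1)$-dimensional cones does not bound the dimension of their intersection from below); the correct statement, which is what the paper uses and which your ``in particular'' reverses, is that the full $(d-1)$-dimensional image $h_{\omega}(\omega_v)$ is contained in each $\fd_{\tau_i}$, since under the splitting the cone $\omega_v$ maps into the image of each $\tau_{i,\out}$. Finally, rather than reconstructing the wall types by hand with a heuristic dimension count, you can simply cite \cite[Construction 4.12]{Gross_Canonical-wall-structure-and-intrinsic-mirror-symmetry}, which is exactly the statement that splitting a broken line type at the edges leaving the spine produces decorated wall types; that is what the paper does, and it removes the ``main obstacle'' you flag.
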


\begin{proof}
    By \cite[Construction 4.12]{Gross_Canonical-wall-structure-and-intrinsic-mirror-symmetry}, splitting $\bomega$ at the edges adjacent to vertices in $G_{\omega}'$ which are not in $G_{\omega}'$ produces decorated wall types.
    If the broken line invariant $N_{\bomega}$ is not zero, then the invariants associated to those wall types are all nonzero by \cite[Eq.(4.2)]{Gross_Canonical-wall-structure-and-intrinsic-mirror-symmetry}.
    In particular, the total curve classes of those decorated wall types are effective.
    It follows that for any such wall type $\btau$ of total curve class $A'$, the curve class $A-A'$ is effective.
    Hence, each of the wall types produced by $\bomega$ satisfies the conditions in \cref{lemma:finitely-many-wall-types}.

    For every vertex $v$ in $G_{\omega'}$ we have $\dim h_{\omega} (\omega_v) = d-1$ (see \cite[Lemma 2.5]{Gross_Canonical-wall-structure-and-intrinsic-mirror-symmetry}), and hence $\dim\bsigma (v)\geq d-1$.
    In particular, the balancing condition \cite[Eq. (2.4)]{Gross_Tropical_geometry_and_mirror_symmetry} holds at every vertex of $G_{\omega}'$.
    If $v$ has valency $2$ in $G_{\omega}$, then its adjacent edges (or legs) are all in $G_{\omega}'$.
    Thus the restriction of $h_{\omega}$ to the spine is still balanced at $v$, and $v$ satisfies (i).   
    
    Now, let $v$ be a vertex in $G_{\omega}'$ which is not $2$-valent (in $G_{\omega}$).
    Let $\btau_1,\dots ,\btau_k$ denote the decorated wall types glued to $v$.
    For each $1\leq i\leq k$, the type $\btau_i$ satifies the conditions of \cref{lemma:finitely-many-wall-types} and $h_{\omega} (\omega_v) \subset \fd_{\tau_i}$.
    Let $e,e'$ be the edges in $G_{\omega'}$ adjacent to $v$, oriented away from $v$.
    By the balancing condition, we have $\bu (e)+\bu (e') = u_{\tau_1} + \cdots + u_{\tau_k}$.
    Note that $\dim \fd_{\tau_1}\cap \cdots \cap \fd_{\tau_k} = d-1$ since the intersection contains $h_{\omega} (\omega_v)$.
    Thus, the pair $(\fd_{\tau_1}\cap \cdots \cap \fd_{\tau_k} , -(u_{\tau_1} + \cdots + u_{\tau_k} ))$ is a wall in $\Wall_A^{\log}$ that satisfies (ii).
    The lemma is proved.
\end{proof}

The next result shows that generically, the spine of a broken line type is transverse to a given set of walls. 

 \begin{lemma} \label{lemma:transversality-broken-lines}
   Let $\omega = (G,\bsigma ,\bu)$ be a broken line type, let $h\colon \Gamma (G,\ell ) \rightarrow \Sigma$ denote the associated universal family of tropical maps to $\Sigma$, and let $G'\subset G$ denote the convex hull of $L(G)$.
   Let $W$ be a finite set of supports of walls.
   There exists an open subset $\omega^{\circ}\subset \omega$ such that for all $t\in \omega^{\circ}$:
   \begin{enumerate}[wide]
     \item $h_t (G')$ only intersects codimension $0$ and $1$ cones of $\Sigma^{\ess}$, 
     \item the intersection $h_t(G')\cap W$ is finite, and
     \item if $h_t (G')$ intersects a $(d-2)$-dimensional stratum of $W$, this intersection is a single point given by the non-vertex endpoint of $L_{\out}$.
   \end{enumerate}
   Furthermore, $\omega^{\circ}$ can be obtained as the complement of a polyhedral subset of dimension at most $d-2$.
 \end{lemma}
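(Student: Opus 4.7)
The plan is to construct $\omega^{\circ}$ as the complement in $\omega$ (viewed as the moduli cone of tropical maps of type $\omega$, of dimension $d-1$) of a finite union of polyhedral subsets of codimension at least $1$; the resulting complement then automatically has dimension at most $d-2$, as required. To set up the relevant linear structures, I would record that for each vertex $v \in V(G)$ the evaluation $\ev_v\colon \omega \to \bsigma(v)$ is linear, and for each edge or leg $e \in E(G) \cup L(G)$ the endpoints and interior of $h_t(e) \subset \Sigma$ depend linearly on $t \in \omega$. Consequently, any locus in $\omega$ cut out by an incidence condition of $h_t$ with a polyhedral subset of $\Sigma$ is polyhedral in $\omega$; since $\Sigma^{\ess}$ has finitely many cones, $W$ is finite, and $G$ has finitely many vertices, edges, and legs, there are only finitely many bad loci to control.

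I would then treat the three conditions in turn. For (1), for each vertex $v \in V(G')$ and each cone $\tau$ of $\Sigma^{\ess}$ of codimension at least $2$ with $\bsigma(v) \not\subset \tau$, the preimage $\ev_v^{-1}(\tau)$ is polyhedral of codimension at least $1$; similarly for each edge or leg in $G'$ whose image meets a codimension $\geq 2$ cone not containing its assigned cone. For (2), for each wall support $\fd \in W$ and each edge or leg $e \in E(G')\cup L(G')$, the locus $\{t \in \omega : h_t(e) \subset \fd\}$ is either all of $\omega$ (when $\bsigma(e) \subset \fd$, in which case $h_t(e)$ is itself a single bounded segment or ray, contributing only one component to $h_t(G')\cap W$) or a polyhedral subset of codimension at least $1$, which we remove. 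For (3), the only feature of $h_t$ whose image can freely sweep a $d$-dimensional open region of $\Sigma$ is the non-vertex endpoint of $L_{\out}$, since $\dim h(\omega_{\out}) = d$; for any other edge, leg, or vertex in $G'$, meeting a $(d-2)$-dimensional stratum of $W$ imposes a codimension $\geq 1$ condition on $t$, producing another finite family of polyhedral subsets to remove.

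Taking $\omega^{\circ}$ to be the complement in $\omega$ of the union of all these polyhedral subsets yields the open subset claimed, with complement of dimension at most $d-2$. The main obstacle is verifying, for condition (2), that combinatorial containment $\bsigma(e) \subset \fd$ is the only way an edge image can lie entirely in a wall; otherwise the codimension $\geq 1$ dichotomy could fail. This follows from realizability of the type together with the fact that the direction $\bu(e)$ is fixed by $\omega$ and independent of $t$, so any accidental containment would impose a linear constraint on the endpoints that is not forced by the combinatorial data $\bsigma$, hence cuts out a proper polyhedral subset.
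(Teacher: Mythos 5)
There is a genuine gap: your argument never uses the structural facts that make the statement true for \emph{broken line types}, and one of your case dispositions is actually false. The paper's proof leans on \cite[Lemma 2.5(2)]{Gross_Canonical-wall-structure-and-intrinsic-mirror-symmetry}, which gives three inputs: (a) for $t\in\Int(\omega)$ the spine image already meets only codimension $0$ and $1$ cones of $\Sigma^{\ess}$ (this is item (1) of the lemma, a structural fact, not something achievable by genericity — if a spine vertex had $\bsigma(v)$ of codimension $\geq 2$, every interior $t$ would be bad and no removal of a lower-dimensional subset could fix it); (b) $\dim h(\omega_v)=d-1$ for every spine vertex, so $h\vert_{\omega_v}\colon \omega_v\to\bsigma(v)$ is injective; and (c) for a spine edge $e$ at $v$, the direction $u_e$ is \emph{not} tangent to $h(\omega_v)$, so $h\vert_{\omega_e}$ is injective and, whenever $u_e$ is tangent to a wall support $\fd$, the intersection $\fd\cap h(\omega_v)$ has dimension at most $d-2$. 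These injectivity and non-tangency statements are exactly what turn your incidence loci into polyhedral subsets of dimension at most $d-2$; without them, the "codimension $\geq 1$" claims for conditions (1)--(3) are unsubstantiated (e.g.\ for (3), if the edge evaluation $\omega_e\to\Sigma$ were not injective, the image in $\omega$ of the incidence locus with a $(d-2)$-stratum could be all of $\omega$).

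The concrete false step is your treatment of condition (2): you declare the case $\bsigma(e)\subset\fd$ harmless because $h_t(e)$ "contributes only one component" to $h_t(G')\cap W$. But condition (2) (matching the definition of a transverse spine) requires the intersection to be a \emph{finite set of points}; if a spine edge lies inside a wall support, the intersection contains a whole segment for every $t$, so (2) fails identically and cannot be repaired by shrinking $\omega^{\circ}$. The lemma is only true because this scenario cannot occur: by (c) above, if $h_t(e)\subset\fd$ for some $t$ then $u_e\in T\fd$ while $u_e\notin Th(\omega_v)$, forcing $\dim(\fd\cap h(\omega_v))\leq d-2$, so the locus $\{t\mid h_t(e)\subset\fd\}=(h\vert_{\omega_v})^{-1}(\fd)$ has dimension at most $d-2$ and generic $t$ avoids it entirely. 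Your closing realizability remark does not close this gap: it only addresses "accidental" containments, and even there the claim that such containments cut out proper subsets fails without (c) — for instance, if $h(\omega_v)\subset\fd$ and $u_e\in T\fd$, the containment would hold for all $t$ although it is not forced by the assignment $\bsigma$. To repair the proof you should import the three properties of broken line types above and rerun your removal argument with the resulting injectivity of the vertex and edge evaluation maps, which is precisely the paper's route.
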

 
 \begin{proof}
    For an edge $e$ in $G$, we denote by $u_e\coloneqq \bu (e)$ the slope of $h$ along $e$.
   By \cite[Lemma 2.5(2)]{Gross_Canonical-wall-structure-and-intrinsic-mirror-symmetry}, the broken line type $\omega$ satisfies:
   \begin{enumerate}[wide]
     \item for $t\in \Int (\omega )$, the image $h_t (G')$ only intersects codimensions $0$ or $1$ cones of $\Sigma^{\ess}$, except possibly for the non-vertex endpoints of the legs,
     \item for every vertex $v$ in $G'$, we have $\dim h(\omega_v) = d-1$, and
     \item for every edge $e$ adjacent to a vertex in the spine, either $e$ is in $G'$ and $u_e$ is not tangent to $h(\omega_v)$, or $e$ is not in $G'$ and $u_e$ is tangent to $h(\omega_v)$.
   \end{enumerate}
   Note that the second condition implies that for each vertex $v$ of $G'$, the evaluation map $h\vert_{\omega_v}\colon \omega_v\rightarrow \bsigma (v)$ is an isomorphism onto its image. The first condition is the property (1) of the lemma.
   
   We construct a lower dimensional polyhedral subset of $\omega$ such Conditions (2-3) of the lemma are satisfied when restricting $h$ to its complement.
   Fix two supports of walls $\fd$ and $\fd'$ in $W$, with $\dim \fd\cap\fd '\leq d-2$.
   Let $e$ be an edge in the spine, and $v$ be a vertex adjacent to $e$.
   If there exists $t\in \omega$ such that $h_t(e)\subset \fd$, then necessarily $\fd\cap h(\omega_v)\neq \emptyset$ and $u_e$ is tangent to $\fd$.
   Since $u_e$ is not tangent to $h(\omega_v)$, the dimension of the intersection $\fd\cap h(\omega_v)$ is at most $d-2$.
   In that case the locus $\lbrace t\in \omega \mid h_t (e)\subset \fd\rbrace$ coincides with $(h\vert_{\omega_v})^{-1} (\fd)$, which is a polyhedral subset of dimension at most $d-2$ since the evaluation map $h\vert_{\omega_v}\colon \omega_v\rightarrow \bsigma (v)$ is injective.
   For $t$ in the complement of that subset the image $h_t(e)$ does not intersect $\fd$.

   For the same reason, the locus $(h\vert_{\omega_v})^{-1} (\fd\cap\fd' )$ is a polyhedral subset of dimension at most $d-2$.
   Assume now that there exists $t\in \omega$ such that $h_t(e)$ intersects $\fd\cap \fd'$ at a single point, in particular $u_e$ is not tangent to either $\fd$ or $\fd'$.
   Consider the evaluation map $h\vert_{\omega_e}\colon \omega_e\rightarrow \sigma (e)$ given by $(t,\lambda)\mapsto h_t(v) + \lambda u_e$.
   Note that $\dim h(\omega_e) = d$ since $u_e$ is not tangent to $h(\omega_v)$, so this map is injective.
   Then $(h\vert_{\omega_e})^{-1} (\fd\cap\fd' )$ is a polyhedral subset of dimension at most $d-2$, and the same is true of its image in $\omega$ under the projection $\omega_e\rightarrow \omega$.
   Taking the union of all the constructed subsets produces a polyhedral subset in $\omega$ of dimension at most $d-2$, whose complement satisfies the desired properties.
   The proof is complete.

   \personal{Let $W$ be the set of walls.
   Let $G_{\omega}'\subset G_{\omega}$ denote the domain of the spine, $\pi\colon \Gamma (G_{\omega},\ell )\rightarrow \omega$ the projection.
   The subset constructed is:
   \[\bigcup_{v\in V(G_{\omega}')}\bigcup_{\substack{\fd\in W\\ \dim \fd\cap h(\omega_v)\leq d-2}} (h\vert_{\omega_v})^{-1} (\fd)
   \cup \bigcup_{e\in E(G_{\omega}')} \bigcup_{\substack{\fd,\fd'\in W\\ u_e\notin T\fd, u_e\notin T\fd'}} \pi \big ( (h\vert_{\omega_e})^{-1} (\fd\cap\fd')\big) .\]
   }
 \end{proof}

\subsection{Cylinder counts} \label{subsec:cylinder-counts}

Non-archimedean and logarithmic cylinder counts are a special type of $3$-pointed invariants with some combinatorial constraint encoded by the analytic spine, or the spine type associated to a log stable map.
In this subsection, we define those invariants and establish some properties of tropical cylinder types. 
We also prove the birational invariance of logarithmic cylinder counts under log \'{e}tale modifications in \cref{prop:birational-invariance-log-cylinders}.

Given an effective curve class $A$, we produced two finite sets of walls in $\Sigma$, the non-archimedean walls $\Wall_A^{\an}$ (\cref{constr:analytic-walls}) and the logarithmic walls $\Wall_A^{\log}$ (\cref{constr:logarithmic-walls}).
It is not clear that those two sets of walls agree, so we consider their union.
In the surface case, \cref{thm:exponential-formula} implies that $\Wall_A^{\an}$ and $\Wall_A^{\log}$ agree. 

\begin{construction}[Total set of walls] \label{constr:total-walls}
    Fix an effective curve class $A\in \NE (X,\bbZ )$.
    We define the total set of walls associated to $A$ as
    \begin{align}
        \Wall_A \coloneqq \Wall_A^{\an}\cup \Wall_A^{\log} .
    \end{align}
    By construction, $\Wall_A$ contains finitely many walls.     
\end{construction}

\subsubsection{Non-archimedean cylinder counts}
Non-archimedean cylinder counts enumerate $3$-pointed analytic stable maps with a fixed spine, called a cylinder spine.
We fix the data $\beta = (A, \bu )$ consisting of an effective curve class $A\in \NE (X,\bbZ)$ and $\bu = (\bu_1,\bu_2, \bu_i)$ a triple of contact orders with $\bu_i = 0$.

We consider a toric blowup 
\[\pi\colon (\tX ,\tD)\longrightarrow (X,D)\]
such that $\bu_1$ and $\bu_2$ correspond to rays of the dual fan of $(\tX,\tD)$, this does not change the interior $U\subset X$.
Tropically, this corresponds to taking a subdivision $\tSigma$ of the dual fan $\Sigma$. 
In particular, the essential skeleton $\vert \Sigma^{\ess}\vert$ does not change.

To avoid confusions, when necessary we write $\cM (U_{\eta}\subset X_{\eta} , \beta )$ to specify that we are looking at a moduli space of stable maps in $(X,D)$, and similarly write $\cM^{\sm} (U_{\eta}\subset X_{\eta} , \beta )$ for the smooth locus. 
The following proposition relates the moduli spaces before and after the blowup.

\begin{proposition}[{\cite[Proposition 7.12]{Keel-Yu_Log-CY-mirror-symmetry}}] \label{prop:moduli-spaces-before-after-blowup}
    The moduli space $\cM^{\sm} (U_{\eta}\subset X_{\eta} ,\beta )$ is not empty if and only if there exists $\tA\in \NE (\tX,\bbZ)$ such that $\cM^{\sm} (U_{\eta} \subset \tX_{\eta} ,(\tA,\bu ))$ is not empty. 
    In this case, $\tA$ is the unique $\gamma\in\NE (\tX,\bbZ )$ such that $\pi_{\ast} \gamma = A$ and $\cM^{\sm} (U_{\eta}\subset \tX_{\eta} , (\gamma,\bu ))\neq \emptyset$.
\end{proposition}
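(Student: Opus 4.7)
The plan is to build a bijection between the two moduli spaces using the unique lift of a stable map along the birational morphism $\pi\colon \tX \to X$, and then to check that the three defining conditions of the $\sm$-locus (curve class up to $\pi_\ast$, prescribed contact orders, and triviality of the pulled-back log tangent bundle) match on both sides. Since $\pi$ is an isomorphism over $U$, and every member of $\cM^{\sm}(U_\eta \subset X_\eta,\beta)$ has no component contained in $D_\eta$, the valuative criterion of properness produces a unique lift $\tf\colon C \to \tX_\eta$ of any $f\colon C \to X_\eta$ in the moduli. Conversely, composition with $\pi$ sends any $\tf$ to such an $f$. Uniqueness of the lift immediately gives uniqueness of $\gamma$, so the ``In this case'' statement will follow automatically.

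The first step is to set up the lift carefully and check the class condition: by construction $\pi_\ast(\tf_\ast[C]) = f_\ast[C] = A$, so $\gamma := \tf_\ast[C] \in \NE(\tX,\bbZ)$ satisfies $\pi_\ast \gamma = A$. The second step is to match contact orders. This is the delicate point: in $\Sigma$ the vectors $\bu_1,\bu_2$ may sit in the interior of higher-dimensional cones, so the original marked points of $f$ may hit intersections of several components of $D$, whereas after the toric refinement $\tSigma$ the vectors $\bu_1,\bu_2$ are primitive generators of rays, so the lifted marked points of $\tf$ should meet a single component of $\tD$ with the multiplicity encoded by $\bu_j$. I plan to verify this via \cref{lemma:contact-order-skeleton-map} applied to the constant model $X_B$ (and its blowup $\tX_B$): the slope of the induced map on skeleta is intrinsic to the analytic map on $U^{\an}_\eta$, which is unchanged by $\pi$, so the slope vectors $\bu_1,\bu_2$ recorded in the dual fan $\Sigma$ are precisely the slope vectors recorded in $\tSigma$. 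The case $\bu_i = 0$ is trivial since the marked point lies in $U_\eta$.

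The third step is to transfer the smoothness condition. Here I would use that $\pi$ is log \'etale for the divisorial log structures coming from $D$ and $\tD$, which gives a canonical isomorphism
\[
\pi^\ast T_{X_\eta}(-\log D_\eta) \;\simeq\; T_{\tX_\eta}(-\log \tD_\eta).
\]
Pulling back along $\tf$ and using $\pi \circ \tf = f$ yields
\[
\tf^\ast T_{\tX_\eta}(-\log \tD_\eta) \;\simeq\; f^\ast T_{X_\eta}(-\log D_\eta),
\]
so one bundle is trivial if and only if the other is. This makes the correspondence $f \leftrightarrow \tf$ restrict to a bijection between $\cM^{\sm}(U_\eta \subset X_\eta,\beta)$ and $\bigsqcup_{\gamma\colon \pi_\ast\gamma = A}\cM^{\sm}(U_\eta \subset \tX_\eta,(\gamma,\bu))$, which is exactly the content of the proposition.

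The main obstacle will be the rigorous matching of contact orders at a marked point whose image in $X$ lies on a deep stratum $D_I$. One must argue that, even though the stalk of the log structure on $\tX$ at the lifted point has strictly more generators than the one on $X$, the extra components of $\tD$ meeting the lifted marked point are exactly the ones supporting zero contact order, so the nonzero part of the contact datum is unambiguously read off from $\bu_j$ via the identification of tangent lattices under the refinement $\tSigma \to \Sigma$. The other steps, once the framework is in place, are essentially formal.
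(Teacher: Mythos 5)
Your overall route is sound, and it is essentially the expected one: the paper itself does not prove this statement but imports it from [Keel--Yu, Proposition 7.12], and the argument behind that result is exactly the lifting you describe ($\pi$ is an isomorphism over $U$, maps in $\cM^{\sm}$ have no component in the boundary and meet $D_\eta$ only at the marked points, so the strict-transform lift exists and is unique; log-\'etaleness of $\pi$ gives $T_{\tX_\eta}(-\log\tD_\eta)\simeq\pi^\ast T_{X_\eta}(-\log D_\eta)$, hence triviality of the pulled-back log tangent bundle is preserved in both directions; contact orders are matched by comparing slopes/orders of vanishing, e.g.\ via \cref{lemma:contact-order-skeleton-map} after spreading out over $B$).

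There is, however, one genuine gap: the uniqueness clause does not ``follow automatically'' from uniqueness of the lift. Uniqueness of the lift only says that each $f$ determines a well-defined class $\tf_\ast[C]$; your bijection is with the disjoint union of $\cM^{\sm}(U_\eta\subset\tX_\eta,(\gamma,\bu))$ over all $\gamma$ with $\pi_\ast\gamma=A$, and nonemptiness of that union is a priori compatible with several distinct $\gamma$ contributing. What is needed is the observation the paper records immediately after the proposition: for any $\tf\in\cM^{\sm}(U_\eta\subset\tX_\eta,(\gamma,\bu))$, since no component of $C$ maps into $\tD_\eta$ and $\tf^{-1}(\tD_\eta)$ is the marked points with the prescribed multiplicities, the intersection numbers $\gamma\cdot E$ with the exceptional divisors $E$ of $\pi$ are forced by $\bu$ alone (the integral length of $\bu_1$, resp.\ $\bu_2$, if the ray of $E$ has that direction, and $0$ otherwise); together with $\pi_\ast\gamma=A$ this determines $\gamma$ by the blowup formula, which is what gives uniqueness. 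This is a short addition given your Step 2, but it must be said. A minor correction to your last paragraph: there are no ``extra components of $\tD$ through the lifted marked point supporting zero contact order'' — any boundary component through $\tf(p_j)$ not containing the curve has local intersection multiplicity at least $1$ there, so the matching of slope vectors in fact forces $\tf(p_j)$ to lie in the open stratum of the single divisor corresponding to the ray spanned by $\bu_j$; this is the correct resolution of the difficulty you anticipate.
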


In the context of the above proposition, we write $\tbeta = (\tA , \bu )$ for the unique curve class and contact orders to $\tX$ induced by $\beta$.
For future reference, we note that the class $\tA$ is characterized by $\pi_{\ast}\tA = A$ and its intersection numbers with the exceptional divisors of $\pi$, which are determined by $(\bu_1,\bu_2)$.
If $E$ is an exceptional component corresponding to a ray $\rho$ in $\tSigma$, the intersection number $\tA\cdot E$ is the integral length of $\bu_1$ (resp. $\bu_2$) if $\rho$ and $\bu_1$ (resp. $\bu_2$) have the same direction, and $0$ otherwise.

We now consider the evaluation map to the third marked point
\[\ev_i\colon \cM (U_{\eta},\tbeta) \longrightarrow \tX,\]
and the map taking the spine of an analytic stable map
\[\Sp\colon \cM (U_{\eta}^{\an},\tbeta ) \longrightarrow \NT ( \tSigma, U),\]
where $\NT (\tSigma, U)$ is the set of $\tSigma$-piecewise linear maps from an extended nodal metric tree to $\Skbar^{\ess} (\tX,\tD)$, such that the preimage of the boundary $\Skbar^{\ess} (\tX,\tD)\setminus \Sk (\tX,\tD)$ is supported on the first and second marked points.

Non-archimedean cylinder counts are obtained by computing a degree after fixing a point constraint and a transverse spine.
We give the precise definition of a cylinder spine below, a typical cylinder spine is depicted in \cref{fig:cylinder-spine}.

\begin{definition}[Cylinder spine] \label{def:cylinder-spine}
    A \emph{cylinder spine of type $\beta$} is a $\Wall_A$-spine $h\colon \Gamma_{\circ}\rightarrow \vert \Sigma^{\ess}\vert$ such that:
    \begin{enumerate}[wide]   
        \item $h$ is a $\Wall_A$-transverse spine,
        \item the contact orders along the legs of $\Gamma_{\circ}$ are given by $\bu$, and
        \item the contracted leg of $h$ is mapped to the interior of a maximal cone of $\Sigma^{\ess}$.     
    \end{enumerate}
    \personal{Do we need: $h$ arises as the spine of an analytic stable map of total curve class $A$?}
\end{definition}

\begin{figure}[h!]
    \centering
    \setlength{\unitlength}{0.5\textwidth}
    \begin{picture} (1.5,1)
    \put(0,0){\includegraphics[height=0.95\unitlength]{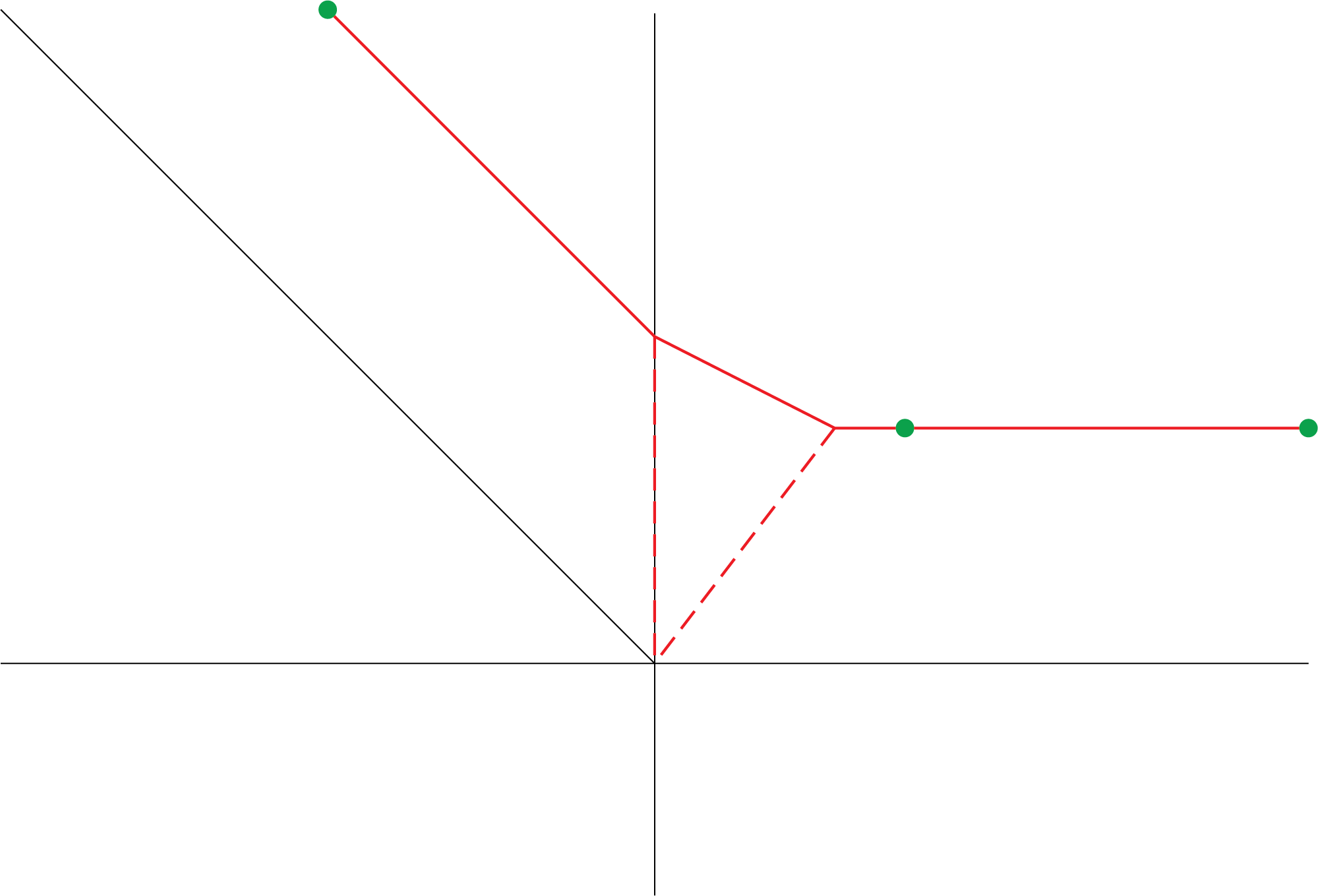}}
    \put(0.33,0.96){$v_1$}
    \put(1.36,0.52){$v_2$}
    \put(0.945,0.52){$v_i$}
    \end{picture}
    \caption{A typical cylinder spine (in solid red). A tropical cylinder type associated to this spine has wall types attached to the bending vertices (in dashed red).}
    \label{fig:cylinder-spine}
\end{figure}

A cylinder spine $S$ of type $\beta$ is a $\Sigma$-piecewise linear map.
It induces a unique $\tSigma$-piecewise linear map obtained by subdividing the domain, which we denote by $\tS$.
The transversality condition ensures that $\Sp^{-1} (\tS)$ is contained $\cM^{\sm} (U_{\eta}^{\an} ,\tbeta )$, in particular the restriction of $\ev_i$ is \'{e}tale.
If $\cM^{\sm} (U_{\eta},\beta)\neq\emptyset $, we define the \emph{non-archimedean cylinder count} associated to $S$ by:
\begin{equation}\label{eq:def-analytic-cylinder-counts}
    N(S, A)\coloneqq \deg \big( \ev_i\vert_{\Sp^{-1} (\tS)}\big) .
\end{equation}
We define the count to be $0$ if the moduli space is empty.
It follows from \cite[Theorem  8.11]{Keel-Yu_Log-CY-mirror-symmetry} that this count is invariant when we deform the spine $S$ in the space of $\Wall_A$-transverse spines.
In order to make connections with logarithmic invariants, we will use this deformation invariance to guarantee that the lift $\tS$ is still a cylinder spine without changing the count.

\begin{lemma} \label{lemma:transverse-spine-after-blowup}
    Assume $\cM^{\sm} (U_{\eta} , \beta )\neq \emptyset$, let $S$ be a cylinder spine of type $\beta$.
    There exists a small deformation $S'$ of $S$ in $\NT(\Sigma, U)$ such that $\tS'$ is a cylinder spine of type $\tbeta$.
\end{lemma}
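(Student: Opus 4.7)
The plan is to combine the rigidity of transverse spines (Proposition \ref{prop:rigidity-transverse-spines}) with a standard genericity argument. A key observation is that the essential skeleton $\Sk(U)=|\Sigma^{\ess}|$ is intrinsic to $U$ and coincides as a topological space with $|\tilde\Sigma^{\ess}|$, the toric blowup $\pi$ only refining its polyhedral decomposition. Consequently, the lift $\tilde S$ of any $\Sigma$-piecewise linear map $S$ has the same underlying image in $\Sk(U)$; the subdivision of the domain only introduces new $2$-valent vertices where the image of $S$ crosses codim-$1$ cones of $\tilde\Sigma^{\ess}$ that are not visible in $\Sigma^{\ess}$, and it does not alter the slope vectors along legs. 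In particular, the leg contact orders prescribed by $\bu$ are automatically preserved.

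With that, the remaining conditions required for $\tilde S'$ to be a cylinder spine of type $\tilde\beta$ are: (i) the contracted leg is mapped to the relative interior of a maximal cone of $\tilde\Sigma^{\ess}$; (ii) $\tilde S'$ is $\Wall_{\tilde A}$-transverse, so the image meets each wall support of $\Wall_{\tilde A}$ at finitely many points, avoids every $(d-2)$-dimensional stratum of $\tilde\Sigma^{\ess}$, and is $2$-valent at any vertex whose image lies on a wall support; (iii) the $\Wall_{\tilde A}$-balancing condition holds at every vertex.

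To produce the deformation, I would apply Proposition \ref{prop:rigidity-transverse-spines} with $W=\Wall_A\cup\Wall_{\tilde A}$, a finite union of walls by construction. This identifies an open neighbourhood of $S$ in $\SP_{\{1,2,i\}}^{\tr}(\Sigma,W)$ with an open subset of $\overline M_{0,\{1,2,i\}}^{\trop}\times|\Sigma^{\ess}|$, so small deformations of $S$ correspond to independent small choices of the domain modulus and of the evaluation point at the contracted leg $p_i$. Conditions (i) and the open part of (ii) then translate into complements of finite unions of positive-codimension polyhedral subsets in this product (the evaluation point landing on the boundary of a maximal cone of $\tilde\Sigma^{\ess}$; the image meeting a higher-codim stratum of $\tilde\Sigma^{\ess}$; the image passing through an intersection of two wall supports of $\Wall_{\tilde A}$), so a generic small perturbation $S'$ satisfies them.

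The main obstacle is condition (iii) at bending vertices. The bending vector $\NB_v$ is a combinatorial invariant of the spine, preserved under small deformations; since $S$ is a $\Wall_A$-spine, $h(v)$ already lies on some wall $(\mathfrak d,\NB_v)\in\Wall_A$. The task is to verify that the same bending vector is carried by a wall of $\Wall_{\tilde A}$ whose support contains (a neighbourhood in the same $\tilde\Sigma^{\ess}$-cone of) $h(v)$. This should follow from the fact that non-archimedean wall supports are intrinsic to $U$, so that $\Wall_{\tilde A}^{\an}$ agrees with $\Wall_A^{\an}$ up to refining supports along $\tilde\Sigma^{\ess}$, after which any small perturbation keeping $h(v)$ in the relative interior of the same codim-$1$ cone of $\tilde\Sigma^{\ess}$ preserves balancing. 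Combining these generic choices yields a deformation $S'\in\NT(\Sigma,U)$ whose lift $\tilde S'$ is a cylinder spine of type $\tilde\beta$.
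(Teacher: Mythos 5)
Your overall strategy is essentially the paper's: deform $S$ using the rigidity of transverse spines (\cref{prop:rigidity-transverse-spines}) plus a genericity argument so that the deformed spine becomes transverse to $\Wall_{\tA}$ and to the codimension-one skeleton of $\tSigma$, noting that the lift does not change the image in $\Sk(U)$ or the leg contact orders. Two remarks on this part. First, you apply the rigidity proposition ``at $S$'' for the enlarged set $W=\Wall_A\cup\Wall_{\tA}$, but $S$ is only known to be $\Wall_A$-transverse, so it need not lie in $\SP_J^{\tr}(\Sigma,W)$; the correct order (and the paper's) is to parametrize the $\Wall_A$-transverse deformations of $S$ first and only then impose the extra transversality (including to codimension-one cones of $\tSigma$) on a generic member. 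Second, the assertion that the bad loci have positive codimension in the deformation space is exactly the statement that the image of every edge sweeps out a top-dimensional cone in the family, i.e.\ the analogue of \cref{lemma:transversality-broken-lines}; it does not follow formally from the product description $\overline{M}_{0,J}^{\trop}\times\vert\Sigma^{\ess}\vert$ and needs that argument. Both points are repairable along the lines of the paper's proof.

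The genuine gap is your condition (iii). For $\tS'$ to be a cylinder spine of type $\tbeta$ it must in particular be a $\Wall_{\tA}$-spine, i.e.\ at every bending vertex the bend must equal the decorating tangent vector of some wall of $\Wall_{\tA}=\Wall_{\tA}^{\an}\cup\Wall_{\tA}^{\log}$ passing through that point. You dispose of this by claiming that non-archimedean wall supports are ``intrinsic to $U$'', so that $\Wall_{\tA}^{\an}$ agrees with $\Wall_A^{\an}$ up to refining supports. This is not available as stated: \cref{constr:analytic-walls} is an existence statement, so the two wall sets are non-canonical and admit no a priori comparison; what must match is the decoration, not merely the support; and the constrained stable maps differ ($A-f_*[C]$ effective in $X$ versus $\tA-f_*[C]$ effective in $\tX$). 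Moreover the bends of $S$ may only be accounted for by walls of $\Wall_A^{\log}$, and you say nothing about how $\Wall_A^{\log}$ relates to $\Wall_{\tA}^{\log}$ under the log-\'etale modification; that is a statement about wall types and their invariants under the blowup (in the spirit of \cref{prop:birational-invariance-log-cylinders}) and requires proof. The paper's own proof is also brief at this point, but your explicit justification cannot be cited or used as written: one would need, e.g., to show that a wall set satisfying \cref{constr:analytic-walls} for $(X,A)$ can be chosen to also serve for $(\tX,\tA)$ (say by pushing stable maps forward along $\pi$ and comparing spines), together with a blowup comparison of the logarithmic walls, before the balancing of $\tS'$ with respect to $\Wall_{\tA}$ can be concluded.
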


\begin{proof}
    \todo{Double check proof.}
    Let $\Wall_{\tA}$ denote the wall structure in $\tSigma$ associated to the curve class $\tA$.
    Since $\vert \Sigma\vert = \vert \tSigma \vert$, this defines a wall structure in $\Sigma$ as well. 
    Let $W$ be the union of the wall structures $\Wall_A$ and $\Wall_{\tA}$, together with codimension $1$ cones in $\tSigma$.
    The rigidity property for transverse spines (\cref{prop:rigidity-transverse-spines}) implies that $S$ can be deformed into a top-dimensional family of $\Wall_A$-transverse spines.
    Similar to the case of broken lines, we can show that in this family the image cone of every edge is top dimensional. 
    In particular, elements of this family will generically be $W$-transverse spines. 
    Then, their lifts to $\tSigma$-piecewise linear maps are cylinder spines of type $\tbeta$.
    The proof is complete. 
\end{proof}

\subsubsection{Logarithmic cylinder counts} \label{subsubsec:log-cylinder-counts}

\begin{definition}[Tropical cylinder type]\label{def:S-type}
    A \emph{tropical cylinder type} is a tropical type $\tau = (G,\bsigma,\bu)$ to $\Sigma (X_s,D_s)$ such that:
    \begin{enumerate}[wide]
        \item $G$ is a genus $0$ graph with $L(G) = \lbrace L_1,L_2,L_i\rbrace$ with $\bsigma (L_1),\bsigma (L_2)\in \Sigma^{\ess}$ and 
        \[\bu (L_1)\in \bsigma (L_1)\setminus \lbrace 0\rbrace,\;\bu (L_2)\in\bsigma (L_2)\setminus \lbrace 0\rbrace,\; \bu (L_i) =0,\]
        \item $\tau$ is realizable and balanced, and
        \item $\dim \tau = \dim h(\tau_{v_i}) = d$, where $v_i$ is the vertex adjacent to $L_i$.
    \end{enumerate}
    A \emph{decorated tropical cylinder type} is a decorated tropical type $\btau = (\tau , \bA )$, with $\tau$ a tropical cylinder type.
    A typical tropical cylinder type is depicted in \cref{fig:cylinder-spine}.
\end{definition}

For a decorated tropical cylinder type $\btau$, we define the associated logarithmic cylinder count as the logarithmic Gromov-Witten invariant: 
\[N_{\btau} \coloneqq \frac{\deg [\cM (X_s , \btau)]^{\vir}}{\vert \Aut (\btau )\vert}.\]
We have an induced map $h_{\ast}\colon \Lambda_{\tau_{v_i}}\rightarrow \Lambda_{\bsigma (v_i)}$ which has finite index,
and define 
\begin{equation}\label{eq:multiplicity-tropical-cylinder}
    k_{\tau}\coloneqq \vert\coker (h_{\ast} )\vert = \Big\vert \frac{\Lambda_{\bsigma (v_i)}}{h_{\ast} (\Lambda_{\tau_{v_i}})} \Big\vert.
\end{equation}

\personal{For a decorated tropical cylinder type $\btau$, we denote by $\beta = (\bu , A)$ the data consisting of the contact orders $\bu$ and the total curve class $A$.}

\begin{lemma} \label{lemma:splitting-cylinder-type}
    Let $\btau$ be a decorated tropical cylinder type as in \cref{def:S-type}.
    Then:
    \begin{enumerate}[wide]
        \item The vertex $v_i$ is exactly $3$-valent.
        Splitting $\btau$ at $v_i$ produces two decorated broken line types $\bomega_1$ and $\bomega_2$, and a decorated type $\btau_0$ with the single vertex $v_i$, three legs, no edges, $\dim \tau_0 = \dim h(\tau_{0,v_i} ) = d$, and curve class $0$.
        \item The spine type $\Sp (\tau )$ is induced by a tropical type in $\Sigma^{\ess}$.
        \item If $N_{\btau}\neq 0$, then
        \begin{align} \label{eq:splitting-logarithmic-cylinder-count}
            k_{\tau}N_{\btau} = k_{\omega_1} k_{\omega_2}N_{\bomega_1} N_{\bomega_2}.
        \end{align}
    \end{enumerate}
\end{lemma}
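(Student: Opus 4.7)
The plan is to treat the three parts in turn. For parts (1) and (2) I would use dimension counts together with the classification into broken line and wall types; for part (3) I would appeal to the splitting formula in logarithmic Gromov-Witten theory.

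For part (1), since $\bu(L_i) = 0$ the leg $L_i$ imposes no balancing constraint at $v_i$, and the requirement $\dim h(\tau_{v_i}) = d$ forces $h(v_i)$ to vary generically in a maximal cone $\bsigma(v_i)\subset \Sigma^{\ess}$. Splitting $G$ at $v_i$ yields subtrees, each together with its cut edge forming either a broken line type (if the subtree contains $L_1$ or $L_2$) or a wall type (otherwise). I rule out wall subtypes as follows: for such a $\tau'$ the endpoint of $L_{\out}$ would coincide with $v_i$, which must then trace only a $(d-1)$-dimensional subset of $\bsigma(v_i)$ by the wall-type condition $\dim h(\tau'_{\out}) = d-1$, contradicting the free $d$-dimensional motion of $v_i$. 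Hence only the two broken line subtypes $\bomega_1,\bomega_2$ (carrying $L_1$ and $L_2$) appear, $v_i$ is exactly $3$-valent, and the residual decorated type $\btau_0$ is the star at $v_i$ with its three legs, no edges, and zero curve class.

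For part (2), the spine of $\tau$ is the union of the three legs with the internal paths from $v_i$ to $L_1$ and from $v_i$ to $L_2$; after simplification these paths are precisely the spines of $\omega_1$ and $\omega_2$. By \cref{lemma:transversality-broken-lines}, for generic moduli parameters the spine of a broken line type only meets codimension $0$ and $1$ cones of $\Sigma^{\ess}$, so every vertex and edge along them has $\bsigma\in\Sigma^{\ess}$. Combined with $\bsigma(L_1), \bsigma(L_2)\in\Sigma^{\ess}$ and the maximality $\bsigma(v_i)\in\Sigma^{\ess}$, this shows $\Sp(\tau)$ is induced by a tropical type in $\Sigma^{\ess}$.

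For part (3), I would deduce the numerical identity from the splitting formula for logarithmic Gromov-Witten invariants of \cite{Wu_Splitting-log-Gromov-Wittten-Invariants,Gross_Notes-gluing-log-maps}. Splitting $\btau$ at the $3$-valent vertex $v_i$ expresses $[\cM(X_s,\btau)]^{\vir}$ as a pushforward of a fibered product of $[\cM(X_s,\bomega_1)]^{\vir}$, $[\cM(X_s,\bomega_2)]^{\vir}$, and $[\cM(X_s,\btau_0)]^{\vir}$ along evaluation maps at the two cut edges, weighted by edge-weight multiplicities. The invariant attached to $\btau_0$ is $1$, since $\btau_0$ is a single vertex of trivial curve class evaluating isomorphically onto a maximal cone (with $k_{\tau_0}=1$). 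Taking degrees and dividing by the appropriate automorphism orders yields $N_{\btau}$ as a product of $N_{\bomega_1}$ and $N_{\bomega_2}$ up to a combinatorial factor; matching that factor against the torsion-cokernel ratio $k_{\omega_1}k_{\omega_2}/k_\tau$ amounts to a linear-algebra comparison of the lattices $\Lambda_{\tau_{v_i}}$, $\Lambda_{\omega_{j,\out}}$, and $\Lambda_{\bsigma(v_i)}$ through the gluing. The main obstacle will be exactly this lattice-theoretic bookkeeping: checking that the gluing edge weights conspire with the torsion factors on both sides to produce the precise identity $k_\tau N_{\btau} = k_{\omega_1} k_{\omega_2} N_{\bomega_1} N_{\bomega_2}$, rather than a discrepancy that would obstruct the intended application to the exponential formula.
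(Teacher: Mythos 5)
Your overall strategy is the same as the paper's (which itself mostly defers to citations): split at $v_i$, rule out extra pieces by a dimension count for (1), use genericity of broken line spines for (2), and invoke the splitting formula for (3). But there are two genuine gaps. For (1), your argument presupposes the dichotomy that every piece produced by splitting at $v_i$ is either a broken line type or a wall type, and then rules out wall types by their dimension condition. That dichotomy is the substantive content of the claim: for a subtree carrying no marked leg, nothing in \cref{def:S-type} tells you that its cut leg satisfies $\dim h(\tau'_{\out})=d-1$; that bound is not a formal consequence of realizability and balancing but rests on the structural results for the log Calabi--Yau pair (this is what the paper imports from \cite[Step 4]{Johnston_Comparison}, going back to the dimension estimates of \cite{Gross_Canonical-wall-structure-and-intrinsic-mirror-symmetry}). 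Without it, an unmarked subtree whose cut leg sweeps a $d$-dimensional family is not excluded by your contradiction with $\dim h(\tau_{v_i})=d$. You also never exclude the configuration in which $L_1$ and $L_2$ lie in the same subtree (so $v_i$ would be $2$-valent with a slope-zero edge), which balancing at $v_i$ alone does not forbid.

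For (3), you outline the splitting formula but explicitly leave open the matching of the combinatorial factors, i.e.\ whether the gluing multiplicities and torsion cokernels really combine to give $k_{\tau}$ on one side and $k_{\omega_1}k_{\omega_2}$ on the other. Since the exact identity \eqref{eq:splitting-logarithmic-cylinder-count} is the whole point of part (3) (it is what later feeds the exponential formula), declaring this bookkeeping ``the main obstacle'' means the statement is not actually established in your proposal. The paper obtains the identity as a special case of \cite[Lemma 6.6]{Gross_Canonical-wall-structure-and-intrinsic-mirror-symmetry}, where precisely this lattice computation at a point-constrained vertex is carried out; either citing that result or reproducing its computation is what is missing. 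Part (2) of your argument is essentially the paper's and is fine, with the small caveat that $\bsigma (v_i)\in\Sigma^{\ess}$ is itself a consequence of (1) (it is a face of the image cone of the out-legs of the broken line pieces, which lie in $\Sigma^{\ess}$ by definition), not a hypothesis of \cref{def:S-type}.
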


\begin{proof}
    Since $\bu (L_i )=0$, the balancing condition implies that $v_i$ is at least $3$-valent.
    Then (1) follows from \cite[Step 4]{Johnston_Comparison}.
    The spine type of a broken line type is induced by a tropical type in $\Sigma^{\ess}$ by \cite[Lemma 2.5]{Gross_Canonical-wall-structure-and-intrinsic-mirror-symmetry}, so the same holds for a cylinder type, proving (2).
    The formula in (3) is a consequence of the splitting formula, and a special case of \cite[Lemma 6.6]{Gross_Canonical-wall-structure-and-intrinsic-mirror-symmetry}.
    The proof is complete.
\end{proof}

\begin{proposition} \label{prop:transversality-cylinder-type}
    Let $\btau$ be a decorated tropical cylinder type with total curve class $A$ and $N_{\btau}\neq 0$.
    Then, there exists a $\Wall_A$-spine $S$ transverse to $\Wall_A$ such that $\Sp (\tau )$ has the same type as $S$.    
\end{proposition}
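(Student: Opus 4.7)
The plan is to verify that $\Sp(\tau)$ is itself a $\Wall_A$-spine, and then deform it generically within its type to make it transverse to $\Wall_A$.

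First, splitting $\btau$ at the $3$-valent vertex $v_i$ via \cref{lemma:splitting-cylinder-type} produces two decorated broken line types $\bomega_1,\bomega_2$ with effective total curve classes $A_1,A_2$ summing to $A$, and the splitting formula \eqref{eq:splitting-logarithmic-cylinder-count} combined with $N_{\btau}\neq 0$ guarantees $N_{\bomega_j}\neq 0$. Since $A_j$ is effective with $A-A_j$ effective, we have $\Wall_{A_j}^{\log}\subseteq \Wall_A^{\log}\subseteq \Wall_A$, so \cref{lemma:balancing-condition-broken-line} applied to each $\bomega_j$ yields the balancing condition with respect to $\Wall_A$ at every vertex of the spine portion coming from $\bomega_j$ other than $v_i$ itself. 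At $v_i$, the balancing of $\tau$ combined with $\bu(L_i)=0$ shows that the two spine-edges meeting $v_i$ carry opposite slopes, so the bend vanishes. Simpleness (by construction of $\Sp$), genus zero (since $\tau$ has genus zero and the convex hull is a subtree), factorization through $\Sigma^{\ess}$ (by \cref{lemma:splitting-cylinder-type}(2)), and integrality of the leg slopes in $\Sigma^{\ess}$ all follow immediately from the defining properties of a decorated tropical cylinder type. Hence $\Sp(\tau)$ is a $\Wall_A$-spine.

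For transversality, apply \cref{lemma:transversality-broken-lines} to each $\omega_j$ with $W$ taken to be the supports of walls in $\Wall_A$, obtaining open subsets $\omega_j^{\circ}\subseteq \omega_j$ whose complements have dimension at most $d-2$, on which the spine realization only meets cones of $\Sigma^{\ess}$ of codimension at most one and intersects $\Wall_A$ in finitely many points. The space of realizations of the type of $\Sp(\tau)$ is parametrized by the position of $h(v_i)\in \bsigma(v_i)$ together with edge-length data inherited from $\omega_1$ and $\omega_2$; the cone $\bsigma(v_i)$ has maximal dimension $d$ by the condition $\dim h(\tau_{v_i})=d$. Choosing $h(v_i)$ generically in $\bsigma(v_i)\cap \Sigma_0$ places it off every wall of $\Wall_A$ and off the singular locus $\vert \Sigma^{\ess}\vert\setminus \vert \Sigma_0\vert$, while generic compatible deformations in $\omega_j^{\circ}$ produce a $\Wall_A$-transverse spine $S$ of the same type as $\Sp(\tau)$. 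The $2$-valence condition for vertices mapping into walls is automatic, since the only non-$2$-valent vertex of $\Sp(\tau)$ is $v_i$, whose image has been chosen off all walls.

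The main technical point is ensuring that the two independent deformations of $\bomega_1,\bomega_2$ can be glued compatibly at $v_i$ while remaining in $\omega_1^{\circ}$ and $\omega_2^{\circ}$. This is handled by a dimension count: since the non-transverse loci in each $\omega_j$ have codimension at least $2$ and the freedom in $h(v_i)$ is $d$-dimensional, a generic compatible choice remains transverse, producing the desired spine $S$.
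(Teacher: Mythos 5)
Your proposal is correct and follows essentially the same route as the paper: split $\btau$ at the $3$-valent vertex via \cref{lemma:splitting-cylinder-type}, use the nonvanishing of the broken line invariants together with \cref{lemma:balancing-condition-broken-line} to see that $\Sp(\tau)$ is a $\Wall_A$-spine, and then apply \cref{lemma:transversality-broken-lines} to each $\omega_j$ and glue generic deformations at $v_i$ by a dimension count. The paper makes your final gluing step precise by noting that the evaluation maps on $\omega_{\out,j}$ and $\tau_{v,0}$ are injective and that the triple intersection of their images contains the $d$-dimensional cone $h_{\tau}(\tau_{v_i})$, so the bad loci (of dimension at most $d-1$ in the images) cannot cover it — exactly the count you sketch.
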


\begin{proof}
    By \cref{lemma:splitting-cylinder-type}, the type $\btau$ splits to produce two decorated broken line types $\bomega_1$ and $\bomega_2$, and a type $\btau_0$ with a single vertex and three legs.
    By \eqref{eq:splitting-logarithmic-cylinder-count}, we have $N_{\bomega_i}\neq 0$ for $i=1,2$.
    Furthermore $A = A_1+A_2$, where $A_i$ is total curve class of $\bomega_i$.
    In particular $A-A_i$ is effective, and we deduce from \cref{lemma:balancing-condition-broken-line} that the spine of $\bomega_i$ is generically a $\Wall_A$-spine.
    We deduce that the spine of $\btau$ is also a $\Wall_A$-spine.

    For $i=1,2$, we denote by $L_{\out,i}$ the leg of $\omega_i$ created by the splitting, and by $\omega_{\out,i}$ the cone parametrizing $L_{\out ,i}$. 
    We denote by $v_0$ the vertex of $\tau_0$ and by $\tau_{v,0}$ the cone parametrizing $v_0$ (which by definition equals $\tau_0$).
    The maps 
    \begin{align*}
        h_{\out,i}\colon \omega_{\out,i} &\longrightarrow h_{\omega_i} (\omega_{\out,i}), \\
        h_{\tau_{v,0}}\colon \tau_{v,0} &\longrightarrow h_{\tau_0} (\tau_{v,0} )
    \end{align*}
    are injective because their images have the same dimension as their domain.
    Thus any element $x\in h_{\out,1} (\omega_{\out,1})\cap h_{\out,2} (\omega_{\out,2} )\cap h_{\tau_{v,0}} (\tau_{v,0} )$ determines unique elements $t_i\in \omega_i$ for $i=1,2$ and $t_0\in\tau_0$.
    The maps $h_{\omega_i, t_i}$ and $h_{\tau_0,t_0}$ can be glued at $x$ to produce a tropical map which will be of the form $h_{\tau, t}$ for some $t\in \tau$ \personal{In fact $t = (h\vert_{\tau_{v_i}})^{-1} (x)$.}.
    Conversely, for each $t\in\tau$ the point $h_{\tau,t} (v_i)$ determines an element in $h_{\out,1} (\omega_{\out,1})\cap h_{\out,2} (\omega_{\out,2} )\cap h_{\tau_{v,0}} (\tau_{v,0} )$.
    Hence, the intersection contains $h_{\tau} (\tau_{v_i} )$ and in particular it is $d$-dimensional.

  We now prove that there exists an open locus $\tau^{\circ}\subset\tau$ such that the spine of $h_{\tau, t}$ is transverse to $\Wall_A$ for all $t\in\tau^{\circ}$.
  Applying \cref{lemma:transversality-broken-lines} with the set of walls $\Wall_A$, for $i=1,2$ we obtain $\omega_i^{\circ}\subset \omega_i$ defined as the complement of an $(d-2)$-dimensional polyhedral subset.
  Let $\omega_{\out,i}^{\circ}\subset\omega_{\out,i}$ denote the inverse image under the projection $\omega_{\out,i}\rightarrow \omega_i$, it is the complement of a $(d-1)$-dimensional polyhedral subset.
  Let $\tau_{v,0}^{\circ}\subset \tau_{v,0}$ denote the open locus where $v_0$ is not mapped into a wall, it is the complement of a $(d-1)$-dimensional polyhedral subset since $\tau_{v,0}\rightarrow h_{\tau_0} (\tau_{v,0})$ is injective.
  It follows that the intersection $h_{\out,1} (\omega_{\out,1}^{\circ})\cap h_{\out,2} (\omega_{\out,2}^{\circ} )\cap h_{\tau_{v,0}} (\tau_{0}^{\circ} )$ is the complement of a $(d-1)$-dimensional polyhedral subset in the $d$-dimensional polyhedral subset $h_{\out,1} (\omega_{\out,1})\cap h_{\out,2} (\omega_{\out,2} )\cap h_{\tau_{v,0}} (\tau_{v,0} )$.
  In particular, it is not empty and the set 
  \[\tau^{\circ}\coloneqq \ev_{v_i}^{-1} \big( h_{\tau_1} (\tau_{1,L_{\out,1}}^{\circ})\cap h_{\tau_2} (\tau_{2,L_{\out,2}}^{\circ})\cap h_{\tau_3} (\tau_{3,v}^{\circ} )\big) .\]
  is a non-empty open subset of $\tau$.
  For $t\in\tau^{\circ}$ the spine of $h_{\tau,t}$ is transverse to $\Wall_A$, as it is transverse when restricted to the subgraphs corresponding to $\omega_1$ and $\omega_2$, and the $3$-valent vertex $v_i$ is mapped away from walls by construction.
  The proposition is proved.
  
  \personal{
  By taking a point in $h_{\tau_1} (\tau_{1,L_1} )\cap h_{\tau_2,L_2} (\tau_{2,L_2})$ corresponding to $t_1\in\tau_1$ and $t_2\in\tau_2$, we can glue the maps $h_{\tau_1,t_1}$ and $h_{\tau_2,t_2}$ along that point.
  By adding a leg of contact order $0$ to the created vertex, we product a tropical map that has type $\tau$.
  So, it is enough to prove that the intersection $h_{\tau_1} (\tau_{1,L_1} )\cap h_{\tau_2,L_2} (\tau_{2,L_2})$ contains an open set.
  But this intersection contains the cone $h(\tau_{v_i})$, which is full dimensional.
  We conclude the proof of (1) using \cref{lemma:transversality-broken-lines}.}
\end{proof}

Combining \cref{prop:transversality-cylinder-type} with the rigidity property of transverse spines, we obtain the following rigidity property for spines of tropical cylinder types.
We recall that a tropical type $\tau'$ is \emph{marked by $\tau$} if it admits a contraction $\tau'\rightarrow \tau$.

 \begin{lemma}\label{lemma:degeneration-transverse-spine}
     Let $\btau$ be a decorated tropical cylinder type of total curve class $A\in\NE (X,\bbZ )$, and let $\tau'$ be a tropical type marked by $\tau$.
     If $N_{\btau}\neq 0$, then $\Sp (\tau ')$ has the same type as $\Sp (\tau )$.
 \end{lemma}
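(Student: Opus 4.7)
The plan is to reduce to the broken line case via the splitting at $v_i$ from \cref{lemma:splitting-cylinder-type}, and then exploit the balancing and transversality results already established for broken line types. First, I would note that the contraction $\phi\colon \tau' \to \tau$ preserves legs and induces a surjection of convex hulls of legs $G_{\tau'}^s \to G_{\tau}^s$. Since $v_i$ is exactly $3$-valent in $G_{\tau}$ and carries the contact-order-zero leg $L_i$, splitting $\btau'$ at the preimage of $v_i$ yields decorated broken line types $\bomega_1', \bomega_2'$ that are marked by the broken line types $\bomega_1, \bomega_2$ produced by splitting $\btau$. By \eqref{eq:splitting-logarithmic-cylinder-count}, the hypothesis $N_{\btau}\neq 0$ forces $N_{\bomega_i}\neq 0$ for $i=1,2$, so the hypotheses of \cref{lemma:balancing-condition-broken-line} and \cref{prop:transversality-cylinder-type} are available for each $\bomega_i$.

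Next I would handle vertices of $\Sp(\tau')$ that are not images of vertices of $\Sp(\tau)$. Any such extra vertex $v$ maps into the relative interior of a cone $\bsigma(e)$ for some edge $e$ of $\Sp(\tau)$, and the two edges of $G_{\tau'}^s$ adjacent to $v$ both lift $e$, so their $\bsigma$-labels coincide with $\bsigma(e)$. The image-cone part of the redundancy criterion in \cref{def:spine-type}(2) is therefore automatic, and the only obstruction to the equality of simple types $\Sp(\tau') = \Sp(\tau)$ is the possible presence of a non-trivial bend at $v$ in the spine.

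To exclude this, I would apply \cref{lemma:balancing-condition-broken-line} to $\bomega_1', \bomega_2'$: every vertex in the spine of $\bomega_i'$ is either $2$-valent balanced in the ambient graph (hence non-bending and redundant upon simplification), or else its bend is prescribed by a wall in $\Wall_A$. Assuming for contradiction the second case at an extra vertex $v$ with non-trivial bend $u$, we would have $(\fd, u) \in \Wall_A$ with $h(v) \in \fd$. The deformation argument of \cref{prop:transversality-cylinder-type} applied to each $\bomega_i$ produces a $\Wall_A$-transverse realization of $\Sp(\bomega_i)$; by the rigidity of transverse spines (\cref{prop:rigidity-transverse-spines}) combined with the dimension estimates of \cref{lemma:transversality-broken-lines}, this extra bend cannot be decoupled from the combinatorial type of $\Sp(\bomega_i)$, so it must either vanish or propagate to a bending vertex of $\Sp(\bomega_i)$ itself, contradicting its spine type. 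The former gives the required redundancy at $v$, and recombining the two halves at $v_i$ yields $\Sp(\tau') = \Sp(\tau)$ as simple types.

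The hardest part of the argument will be the last step: precisely converting the rigidity of transverse spines into a no-go statement for extra bending at interior points of spine edges of $\tau$. One must carefully track how bending contributions carried by subtrees of $\tau'$ that are collapsed by $\phi$ interact with the global wall structure $\Wall_A$, and argue that any such contribution either propagates to a visible bend in $\Sp(\tau)$ itself (contradicting its spine type) or is already trivial. This is exactly the kind of transversality bookkeeping that underlies the proofs of \cref{lemma:transversality-broken-lines} and \cref{prop:transversality-cylinder-type}, and I would expect the full proof to amount to a careful adaptation of those arguments in the presence of the contraction $\phi$.
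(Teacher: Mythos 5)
Your reduction does not close the key step, and two of its intermediate claims are unjustified. First, you invoke \cref{lemma:balancing-condition-broken-line} for the split pieces $\bomega_1',\bomega_2'$ of $\tau'$, but that lemma requires \emph{decorated broken line types with nonvanishing invariant}. The hypothesis $N_{\btau}\neq 0$ only controls $\tau$ and its splitting $\bomega_1,\bomega_2$; the type $\tau'$ is an arbitrary, undecorated type marked by $\tau$, and the pieces obtained by splitting it need not be broken line types at all (the dimension conditions $\dim\omega = d-1$ and $\dim h(\omega_{\out}) = d$ can fail), let alone carry nonzero invariants. Second, your claim that the only possible discrepancy between $\Sp(\tau')$ and $\Sp(\tau)$ is an extra bending vertex is too quick: under a contraction $\tau'\to\tau$ the image cones $\bsigma'$ may strictly grow (components fall into deeper strata), so spine edges and vertices of $\tau'$ can traverse different cones than those of $\tau$, changing the simple type with no new bend; this case is not addressed. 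Third, the decisive step — turning "rigidity" into the statement that no extra bend can appear — is exactly what has to be proved, and you defer it ("I would expect\dots"). Note also that working with each half $\bomega_i$ separately cannot yield the needed rigidity: a broken line family is $(d-1)$-dimensional and its spine type is only \emph{generically} constant (\cref{lemma:transversality-broken-lines}); the rigidity in this lemma comes precisely from the evaluation constraint at the $3$-valent vertex $v_i$, which your splitting discards.

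For comparison, the paper's proof makes this mechanism concrete without any balancing analysis of $\tau'$: by \cref{prop:transversality-cylinder-type} one realizes $\Sp(\tau)$ as a $\Wall_A$-transverse spine $S$, chosen as the spine of $h_{\tau,t}$ for some $t\in\Int(\tau)$, so that $v_i$ maps into the interior of the $d$-dimensional cone $h_{\tau}(\tau_{v_i})$. A tropical map of type $\tau'$ is a small deformation, so $\Sp(\tau')$ is a small deformation $S'$ of $S$, still transverse since transversality is open. By \cref{prop:rigidity-transverse-spines} the map $(\dom,\ev_i)$ is a local homeomorphism on transverse spines, and the simplified domain (one vertex with three infinite legs) admits no relevant deformation, so $S'$ is determined by a small displacement of $\ev_i(S)$; this displacement stays in $\Int h_{\tau}(\tau_{v_i})$ and is therefore realized by a member of the universal family over $\tau$, forcing $S'$ to have type $\Sp(\tau)$. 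To salvage your route you would have to reinstate the evaluation at $v_i$ as a gluing constraint and prove the rigidity statement directly; as written, the argument has a genuine gap.
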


 \begin{proof}
    By \cref{prop:transversality-cylinder-type}, there exists a $\Wall_A$-transverse spine $S$ which has the type $\Sp (\tau )$.
    Such $S$ can be chosen to be the spine of $h_{\tau,t}$ for some $t\in \Int (\tau )$.
    In particular, $S$ maps the vertex $v_i$ to the interior of the image cone $h_{\tau} (\tau_{v_i})$.
    
    The type $\Sp (\tau' )$ is given by the type of a small deformation $S'$ of $S$.
    Since being $\Wall_A$-transverse is an open condition, we can assume that $S'$ is a transverse spine. 
    By \cref{prop:rigidity-transverse-spines}, such a small deformation of $S$ corresponds to a deformation the image of $S$ under the map $(\dom , \ev_i)$, remembering the simplification of the domain of $S$ as a metric tree and evaluating at the interior leg.
    
    Since $\dom (S)$ consists of a single vertex with three infinite legs, its only deformations turn infinite legs into finite legs.
    But $S'$ necessarily has infinite legs, so we may restrict to deformations of $S$ with fixed domain.
    Then $S'$ corresponds to a small deformation of $\ev_i (S)$, the image of the vertex $v_i$ under $S$.
    By definition of a cylinder spine the image cone $h_{\tau} (\tau_{v_i} )$ is $d$-dimensional, and we chose $S$ so that it maps $v_i$ to the interior of that cone.
    Then, any small deformation of $\ev_i (S)$ corresponds to the spine of an element of the universal family $h_{\tau}$.
    In particular, $S'$ has type $\Sp (\tau )$.
    The lemma is proved.
 \end{proof}

\subsubsection{Birational invariance of logarithmic cylinder counts}
Non-archimedean cylinder counts are defined after a toric blow up of $(X,D)$.
From the point of view of logarithmic geometry, toric blowups correspond to log-\'{e}tale modifications.
Here, we estabish the invariance of logarithmic cylinder counts under those modifications.

We fix a log-\'{e}tale modification $\pi\colon (\tX,\tD)\rightarrow (X,D)$ (i.e. a toric blowup), corresponding to a subdivision $\tSigma$ of $\Sigma$.
Given the data $\beta = (A,\bu )$ consisting of a curve class in $\NE (X ,\bbZ )$ and a tuple of contact orders to $\Sigma$, the paragraph after \cref{prop:moduli-spaces-before-after-blowup} describes a unique curve class $\tA$ in $\NE( \tX , \bbZ)$.
The following lemma provides a description of tropical lifts of decorated cylinder types. 
We refer to \cite{Johnston_Birational-invariance} for the notion of (decorated) tropical lift.

\begin{lemma}\label{lemma:tropical-lift-cylinder}
    Let $S$ be a cylinder spine of type $\beta = (A,\bu)$, such that the subdivided spine $\tS$ is a cylinder spine of type $\tbeta = (\tA,\bu)$.
    Fix a decorated tropical type $\btau$ to $\Sigma$, and a decorated lift $\btau'\rightarrow \btau$ to $\tSigma$.
    \begin{enumerate}[wide]
        \item Assume $\btau'$ is a decorated cylinder type in $\tSigma$ with spine type $\tS$ and $N_{\btau '}\neq 0$.
        Then $\btau$ is a decorated cylinder type in $\Sigma$ with spine type $S$.
        \item Assume $\btau$ is a decorated cylinder type in $\Sigma$ with spine type $S$.
        If $\dim\tau' = d$, then $\btau'$ is a decorated cylinder type in $\tSigma$ with spine type $\tS$.
    \end{enumerate}
\end{lemma}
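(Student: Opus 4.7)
The plan is to verify, in each direction, the three defining conditions of a decorated tropical cylinder type (\cref{def:S-type}) together with the spine-type assertion. Both directions exploit that $\pi$ corresponds to a subdivision of cone complexes $\tSigma \to \Sigma$, so $|\tSigma| = |\Sigma|$, and legs with their contact orders are preserved under both lifting and contracting.

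For (1), I would begin by observing that the contraction $G' \to G$ sends the three legs of $G'$ bijectively to the three legs of $G$ with unchanged contact orders, so condition (1) of \cref{def:S-type} for $\btau$ follows immediately from the same condition for $\btau'$. Realizability and balancing of $\tau$ follow from those of $\tau'$ by composing with the coarsening $|\tSigma| \to |\Sigma|$: the $2$-valent vertices of $G'$ that get contracted sit on interior walls of the subdivision, where the piecewise-linear structure coarsens smoothly, so balancing at surviving vertices is unaffected. For the dimensional condition $\dim \tau = \dim h(\tau_{v_i}) = d$, I would combine $\dim \tau' = \dim h(\tau'_{v_i}) = d$ with the spine-type hypothesis $\Sp(\tau') = \tS$: since $\tS$ maps $v_i$ to the interior of a maximal cone of $\tSigma^{\ess}$ (third item of \cref{def:cylinder-spine} for $\tbeta$), its image in $\Sigma^{\ess}$ lies in the interior of a maximal cone, and $\tau_{v_i}$ surjects onto this maximal cone. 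For the spine-type claim, $\Sp(\btau)$ is the pushforward-and-simplification of $\Sp(\btau') = \tS$ under the coarsening $\tSigma^{\ess}\to\Sigma^{\ess}$, which by construction of $\tS$ returns $S$. The hypothesis $N_{\btau'} \ne 0$ will be used via the splitting at $v_i$ (\cref{lemma:splitting-cylinder-type}) and the balancing property of broken lines (\cref{lemma:balancing-condition-broken-line}) to certify that the coarsening really simplifies to $S$ and not to a more degenerate type.

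For (2), given $\btau$ a decorated cylinder type with spine $S$ and a lift $\btau'$ with $\dim \tau' = d$, the graph $G'$ is obtained from $G$ by inserting $2$-valent vertices wherever edges transversally meet new codimension-one cones of $\tSigma$. Legs and their contact orders are thus unchanged, so condition (1) of \cref{def:S-type} for $\btau'$ is inherited from that of $\btau$. Realizability and balancing lift from $\tau$ to $\tau'$ automatically because the underlying supports coincide. Injectivity of $h|_{\tau_{v_i}}$, which is forced by $\tau$ being a cylinder type, lifts to injectivity of $h|_{\tau'_{v_i}}$, so the hypothesis $\dim \tau' = d$ yields $\dim h(\tau'_{v_i}) = d$. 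Finally, the spine of $\tau'$ is obtained by subdividing $\Sp(\tau) = S$ along the codimension-one cones of $\tSigma$, which by uniqueness of such a subdivision equals $\tS$.

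The main obstacle will be the careful bookkeeping of the interaction between the subdivision and the spine. One must distinguish graph changes arising from edges merely transversally crossing new walls (producing redundant $2$-valent vertices that do not affect the simplified spine type) from a scenario where an edge of the spine would lie entirely on a new codimension-one cone of $\tSigma$ (which could create genuine new bending). The transversality built into $\tS$ being a cylinder spine of type $\tbeta$ rules out the latter along the spine, so in (1) the simplification of the spine of $\tau'$ recovers exactly the spine of $\tau$, and conversely in (2) the subdivision of the spine of $\tau$ recovers exactly the spine of $\tau'$.
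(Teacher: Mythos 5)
Your part (2) is correct and essentially the paper's argument: since $h|_{\tau_{v_i}}$ is injective, the lift $\tau'$ is identified with a maximal cone of the subdivision of $h_\tau(\tau_{v_i})$ induced by $\tSigma$, and the spine of $\tau'$ is the $\tSigma$-subdivision of $S$, i.e.\ $\tS$.

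Part (1), however, has a genuine gap at the dimension condition of \cref{def:S-type}. From $\Int(\tau')\subset\Int(\tau)$ you only get $\dim\tau\geq\dim\tau'=d$, and a lift is in general a \emph{proper} subcone of $\tau$, so $\dim\tau>d$ is entirely possible a priori. Your argument that ``$\tS$ maps $v_i$ into the interior of a maximal cone, and $\tau_{v_i}$ surjects onto this maximal cone'' at best gives $\dim h(\tau_{v_i})=d$; it says nothing about $\dim\tau$ itself, since when $\dim\tau>d$ the evaluation $h|_{\tau_{v_i}}$ simply fails to be injective (the curve deforms with the image of $v_i$ fixed), which is exactly the failure mode one must exclude. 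This cannot be ruled out tropically: it is here, and only here, that the hypothesis $N_{\btau'}\neq 0$ is needed, via a geometric vanishing statement. The paper argues that if $\dim\tau>d$ then, by the reasoning of \cite{Johnston_Birational-invariance} (Lemma 9.3 there), $\deg[\cM(\tX,\btau')]^{\vir}=0$, contradicting $N_{\btau'}\neq 0$; hence $\dim\tau=d$ and $\tau$ is a cylinder type. Your proposal instead spends $N_{\btau'}\neq 0$ on the spine-type bookkeeping (via \cref{lemma:splitting-cylinder-type} and \cref{lemma:balancing-condition-broken-line}), which is not where the difficulty lies: the spine claim follows by removing the redundant $2$-valent vertices mapped to cones of $\tSigma\setminus\Sigma$, exactly as you describe elsewhere, whereas the excess-dimension scenario for $\tau$ remains unaddressed in your write-up. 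To repair part (1), replace the ``surjects onto a maximal cone'' step by the virtual-degree vanishing argument for lifts of types of excess dimension.
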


\begin{proof}
\personal{We note that $\btau$ is uniquely determined by $\btau'$ (see \cite{Johnston_Birational-invariance}).\\}
Conditions (1) and (2) of \cref{def:S-type} are satisfied for both $\btau$ and $\btau'$ as soon as they are satisfied by either $\btau$ or $\btau'$.
We also have $\dim \tau \geq \dim \tau'$ because $\Int (\tau' )\subset \Int (\tau )$ by definition of a tropical lift.

To prove (1), assume that $\btau'$ is a decorated tropical cylinder type with spine type $\tS$ and $N_{\btau'}\neq 0$.
In particular $\dim \tau\geq \dim \tau' = d$.
If $\tau$ is not a tropical cylinder type, then necessarily $\dim \tau >d$ (note that $\dim h_{\tau} (\tau_{v_i} )\leq \dim \tau$).
We can then apply the same reasoning as \cite[Lemma 9.3]{Johnston_Birational-invariance} to conclude that $\deg [\cM (\tX ,\btau')]^{\vir} = 0$, contradicting the assumptions and proving that $\btau$ is a decorated cylinder type in $\Sigma$.
The spine type of $\btau$ is uniquely determined by $\Sp (\btau ')$ by removing redundant vertices, i.e. $2$-valent vertices which are mapped to a cone in $\tSigma\setminus \Sigma$.
This produces the spine type determined by $S$, proving (1).

For (2), assume that $\btau$ is a decorated tropical cylinder type with spine type $S$, and that $\dim\tau' = d$.
Let $v_i$ denote the vertex adjacent to the contracted leg in $\tau$.
Since the cone complex $\tau$ is identified with the image $h_{\tau} (\tau_{v_i} )$, the tropical lift $\tau'$ corresponds to a maximal cone in the subdivision of $h_{\tau} (\tau_{v_i} )$ induced by $\tSigma$.
In particular, $\btau'$ satisfies Condition (3) of \cref{def:S-type} and hence is a decorated tropical cylinder type.
The spine type $\Sp (\btau' )$ is obtained by subdividing $\Sp (\btau )$ according to $\tSigma$, so it has type $\tS$, proving (2).
\end{proof}

This description of lifts and the decorated version of the main theorem in \cite{Johnston_Birational-invariance} leads to the following formula.

\begin{proposition} \label{prop:birational-invariance-log-cylinders}
    Let $(\tX,\tD )\rightarrow (X,D)$ be a log-\'{e}tale modification of $(X,D)$, and let $\btau$ be a decorated tropical cylinder type in $\Sigma (X,D)$.
    Then
    \[k_{\tau}N_{\btau} = \sum_{\btau'} k_{\tau'} N_{\btau '} ,\]
    where the sum is over decorated tropical cylinder types $\btau'$ to $\tX$ that lift $\btau$.
\end{proposition}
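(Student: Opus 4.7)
The plan is to derive the identity from the decorated version of Johnston's birational invariance theorem~\cite{Johnston_Birational-invariance}, applied to $\btau$. It provides a formula
\[
N_{\btau} \;=\; \sum_{\btau'} m_{\btau'}\, N_{\btau'},
\]
in which $\btau'$ ranges over decorated tropical lifts of $\btau$ to $\tSigma$ of maximal dimension $\dim \tau' = \dim \tau = d$, and $m_{\btau'}$ is a lattice-index multiplicity attached to the refinement $\tau' \to \tau$. By \cref{lemma:tropical-lift-cylinder}(2), these maximal lifts are precisely the decorated tropical cylinder types to $(\tX,\tD)$ lifting $\btau$ (with spine type $\tS$); lifts with $\dim \tau' < d$ either fail to be tropical cylinder types (they would violate Condition (3) of \cref{def:S-type}) or are excluded by Johnston's dimension hypothesis, so they contribute to neither side of the target identity.

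The combinatorial core of the argument is to identify
\[
k_{\tau}\, m_{\btau'} \;=\; k_{\tau'}
\]
for each such lift. For a cylinder type the vertex $v_i$ carries all the tropical freedom: by \cref{lemma:splitting-cylinder-type}(1) the vertex $v_i$ is $3$-valent, $\tau$ is canonically identified with $h_{\tau}(\tau_{v_i})$, and for any maximal-dimensional lift $\btau'$ the cone $\tau'$ is identified with a maximal cone $h_{\tau'}(\tau'_{v_i}) \subseteq h_{\tau}(\tau_{v_i})$ of the subdivision induced by $\tSigma$. Both target cones are $d$-dimensional and therefore share an ambient tangent lattice $N \coloneqq \Lambda_{h_{\tau}(\tau_{v_i})} = \Lambda_{h_{\tau'}(\tau'_{v_i})}$. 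By \eqref{eq:multiplicity-tropical-cylinder},
\[
k_{\tau} = [N : h_{\tau,*}\Lambda_{\tau_{v_i}}], \qquad k_{\tau'} = [N : h_{\tau',*}\Lambda_{\tau'_{v_i}}],
\]
while Johnston's multiplicity $m_{\btau'}$ is, by construction, the index of $h_{\tau',*}\Lambda_{\tau'_{v_i}}$ inside $h_{\tau,*}\Lambda_{\tau_{v_i}}$ (after identification through the contraction $\tau' \to \tau$, which is an isomorphism on the top-dimensional ambient tangent spaces). Multiplicativity of lattice indices across the chain
\[
h_{\tau',*}\Lambda_{\tau'_{v_i}} \subseteq h_{\tau,*}\Lambda_{\tau_{v_i}} \subseteq N
\]
then yields $k_{\tau}\, m_{\btau'} = k_{\tau'}$. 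Multiplying Johnston's formula by $k_{\tau}$ and substituting gives
\[
k_{\tau}\, N_{\btau} \;=\; \sum_{\btau'} k_{\tau}\, m_{\btau'}\, N_{\btau'} \;=\; \sum_{\btau'} k_{\tau'}\, N_{\btau'},
\]
which is the desired identity.

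The main obstacle is matching Johnston's abstractly-defined multiplicity with the ratio $k_{\tau'}/k_{\tau}$. Since all of the relevant lattice structure for a cylinder type is concentrated at $v_i$, one has to unwind the definitions at that vertex and verify that Johnston's refinement multiplicity restricts, in this special situation, to exactly the index of tangent lattices at $v_i$; with that in hand, the rest of the proof is a purely formal manipulation of lattice indices and summations.
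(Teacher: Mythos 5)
Your overall route is the same as the paper's: invoke the decorated version of Johnston's log-\'etale invariance theorem, use \cref{lemma:tropical-lift-cylinder} to identify the maximal-dimensional lifts with decorated cylinder types, and then identify Johnston's refinement multiplicity with the ratio of the lattice indices $k_{\tau'}/k_{\tau}$ at the vertex $v_i$ (your multiplicativity-of-indices argument along $h_{\tau',*}\Lambda_{\tau'_{v_i}}\subseteq h_{\tau,*}\Lambda_{\tau_{v_i}}\subseteq N$ is equivalent to the snake-lemma computation the paper performs).

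However, there is a genuine gap: you state Johnston's theorem as an identity among the normalized counts $N_{\btau}=\deg[\cM(X,\btau)]^{\vir}/\vert\Aut(\btau)\vert$, but the theorem is an identity among the virtual degrees $\deg[\cM(X,\btau)]^{\vir}$ themselves, with no automorphism factors. To pass from the virtual-degree identity to the asserted formula $k_{\tau}N_{\btau}=\sum_{\btau'}k_{\tau'}N_{\btau'}$ one must prove that $\Aut(\btau')\simeq\Aut(\btau)$ for every decorated cylinder-type lift $\btau'$ of $\btau$; this is not automatic, since a lift subdivides edges and re-punctures the outgoing legs, and it is exactly the step your proposal never addresses. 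The paper handles it by splitting $\btau$ and $\btau'$ at $v_i$ into pairs of broken line types (\cref{lemma:splitting-cylinder-type}), reducing via the description of $\Aut$ of a broken line type to the statement that a decorated wall type and its top-dimensional lift have isomorphic automorphism groups, and then checking that automorphisms are compatible with the induced subdivision of the graph. Without this comparison of automorphism groups your final substitution is unjustified; the rest of your argument (the dimension restriction on lifts and the lattice-index bookkeeping at $v_i$) is sound and matches the paper.
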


\begin{proof}
    Similar to the proof of \cite[Corollary 9.4]{Johnston_Birational-invariance}, applying Theorem 1.4 of the cited reference we obtain 
    \[\frac{1}{m} \deg [\cM (X,\btau)]^{\vir} = \sum_{\btau'}\deg [\cM (\tX ,\btau' )]^{\vir},\]
    where the sum is over decorated lifts $\btau'$ of $\btau$ with $\dim \tau '= \dim \tau$ and $m$ is the index of the inclusion $(\tau')^{\gp}\rightarrow \tau^{\gp}$.
    We may restrict the sum to decorated tropical cylinder types that lift $\btau$, since otherwise the virtual degree of $\cM (\tX ,\btau' )$ is $0$ by \cref{lemma:tropical-lift-cylinder}.
    The index of the inclusion may be computed by applying the snake lemma to the following commutative diagram, where the rows are exact:
    \[
    \begin{tikzcd}
        0\ar[r] & \Lambda_{\tau_{v_i}'}  \ar[r]\ar[d,"(h_{\tau'})_{\ast}"] & \Lambda_{\tau_{v_i}}\ar[r]\ar[d,"(h_{\tau})_{\ast}"] & H \ar[r]\ar[d] & 0 \\
        0 \ar[r] & \Lambda_{\bsigma ' (v_i)} \ar[r] & \Lambda_{\bsigma (v_i)} \ar[r] & 0 .
    \end{tikzcd}
    \]
    We obtain  $m =\frac{k_{\tau}}{k_{\tau'}}$, where $k_{\tau}$ and $k_{\tau'}$ are defined in \eqref{eq:multiplicity-tropical-cylinder}.
    It remains to prove that if $\btau'$ is a tropical cylinder type lifting the tropical cylinder type $\btau$, then $\Aut (\btau' )\simeq \Aut (\btau )$.
    Let $\bomega_1,\bomega_2$ (resp. $\bomega_1',\bomega_2'$) be the decorated broken line types obtained by splitting $\btau$ (resp. $\btau'$) as in \cref{lemma:splitting-cylinder-type}(1).
    We have isomorphisms $\Aut (\btau )\simeq \Aut (\bomega_1)\times\Aut (\bomega_2)$ and $\Aut (\btau')\simeq \Aut (\bomega_1')\times\Aut (\bomega_2')$.
    Furthermore $\bomega_i'$ is a decorated tropical lift of $\bomega_i$ for $i=1,2$.
    Using the description of the automorphism group of a decorated broken line type in \cite[\S4.2]{Gross_Canonical-wall-structure-and-intrinsic-mirror-symmetry}, we are left to show that if $\bgamma'$ is a decorated wall type in $\tSigma$ lifting a decorated wall type $\bgamma$ to $\Sigma$, then $\Aut (\bgamma )\simeq \Aut (\bgamma')$.
    
    We denote by $L_{\out}$ (resp. $L_{\out}'$) the unique leg of $\bgamma$ (resp. of $\bgamma'$).
    Since $\bgamma'$ is a top dimensional lift of $\bgamma$, it corresponds to a choice of maximal cone in the subdivision of the image cone of $L_{\out}$ induced by $\tSigma$, and a choice of puncturing for the leg.
    These choices determine a unique subdivision of the graph $G_{\gamma}$ which produces $G_{\gamma'}$.
    If two edges of $G_{\gamma}$ are identified under an element of $\Aut (\bgamma)$, they are subdivided in the same way in $G_{\gamma'}$ as automorphisms commute with $\bsigma$ and $\bu$.
    Any automorphism of a wall type restricts to the identity on the shortest path connecting the unique leg to a vertex of valence at least $3$.
    This implies that $\Aut (\bgamma)$ injects in $\Aut (\bgamma' )$, and we can prove surjectivity in the same way.
    Hence $\Aut (\bgamma) \simeq \Aut (\bgamma')$, competing the proof.
\end{proof}

The following lemma identifies the total curve class of a lift of a decorated cylinder type.

\begin{lemma} \label{lemma:total-class-lift-cylinder}
    Let $\pi\colon (\tX,\tD)\rightarrow (X,D)$ be a log-\'{e}tale modification. 
    Let $\btau$ be a decorated cylinder type in $\Sigma$ with contact orders $\bu$ and total curve class $A$, and let $\btau'$ be a decorated cylinder type in $\tSigma$ lifting $\btau$.
    If $N_{\btau'}\neq 0$, then the total curve class of $\btau'$ is $\tA$.
\end{lemma}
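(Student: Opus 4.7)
The plan is to verify that $A'$ satisfies the two defining conditions of $\tA$ recorded in the paragraph following \cref{prop:moduli-spaces-before-after-blowup}, namely $\pi_\ast A' = A$ and the specified intersection numbers with the exceptional divisors of $\pi$; then the uniqueness in that characterization yields $A' = \tA$.

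First I would check that $\pi_\ast A' = A$. Since $\btau'$ is a decorated lift of $\btau$ in the sense of \cite{Johnston_Birational-invariance}, the curve-class decorations at the vertices of $\tau'$ push forward under $\pi$ to those of $\tau$, so summing over the vertices gives $\pi_\ast A' = A$. The same compatibility ensures that the contact orders along the legs of $\btau'$ equal $\bu = (\bu_1, \bu_2, 0)$, exactly as for $\btau$.

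Next I would compute $A' \cdot \tD_j$ for every irreducible component $\tD_j \subset \tD$ in terms of contact orders. The hypothesis $N_{\btau'} \neq 0$ provides a log stable map $f \colon C \to \tX_s$ of type $\btau'$. By \cref{assumption:semipositivity}, applied as in the argument of \cite{Johnston_Comparison}, no irreducible component of $C$ is mapped entirely into $\tD_s$. Therefore $f^\ast \mathcal{O}(\tD_j)$ is scheme-theoretically supported on the marked points, and
\begin{equation*}
A' \cdot \tD_j \;=\; \deg f^\ast \mathcal{O}(\tD_j) \;=\; \text{(total contact order of $\bu_1, \bu_2$ with $\tD_j$)}.
\end{equation*}
For $\tD_j$ the strict transform of a component $D_j \subset D$, this reproduces $A \cdot D_j = \tA \cdot \tD_j$ by Step~1 and the projection formula. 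For an exceptional divisor $E$ associated to a ray $\rho \in \tSigma \setminus \Sigma$, recall that the toric blowup was chosen so that $\bu_1$ and $\bu_2$ lie on rays of $\tSigma$; hence the contribution is nonzero only when $\rho$ is the ray of some $\bu_\ell$, in which case it equals the integral length of $\bu_\ell$. This matches exactly the description of $\tA \cdot E$.

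The only step with real content is the vanishing of boundary-contracted components, which rests on \cref{assumption:semipositivity}; the rest is bookkeeping with contact orders and the projection formula. Once both $\pi_\ast A' = A$ and the intersection profile with the exceptional divisors are verified, the characterization of $\tA$ recalled after \cref{prop:moduli-spaces-before-after-blowup} (equivalently, the fact that for a toric blowup a numerical class in $\NE(\tX, \bbZ)$ is pinned down by its pushforward together with its intersections against the exceptional divisors) forces $A' = \tA$.
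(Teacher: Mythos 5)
Your overall strategy (characterize $\tA$ by $\pi_\ast A' = A$ plus its intersection numbers with the exceptional divisors, then match these against the contact orders $\bu_1,\bu_2$) is the same as the paper's, and the bookkeeping in your first and third steps is fine. The gap is in your second step: the claim that, because $N_{\btau'}\neq 0$ and by \cref{assumption:semipositivity}, no irreducible component of the domain maps into $\tD_s$, is false for maps of cylinder type. By \cref{def:S-type}, the vertex $v_i$ adjacent to the contracted leg has $\dim h(\tau'_{v_i}) = d$, so $\bsigma(v_i)$ is a maximal cone and the corresponding component of any log stable map of type $\btau'$ is contracted into a $0$-dimensional stratum of $\tD_s$; likewise the vertices of the wall-type pieces attached at bending vertices are mapped to positive-dimensional cones, hence into boundary strata. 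Semi-positivity, as used in \cite{Johnston_Comparison} and in this paper, only excludes boundary components for maps appearing over the \emph{generic} point constraint; the cylinder types are precisely the degenerate special-fiber types whose curves do meet and lie in the boundary. Consequently $f^{-1}(\tD_j)$ is not supported on the marked points, and your identification $A'\cdot \tD_j = \deg f^\ast\cO(\tD_j) = \text{(total contact order)}$ cannot be justified by a naive scheme-theoretic vanishing-order argument.

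The conclusion of that step is nonetheless true, but for a logarithmic reason: the paper invokes \cite[Corollary 2.30]{Abramovich_Punctured_logarithmic_maps}, which computes the degree of $A'$ against each divisor purely in terms of the contact orders recorded in the tropical type, and this formula is valid even when components of the curve lie inside the boundary. Replacing your semipositivity argument by this citation repairs the proof; the remaining steps (that $\pi_\ast A' = A$ because $\btau'$ is a decorated lift, that the leg contact orders give intersection number $0$ or the integral length of $\bu_\ell$ against each exceptional divisor, and the uniqueness characterization of $\tA$ stated after \cref{prop:moduli-spaces-before-after-blowup}) go through as you wrote them.
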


\begin{proof}
    Let $A'$ denote the total curve class of $\btau'$, by the blowup formula $A'$ is uniquely determined by its intersection number with each exceptional component of $\pi$ and by the class $\pi_{\ast}A'$.
    Since $\btau'$ is a decorated lift of $\btau$, the latter is $A$.
    Since $N_{\btau'}\neq 0$, the moduli space $\cM (X ,\btau' )$ is not empty and we can use \cite[Corollary 2.30]{Abramovich_Punctured_logarithmic_maps} to compute the degree of $A'$ against a divisor in terms of the contact orders of the tropical type $\btau'$.
    We conclude that $A' = \tA$, where $\tA$ is defined after \cref{prop:moduli-spaces-before-after-blowup}.
    The proof is complete.
\end{proof}

\section{Comparison of cylinder counts}\label{sec:comparison-counts}

In this section we prove the main comparison result, \cref{thm:main-comparison}, which expresses non-archimedean cylinder counts in terms of log Gromov-Witten invariants. 
In \S\ref{subsec:algebraic-stacks-over-DVR}, we collect some general facts about irreducible components of algebraic stacks over a DVR, and in \cref{lemma:construct-substack} we explain how to produce a substack from an open substack in the special fiber.
In good situations, \cref{lemma:description-generic-fiber} describes the generic fiber of the constructed substack.
In \S\ref{subsec:family-moduli-space}, we precisely define the degenerate point constraint corresponding to \cref{fig:point-constraint}, and define a family of point-constrained moduli spaces which is parameterized by the spectrum of a complete DVR. 
In \S\ref{subsec:main-comparison}, we prove our main comparison (\cref{thm:main-comparison}) by first describing the special fiber of the moduli space (\cref{prop:decomposition-special-fiber}), and then refine the moduli spaces by selecting the tropical types corresponding to a given spine (Constructions \ref{constr:refined-tropical-moduli-space} and \ref{constr:refined-geometric-moduli-space}).
The description of the general fiber of this refined moduli space (\cref{prop:spine-refinement-generic-fiber}) relies on the study of logarithmic cylinder types in \cref{subsec:cylinder-counts}.

\subsection{Algebraic stacks over a DVR} \label{subsec:algebraic-stacks-over-DVR}

In this subsection, we fix the spectrum $S$ of a complete DVR of equal characteristic $0$ and provide some results and constructions related to algebraic stacks defined over $S$.
We denote by $s$ the closed point of $S$, and by $\eta$ its generic point.

Given an algebraic stack $\cX$, we denote by $\vert \cX\vert$ the underlying topological space. 
When talking about a substack $\cY \subset \cX$, we generally equip $\cY$ with the induced stack structure. 
If $\cX$ is locally noetherian, its \emph{irreducible components} are by definition the irreducible components of $\vert \cX \vert$.
Irreducible components are equipped with the \emph{reduced} stack structure, we view them as integral closed substacks of $\cX$.
The multiplicity of irreducible components is defined in \cite[Tag 0DR4]{Stacks_project}.
We recall that if $\cX$ is quasi-separated, then $\vert \cX\vert$ is a sober topological space.
We denote by $\Irr (\cX )$ the set of irreducible components of $\cX$.

The characterization of flatness for schemes over a DVR \cite[III, Proposition 9.7]{Hartshorne_Algebraic_geometry} extends to algebraic stacks. 

\begin{proposition}\label{lemma:flatness-over-DVR}
  Let $f\colon \cX\rightarrow S$ be a morphism from a locally noetherian, quasi-compact algebraic stack to the spectrum of a DVR.
  Then the following are equivalent:
  \begin{enumerate}[wide]
    \item $f$ is flat,
    \item $\vert \cX\vert$ equals the closure of the generic fiber $\vert \cX_{\eta}\vert$.
  \end{enumerate}
  In particular, if $f$ is flat then every irreducible component of $\cX$ dominates $S$.
\end{proposition}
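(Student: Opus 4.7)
The plan is to reduce the equivalence to the classical flatness criterion for schemes over a DVR by passing to a smooth presentation of $\cX$, and then to read off the ``in particular'' clause directly from condition (2).

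First I would choose a smooth surjective morphism $\pi\colon U\to\cX$ from a scheme $U$; by quasi-compactness of $\cX$ I can take $U$ quasi-compact, and since $\pi$ is locally of finite presentation while $\cX$ is locally noetherian, $U$ is itself noetherian. Writing $g\coloneqq f\circ\pi\colon U\to S$, faithful flatness of $\pi$ immediately yields the equivalence ``$f$ flat $\Leftrightarrow$ $g$ flat''. The key technical point will be to transfer the topological condition (2): the induced map $\vert U\vert\to\vert\cX\vert$ is continuous, surjective, and, because smooth morphisms are flat, satisfies the generization-lifting property. A straightforward check using these properties shows that $\vert\cX\vert=\overline{\vert\cX_{\eta}\vert}$ if and only if $\vert U\vert=\overline{\vert U_{\eta}\vert}$, reducing both halves of the equivalence to the scheme-theoretic statement over $S$.

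For the scheme case, the direction $(1)\Rightarrow(2)$ is the generization-lifting property of flat morphisms: for any $u\in\vert U\vert$, the generic point $\eta\in\vert S\vert$ is a generization of $g(u)$, so it lifts to a generization $u'$ of $u$ with $g(u')=\eta$, giving $u\in\overline{\{u'\}}\subseteq\overline{\vert U_{\eta}\vert}$. The converse $(2)\Rightarrow(1)$ invokes the standard criterion that a locally noetherian scheme over $\Spec R$ is flat if and only if its structure sheaf has no $\pi$-torsion, which one reads off from condition (2) applied to $U$.

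Finally, the ``in particular'' assertion is an immediate consequence of (2): if $Z\subseteq\cX$ is an irreducible component with generic point $z\in\vert\cX\vert$, then $z\in\overline{\vert\cX_{\eta}\vert}$ furnishes a generization of $z$ lying in $\vert\cX_{\eta}\vert$; by maximality of $z$, this generization must be $z$ itself, whence $f(z)=\eta$ and $Z$ dominates $S$. The only non-formal step I anticipate is the descent of the topological condition (2) along the smooth cover, which hinges entirely on the generization-lifting property of flat morphisms; every other step is a direct translation of well-known scheme-theoretic facts.
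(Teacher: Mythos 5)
Your proof is correct and follows essentially the same route as the paper: pass to a smooth presentation $U\to\cX$, transfer both flatness and the density of the generic fiber along the chart, and reduce to the classical flatness criterion for schemes over a DVR; the only differences (using generization-lifting of the smooth chart instead of its openness, and deducing the final clause topologically from condition (2) rather than by lifting irreducible components to $U$) are cosmetic. The one point to watch is that reading off $\pi$-torsion-freeness from the purely topological condition (2) requires $U$ to have no embedded components (e.g.\ to be reduced), a caveat equally implicit in the paper's reduction to the scheme case via Hartshorne III.9.7.
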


\begin{proof}
  Fix a surjective smooth morphism $g\colon U\rightarrow \cX$, with $U$ an affine scheme.
  The map $\cX\rightarrow S$ is flat if and only if $U\rightarrow S$ is flat.
  The chart $g$ induces a smooth chart $U_{\eta}\rightarrow \cX_{\eta}$.
  Since $g(\overline{\vert U_{\eta}\vert}) = \overline{g(\vert U_{\eta}\vert )} = \overline{\vert \cX_{\eta}\vert}$, if the closure of $U_{\eta}$ equals $U$ then condition (2) is satisfied.
  Conversely, the preimage of a dense open set under a surjective open map is dense.
  Hence, if $\cX_{\eta}$ is dense in $\cX$, then $g^{-1} (\cX_{\eta} ) = U_{\eta}$ is dense in $U$.
  The equivalence of (1) and (2) is thus reduced to the scheme case.

  For the last statement, fix an irreducible component $\cX'$ of $\cX$.
  It is dominated by an irreducible component $U'\subset U$.
  Since $U\rightarrow S$ is flat, the generic point of $U'$ maps to the generic point of $S$.
  Thus, the same holds for $\cX'$ and the proof is complete.
\end{proof}

We use the following terminology for irreducible components.

\begin{definition}
    Let $\cX\rightarrow S$ be a locally noetherian, quasi-separated algebraic stack. 
    An irreducible component of $\cX$ is called \emph{horizontal} if it dominates $S$.
    Otherwise, it is called \emph{vertical}.
\end{definition}
\personal{
Irreducible components define a stratification, and by an open stratum we mean the a set of the form $\cZ\setminus (\cX \setminus\cZ)$, where $\cZ$ is an irreducible component.}

The following lemma provides a description of horizontal irreducible components. 
We will use it in the special case when $\cX_{\eta}$ is \'{e}tale.

\begin{lemma}\label{lemma:horizontal-irreducible-components-over-DVR}
    Let $\cX\rightarrow S$ be a locally noetherian, quasi-separated algebraic stack.
    There is a 1:1 correspondence between horizontal irreducible components of $\cX$ and irreducible components of $\cX_{\eta}$.  
\end{lemma}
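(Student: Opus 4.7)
The plan is to reduce the lemma to the analogous, essentially tautological statement about sober topological spaces. Under our hypotheses, $\vert \cX \vert$ is a sober topological space (a standard fact for locally noetherian, quasi-separated algebraic stacks), so its irreducible components correspond bijectively to its generic points, and likewise for $\vert \cX_{\eta} \vert$. The open immersion $\cX_{\eta} \hookrightarrow \cX$ induces an open embedding of topological spaces $\vert \cX_{\eta}\vert \hookrightarrow \vert \cX \vert$, so I may regard $\vert \cX_{\eta}\vert$ as an open subspace of $\vert \cX \vert$.

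First I would characterize horizontal components as those whose generic point lies in $\vert \cX_{\eta}\vert$: an irreducible component $\cZ$ with generic point $\xi$ dominates $S$ precisely when the image of $\xi$ in $S$ is $\eta$, which happens exactly when $\xi \in \vert \cX_{\eta}\vert$.

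Next I would define the correspondence by sending a horizontal component $\cZ$ with generic point $\xi$ to the reduced closed substack of $\cX_{\eta}$ cut out by the closure of $\xi$ in $\vert \cX_{\eta}\vert$ (equivalently, the reduced intersection $\cZ \cap \cX_{\eta}$), and conversely sending an irreducible component $\cW$ of $\cX_{\eta}$ with generic point $\xi'$ to the reduced closure of $\xi'$ in $\vert \cX \vert$. On the level of generic points, both operations are the identity, using the elementary fact that an open subset of a sober irreducible space is irreducible with the same generic point. Hence the two assignments are mutually inverse, which gives the desired bijection.

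The only point requiring care is the verification that $\vert \cX \vert$ is sober and that $\vert \cX_{\eta}\vert$ carries the subspace topology induced from $\vert \cX \vert$; both are standard for quasi-separated algebraic stacks, so once they are in hand the argument is a formal consequence of sobriety.
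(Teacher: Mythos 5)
Your proposal is correct and takes essentially the same route as the paper: both arguments use sobriety of $\vert\cX\vert$ and $\vert\cX_{\eta}\vert$ to identify irreducible components with their generic points, characterize horizontal components as those whose generic point lies in $\vert\cX_{\eta}\vert$, and pass back and forth by taking closures. The one thing the paper spells out that you compress into "a formal consequence of sobriety" is the well-definedness (maximality) check --- that $\cZ\cap\cX_{\eta}$ is genuinely a component of $\cX_{\eta}$ and that the closure in $\cX$ of a component of $\cX_{\eta}$ is genuinely a component of $\cX$, the latter using that vertical components are disjoint from $\cX_{\eta}$ --- but these are the same short sobriety arguments, so there is no real gap.
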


\begin{proof}
    Up to replacing $\cX$ by the closure of $\cX_{\eta}$ in $\cX$, we may assume that $\Irr( \cX )$ consists only of horizontal components.
    The result then follows from the fact that a dense open inclusion between sober topological spaces induces a bijection on irreducible components. 
    More precisely, since $\vert \cX\vert$ and $\vert \cX_{\eta}\vert$ are sober we identify $\Irr (\cX )$ and $\Irr (\cX_{\eta} )$ with the sets of generic points of irreducible components. 
    Hence $\Irr (\cX_{\eta} )\subset \vert \cX_{\eta}\vert$, 
    and since components of $\cX$ are horizontal we also have $\Irr (\cX )\subset \vert \cX_{\eta} \vert$.

    We first show the inclusion $\Irr (\cX )\subset \Irr (\cX_{\eta} )$.
    Let $\xi\in \Irr (\cX )$, there exists $\xi'\in \Irr (\cX_{\eta} )$ such that $\xi\in \overline{\lbrace \xi'\rbrace}\cap \vert \cX_{\eta}\vert$ (where we mean closure in $\vert \cX\vert$).
    Then $\overline{\lbrace \xi\rbrace}\subset \overline{\lbrace \xi'\rbrace}$, and since $\overline{\lbrace \xi\rbrace}$ is an irreducible component this inclusion is an equality. 
    Since $\vert\cX\vert$ is sober, we must have $\xi = \xi'$.

    For the reverse inclusion, we prove that if $\xi\in\Irr (\cX_{\eta} )$ then $\overline{\lbrace \xi\rbrace}$ is an irreducible component of $\vert \cX\vert$.
    Since $\overline{\lbrace \xi\rbrace}$ is irreducible, there exists $\xi'\in \Irr (\cX )$ such that $\overline{\lbrace \xi\rbrace}\subset \overline{\lbrace \xi '\rbrace}$.
    Intersecting with $\vert \cX_{\eta}\vert$, we obtain the same inclusion for the closures of $\xi$ and $\xi'$ in $\vert \cX_{\eta}\vert$.
    But now, both are irreducible components of $\vert \cX_{\eta}\vert$ hence the inclusion is an equality, and we conclude that $\xi =\xi'$ using that $\vert \cX_{\eta}\vert$ is sober. 
    The lemma is proved.
\end{proof}

Given an algebraic stack $\cX$ over a DVR, the next construction associates a proper substack of $\cX$ to an open substack of the special fiber, under some properness assumptions.
We will apply this construction in \cref{subsubsec:spine-refinement} to prove our main comparison result.

\begin{construction}\label{lemma:construct-substack}
    Let $f\colon \cX\rightarrow S$ be an algebraic stack, with $f$ separated and locally of finite type, and $\cX_s$ proper.
    Given an open substack $\cU\subset \cX_s$, we construct a proper sustack $\cX (\cU )\subset \cX$ such that  $\cX (\cU )_s$ equals the closure of $\cU$ in $\cX_s$.

    Completion along the special fiber produces a morphism of formal stacks $\fX\rightarrow \hS$, where $\fX$ (resp. $\hS$) is the completion of $\cX$ (resp. $S$) along the special fiber.
    Note that $\hS$ is the formal spectrum of a complete DVR.
    The open substack $\cU\subset \cX_s$ produces an open substack $\fU\subset \fX$.
    The closure of $\fU$ is a closed substack of $\fX$, whose underlying topological space coincides with the closure of $\cU$ in $\cX_s$.
    Since $\cX_s$ is proper, the formal stack $\fX$ is proper over $\hS$.
    Then the closure $\overline{\fU}\subset \fX$ is proper over $\hS$, and by formal GAGA for stacks (\cite[Corollary 4.6]{Conrad_Formal-GAGA}) there exists a proper substack $\cX (\cU )\subset \cX$ whose formal completion coincides with $\overline{\fU}$.
    This stack $\cX (\cU )$ satisfies the desired properties.
\end{construction}

The following lemma describes the generic fiber of the constructed stack $\cX (\cU )$ under an additional assumption.

\begin{lemma} \label{lemma:description-generic-fiber}
  Let $\cX\rightarrow S$ be a proper algebraic stack, $\cU \subset \cX_s$ an open substack with closure $\overline{\cU}$ in $\cX_s$, producing the proper substack $\cX (\cU )\subset\cX$ as in \cref{lemma:construct-substack}.
  Assume that no horizontal irreducible component of $\cX$ intersects $\overline{\cU}\setminus\cU$.
  Then $\cX (\cU )$ is the substack given by the union of $\overline{\cU}$ and the horizontal irreducible components of $\cX$ which intersect $\cU$.
\end{lemma}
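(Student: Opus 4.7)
The plan is to apply formal GAGA to both $\cX(\cU)$ and to the candidate substack
\[
\cZ \coloneqq \overline{\cU}\cup \bigcup_i \cY_i,
\]
where the $\cY_i$ range over the horizontal irreducible components of $\cX$ meeting $\cU$. Since $\cX$ is proper, hence noetherian, there are only finitely many $\cY_i$, so $\cZ$ is a closed (hence proper) substack of $\cX$. The two inclusions $\cX(\cU)\subset \cZ$ and $\cZ\subset \cX(\cU)$ will be handled quite differently.

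The inclusion $\cX(\cU)\subset \cZ$ is the easy direction. Since $\cZ\supset \overline{\cU}\supset \cU$, the formal completion $\widehat{\cZ}$ of $\cZ$ along the special fiber contains the open formal substack $\fU\subset \fX$. By construction $\overline{\fU}$ is the scheme-theoretic closure of $\fU$ in $\fX$, so it is minimal among closed formal substacks containing $\fU$, and therefore $\overline{\fU}\subset \widehat{\cZ}$. The full faithfulness of formal completion on proper closed substacks (used in \cref{lemma:construct-substack}) then yields $\cX(\cU)\subset \cZ$.

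For $\cZ\subset \cX(\cU)$, the piece $\overline{\cU}\subset \cX(\cU)$ is immediate from $\overline{\cU}=\cX(\cU)_s$. The main point is to show that each horizontal irreducible component $\cY$ of $\cX$ meeting $\cU$ satisfies $\cY\subset \cX(\cU)$, and here the non-intersection hypothesis is decisive. It gives $\cY\cap \overline{\cU}=\cY\cap \cU$, so $\cY\cap \cU$ is both open (as $\cU$ is open in $\cX_s$) and closed (as $\overline{\cU}$ is closed in $\cX_s$) in $\cY_s$. Because $\cY$ is integral and dominates $S$, \cref{lemma:flatness-over-DVR} makes $\cY\to S$ flat; properness together with integrality then forces $\cY_s$ to be connected via Stein factorization over the complete DVR. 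The non-empty clopen subset $\cY\cap \cU$ must therefore be all of $\cY_s$, giving $\cY_s\subset \cU\subset \overline{\cU}$. Topologically $\widehat{\cY}$ then lies inside $\overline{\fU}$, and since $\fU\cap \widehat{\cY}$ is a non-empty open in the (formally integral) $\widehat{\cY}$, its scheme-theoretic closure in $\fX$ recovers all of $\widehat{\cY}$ and is contained in $\overline{\fU}$. Hence $\widehat{\cY}\subset \overline{\fU}$, and formal GAGA promotes this to $\cY\subset \cX(\cU)$.

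The main obstacle is the connectedness step that lets the clopen non-empty $\cY\cap \cU\subset \cY_s$ exhaust $\cY_s$ — without it, a disconnected piece of $\cY_s$ disjoint from $\cU$ would lie outside $\overline{\cU}$ and break the set-theoretic containment $\cY_s\subset \overline{\cU}$. Establishing it requires the integrality of the horizontal component, properness of $\cX\to S$, and completeness of the DVR, assembled through a Stein factorization argument for algebraic stacks. Once connectedness is secured, the integrality of $\widehat{\cY}$ yields scheme-theoretic density of any non-empty open, and the remaining passage between $\cX$ and $\fX$ is a routine application of the GAGA dictionary from \cref{lemma:construct-substack}.
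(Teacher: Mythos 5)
Your second inclusion, $\cZ\subset \cX(\cU)$, is essentially the paper's argument: connectedness of the special fiber of each horizontal component (your Stein-factorization step is exactly the paper's \cref{lemma:connected-special-fiber}, $\pi_0(\cY_s)\simeq\pi_0(\cY)$ for $\cY$ proper over the complete DVR), the hypothesis to make $\cY\cap\cU$ clopen and hence $\cY_s\subset\cU$, and then formal GAGA. One remark: the closing ``formally integral, density of $\fU\cap\widehat{\cY}$'' flourish is unnecessary and a bit shaky as stated; since $\widehat{\cY}$ is supported in $\cU$ and $\overline{\fU}$ restricts to $\fU$ over $\cU$ (while the ideal of $\widehat{\cY}$ is the unit ideal off its support), the containment $I_{\overline{\fU}}\subset I_{\widehat{\cY}}$, i.e.\ $\widehat{\cY}\subset\overline{\fU}$, is immediate.

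The genuine gap is in what you call the easy direction, $\cX(\cU)\subset\cZ$. From $\cZ\supset\overline{\cU}\supset\cU$ you conclude that $\widehat{\cZ}$ contains $\fU$; but this only holds at the level of underlying topological spaces. The formal substack $\fU$ is the \emph{full} formal neighborhood of the open set $\cU$ inside $\fX$ (it contains every infinitesimal thickening in the direction of $S$), whereas $\overline{\cU}$ lives entirely in the special fiber; containing it gives no control on the structure sheaf of $\widehat{\cZ}$ over $\cU$. What you actually need is that the ideal of $\cZ$ vanishes on the formal neighborhood of $\cU$, equivalently that every irreducible (and embedded) component of $\cX$ passing through a point of $\cU$ is contained in $\cZ$ with its scheme structure -- the vertical ones because any vertical component meeting $\cU$ is dense-in-its-closure inside $\overline{\cU}$, the horizontal ones by the very definition of the $\cY_i$, and some reducedness hypothesis to rule out nilpotents along horizontal directions. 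None of this appears in your argument, and it is precisely the nontrivial content; it is also telling that your forward direction never uses the hypothesis of the lemma, while the paper's does. The paper avoids the issue entirely by arguing pointwise on the generic fiber: for $\xi\in\cX(\cU)_\eta$, the closure of $\xi$ in $\cX$ is proper over $S$, so its special fiber is nonempty and contained in $\cX(\cU)_s=\overline{\cU}$; by the dichotomy coming from connectedness plus the hypothesis (each horizontal component has special fiber inside $\cU$ or disjoint from $\overline{\cU}$), that special fiber lies in $\cU$, so $\xi$ lies on a horizontal component meeting $\cU$, i.e.\ $\xi\in\cZ_\eta$. Either supply the missing ideal-theoretic argument (with the reducedness caveat made explicit) or replace this direction by the pointwise argument.
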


\begin{proof}
    Let $\cZ\subset \cX$ be a substack associated to a horizontal irreducible component.
    Since $\cZ_s$ is connected by \cref{lemma:connected-special-fiber}, if $\cZ$ intersects both $\cU$ and $\cX_s\setminus \overline{\cU}$ then necessarily $\cZ$ must intersect the complement of $\cU\cup (\cX_s\setminus \overline{\cU})$ in the special fiber, which is $\overline{\cU}\setminus\cU$.
    This contradicts the assumption, and we deduce that the special fiber of every horizontal irreducible component is contained either in $\cU$, or in $\cX_s\setminus\overline{\cU}$.

    Now, let $\cZ$ be the substack defined by the horizontal irreducible components which intersect $\cU$.
    To prove the lemma, it is enough to show that $\cX (\cU )_{\eta} = \cZ_{\eta}$.
    If $\xi$ is a point of $\cX (\cU )_{\eta}$, then its closure in $\cX$ has special fiber contained in $\overline{\cU}$, hence in $\cU$ by the assumption of the lemma.
    It follows that $\xi$ lies on a horizontal irreducible component which intersects $\cU$, hence it is a point of $\cZ_{\eta}$.
    Conversely, note that $\cZ$ is a proper substack of $\cX$.
    Since $\cZ_s \subset \cX(\cU)_s$, formal GAGA implies that $\cZ\subset \cX (\cU )$.
    In particular $\cZ_{\eta}\subset \cX (\cU )_{\eta}$, completing the proof. 
\end{proof}

\begin{lemma} \label{lemma:connected-special-fiber}
    Let $\cX\rightarrow S$ be a proper algebraic stack.
    Then there is a bijection $\pi_0 (\cX_s ) \simeq \pi_0 (\cX )$.
\end{lemma}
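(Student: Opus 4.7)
The plan is to show that any clopen decomposition of $\cX_s$ lifts uniquely to a clopen decomposition of $\cX$; this implies that the natural map $\pi_0(\cX_s)\to\pi_0(\cX)$ induced by the closed inclusion $\cX_s\hookrightarrow \cX$ is a bijection.

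For surjectivity, I first note that $\cX$ is noetherian because $\cX\to S$ is proper and $S$ is noetherian. Every irreducible component $\cZ\subset\cX$ has closed image in $S$ by properness, so $f(\cZ)\in\{S,\{s\}\}$, and in either case $\cZ\cap\cX_s\neq\emptyset$. Since connected components are unions of irreducible components in a noetherian stack, every connected component of $\cX$ meets $\cX_s$, which gives surjectivity.

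For injectivity, I fix a non-trivial decomposition $\cX_s=\cY_1\sqcup\cY_2$ into clopen substacks, and apply \cref{lemma:construct-substack} to each open piece $\cY_i\subset\cX_s$ to produce a proper closed substack $\cX_i \coloneqq \cX(\cY_i)\subset\cX$ with $(\cX_i)_s=\overline{\cY_i}=\cY_i$ (the closure being trivial since $\cY_i$ is already closed in $\cX_s$). By the construction, the formal completion $\widehat{\cX_i}$ along the special fiber coincides with the formal neighborhood of $\cY_i$ inside $\widehat{\cX}$. Since $|\widehat{\cX}|=|\cX_s|=|\cY_1|\sqcup|\cY_2|$ is a clopen topological decomposition, one obtains $\widehat{\cX}=\widehat{\cX_1}\sqcup\widehat{\cX_2}$ as formal stacks. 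Consequently the formal completions of $\cX_1\cup\cX_2$ and $\cX_1\cap\cX_2$ are $\widehat{\cX}$ and the empty formal stack respectively, and formal GAGA for proper stacks (\cite{Conrad_Formal-GAGA}) then forces $\cX_1\cup\cX_2=\cX$ and $\cX_1\cap\cX_2=\emptyset$, so $\cX=\cX_1\sqcup\cX_2$.

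The main step where care is needed is the formal decomposition $\widehat{\cX}=\widehat{\cX_1}\sqcup\widehat{\cX_2}$. This is essentially topological, because the underlying space of $\widehat{\cX}$ coincides with $|\cX_s|$, and a clopen substack of $\cX_s$ gives a clopen formal substack of $\widehat{\cX}$; properness of each piece over $\widehat{S}$ is inherited from that of $\widehat{\cX}$, which justifies the application of \cref{lemma:construct-substack}. With this argument for a binary decomposition, identical reasoning (or a short induction on the number of components) handles an arbitrary partition of $\cX_s$ into connected components, yielding the bijection $\pi_0(\cX_s)\simeq\pi_0(\cX)$.
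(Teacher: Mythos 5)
Your argument is correct, but it takes a different route from the paper's proof of \cref{lemma:connected-special-fiber}. The paper argues in two lines: since $\vert\fX\vert=\vert\cX_s\vert$ one has $\pi_0(\cX_s)\simeq\pi_0(\fX)$, and then $\pi_0(\fX)\simeq\pi_0(\cX)$ is extracted from formal GAGA by comparing $H^0(\cX,\cF)\simeq H^0(\fX,\widehat{\cF})$ for a constant sheaf, i.e.\ by counting components through global sections/idempotents in one step. You instead lift a clopen decomposition of $\cX_s$ to one of $\cX$ by algebraizing the two clopen formal pieces via \cref{lemma:construct-substack}, and you prove surjectivity of $\pi_0(\cX_s)\to\pi_0(\cX)$ separately, using that every irreducible component of $\cX$ has closed image in $S$ and hence meets the special fiber. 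Both proofs rest on the same two pillars ($\vert\fX\vert=\vert\cX_s\vert$ and formal GAGA), so yours is not more general, but it is a perfectly sound alternative and has the mild virtue of reusing the paper's Construction \ref{lemma:construct-substack}; the cost is the extra bookkeeping that the paper avoids. The only step you gloss is ``formal GAGA forces $\cX_1\cup\cX_2=\cX$ and $\cX_1\cap\cX_2=\emptyset$'': to make this precise you should argue on ideal sheaves, namely that $\widehat{I_1\cap I_2}=\widehat{I_1}\cap\widehat{I_2}=0$ and $\widehat{I_1+I_2}=\cO_{\fX}$, and then use that the completion functor on coherent sheaves is an equivalence (hence a coherent sheaf with vanishing completion is zero); for the emptiness of $\cX_1\cap\cX_2$ one can alternatively avoid GAGA altogether by noting that it is a closed substack proper over $S$ whose special fiber is empty, so by closedness of its image it must be empty. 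With these details supplied, your proof is complete.
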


\begin{proof}
    Let $\fX$ denote the completion of $\cX$ along the special fiber. 
    Since $\vert \fX \vert = \vert \cX_s\vert$, we have a bijection $\pi_0 (\cX_s)\simeq \pi_0 (\fX )$.
    On the other hand, by formal GAGA we have $\pi_0 (\fX )\simeq \pi_0 (\cX )$.
    Since there are finitely many connected components, this can be seen by looking at the rank of $H^0 (\cX , \cF ) \simeq H^0 (\fX ,\widehat{\cF} )$ for a constant sheaf $\cF$.
\end{proof}

\subsection{Construction of the family moduli space}  \label{subsec:family-moduli-space}
In this subsection, we construct a family of point constrained moduli spaces (\cref{def:family-moduli-space}), using a very degenerate point constraint in the special fiber (see \cref{fig:point-constraint}).

\subsubsection{Degenerate point constraint}

We review the key features of the degenerate point constraint in \cite{Johnston_Comparison}, itself adapted from \cite[\S 6]{Gross_Canonical-wall-structure-and-intrinsic-mirror-symmetry}.

The point constraint is encoded by a morphism of fs log schemes
\begin{equation} \label{eq:point-constraint}
  T \longrightarrow X,
\end{equation}
where $T = (\underline{T} , \cM_T )$ with $\underline{T} = \Spec \cO_{U,0}$ for $U$ an \'{e}tale neighborhood of $0\in\bbA_{\bbk}^1$, and $\cM_T$ an fs log structure with $R\coloneqq \ghost{\cM}_{T,s}$ a free monoid of rank $d = \dim X$ and $\ghost{\cM}_{T,\eta} = 0$.
Geometrically, the generic fiber $T_{\eta}$ maps to a point of $X\setminus D$ which we can take as generic as needed.
The special fiber $T_s$ maps to a $0$-dimensional stratum of $D$ associated to a maximal cone $\sigma_0\in \Sigma (X,D)$.
We choose $\sigma_0$ in the essential part $\Sigma^{\ess} \subset \Sigma(X,D)$.
Tropically, the map $\Sigma (T)\rightarrow \Sigma (X,D)$ is an inclusion $\bbR_{\geq 0}^{d}\hookrightarrow \sigma_0$ whose image can be taken as small as we want.
This fact is used to achieved transversality with respect to the evaluation map and obtain a flat family of moduli spaces (see \cref{prop:choice-point-constraint}).

We now modify the point constraint to fit the set-up of \cref{subsec:log-setup}.
We let $k^{\circ}$ denote the completion of $\cO_{U,0}$, it is now a complete regular DVR smooth over $\bbk$.
\personal{In particular $k^{\circ}\simeq \tk\dbb{t}$ by the classification of complete regular local rings.}
We replace $\underline{T}$ by $\Spec k^{\circ}$, equipped with the pullback log structure under the morphism $\Spec k^{\circ}\rightarrow \underline{T}$.
Now, the point constraint \eqref{eq:point-constraint} is parametrized by the spectrum of a complete DVR, smooth over $\Spec\bbk$.

Let $B$ denote the scheme $\underline{T}$ equipped with the trivial log structure, we denote by $\eta = \Spec k$ (resp. $s = \Spec \tk$) the generic point (resp. the closed point).
The generic fiber of the log scheme $T$ is the trivial log point $T_{\eta} = \Spec k$, while the special fiber is the log point $\Spec (R\rightarrow \tk)$.
As in \cref{sec:preliminaries}, we define 
\begin{align*}
    \underline{X_B} \coloneqq \underline{X}\times_{\bbk} \underline{B} ,\\
    \underline{D_B} \coloneqq \underline{D}\times_{\bbk} \underline{B},
\end{align*}
and equip $X_B$ with the divisorial log structure coming from $D_B$.
We note that $\Sigma (X_B,D_B) = \Sigma (X_s,D_s) = \Sigma (X,D)$, in particular $\Sigma (X_B,D_B)$ is $d$-dimensional and not $(d+1)$-dimensional, because $D\times B$ does not have a $0$-stratum.

We have a morphism of log schemes $T\rightarrow B$, and obtain a commutative diagram of fs log schemes:
\begin{equation} \label{eq:point-constraint-family}
\begin{tikzcd}
    T \ar[r] \ar[dr] & X_B \ar[d] \\
    & B .
\end{tikzcd}
\end{equation}

\subsubsection{Point constrained moduli space}

We can now recast the construction of the family of moduli spaces in \cite{Johnston_Comparison} into a version for log stable maps in $X_B$.
Since the log structure on $B$ is trivial, the Artin fan of $B$ is $\cA_B \simeq \Spec \bbk$.
Then, the relative Artin fan for $X_B$ is simply
\[\cX_B \coloneqq B\times_{\cA_B} \cA_X \simeq B\times \cA_X.\]

\begin{notation}[Choice of contact orders] \label{notation:choice-contact-orders}
    Let $A\in \NE (X,\bbZ )$.
    We fix a vertex type $\beta$ of tropical maps to $\Sigma (X,D)$ i.e. a tropical type whose underlying graph has a single vertex and no edges, with the following properties.
    The total curve class of $\beta$ is $A$, and $\beta$ has three legs $\lbrace L_1,L_2, L_i\rbrace$ such that: 
    \begin{enumerate}[wide]
        \item $\bsigma (L_1),\bsigma (L_2)\in\Sigma^{\ess}$,
        \item $\bu (L_1)\in \bsigma (L_1)\setminus \lbrace 0\rbrace$ and $\bsigma (L_2)\in \bsigma (L_2)\setminus \lbrace 0\rbrace$, and
        \item $\bu (L_i ) =0$.
    \end{enumerate}
    The type $\beta$ induces a global type for $\Sigma (X_B,D_B)\simeq \Sigma (X,D)$.
\end{notation}
We introduce the moduli spaces
\[\cM (X_B/B , \beta ) ,\quad \fM (\cX_B/B ,\beta ) \]
of basic logarithmic stable maps in $X_B/B$ (resp. $\cX_B/B$) of type $\beta$.
The latter prametrizes basic tropical maps to $\Sigma (X_B,D_B)$ of type $\beta$, and we refer to it as the tropical moduli space.
We note that by \cite[Proposition 5.13]{Abramovich_Punctured_logarithmic_maps}, we have $\cM (X_B/B ,\beta )\simeq \cM (X_B /\Spec\bbk,\beta )$ and similarly for the tropical moduli space.
\personal{Henceforth, we will omit $B$ from the notation.}
Since $B$ is log smooth over $\Spec\bbk$, the general theory developed in \cite{Abramovich_Punctured_logarithmic_maps} shows the following. 

\begin{proposition}
    Fix $\beta$ as in \cref{notation:choice-contact-orders}.
    \begin{enumerate}[wide]
        \item The tropical moduli space $\fM (\cX_B/B,\beta )$ is not empty, and is reduced and pure dimensional of relative dimension $0$ over $B$.
        \item The structural map  $\cM (X_B/B,\beta )\longrightarrow B$ is proper.
        \item The map $\cM (X_B/B,\beta) \rightarrow \fM (X_B/B,\beta )$ admits a relative perfect obstruction theory.
    \end{enumerate}
\end{proposition}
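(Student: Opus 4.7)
The plan is to check that each of the three assertions is a direct consequence of the general results established in \cite{Abramovich_Punctured_logarithmic_maps}, applied to our relative setting over $B$. The crucial simplification is that $B$ carries the \emph{trivial} log structure, so $\cX_B \simeq B \times \cA_X$, and a basic log map to $\cX_B$ over $B$ amounts to a scheme map to $B$ together with a basic log map to $\cA_X$. This produces a natural identification $\fM(\cX_B/B, \beta) \simeq B \times_{\bbk} \fM(\cA_X, \beta)$, and analogously $\cM(X_B/B, \beta) \simeq B \times_\bbk \cM(X, \beta)$.

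For Part (1), it then suffices to show that $\fM(\cA_X, \beta)$ is nonempty, reduced, and $0$-dimensional over $\Spec\bbk$. Since $\beta$ is a vertex type (single vertex, no edges, three legs) with contact orders lying in $\Sigma^{\ess}\subset\Sigma(X,D)$, there is a well-defined minimal cone $\sigma_\beta$ of $\Sigma(X,D)$ containing all of $\bu(L_1)$, $\bu(L_2)$, $\bu(L_i)=0$. The basic monoid of $\beta$ is then $\sigma_\beta^\vee \cap M$, and the tropical moduli stack is the Artin cone $\cA_{\sigma_\beta}$. This is reduced and $0$-dimensional (the dimension of the affine toric variety $\Spec\bbk[\sigma_\beta^\vee\cap M]$ equals that of the acting torus), and nonempty because $\sigma_\beta$ is a genuine cone. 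Taking the product with $B$ gives the desired reduced, pure $0$-dimensional stack over $B$.

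For Part (2), we invoke the properness theorem for moduli of basic log stable maps to log smooth proper targets: the morphism $X_B \to B$ is proper (base change of $X\to\Spec\bbk$) and log smooth (since $X$ is log smooth over $\Spec\bbk$ for the divisorial log structure from $D$, and $B$ has trivial log structure). Fixing the type $\beta$ (curve class plus contact orders) is closed, so the resulting moduli is proper over $B$. Part (3) is the standard construction of the log Gromov--Witten perfect obstruction theory relative to the Artin target: the obstruction complex is $R\pi_* f^* T^{\log}_{X_B/B}$ for the universal map $f$ and projection $\pi$ from the universal curve, which has amplitude $[0,1]$ since the fibers of $\pi$ are curves.

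The main (and essentially the only) obstacle is the bookkeeping in Part (1), namely the identification of the basic tropical moduli of a vertex type with $\cA_{\sigma_\beta}$ and the verification of its dimension. The remaining two parts are formal consequences of the general theory once the log smoothness and properness of $X_B/B$ are in place.
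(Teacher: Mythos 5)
Your reductions for parts (2) and (3) are essentially what the paper does: since $B$ carries the trivial log structure, $X_B\simeq X\times B$ and $\cX_B\simeq \cA_X\times B$, so everything follows from the general theory of (punctured) log stable maps applied over $\Spec\bbk$ (this is the identification $\cM(X_B/B,\beta)\simeq \cM(X_B/\Spec\bbk,\beta)$ via \cite[Proposition 5.13]{Abramovich_Punctured_logarithmic_maps} noted just before the statement), together with properness of $X\to\Spec\bbk$ and the standard relative obstruction theory $(R\pi_*f^*T^{\log}_{X_B/B})^\vee$.

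However, your argument for part (1) has a genuine gap. The stack $\fM(\cX_B/B,\beta)$ is \emph{not} the Artin cone of the single vertex type: $\beta$ is used here only as discrete data (genus $0$, the three contact orders, the total curve class), so $\fM(\cX_B/B,\beta)$ parametrizes \emph{all} basic prestable log maps to the Artin fan with that data, including maps from nodal domains whose tropicalizations have many vertices and edges. These degenerate strata are not a technicality — they are exactly the loci (e.g.\ the cylinder types of \cref{def:S-type}) on which the special-fiber analysis of \cref{subsec:main-comparison} rests. Your computation identifies (at best) the open stratum where the tropical type is exactly the vertex type, and even there the basic monoid is not governed by ``the minimal cone containing $\bu(L_1),\bu(L_2),0$'': the vertex may sit in any cone of $\Sigma$ compatible with the legs, so this locus is a union of Artin cones, not a single $\cA_{\sigma_\beta}$. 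Consequently your argument does not establish the actual content of (1) — reducedness and pure relative dimension $0$ of the \emph{whole} stack — which in the paper is imported from the general theory of \cite{Abramovich_Punctured_logarithmic_maps} (idealized log smoothness of the strata $\fM(\cX,\tau)$, equivalently the log-\'etale comparison with the $0$-dimensional stack of $3$-pointed genus-$0$ prestable curves), rather than from a single basic-monoid computation. To repair your write-up, replace the identification with $\cA_{\sigma_\beta}$ by a citation of (or an argument reproving) that general statement for all types marked by $\beta$.
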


We have evaluation maps at the contact order zero point
\begin{align*}
  \ev_{X_B} \colon &\cM (X_B/B , \beta )  \longrightarrow  X_B, \\
  \ev_{\cX_B}\colon & \fM (\cX_B/B ,\beta ) \longrightarrow  \cX_B 
\end{align*}
which are compatible with the log structures and commute with the projection to $B$.
We form the fiber product:
\begin{equation} \label{eq:evaluation-stack}
    \fM^{\ev} (\cX_B/B ,\beta) \coloneqq \fM (\cX_B/B ,\beta ) \times_{\underline{\cA_{X_B}}} \underline{X_B}.
\end{equation}
It coincides with the fs fibered product by the following lemma.

\begin{lemma} \label{lemma:evaluation-stack-is-fine-pullback}
  There is a cartesian diagram in all categories
  \[\begin{tikzcd}
    \fM^{\ev} (\cX_B/B ,\beta )\ar[r] \ar[d] & X_B \ar[d] \\
    \fM (\cX_B/B ,\beta ) \ar[r] & \cX_B .
  \end{tikzcd}\]
\end{lemma}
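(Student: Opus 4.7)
The plan is to reduce the statement to the well-known fact that fiber products along a strict morphism of log stacks agree in the log, fine log, fs log, and underlying scheme/stack categories. Concretely, I would first observe that the canonical morphism $X_B \to \cX_B = B \times \cA_X$ from an fs log scheme to its relative Artin fan is strict by construction: the log structure on $\cX_B$ is obtained from the Kato chart data of $X_B$ together with the trivial log structure on $B$, so its pullback to $X_B$ is tautologically the log structure on $X_B$. This is the only nontrivial input; everything else is formal.

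Once strictness of $X_B \to \cX_B$ is granted, I would argue as follows. In any category of (possibly fine, fs) log stacks, the fs (resp.\ fine, log) fiber product of a morphism $\cY \to \cX_B$ with a strict morphism $X_B \to \cX_B$ is computed by taking the underlying stack fiber product and endowing it with the pulled-back log structure from $\cY$. The reason is that strictness is preserved under base change, and the resulting log structure is already fs whenever $\cY$ is, so no fs-saturation or integralization is needed. Applying this with $\cY = \fM(\cX_B/B,\beta)$, the underlying stack of the fs (or fine, or log) fiber product is exactly $\fM(\cX_B/B,\beta)\times_{\underline{\cA_{X_B}}}\underline{X_B}$, which is the defining formula \eqref{eq:evaluation-stack} for $\fM^{\ev}(\cX_B/B,\beta)$, and the induced log structure is the one it inherits as the fs pullback. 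Hence the square is cartesian in all the categories at once.

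The only step that requires care is the first one, the strictness of $X_B \to \cX_B$. Since $\cA_X$ is defined so that $X \to \cA_X$ is strict, and since $B$ carries the trivial log structure so that $\cX_B = B \times \cA_X$ has log structure pulled back from $\cA_X$ along the second projection, strictness of $X_B = B \times X \to \cX_B = B \times \cA_X$ follows from strictness being preserved under product with a trivially log morphism. I would record this as a short remark referring to the definition of the Artin fan, with no further computation needed; the rest of the proof is bookkeeping about base change of strict morphisms.
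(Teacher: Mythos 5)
Your argument is correct and is essentially the paper's proof: the paper likewise notes that the vertical maps are strict and that the square is cartesian on underlying stacks, and then invokes the general fact (cited as \cite[Remark 2.1.3]{Ogus_Lectures-log-geometry}) that a square with strict vertical arrows that is cartesian on underlying stacks is cartesian in the log, fine, and fs categories. Your write-up merely spells out this base-change fact (strictness of $X_B\to\cX_B$ from the construction of the Artin fan, no saturation/integralization needed), which the paper handles by citation.
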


\begin{proof}
  The vertical maps are strict, and the diagram is cartesian at the level of stacks.
  The result then follows from \cite[Remark 2.1.3]{Ogus_Lectures-log-geometry}.
\end{proof}

The generic fiber of $\cM (X_B/B,\beta )$ is identified with $\cM (X_{\eta} , \beta )$, which parametrizes log stable maps to the base changed variety $X_{\eta}\coloneqq X\times_{\bbk} k$.
The Artin fan $\cX_{\eta}$ of $X_{\eta}$ is isomorphic to $\cX_B\times_B\eta \simeq\cA_{X}\times_{\bbk} k$.
Thus, the generic fiber of the tropical moduli spaces $\fM (\cX_B/B,\beta )$ and $\fM^{\ev} (\cX_B/B,\beta )$ are identified with $\fM (\cX_{\eta},\beta )$ and $\fM^{\ev} (\cX_{\eta} ,\beta )$ (defined analogously to \eqref{eq:evaluation-stack}).
In a similar way, denoting by $\cX_s$ the Artin fan of $X_s  =X\times_{\bbk}\tk$, the special fibers are isomorphic to the $\tk$-stacks $\cM (X_s ,\beta )$, $\fM (\cX_s , \beta )$, and $\fM^{\ev} (\cX_s ,\beta )$.

\begin{definition} \label{def:family-moduli-space}
    The \emph{point constrained moduli spaces} associated to \eqref{eq:point-constraint-family} are
    \begin{align*}
      \cM (X_B , \beta )_T&\coloneqq \cM (X_B,\beta )\times_{X_B}^{\fs} T ,\\
      \fM^{\ev}  (\cX_B ,\beta )_T &\coloneqq \fM^{\ev} (\cX_B ,\beta ) \times_{X_B}^{\fs} T.
    \end{align*}
\end{definition}

\begin{proposition} \label{prop:choice-point-constraint}
    There exists a choice of point constraint in \eqref{eq:point-constraint-family} such that:
    \begin{enumerate}[wide]
        \item $\fM^{\ev} (\cX_B/B ,\beta )_T\rightarrow T$ is flat, and
        \item $\cM (X_{\eta} , \beta )_{T_{\eta}}$ is \'{e}tale over $\eta$, and contained in the image of the open embedding $\cM^{\sm} (U,\beta )\subset \cM (X_{\eta} ,\beta )$ (see \cref{prop:embedding-smooth-locus}).
    \end{enumerate}
\end{proposition}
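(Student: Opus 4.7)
The plan is to verify both properties by a suitably generic choice of point constraint in \eqref{eq:point-constraint-family}, following \cite{Johnston_Comparison} adapted to the relative setting over $B$. The choice of point constraint consists of a geometric datum (a point of $U_\eta$ for the generic fiber and a $0$-stratum of $D$ lying in a maximal cone $\sigma_0 \in \Sigma^{\ess}$ for the special fiber) together with a tropical datum (the inclusion $\Sigma(T) = \bbR_{\geq 0}^d \hookrightarrow \sigma_0$ of the tropical constraint inside this maximal cone). Both data can be varied independently, and the proposition will be obtained by showing that the loci of good choices are dense.

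I would begin with part (2), the easier of the two. The generic fiber $T_\eta$ is the trivial log point and maps to a point $x \in X_\eta \setminus D_\eta$ which can be taken arbitrarily generic. For such $x$, standard transversality arguments show that every log stable map in $\cM(X_\eta,\beta)_{T_\eta}$ has no component mapped into $D_\eta$ and satisfies the triviality of $f^\ast T_{X_\eta}(-\log D_\eta)$, hence lies in $\cM^{\sm}(U_\eta,\beta)$ via the open embedding of \cref{prop:embedding-smooth-locus}. Étaleness then follows from the proposition stated after \cref{prop:embedding-smooth-locus}: since $\overline{\cM}_{0,3}$ is a point, the étaleness of $\Phi_i = (\dom, \ev_i)$ on $\cM^{\sm}(U_\eta,\beta)$ reduces to étaleness of $\ev_i$, and the fiber over the single point $x$ is therefore étale over $\eta$.

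For part (1), by \cref{lemma:flatness-over-DVR} flatness of $\fM^{\ev}(\cX_B/B,\beta)_T \to T$ is equivalent to asking that every irreducible component dominate $T$. The special fiber $\fM^{\ev}(\cX_s,\beta)_{T_s}$ stratifies by basic tropical types refining $\beta$, and on each stratum the evaluation map from $\fM^{\ev}$ to $X_B$ admits an explicit tropical description in terms of the universal family of tropical maps. By choosing the tropical inclusion $\Sigma(T) \hookrightarrow \sigma_0$ sufficiently generically inside the maximal cone $\sigma_0$, one achieves transversality of its image against the tropical evaluation image on each stratum. After forming the fs fiber product with $T$, each resulting stratum has the expected dimension and no vertical components appear, yielding flatness over $T$. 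This is the relative counterpart of the transversality argument in \cite{Johnston_Comparison}, and uses that $\fM(\cX_B/B,\beta) \to B$ is reduced and pure-dimensional of relative dimension~$0$, together with the identification in \cref{lemma:evaluation-stack-is-fine-pullback}.

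The main obstacle is the precise flatness verification in (1). The fs fiber product can introduce torsion or components of unexpected dimension if the tropical data is not transverse, so one needs a careful local analysis of $\fM^{\ev}(\cX_B/B,\beta)$ along each tropical stratum of the special fiber to ensure that the generic position of $\Sigma(T) \hookrightarrow \sigma_0$ produces a flat family. Once this tropical transversality is established, the passage from the absolute setting of \cite{Johnston_Comparison} to the relative setting $X_B/B$ is essentially formal: the identifications $\Sigma(X_B,D_B) \simeq \Sigma(X,D)$ and $\cX_B \simeq B \times \cA_X$ mean that the combinatorial arguments carry over directly.
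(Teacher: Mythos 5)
Your part (2) is in the same spirit as the paper: the paper simply invokes the end of the proof of Theorem 1.1 in \cite{Johnston_Comparison} for a sufficiently generic image of $T_{\eta}$, and your sketch (generic point forces the constrained maps into $\cM^{\sm}(U_{\eta},\beta)$, then \'etaleness of $\ev_i$ from \cite[Lemma 4.24]{Keel-Yu_Log-CY-mirror-symmetry} with $\overline{\cM}_{0,3}$ a point) is the intended argument, though calling the triviality of $f^{\ast}T_{X_{\eta}}(-\log D_{\eta})$ a ``standard transversality argument'' hides the place where affineness and \cref{assumption:semipositivity} actually enter.

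For part (1), however, there is a genuine gap: you never prove flatness, you only reduce it to the claim that a sufficiently generic inclusion $\Sigma(T)\hookrightarrow\sigma_0$ makes every stratum of the fs fiber product have expected dimension with no vertical components, and you yourself flag this as ``the main obstacle'' without carrying it out. That claim is exactly the hard content, and the reduction itself is delicate: $\fM^{\ev}(\cX_B/B,\beta)_T$ need not be reduced, and the fs fiber product can create embedded $\pi$-torsion, so ``every irreducible component dominates $T$'' (the direction of \cref{lemma:flatness-over-DVR} you need) does not by itself deliver flatness without additional control. The paper's proof supplies the missing mechanism and sidesteps these issues: the map is log flat automatically, being an fs base change of the log smooth evaluation map (\cref{lemma:evaluation-stack-is-fine-pullback} and the log smoothness results feeding into it), and log flat plus \emph{integral} implies flat by \cite[Proposition 2.3]{Gross_Intrinsic_mirror_symmetry}; since the log structure on $T$ is free, integrality is verified by the purely tropical criterion of \cite[Theorem 2.9]{Gross_Intrinsic_mirror_symmetry}, which one arranges by \emph{shrinking} the image of $\Sigma(T)$ in $\sigma_0$ as in \cite[Step 3]{Johnston_Comparison} (note this is a smallness condition on the tropical constraint, not merely generic position). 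Without either carrying out your stratum-by-stratum dimension and torsion analysis in detail or importing this integrality criterion, part (1) of your argument is incomplete.
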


\begin{proof}
    By base change, the map in (1) is log flat.
    If we can choose the point constraint so that the map is also integral, then it will be flat (\cite[Proposition 2.3]{Gross_Intrinsic_mirror_symmetry}).
    Since the log structure on $T$ is free, integrality can be reduced to the tropical criterion of \cite[Theorem 2.9]{Gross_Intrinsic_mirror_symmetry}, which involves the map $\Sigma (T)\rightarrow \Sigma (X_B,D_B)$ and the tropical evaluation map $\Sigma (\ev_{\cX_B} )$.
    The criterion can be achieved by shrinking the image of the tropical point constraint, as in \cite[Step 3]{Johnston_Comparison}.

    (2) is proved at the end of the proof of Theorem 1.1 in \cite{Johnston_Comparison}, provided that the image of $T_{\eta}$ in $X_{\eta}$ is generic enough. 
\end{proof}

\subsection{Main comparison} \label{subsec:main-comparison}
We refine the analysis of the special fiber in \cite{Johnston_Comparison} to obtain a correspondence between non-archimedean spine counts and logarithmic invariants.
We continue to fix a decorated vertex type $\beta$ as in \cref{notation:choice-contact-orders}, with total curve class $A\in \NE (X,\bbZ )$.
To simplify notations, we write 
\begin{align*}
    \cM_T &\coloneqq \cM (X_B/B ,\beta)_T ,\\
    \cM_{\eta}&\coloneqq \cM (X_{\eta} , \beta)_{T_{\eta}} ,\\
    \cM_s &\coloneqq \cM (X_s , \beta )_{T_s},
\end{align*}
and similarly we denote by $\fM_T$, $\fM_{\eta}$, and $\fM_s$ the tropical moduli spaces.
We have constructed a diagram where all squares are fs-cartesian
 \begin{equation} \label{cd:family-moduli-space}
 \begin{tikzcd}
   \cM_s \ar[r]\ar[d, "\varepsilon_s"] & \cM_T \ar[d,"\varepsilon_T"] & \cM_{\eta} \ar[d, "\varepsilon_{\eta}"] \ar[l] \\
   \fM_s \ar[r] \ar[d] & \fM_T \ar[d] & \fM_{\eta} \ar[l] \ar[d] \\
   s \ar[r] & T & \eta \ar[l].
 \end{tikzcd}
 \end{equation}
 The bottom vertical arrows are flat, all the vertical arrows are proper, and the map $\cM_{\eta} \rightarrow \eta$ is étale.
 The map $\varepsilon_T$ admits a relative perfect obstruction theory, providing a relative virtual fundamental class $[\cM_T /T]^{\vir}\coloneqq \varepsilon_T^! [\fM_T]$ of degree $0$ relative to $T$.
 By pullback, we obtain degree $0$ virtual fundamental classes $[\cM_s]^{\vir} \coloneqq \varepsilon_s^! [\fM_s]$ and $[\cM_{\eta}]^{\vir}\coloneqq \varepsilon_{\eta}^! [\fM_{\eta}]$.
 Since $\cM_{\eta}$ is smooth, the latter coincides with the usual fundamental class. 
 By invariance of the virtual fundamental class under flat deformations, we have 
 \begin{equation}
     \deg [\cM_{\eta}]^{\vir} = \deg [\cM_s]^{\vir} .
 \end{equation}

\subsubsection{Description of the special fiber}

We have the following description of irreducible components of $\fM_s$ by tropical types.

 \begin{lemma} \label{prop:tropical-type-special-fiber}
   Let $\xi$ be the generic point of an irreducible component $\fM_{\xi}$ of $\fM_s$, let $\tau$ be the associated tropical type.
   Assume that $\deg \varepsilon_s^! [\fM_{\xi}]\neq 0$. 
   Then $\tau$ is a cylinder type with $\bsigma (v_i)  =\sigma_0$, and the multiplicity $\mu_{\xi}$ of $\fM_{\xi}$ only depends on $\tau$ and is
   \[ \mu_{\tau} \coloneqq \vert \coker (R^{\gp}\rightarrow Q_{\xi}^{\gp})\vert ,\]
     where $R = \ghost{\cM}_{T,s}$ is the stalk of the ghost sheaf of the log structure for the point constraint at $s$, and $Q_{\xi}$ is the stalk of the ghost sheaf of $\fM_s$ at $\xi$.
 \end{lemma}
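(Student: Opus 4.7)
The plan follows the strategy of \cite{Johnston_Comparison}, refined so as to track the individual tropical type of each irreducible component of $\fM_s$. I would proceed in three steps, corresponding to the containment $\bsigma(v_i) = \sigma_0$, the characterization of $\tau$ as a cylinder type, and the multiplicity formula.

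First, I would analyze the fs pullback by the point constraint $T\to X_B$. Since the image of $\Sigma(T)\to\Sigma(X_B,D_B)$ is contained in $\sigma_0$, a direct calculation (using that $R$ is free of rank $d$ and that the fs pullback of log stacks is controlled by the tropical evaluation) shows that any component of $\fM^{\ev}$ whose tropical type $\tau'$ satisfies $\bsigma(v_i)\not\subset \sigma_0$ contributes emptily to $\fM_s$. Hence $\bsigma(v_i)\subset\sigma_0$; I will upgrade this to equality via the dimension count below.

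Second, I would verify that $\tau$ satisfies \cref{def:S-type}. Condition (1) is immediate from the choice of $\beta$ in \cref{notation:choice-contact-orders}, and condition (2) from basicness. The crucial condition (3), $\dim\tau = \dim h(\tau_{v_i}) = d$, comes from a virtual dimension count: since $\varepsilon_T$ has relative virtual dimension $0$ and the fs constraint by $T$ at $v_i$ cuts relative dimension by $\dim h(\tau_{v_i})$ when the evaluation cone is non-degenerate, the non-vanishing of $\varepsilon_s^![\fM_\xi]$ forces both equalities to $d$. Here \cref{assumption:semipositivity} is essential to rule out log stable maps with unmarked components mapping entirely into $D$, which would otherwise upset the dimension budget; this is the point where the arguments of \cite[Steps 3--4]{Johnston_Comparison} are adapted. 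The equality $h(\tau_{v_i}) = \sigma_0$ then follows from $\dim \sigma_0 = d$, giving $\bsigma(v_i) = \sigma_0$.

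Third, for the multiplicity I would use that étale locally at $\xi$, the stack $\fM^{\ev}$ is the strict pullback of $\cA_{X_B}$-charts, so its ghost stalk is the basic monoid $Q_\xi^0$ determined by $\tau$. The fs base change by $T$ is then the fs amalgamated sum $Q_\xi^0 \oplus^{\fs}_P R$, where $P$ is the submonoid of $Q_\xi^0$ coming from $\bsigma(v_i)$. Under step one's identification $P^{\gp}\simeq R^{\gp}$, the resulting irreducible component appears with multiplicity equal to the order of the torsion in the cokernel of the induced map of character lattices; unwinding definitions, this is $\vert\coker(R^{\gp}\to Q_\xi^{\gp})\vert$, depending only on $\tau$. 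The main obstacle will be the dimension count of step two, which requires careful combinatorial control over which cylinder types actually contribute and a precise use of semi-positivity to eliminate boundary-absorbed components; once this is in place, steps one and three are essentially formal consequences of the fs-pullback machinery.
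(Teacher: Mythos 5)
Your outline follows essentially the same route as the paper, whose proof of this lemma simply defers to \cite[Theorem 6.4, Steps 2--3]{Gross_Canonical-wall-structure-and-intrinsic-mirror-symmetry} and \cite[Step 4]{Johnston_Comparison}: the tropically transverse degenerate point constraint together with integrality/flatness of the constrained family over $T$ forces $\dim\tau=\dim h(\tau_{v_i})=d$ and $\bsigma (v_i)=\sigma_0$, and the multiplicity comes from the local monoid-theoretic description of the fs fiber product, yielding $\vert\coker (R^{\gp}\rightarrow Q_{\xi}^{\gp})\vert$. Two minor corrections to your justifications: balancedness of $\tau$ is not a consequence of basicness but of the hypothesis $\deg \varepsilon_s^![\fM_{\xi}]\neq 0$, which guarantees actual stable maps to $X_s$ lying over $\fM_{\xi}$ (maps to the Artin fan alone need not be balanced); and semi-positivity is not what makes this special-fiber dimension count work (it is a standing assumption used to compare with the generic fiber), so your appeal to it here is a misattribution rather than a needed ingredient.
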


 \begin{proof}
     The fact that $\tau$ is a cylinder type follows from the choice of tropically transverse point constraint as in \cite[Theorem 6.4, Steps 2-3]{Gross_Canonical-wall-structure-and-intrinsic-mirror-symmetry} and \cite[Step 4]{Johnston_Comparison}.
     We deduce the formula for the multiplicity of $\fM_{\xi}$ as in \cite[Theorem 6.4, Step 3]{Gross_Canonical-wall-structure-and-intrinsic-mirror-symmetry}.
 \end{proof}

For a decorated tropical cylinder type $\btau$ of total curve class $A$, we introduce the proper substack 
\[\fM_s (\btau )\subset \fM_s\]
given by the union of $\fM_{\xi}$ for $\xi$ a generic point with associated type $\btau$, viewed as a \emph{non-reduced} substack of $\fM_s$.
The following is a first refinement of the main result of \cite{Johnston_Comparison}, in the case of $3$-pointed invariants. 

\begin{proposition} \label{prop:decomposition-special-fiber}
    Let $\btau$ be a decorated tropical cylinder type of total curve class $A$ and with $\bsigma (v_i ) =\sigma_0$.
    Then
    \[\deg \varepsilon_s^! [\fM_s (\btau )] = k_{\tau}N_{\btau} .\]
    In particular, summing over those types gives:
    \[\deg [\cM_{\eta}] = \sum_{\btau} k_{\tau} N_{\btau}.\]
\end{proposition}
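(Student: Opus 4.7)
The plan is to prove the pointwise formula $\deg \varepsilon_s^! [\fM_s(\btau)] = k_\tau N_{\btau}$ for each decorated cylinder type $\btau$; the summed formula for $\deg[\cM_\eta]$ then follows by decomposing $[\fM_s] = \sum [\fM_s(\btau)]$ over all decorated types, invoking \cref{prop:tropical-type-special-fiber} to discard the types that are not cylinder types (which contribute zero), and using the deformation invariance $\deg[\cM_\eta] = \deg[\cM_s]^{\vir}$.

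First I would apply \cref{prop:tropical-type-special-fiber} to write $[\fM_s(\btau)] = \mu_\tau \, [\fM_s(\btau)^{\red}]$ at the cycle level. Next I would verify the identity $\mu_\tau = k_\tau$, which is where the numerical factor in the formula ultimately comes from. For a cylinder type with $\dim \tau = \dim h(\tau_{v_i}) = d$, the tropical moduli cone of $\tau$ coincides with $\tau_{v_i}$, so $Q_\xi^{\gp}$ is the dual of $\Lambda_{\tau_{v_i}}$. A suitable choice of point constraint identifies $R^{\gp}$ with $\Lambda_{\sigma_0}^{\vee}$, and the map $R^{\gp} \to Q_\xi^{\gp}$ is the transpose of $h_* \colon \Lambda_{\tau_{v_i}} \to \Lambda_{\sigma_0}$. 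Since both lattices have rank $d$, the duality of cokernels of full-rank lattice maps gives $\mu_\tau = k_\tau$.

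The heart of the argument is to compare $\varepsilon_s^! [\fM_s(\btau)^{\red}]$ with the logarithmic virtual class $[\cM(X_s,\btau)]^{\vir}$. The forgetful morphism $\fM(\cX_s, \btau) \to \fM_s$ factors through $\fM_s(\btau)^{\red}$ and, by the decomposition theorem \cite{Abramovich_Decomposition_degenerate_GW_invariants} specialized to cylinder types, is finite of degree $|\Aut(\btau)|$ over its image. Taking the fs fiber product over $X_s$ with the point constraint $T_s$, and using the compatibility of the perfect obstruction theories of $\cM_s \to \fM_s$ and $\cM(X_s,\btau) \to \fM(\cX_s, \btau)$ under this base change, I would obtain
\[
\deg \varepsilon_s^! [\fM_s(\btau)^{\red}] = \frac{1}{|\Aut(\btau)|} \deg [\cM(X_s, \btau)_{T_s}]^{\vir},
\]
where $\cM(X_s, \btau)_{T_s} \coloneqq \cM(X_s, \btau) \times^{\fs}_{X_s} T_s$. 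Since $T_s \to X_s$ is strict at the underlying point and its tropicalization is a full-dimensional inclusion into $\sigma_0$, the fs base change does not change the virtual degree, giving
\[
\deg [\cM(X_s, \btau)_{T_s}]^{\vir} = \deg [\cM(X_s, \btau)]^{\vir} = |\Aut(\btau)| \, N_{\btau}.
\]
Combining the three steps yields $\deg \varepsilon_s^! [\fM_s(\btau)] = \mu_\tau N_{\btau} = k_\tau N_{\btau}$, as desired.

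The main obstacle is the middle step: carefully transporting the perfect obstruction theory across the finite cover $\fM(\cX_s, \btau) \to \fM_s(\btau)^{\red}$ and the fs base change by the point constraint, so that the factor $|\Aut(\btau)|$ appears with the correct sign and multiplicity. This is technical logarithmic bookkeeping, and essentially reformulates the analysis in \cite{Johnston_Comparison} as a stratified decomposition of $[\cM_s]^{\vir}$ indexed by decorated tropical cylinder types.
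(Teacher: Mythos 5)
Your overall architecture is the same as the paper's, which simply runs the argument of \cite[Theorem 6.4, Step 4]{Gross_Canonical-wall-structure-and-intrinsic-mirror-symmetry}: decompose $[\fM_s]$ by tropical type, identify the multiplicity of each component with $k_\tau$ by a lattice computation, and compare the reduced component with $N_{\btau}$ through the degree-$\vert\Aut(\btau)\vert$ marking morphism and compatible obstruction theories. Your multiplicity step is essentially right, though the identification is stated loosely: $Q_\xi$ is not the dual of $\Lambda_{\tau_{v_i}}$ but the fs pushout $Q_\tau\oplus^{\fs}_{P_{\sigma_0}}R$, where $P_{\sigma_0}$ is the ghost stalk of $(X,D)$ at the $0$-stratum; the clean computation is $\coker(R^{\gp}\to Q_\xi^{\gp})\cong\coker\big(P_{\sigma_0}^{\gp}\to Q_\tau^{\gp}\big)$, and since this is the transpose of $h_*\colon\Lambda_{\tau_{v_i}}\to\Lambda_{\sigma_0}$ between rank-$d$ lattices, the determinant/duality argument you invoke gives $\mu_\tau=k_\tau$.

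The genuine gap is your last step, $\deg[\cM(X_s,\btau)_{T_s}]^{\vir}=\deg[\cM(X_s,\btau)]^{\vir}$, justified by "$T_s\to X_s$ is strict at the underlying point." This is false: the ghost map of the point constraint is $\psi\colon P_{\sigma_0}\to R$, which dually is the inclusion of a small full-dimensional subcone of $\sigma_0$, not an isomorphism, so $T_s\to X_s$ is not strict. More importantly, this is precisely the step where all the nontrivial bookkeeping lives: the fs base change replaces every basic monoid $Q$ by the fs pushout $Q\oplus^{\fs}_{P_{\sigma_0}}R$, and exactly such pushouts are what produced the multiplicity $\mu_\tau=k_\tau$ on the unmarked side; a priori the same mechanism could introduce (or cancel) lattice-index factors and non-reduced structure on $\cM(X_s,\btau)_{T_s}$, which would spoil your accounting $\deg\varepsilon_s^![\fM_s(\btau)^{\mathrm{red}}]=N_{\btau}$. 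What is needed here is the actual content of Step 4 of \cite[Theorem 6.4]{Gross_Canonical-wall-structure-and-intrinsic-mirror-symmetry}: a comparison of the constrained, $\btau$-marked space with $\cM(X_s,\btau)$ carried out at the level of the basic monoids and obstruction theories (using that the tropical evaluation $\tau\to\sigma_0$ at $v_i$ is injective with full-dimensional image, so that on the marked space the constraint only rescales the log structure in a way whose index has already been accounted for in $\mu_\tau$), not a transversality or strictness remark. A similar imprecision occurs earlier when you speak of a forgetful morphism $\fM(\cX_s,\btau)\to\fM_s$: the target is the point-constrained space, so the marking map must itself be base-changed by $T_s$ before it can factor through $\fM_s(\btau)^{\mathrm{red}}$, and the degree-$\vert\Aut(\btau)\vert$ statement has to be checked after this base change. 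With these points repaired, your argument reproduces the paper's (cited) proof; the summed formula and the use of \cref{prop:tropical-type-special-fiber} and deformation invariance are as in the paper.
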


\begin{proof}
    The proof is analogous to \cite[Theorem 6.4, Step 4]{Gross_Canonical-wall-structure-and-intrinsic-mirror-symmetry}.
\end{proof}

\subsubsection{Refinement by spine} \label{subsubsec:spine-refinement}

Recall the wall structure $\Wall_A$ from \cref{constr:total-walls}.
Fix a transverse $\Wall_A$-transverse spine $S$ to $\Sk (X,D)\simeq \Sigma$, with contact orders specified by $\beta$ and such that $\Sp^{-1} (S)\cap \cM_{\eta}\neq \emptyset$.
In particular the non-archimedean cylinder count $N(S,A)$ is nonzero.

 We will define refined moduli spaces that fit into a diagram analogous to \eqref{cd:family-moduli-space}, and only contain stable maps related to the cylinder spine $S$.
 Before defining those refined moduli spaces, we modify the stacks $\fM_T$ and $\cM_T$ as follows.
 \begin{enumerate}[wide]
     \item We replace $\fM_T$ with the closure of the complement of $\fM_{\xi}$ for $\deg \varepsilon_s^![\fM_{\xi}] = 0$ and $\fM (\btau )$ for $N_{\btau} = 0$.
     \item We replace $\cM_T$ by the irreducible closure of the preimage of the newly constructed $\fM_T$, and then consider the closure of the complement of the horizontal irreducible components corresponding to points $[f]\in\cM_{\eta}$ with $\Sp (f^{\an} )$ non transverse.
 \end{enumerate}

The following construction provides the relevant tropical moduli space for the spine $S$.
 
\begin{construction}[Refined tropical moduli space]  \label{constr:refined-tropical-moduli-space}
     We define a proper substack 
     \begin{align}\label{eq:spine-refined-trop-moduli}
         \fM (S)_s\coloneqq \bigcup_{\btau} \fM_s (\btau ) \subset \fM_s,
     \end{align}
     where the union is over decorated tropical cylinder types $\btau$ of total curve class $A$ with $\bsigma (v_i)= \sigma_0$, $\Sp (\tau )$ of type $S$, and $N_{\btau}\neq 0$.
     We introduce the (non-reduced) proper substack
     \[\fM (S)_T\subset \fM_T\]
     given by horizontal components of $\fM_T$ which intersect $\fM (S)_s$.
     By \cref{lemma:flatness-over-DVR}, the morphism $\fM_T\rightarrow T$ is flat.
 \end{construction}

 We define analogous substacks at the geometric level (see \cref{fig:refined-moduli-space}).

 \begin{construction}[Refined geometric moduli space] \label{constr:refined-geometric-moduli-space}
    We associate to the spine $S$ the proper substack
    \begin{align}
        \cM (S)_s\subset \cM_s
    \end{align}
    whose underlying topological space is given by irreducible components of $\vert \cM_s\vert$ that intersect $\varepsilon_s^{-1} (\fM(S)_s)$, but equipped with the non-reduced substack structure.
    Consider the open substack 
    \[\cU (S)\coloneqq \cM (S)_s\setminus \bigg(\overline{\cM_s\setminus \cM (S)_s}\bigg),\]
    corresponding to removing the intersection of $\cM (S)_s$ with irreducible components of $\cM_s$ that do not intersect $\varepsilon_s^{-1} (\fM (S)_s )$.
    Applying \cref{lemma:construct-substack}, we obtain a proper substack
    \begin{align}
        \cM (S)_T\subset \cM_T
    \end{align}
    whose special fiber is $\overline{\cU (S)} = \cM (S)_s$.
    We denote by $\cM (S)_{\eta}$ the general fiber of $\cM (S)_T$, it is a proper substack of $\cM_{\eta}$.
 \end{construction}

\begin{figure}
    \centering
    \setlength{\unitlength}{0.45\textwidth}
    \begin{picture} (0.5,1)
    \put(0,0.03){\includegraphics[height=0.97\unitlength]{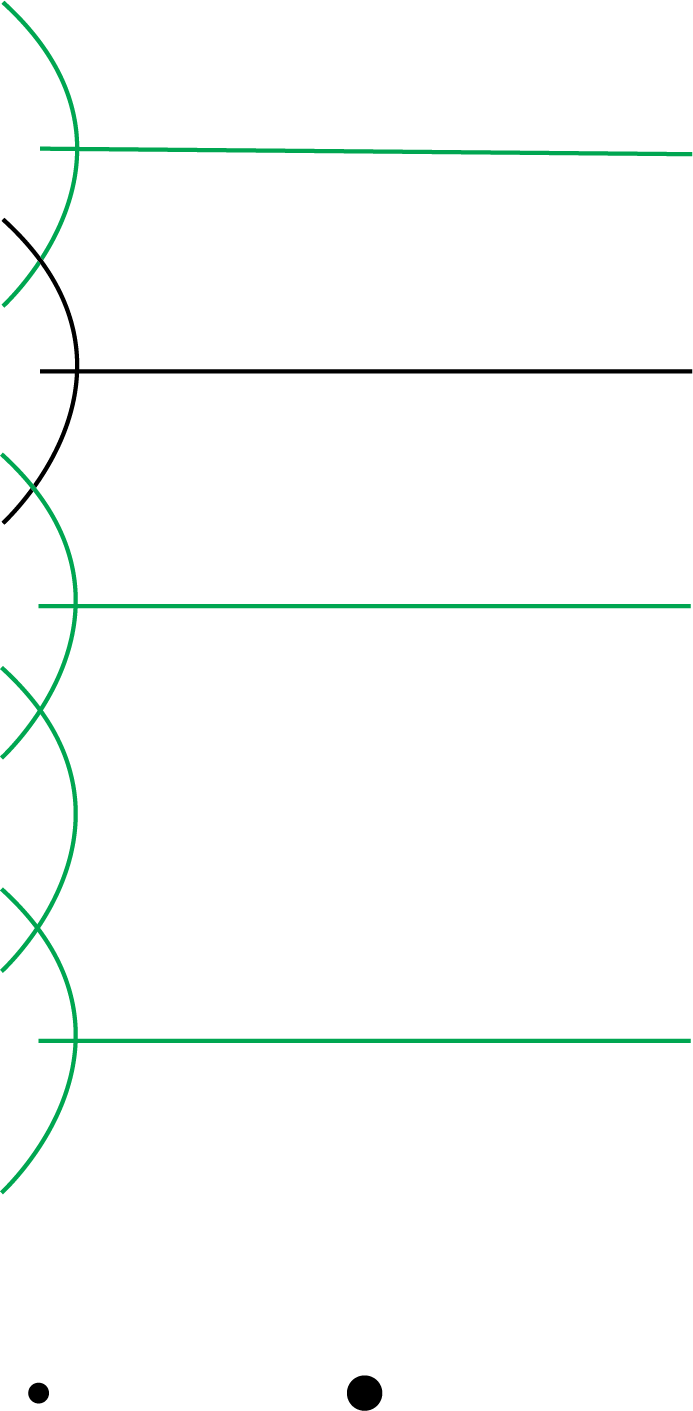}}
    \put(0.015,0.0){$s$}
    \put(0.235,0){$\eta$}
    \put(0.15,0.95){\textcolor[RGB]{0,166,81}{$\cM(S)_T$}}
    \end{picture}
    \caption{The refined geometric moduli space $\cM (S)_T$ is obtained by selecting vertical components in the special fiber, and considering the horizontal components which intersect them. }
    \label{fig:refined-moduli-space}
\end{figure}

 The next proposition describes the generic fiber of the refined moduli space.

 \begin{proposition} \label{prop:spine-refinement-generic-fiber}
   We have 
   \[\cM (S)_{\eta}^{\an} = \Sp^{-1} (S)\cap \cM_{\eta}^{\an}.\]
 \end{proposition}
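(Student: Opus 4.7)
The plan is to apply \cref{lemma:description-generic-fiber} to express $\cM(S)_T$ as the union of $\cM(S)_s$ and the horizontal irreducible components of $\cM_T$ meeting $\cU(S)$, and then to match these horizontal components with analytic spine types via \cref{prop:spine-tropical-type-special-fiber}. The key input is the rigidity of spine types under degeneration of tropical types, provided by \cref{lemma:degeneration-transverse-spine}.

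First I would verify the hypothesis of \cref{lemma:description-generic-fiber}, namely that no horizontal irreducible component of $\cM_T$ intersects $\overline{\cU(S)}\setminus\cU(S) = \cM(S)_s\cap\overline{\cM_s\setminus\cM(S)_s}$. After the two modifications made to $\cM_T$ prior to \cref{constr:refined-tropical-moduli-space}, \cref{lemma:horizontal-irreducible-components-over-DVR} identifies each horizontal component with a point $[f_\eta]\in\cM_\eta$ whose analytic spine is $\Wall_A$-transverse. By properness of $\cM_T\to T$ and the valuative criterion (applied after a finite base change if necessary), $[f_\eta]$ extends to a family $[f_T]$ over $T$ whose special fiber has tropical type $\tau$; by \cref{prop:tropical-type-special-fiber} combined with the removal of components with trivial virtual contribution, $\tau$ is a decorated cylinder type with $N_{\btau}\neq 0$. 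Any other geometric point in the special fiber of this horizontal component has tropical type $\tau'$ marked by $\tau$, and \cref{lemma:degeneration-transverse-spine} then forces $\Sp(\tau')$ to have the same type as $\Sp(\tau)$. Consequently the special fiber of a horizontal component lies entirely inside $\cM(S)_s$ or entirely inside $\cM_s\setminus\cM(S)_s$, never straddling the two, and the required hypothesis of \cref{lemma:description-generic-fiber} holds.

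With this verification in hand, \cref{lemma:description-generic-fiber} identifies $\cM(S)_\eta$ with the union of horizontal components of $\cM_T$ whose special fiber lands in $\cM(S)_s$, i.e.\ those $[f_\eta]$ whose associated tropical type $\tau$ of the special fiber is a decorated cylinder type with $\Sp(\tau)$ of type $S$ and $N_{\btau}\neq 0$. To pass to the analytic characterization I would then invoke \cref{prop:spine-tropical-type-special-fiber}: since $[f_\eta]\in\cM^{\sm}(U,\beta)$ implies that $f_\eta^{-1}(D_\eta)$ is supported on the marked points, and since $\Sp(\tau)$ is induced by a type in $\Sigma^{\ess}$ by \cref{lemma:splitting-cylinder-type}(2), the proposition yields $\Sp(f_\eta^{\an}) = \Sp(\tau)$. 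Both inclusions then drop out at once: $[f_\eta]\in\cM(S)_\eta^{\an}$ iff $\Sp(\tau)$ has type $S$ iff $\Sp(f_\eta^{\an})$ has type $S$ iff $[f_\eta]\in\Sp^{-1}(S)\cap\cM_\eta^{\an}$.

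The principal obstacle will be the second paragraph, namely ruling out that a horizontal component can degenerate to a mixture of cylinder types with distinct spine types. This relies essentially on \cref{lemma:degeneration-transverse-spine}, together with the preparatory modifications to $\cM_T$ ensuring that every surviving horizontal component corresponds to a transverse spine so that the rigidity argument applies uniformly along the degeneration. Once this is established, the remainder of the argument is a bookkeeping exercise combining the valuative criterion with \cref{prop:spine-tropical-type-special-fiber}.
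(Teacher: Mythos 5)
Your overall route is the same as the paper's: verify the hypothesis of \cref{lemma:description-generic-fiber} using the rigidity of spine types under degeneration (\cref{lemma:degeneration-transverse-spine}), then translate between tropical spine types in the special fiber and analytic spines in the generic fiber via \cref{prop:spine-tropical-type-special-fiber}. However, your verification of the hypothesis has a genuine gap. First, the special fiber point produced by the valuative criterion need not have a tropical type that \emph{is} a decorated cylinder type: \cref{prop:tropical-type-special-fiber} only describes the \emph{generic} points of components of $\fM_s$, and the type at your limit point is merely marked by such a type. Second, the claim that every other geometric point of the special fiber of the horizontal component has type marked by $\tau$ is unjustified: those points are specializations of $[f_\eta]$, not of your chosen limit point, and the special fiber of a horizontal component is connected (\cref{lemma:connected-special-fiber}) but need not be irreducible. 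Third, even granting your dichotomy, "special fiber entirely inside $\cM(S)_s$" does not give the hypothesis of \cref{lemma:description-generic-fiber}: the locus to avoid is $\overline{\cU(S)}\setminus\cU(S)=\cM(S)_s\cap\overline{\cM_s\setminus\cM(S)_s}$, which is \emph{contained} in $\cM(S)_s$, so a component whose special fiber sits inside $\cM(S)_s$ can still meet it. The paper avoids all of this by arguing pointwise: for a closed point $x$ of a horizontal component lying in $\cM(S)_s\setminus\cU(S)$, the type at $x$ is simultaneously a degeneration of a cylinder type with spine type $S$ and of one with a different spine type $S'$, and applying \cref{lemma:degeneration-transverse-spine} to both yields a contradiction, so no such $x$ exists.

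There is a second gap at the end: your chain of equivalences conflates "the spine of $f_\eta^{\an}$ has the same type as $S$" with "$\Sp(f_\eta^{\an})=S$". The set $\Sp^{-1}(S)$ is the literal fiber of the spine map over the specific piecewise-linear map $S$, so for the inclusion $\cM(S)_\eta^{\an}\subset\Sp^{-1}(S)$ you must upgrade equality of combinatorial types to equality of spines. The paper does this by observing that the interior evaluation point is pinned by the point constraint, so $\Sp(f^{\an})$ and $S$ are two $\Wall_A$-transverse spines with the same image under $(\dom,\ev_i)$, and then invoking the rigidity of transverse spines (\cref{prop:rigidity-transverse-spines}) to conclude $\Sp(f^{\an})=S$. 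This step, and any use of \cref{prop:rigidity-transverse-spines}, is absent from your proposal; without it, a map whose spine merely has type $S$ (say with the bending vertex elsewhere on the wall) would sit in your putative $\cM(S)_\eta^{\an}$ but not in $\Sp^{-1}(S)$.
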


 \begin{proof}
     First, we identify the generic fiber $\cM (S)_{\eta}$ using \cref{lemma:description-generic-fiber}.
     Let $\cZ\in \Irr (\cM_T )$ be a horizontal component, and fix a closed point $x$ in $\cZ\cap\cM_s$.
     Let $\tau'$ denote the tropical type associated to $x$.
     If $x$ lies in $\cM(S)_s\setminus \cU(S)$, then $\Sp (\tau' )$ is a spine type that interpolates between the types of $S$ and another spine $S'$, distinct from $S$.
     This contradicts \cref{lemma:degeneration-transverse-spine}, because $S$ is transverse.
     We deduce that no horizontal irreducible components of $\cM_T$ intersects $\cM (S)_s\setminus \cU (S)$.
     So, the generic fiber $\cM (S)_{\eta}$ is described by \cref{lemma:description-generic-fiber}.

    Next, we prove each inclusion.
    Keeping the same notations, let $[f]\in\cM_{\eta}$ denote the generic point of $\cZ$.
    The specialization $[f]\rightsquigarrow x$ corresponds to a morphism $V\rightarrow \cM_T$, with $V$ the spectrum of a complete DVR.
    By \cref{prop:spine-tropical-type-special-fiber}, the type $\Sp (\tau' )$ coincides with the type $\Sp (f^{\an} )$.
    By \cref{prop:tropical-type-special-fiber} and \cref{prop:transversality-cylinder-type}, the type $\Sp (\tau ')$ also coincides with the type $S'$ of a $\Wall_A$-transverse spine.
    In particular $\Sp (f^{\an})$ and $S'$ have the same type. 
    \personal{Here, we use that we have removed 1/ Vertical components $\fM_{\xi}$ with degree $0$, 2/ Components $\fM (\btau )$ with $N_{\btau} = 0$ and 3/ Points of $\cM_{\eta}$ with non-transverse spines.}
    
    We deduce that if $f^{\an}$ has spine $S$, then $\cZ$ intersects $\cM (S)_s$, and hence $\cZ_{\eta}\subset \cM (S)_{\eta}$.
    Conversely, if the generic point of $\cZ$ lies in $\cM (S)_{\eta}$ then $\Sp (f^{\an} )$ and $S$ are two $\Wall_A$-transverse spines that have the same image under the map $(\dom ,\ev_i )$ taking the simplification of domain and evaluating at the interior marked point.
    It follows from the rigidity property of transverse spines (\cref{prop:rigidity-transverse-spines}) that $S = \Sp (f^{\an} )$, proving the second inclusion. 
    The proof is complete.
 \end{proof}

Note that the proof of \cref{prop:spine-refinement-generic-fiber} relies in an essential way on the rigidity of cylinder spines established in \cref{lemma:degeneration-transverse-spine}.
We can now prove the main comparison.

\begin{theorem}[Main comparison] \label{thm:main-comparison}
    Let $U$ be a connected smooth affine log Calabi-Yau variety satisfying the assumptions of \cref{subsec:log-setup}.
    Let $S$ be a cylinder spine in $\Sk (U)$ of type $\beta = (A,\bu )$.
    Then the non-archimedean cylinder count $N(S,A)$ decomposes as a sum of logarithmic cylinder counts
    \[N(S,A ) = \sum_{\btau} k_{\tau} N_{\btau} , \]
    where the sum is over decorated tropical cylinder types $\btau$ of total curve class $A$, contact orders specified by $\beta$, and $\Sp (\tau )$ of type $S$ (see \cref{def:spine-type} for $\Sp (\tau )$).
\end{theorem}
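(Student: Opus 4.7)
The plan is to reduce to a situation where the main comparison follows directly from the deformation invariance of the virtual fundamental class applied to the family $\cM_T$. First, since $\bu_1,\bu_2$ may not correspond to rays of the dual fan of $(X,D)$, I would pass to a toric blowup $\pi\colon (\tX,\tD)\to (X,D)$ realizing $\bu_1,\bu_2$ as rays, producing the lifted data $\tbeta=(\tA,\bu)$ by \cref{prop:moduli-spaces-before-after-blowup}. Using \cref{lemma:transverse-spine-after-blowup}, I would deform $S$ slightly (without changing $N(S,A)$) to arrange that the subdivided spine $\tS$ is a cylinder spine of type $\tbeta$, transverse both to $\Wall_A^{\an}$ and to $\Wall_{\tA}^{\an}\cup\Wall_{\tA}^{\log}$.

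Next, I would work with $(\tX,\tD)$ and set up the family moduli space $\cM_T$ of \cref{def:family-moduli-space} associated to the degenerate point constraint at a $0$-stratum $\sigma_0\in \Sigma^{\ess}(\tX,\tD)$ to which $S$ maps its $\bu_i=0$ leg. Applying \cref{constr:refined-tropical-moduli-space} and \cref{constr:refined-geometric-moduli-space} to $\tS$ produces the spine-refined substack $\cM(\tS)_T\subset \cM_T$. On the special fiber, by construction $\cM(\tS)_s$ is built from the components $\fM_s(\btau)$ indexed by decorated tropical cylinder types of total class $\tA$ with $\Sp(\tau)$ of type $\tS$; applying \cref{prop:decomposition-special-fiber} componentwise gives
\[
\deg\varepsilon_s^!\,[\fM(\tS)_s]=\sum_{\btau'}k_{\tau'}N_{\btau'},
\]
where $\btau'$ ranges over decorated tropical cylinder types in $\tSigma$ of total class $\tA$, contact orders $\bu$, and $\Sp(\tau')$ of type $\tS$. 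On the generic fiber, \cref{prop:spine-refinement-generic-fiber} identifies $\cM(\tS)_\eta^{\an}$ with $\Sp^{-1}(\tS)\cap \cM_\eta^{\an}$, which by construction of the non-archimedean count \eqref{eq:def-analytic-cylinder-counts} has degree $N(S,A)=N(\tS,\tA)$ under $\ev_i$. Deformation invariance of the relative virtual fundamental class along $T$, applied to the proper substack $\cM(\tS)_T\subset \cM_T$, then yields
\[
N(S,A)=\deg[\cM(\tS)_\eta]=\deg \varepsilon_s^!\,[\fM(\tS)_s]=\sum_{\btau'}k_{\tau'}N_{\btau'}.
\]

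Finally, I would descend from $\tSigma$ back to $\Sigma$. \cref{lemma:tropical-lift-cylinder} shows that the decorated cylinder types $\btau'$ appearing with nonzero contribution in the sum above are exactly the top-dimensional tropical lifts of decorated cylinder types $\btau$ in $\Sigma$ with $\Sp(\tau)$ of type $S$, and \cref{lemma:total-class-lift-cylinder} identifies their total curve class as $\tA$. Applying the birational invariance formula \cref{prop:birational-invariance-log-cylinders} to each such $\btau$ regroups the sum into
\[
\sum_{\btau'}k_{\tau'}N_{\btau'}=\sum_{\btau}k_{\tau}N_{\btau},
\]
with the right-hand sum running over the decorated tropical cylinder types required by the statement. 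Combining with the previous identity completes the proof.

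The main obstacle is ensuring that the substack $\cM(\tS)_T$ actually \emph{computes} $N(\tS,\tA)$ on the generic fiber, i.e.\ that no horizontal component of $\cM_T$ straddles the spine stratification in the special fiber. This is exactly what \cref{prop:spine-refinement-generic-fiber} achieves, and its proof critically uses the rigidity of transverse spines together with \cref{lemma:degeneration-transverse-spine}; getting these combinatorial transversality statements to cooperate with the chosen transverse lift $\tS$ is the delicate step that the rest of the preliminary sections (\cref{subsec:walls-balancing,subsec:cylinder-counts}) were designed to support.
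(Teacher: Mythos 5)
Your proposal follows the same route as the paper's proof: pass to the toric blowup and deform $S$ so that $\tS$ is a cylinder spine of type $\tbeta$ (\cref{lemma:transverse-spine-after-blowup}), run the refined moduli space constructions for $\tS$, equate generic and special fiber degrees via flatness with \cref{prop:spine-refinement-generic-fiber} and \cref{prop:decomposition-special-fiber}, and descend to $\Sigma$ using \cref{lemma:tropical-lift-cylinder}, \cref{lemma:total-class-lift-cylinder}, and \cref{prop:birational-invariance-log-cylinders}. The argument and its key inputs match the paper's proof, so no further comparison is needed.
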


\begin{proof}
    We first prove the formula after replacing $(X,D)$ by a toric blowup $(\tX,\tD)$ in which the non-archimedean count $N(S,A)$ is defined.
    By \cref{lemma:transverse-spine-after-blowup}, after deforming $S$ we can guarantee that the spine $\tS$ induced by the subdivision of $\Sigma$ is still a cylinder spine, of type $\tbeta = (\tA, \bu )$.
    Since the count $N(S,A)$ is deformation invariant amongst transverse spines (\cite[Theorem 8.12]{Keel-Yu_Log-CY-mirror-symmetry}), this does not change the non-archimedean counts.
    We can now perform all the previous constructions with respect to $(\tX,\tD)$, using the spine $\tS$ and the data $\tbeta = (\tA , \bu )$.
    
    By construction, the diagram \eqref{cd:family-moduli-space} restricts to a cartesian diagram for the refined moduli spaces. 
    Since $\fM(\tS)_T\rightarrow T$ is flat, we have 
    \[\deg [\cM (\tS)_{\eta} ]^{\vir} = \deg [\cM (\tS)_s]^{\vir} .\]
    The left-hand side equals $N(\tS,\tA )$ by \cref{prop:spine-refinement-generic-fiber} and \eqref{eq:def-analytic-cylinder-counts}.
    By construction, the right-hand side has the decomposition 
    \[[\cM (\tS)_s]^{\vir} = \sum_{\btau'} \varepsilon_s^! [\fM (\btau' ) ],\]
    where the sum is over decorated tropical cylinder types to $\Sigma (\tX ,\tD )$ of total curve class $\tA$, contact orders specified by $\tbeta$, and $\Sp (\tau' )$ of type $\tS$, with the additional condition $N_{\btau'}\neq 0$.
    Taking the virtual degree, we obtain
    \begin{align*}
        N(\tS,\tA ) &= \sum_{\btau'} k_{\tau'}N_{\btau'} \\
        &= \sum_{\btau} \sum_{\btau'\rightarrow\btau} k_{\tau'}N_{\btau'} \\
        &= \sum_{\btau} k_{\tau}N_{\btau} ,
    \end{align*}
    where the sums over $\btau$ are indexed by cylinder types as in the statement of the theorem. 
    We used \cref{prop:decomposition-special-fiber} for the first equality, \cref{lemma:tropical-lift-cylinder,lemma:total-class-lift-cylinder} for the second equality, and \cref{prop:birational-invariance-log-cylinders} for the last equality. 
    Since $N(\tS,\tA ) = N(S,A)$ by definition, the theorem is proved.
\end{proof}

\section{Application to log Calabi-Yau surfaces} \label{sec:application-scattering-diagram}
In this section, we fix a connected smooth affine log Calabi-Yau surface $U$ over $\bbk$, and an snc compactification $(X,D)$ satisfying the assumptions of \cref{subsec:log-setup}.
The main result is the \emph{exponential formula} (\cref{thm:exponential-formula}), which asserts that the non-archimedean and logarithmic scattering diagrams coincide, and \cref{coro:mirror-algebras} which establishes the equivalence of the non-archimedean and logarithmic mirror constructions.
Both results are a consequence of the remarkable formula in \cref{prop:infinitesimal-cylinder-counts-to-log-counts}, which expresses non-archimedean infinitesimal cylinder counts in terms of punctured log Gromov-Witten invariants. 

In the surface case, the pair $(X,D)$ is also called a Looijenga pair, and the affineness assumption says that this pair is positive. 
In particular \cref{assumption:semipositivity} is always satisfied by \cite[Lemma 6.9]{Gross_Mirror_symmetry_for_log_Calabi-Yau_surfaces_I_published}.
Such a Looijenga pair admits a toric model, i.e. there exists a toric variety $X_t$ which is related to $X$ by a zigzag of toric blowups:
\[
\begin{tikzcd}
    & X_1 \ar[ld] \ar[rd] & & X_N \ar[ld] \ar[rd] & \\
    X & & \cdots & & X_t.
\end{tikzcd}
\]
The dual intersection fan $\Sigma$ of $(X,D)$ then has support isomorphic to $\bbR^2$ (see \cite[\S 2]{Keel_Yu_The_Frobenius}).
The affine structure on $\Sk (U)$ defined in \cref{subsec:log-setup} does not coincide with the affine structure defined by $\bbZ^2\subset \bbR^2$ in general, and unless $(X,D)$ is toric it has a singularity at the point $0\in\Sk (U)$, corresponding to the origin of $\bbR^2$.
Under the homeomorphism $\Sk (X,D)\simeq \vert \Sigma\vert$, the data of a wall structure in $\Sigma$ reduces to a collection of lines through the origin $0\in \Sk (U)$ and contained in $\Sk (U)\simeq \vert \Sigma^{\ess}\vert$, decorated with an integral tangent vector.

\subsection{Infinitesimal cylinder counts}

We discuss infinitesimal cylinder counts in the non-archi\-me\-dean setting. 
In general, we expect those counts to be related to \emph{punctured} log Gromov-Witten invariants.
Similar to the definition of non-archimedean cylinder counts in \cref{subsec:cylinder-counts}, infinitesimal cylinder counts are $3$-pointed non-archimedean Gromov-Witten invariants defined by fixing constraints on the spine and taking the degree of the evaluation map. 
The key difference is that infinitesimal cylinder counts are meant to provide a count of \emph{curves with boundary} inside $X$.
This is done in \cite{Yu_Enumeration_of_holomorphic_cylinders_I,Yu_Enumeration_of_holomorphic_cylinders_II,Keel_Yu_The_Frobenius,Keel-Yu_Log-CY-mirror-symmetry} by reducing to counts of proper curves satisfying a \emph{tail condition}.
We fix the data $\beta = (A ,\bu )$ consisting of a curve class $A\in \NE (X,\bbZ )$ and contact orders $\bu = (\bu_1,\bu_2 ,\bu_i)$ with $\bu_i=0$.

\begin{definition}
    An \emph{infinitesimal cylinder spine} $S$ of type $\beta = (A, \bu )$ is a $\Sigma$-piecewise linear map $h\colon \Gamma\rightarrow \vert \Sigma^{\ess}\vert$ such that:
    \begin{enumerate}[wide]
        \item the domain $\Gamma$ is a metric graph with a single $2$-valent vertex $v_0$, a single $3$-valent vertex $v_i$, an infinite contracted leg adjacent to $v_i$ and two finite legs with contact orders $\bu_1$ and $\bu_2$,
        \item the intersection of $h (\Gamma )$ and the support of $\Wall_A$ is either empty or equal to $h(v_0)$, and $h(v_0)\neq 0$,
        \item the $1$-valent vertices of $\Gamma$ are not mapped to rays of $\Sigma$,
        \item the $3$-valent vertex $v_i$ is mapped to the interior of a maximal cone and $h$ is balanced at $v_i$.
    \end{enumerate}
    A typical infinitesimal cylinder spine is depicted in \cref{fig:infinitesimal-cylinder-spine}.
\end{definition}

\begin{figure}
    \centering
    \setlength{\unitlength}{0.35\textwidth}
    \begin{picture} (1.5,1)
    \put(0,0){\includegraphics[height=\unitlength]{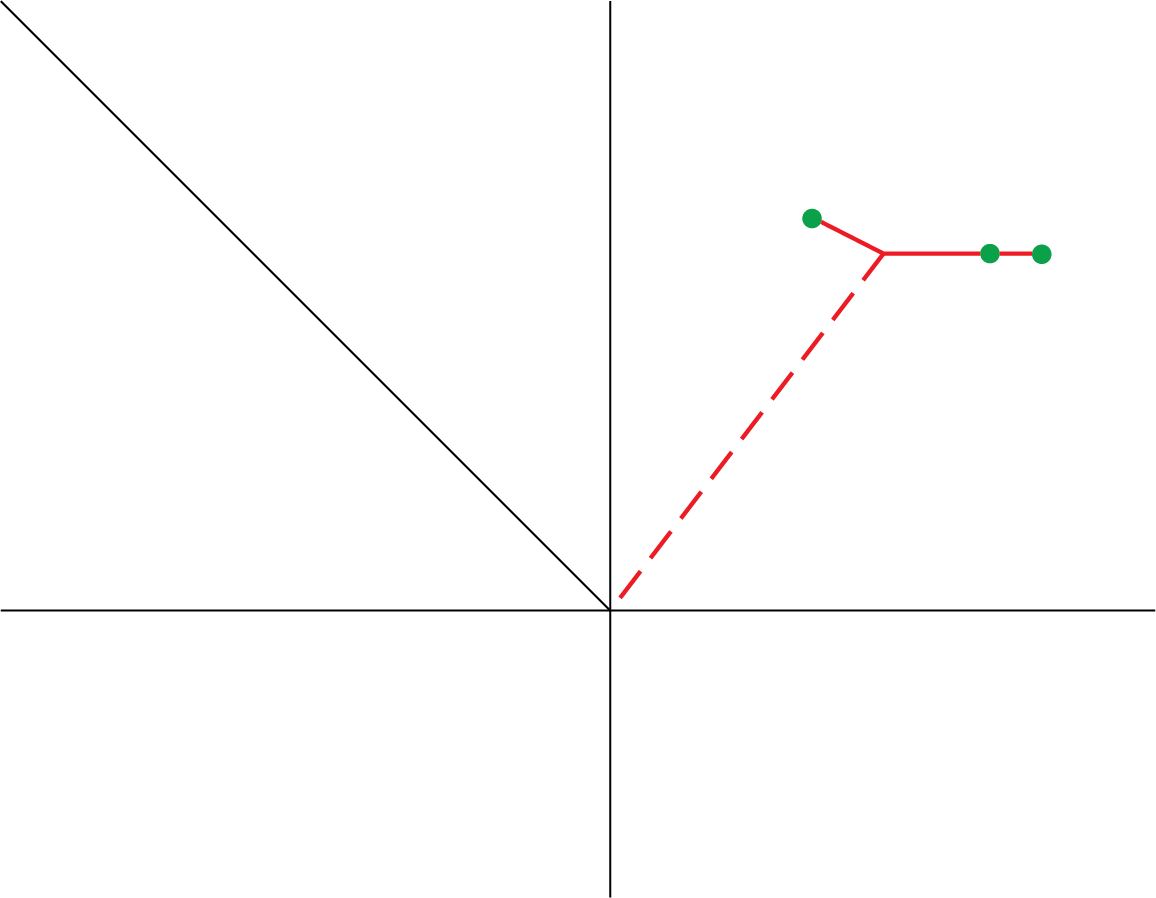}}
    \put(0.87,0.79){$v_1$}
    \put(1.07,0.75){$v_i$}
    \put(1.14,0.75){$v_2$}
    \end{picture}
    \caption{A typical infinitesimal cylinder spine (in solid red). To make it a tropical curve coming from a stable map, wall types are attached to the bending vertex (in dashed red).}
    \label{fig:infinitesimal-cylinder-spine}
\end{figure}

Given an infinitesimal cylinder spine $S$ of type $\beta = (A,\bu )$, the associated non-archimedean count is defined as 
\[N(S,A) \coloneqq \deg \big( \ev_i\colon \cM^{\sm} (U_{\eta}^{\an},\beta )'\cap \Sp^{-1} (S)\longrightarrow X_{\eta}^{\an} \big), \]
where $\cM^{\sm} (U_{\eta}^{\an} ,\beta )'$ is a moduli space of analytic stable maps to an auxiliary space, constructed from $X_{\eta}^{\an}$, which satisfy a generalized tail condition.
We refer to \cite[\S4 and \S8]{Keel-Yu_Log-CY-mirror-symmetry} for details.

While the construction of the count through an auxiliary space is well-suited for a definition in arbitrary dimension, the $2$-dimensional case allows for a direct definition as in \cite{Yu_Enumeration_of_holomorphic_cylinders_I}.
The affine structure on $\Sk (X,D)$ can be used to canonically extend an infinitesimal cylinder spine into a cylinder spine as in \cref{def:cylinder-spine}.
This produces a spine $\hS$, see \cite[\S4]{Yu_Enumeration_of_holomorphic_cylinders_I}, and the count $N(S,A)$ coincides with the non-archimedean cylinder count $N(\hS , \hA )$ where $\hA = A+\hdelta$ is a curve class determined by $\beta$.
We provide more details below. 

\begin{construction}[Extension of a spine] \label{constr:extension-spine}
    Let $S = (h\colon \Gamma\rightarrow \vert \Sigma\vert )$ be an infinitesimal cylinder spine.
    We construct a cylinder spine $\hS = (\hh\colon \hGamma\rightarrow\vert \Sigma\vert )$ called the \emph{extension of $S$} as follows.
    
    We glue infinite legs $[0,+\infty)$ to the finite $1$-valent vertices of $\Gamma$, require $\hh$ to have slope specified by the contact orders $\bu$ on those extended legs, and introduce the necessary $2$-valent vertices in order to make $\hh$ a $\Sigma$-piecewise linear map. 
    The reason why this procedure works is because the extended legs never hit the origin $0$, which is the singular locus of the integral affine structure, ensuring that we can still make sense of the contact orders $\bu$ in different cones of $\Sigma$.
    In the language of \cite{Yu_Enumeration_of_holomorphic_cylinders_I}, the spine $S$ is extendable.
    
    We associate to $\hS$ a curve class $\hA$ of the form 
    \[\hA = A + \hdelta_1 + \hdelta_2 ,\]
    where $\hdelta_1$ and $\hdelta_2$ are effective curve classes determined by the extended legs as in \cite[Definition 7.9]{Keel_Yu_The_Frobenius}.
    Then, the spine $\hS$ is a cylinder spine of type $\hbeta = (\hA ,\bu )$.
\end{construction}

The counts $N(S,A)$ and $N(\hS,\hA )$ are defined using different moduli spaces, but those moduli spaces are isomorphic.
While we do not prove this fact here, a consequence is that the counts $N(S,A)$ and $N(\hS ,\hA )$ agree. 

\begin{lemma}
    Let $(X,D)$ be a $2$-dimensional log Calabi-Yau pair satisfying the assumptions of \cref{subsec:log-setup}.
    Let $S$ be an infinitesimal cylinder spine of type $\beta = (A,\bu )$, and let $\hS$ be the cylinder spine of type $\hbeta = (\hA ,\bu )$ constructed from $S$.
    Then 
    \[N(S,A)  = N(\hS ,\hA ).\]
\end{lemma}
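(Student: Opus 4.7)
The plan is to exhibit an explicit bijection between the moduli spaces defining the two counts, compatible with the evaluation maps at $p_i$. Both counts are defined as degrees of \'etale maps (the auxiliary moduli space for $N(S,A)$ and $\cM^{\sm}(U_\eta,\hbeta)\cap \Sp^{-1}(\hS)$ for $N(\hS,\hA)$), so once one establishes an isomorphism of moduli spaces over $X_\eta^{\an}$ respecting $\ev_i$, the equality of degrees follows immediately.

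First, I would observe that in dimension two the defining properties of an infinitesimal cylinder spine---namely $h(v_0)\neq 0$, the $1$-valent vertices of $\Gamma$ avoid the rays of $\Sigma$, and balancing at $v_i$---guarantee that the extended legs produced in \cref{constr:extension-spine} lie entirely in the locus where the integral affine structure on $\Sk(U)$ is smooth. Consequently the extension $\hS$ is a well-defined $\Sigma$-piecewise linear map, and the adjusted curve class $\hA = A + \hdelta_1+\hdelta_2$ precisely accounts for attaching two analytic tails of classes $\hdelta_1$ and $\hdelta_2$ to each curve counted by $N(S,A)$. The fact that $\hS$ is a genuine cylinder spine of type $\hbeta$ is exactly the statement at the end of \cref{constr:extension-spine}.

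Next, I would construct mutually inverse maps. Starting from $[f] \in \cM^{\sm}(U_\eta,\hbeta)\cap \Sp^{-1}(\hS)$, the two components of the source curve mapping with nontrivial contact order to $D_\eta^{\an}$ are canonically determined by the two infinite legs of $\hS$; removing them produces an analytic curve with boundary that fits into the auxiliary space from \cite[\S 4, \S 8]{Keel-Yu_Log-CY-mirror-symmetry} and automatically satisfies the generalized tail condition there (the tail data being recorded by the contact orders $\bu_1,\bu_2$). Conversely, given a curve with boundary counted by $N(S,A)$, the surface-case rigidity of analytic disks with prescribed contact order (reducible to the toric model via the zigzag $X\leftarrow X_1\rightarrow \cdots\rightarrow X_t$) produces unique analytic tails with classes $\hdelta_1,\hdelta_2$; gluing them onto the given curve yields a proper analytic stable map with spine exactly $\hS$. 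These constructions are clearly mutually inverse and commute with $\ev_i$, since the contracted leg at the trivalent vertex $v_i$ is untouched by the extension.

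The hard part is making precise the matching between the generalized tail condition in the auxiliary-space construction of \cite{Keel-Yu_Log-CY-mirror-symmetry} and the canonical extension of \cref{constr:extension-spine}, and verifying that the rigidity of analytic tails with prescribed contact orders is an \'etale phenomenon so that the bijection above is an isomorphism of schemes, not just of sets. In the surface case both of these can be reduced to a local toric computation near a generic point of a boundary divisor, using that the relevant moduli of tails is a single reduced point with the correct multiplicity absorbed into the class adjustment $A\mapsto \hA$.
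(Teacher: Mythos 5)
The paper does not actually prove this lemma: after \cref{constr:extension-spine} it simply asserts that the two moduli spaces (the auxiliary space with generalized tail condition defining $N(S,A)$, and $\cM^{\sm}(U_{\eta},\hbeta)\cap\Sp^{-1}(\hS)$ defining $N(\hS,\hA)$) are isomorphic, and explicitly says ``while we do not prove this fact here.'' Your strategy --- exhibit an isomorphism of the two moduli spaces over $X_{\eta}^{\an}$ compatible with $\ev_i$ and conclude equality of degrees of the two \'etale maps --- is exactly the route the paper has in mind, so in that sense you are on the intended path and your first reduction step is fine.

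That said, your sketch stops short precisely where the real content lies, and one point of the dictionary is stated imprecisely. The infinitesimal cylinder counts of \cite{Keel-Yu_Log-CY-mirror-symmetry} are not defined via analytic curves with boundary: they are defined via \emph{proper} stable maps into an auxiliary space constructed from $X_{\eta}^{\an}$, subject to a generalized tail condition, so ``removing the two components with nontrivial contact order'' is not literally the correspondence; what must be matched is the behaviour of the curve outside a bounded region (encoded by the tail condition) with the canonical straight-line extension of the spine and the unique tail classes $\hdelta_1,\hdelta_2$ of \cref{constr:extension-spine}. Your claim that a curve with spine $\hS$ ``automatically satisfies the generalized tail condition'' is an assertion, not an argument, and it is the crux: one needs that beyond the bending vertex the curve is rigid with the prescribed contact orders (a consequence of the transversality/extendability of $S$ away from the singular point of the affine structure in the surface case), and that this rigidity holds in families, so that the resulting identification is an isomorphism of analytic moduli spaces, or at least an isomorphism \'etale-locally over $X_{\eta}^{\an}$ via $\ev_i$, rather than a bijection on closed points. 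You name these issues yourself as ``the hard part'' and defer them to ``a local toric computation,'' but you do not carry that computation out, so your proposal, like the paper, records the expected mechanism without establishing the lemma; as a self-contained proof it has a genuine gap at exactly this step.
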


We remark that this alternative definition of $N(S,A)$ is only valid in the surface case.
In higher dimensions, singularities of the integral affine structure on $\Sk( X,D)$ are higher dimensional and prevent the naive extension procedure outlined above. 

Finally, we recall the following invariance property of infinitesimal cylinder counts.
The pairing $\langle \cdot , \cdot\rangle$ in the statement of the proposition is the canonical pairing between integral tangent vectors and integral affine functions.

\begin{proposition}[{\cite[Proposition 12.8]{Keel-Yu_Log-CY-mirror-symmetry}}]
    Let $S= (h\colon \Gamma\rightarrow\vert \Sigma^{\ess}\vert )$ be an infinitesimal cylinder spine of type $\beta = (A, \bu )$.
    Let $n$ be a primitive normal vector to the line through $0\in \Sk (U) \simeq\vert \Sigma^{\ess}\vert$ containing $h(v_0)$.
    Then the count $N(S,A)$ depends only on $A$, $\langle n,\bu_1 \rangle$ and $\bu_1 + \bu_2$.
\end{proposition}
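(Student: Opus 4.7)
The plan is to reduce to a logarithmic computation via the extension-to-cylinder-spine of \cref{constr:extension-spine} and the main comparison \cref{thm:main-comparison}, and then show that the resulting combinatorial sum depends on $(\bu_1,\bu_2)$ only through the stated invariants.

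Using the deformation invariance of non-archimedean cylinder counts \cite[Theorem 8.11]{Keel-Yu_Log-CY-mirror-symmetry}, I would first eliminate dependence on the continuous parameters of $S$: the point $h(v_0)$ may be slid along the ray through $0$ that it determines, and $h(v_i)$ may be moved within the interior of its ambient maximal cone, without affecting $N(S,A)$. This reduces the claim to showing that the count depends on $(\bu_1,\bu_2)$ only through $\bu_1+\bu_2$ and $\langle n,\bu_1\rangle$.

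I would then extend $S$ to a cylinder spine $\hS$ of type $\hbeta=(\hA,\bu)$ with $\hA=A+\hdelta_1+\hdelta_2$, so that $N(S,A)=N(\hS,\hA)$, and apply \cref{thm:main-comparison} to obtain
\[N(S,A)=\sum_{\btau} k_\tau N_{\btau},\]
where the sum runs over decorated tropical cylinder types of total class $\hA$ and spine type $\hS$. By \cref{lemma:splitting-cylinder-type}, each such $\btau$ splits at $v_i$ into two decorated broken line types with outgoing contact orders $\bu_1,\bu_2$; balancing at $v_i$ forces $\bu_1+\bu_2\in n^\perp$, and at $v_0$ each broken line further decomposes via \cite[Construction 4.12]{Gross_Canonical-wall-structure-and-intrinsic-mirror-symmetry} into wall types whose outgoing contact orders sum to $\bu_1+\bu_2$. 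Hence the contribution at $v_0$ already sees $(\bu_1,\bu_2)$ only through the total bending $\bu_1+\bu_2$.

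It then remains to verify that the remaining broken-line and tail data on the right-hand side depend on $(\bu_1,\bu_2)$ only through $\bu_1+\bu_2$ and $\langle n,\bu_1\rangle$. The tail classes $\hdelta_1,\hdelta_2$ arising from \cref{constr:extension-spine} can be computed from the blow-up formula, and a direct calculation using $\langle n,\bu_1+\bu_2\rangle=0$ should show that $\hdelta_1+\hdelta_2$ is determined by $\bu_1+\bu_2$ and $\langle n,\bu_1\rangle$ alone. Given two pairs $(\bu_1,\bu_2)$ and $(\bu_1',\bu_2')$ sharing these two invariants, I would choose a common toric refinement of $(X,D)$ adapted to all four rays and invoke the birational invariance of logarithmic cylinder counts (\cref{prop:birational-invariance-log-cylinders}) to place both computations in the same log-\'etale model, in which the corresponding decorated cylinder types are put in bijection preserving the invariants $k_\tau N_{\btau}$. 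The main obstacle is precisely this last bookkeeping step: carefully tracking the tail classes across toric modifications and establishing the bijection of decorated cylinder types, so that the sums on the right-hand side for the two pairs agree termwise.
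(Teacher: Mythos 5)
The statement you are proving is not proved in this paper at all: it is quoted verbatim from \cite[Proposition 12.8]{Keel-Yu_Log-CY-mirror-symmetry}, where it is established directly by non-archimedean arguments, and in the present paper it serves as an \emph{input} (it is exactly what makes the generating series \eqref{eq:wall-crossing-function-NA} well defined). So your route — extend $S$ to $\hS$ via \cref{constr:extension-spine}, apply \cref{thm:main-comparison}, split via \cref{lemma:splitting-cylinder-type}, propagate at the bending vertex, and compare models via \cref{prop:birational-invariance-log-cylinders} — is necessarily different from the paper's, and much heavier. It is not strictly circular (the proof of \cref{thm:main-comparison} does not invoke the cited proposition), but it does rest on the identity $N(S,A)=N(\hS,\hA)$, which the paper itself only asserts without proof, so your argument is conditional on that as well.

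The genuine gap is the step you yourself defer as ``bookkeeping'': it is the entire content of the statement. After splitting at $v_i$ and applying the propagation rule at $v_0$, you still have to show that the resulting weighted sums for two pairs $(\bu_1,\bu_2)$ and $(\bu_1',\bu_2')$ with the same sum and the same $\langle n,\bu_1\rangle$ agree, and this is not a termwise bijection of decorated cylinder types preserving $k_{\tau}N_{\btau}$ in any obvious sense: the extended spines $\hS$, the tail classes $\hdelta_1,\hdelta_2$, the total classes $\hA$, and (after your common toric refinement) the relevant lifts all change with the individual $\bu_j$. Your claim that $\hdelta_1+\hdelta_2$ is determined by $\bu_1+\bu_2$ and $\langle n,\bu_1\rangle$ alone is asserted, not argued, and is not obvious from \cite[Definition 7.9]{Keel_Yu_The_Frobenius}; more importantly, nothing in your sketch explains where the dependence on $\langle n,\bu_1\rangle$ (as opposed to $\bu_1+\bu_2$ only) enters the logarithmic side — it must come through the multiplicities in the propagation rule at $v_0$, which you never analyse, and any ``bijection preserving $k_{\tau}N_{\btau}$'' that ignores these multiplicities would prove too much. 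There is also a small but real error: the constraint $\bu_1+\bu_2\in n^{\perp}$ (for a nonzero count) comes from the wall-balancing condition at the bending vertex $v_0$, whose bend must be tangent to the wall through $h(v_0)$, not from balancing at $v_i$, which is balanced by definition. Carrying out the missing identification would essentially amount to reproving \cref{prop:infinitesimal-cylinder-counts-to-log-counts}; as written, the decisive step is absent, so the proposal does not yet constitute a proof.
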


\subsection{Scattering diagram of log Calabi-Yau surfaces}

In this subsection, we use the main comparison (\cref{thm:main-comparison}) and the splitting formula for broken line invariants in \cite[Eq.(4.2)]{Gross_Canonical-wall-structure-and-intrinsic-mirror-symmetry} to express infinitesimal cylinder counts in terms of wall types invariants in the surface case. 
As a consequence, we obtain the equivalence of the non-archimedean and logarithmic scattering diagrams (\cref{thm:exponential-formula}) and mirror constructions (\cref{coro:mirror-algebras}).

We first need to compute the invariants associated to a broken line type whose spine does not bend.
We recall that given a broken line type $\bomega$, the nonvanishing of the invariant $N_{\bomega}$ constrains the curve class decoration on vertices of the spine (\cite[Lemma 4.11]{Gross_Canonical-wall-structure-and-intrinsic-mirror-symmetry}).
The type $\bomega$ is called \emph{admissible} if the curve class decoration satisfies those contraints. 

\begin{lemma} \label{lemma:computation-straight-broken-line}
    Let $\bomega$ be an admissible decorated broken line type, let $u_{\out} = \bu(L_{\out} )$.
    If $\Sp (\bomega )$ is balanced, then $\bomega =\Sp (\bomega )$ and $k_{\omega}N_{\bomega} = 1$.
\end{lemma}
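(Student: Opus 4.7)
The argument splits into two parts: first, that the graph of $\bomega$ coincides with that of $\Sp(\bomega)$ after simplification (no trees hang off the spine), and second, the direct computation of $k_\omega N_{\bomega}$.

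For the first part, I would apply the splitting construction of \cite[Construction 4.12]{Gross_Canonical-wall-structure-and-intrinsic-mirror-symmetry} at the edges joining the spine to the rest of $G_\omega$. This decomposes $\bomega$ into $\Sp(\bomega)$ together with decorated wall types $\btau_1,\dots,\btau_k$, each glued to a spine vertex via its unique leg. At each spine vertex $v$, the balancing condition on $\bomega$ separates into a contribution from the spine (equal to the bend of $\Sp(\bomega)$ at $v$, which vanishes by hypothesis) and the sum of the root-edge directions of the wall types attached at $v$; hence the latter sum must vanish. In the surface case, $v$ maps to a ray of $\Sigma^{\ess}$, and the root edges of the attached wall types point into maximal cones adjacent to this ray. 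Using admissibility of $\bomega$ together with the effectiveness of each wall-type's curve class decoration (which forces positivity of intersection with boundary divisors), I would rule out any nontrivial cancellation of root-edge vectors, forcing no wall types to be attached, and hence $\bomega = \Sp(\bomega)$.

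For the second part, once $\bomega = \Sp(\bomega)$ is simplified, every vertex is balanced and maps to a ray of $\Sigma^{\ess}$. The intersection-number formula $\bA(v)\cdot D_i = \sum_{e\ni v}\langle \bu(e), D_i\rangle$ combined with the balancing gives $\bA(v)\cdot D_i = 0$ for every boundary divisor $D_i$; admissibility together with the structure of $\NE(X,\bbZ)$ then forces $\bA(v) = 0$. The resulting moduli space $\cM(X_s,\bomega)$ parameterizes essentially constant punctured log stable maps to a point of the essential divisor $D_{\bsigma(v_*)}$ with prescribed tangencies, and a standard log-toric computation of the virtual class gives $N_{\bomega} = 1/k_\omega$, where the factor $k_\omega$ is precisely the index $|\coker(h_\ast)|$ appearing in its lattice-theoretic definition.

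The main obstacle is the cancellation analysis in the first step. Ruling out pairs of wall types at the same spine vertex whose root-edge vectors cancel requires exploiting both admissibility (which ties the curve class decoration tightly to the contact orders) and surface-specific features (walls are rays, so root edges of wall types at a given spine vertex are highly constrained). The effectiveness of the Mori cone and the toric-model structure of $(X,D)$ together make this verification possible but delicate.
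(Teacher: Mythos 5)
Your overall skeleton (split off the wall types attached to the spine, use balancing at spine vertices, then compute the invariant of the straight piece) matches the paper's, but both halves of your argument have genuine gaps. In the first half, the cancellation you flag as the ``main obstacle'' is exactly the point, and your proposed route (admissibility plus effectiveness of curve classes) does not resolve it; the paper's resolution is a simple geometric observation you are missing. In the surface case a wall type has $\dim\tau=\dim h(\tau_{\out})=1$ and its out-leg sweeps out a \emph{ray emanating from the origin} with direction $\bu(L_{\out})$; so a wall type glued to a spine vertex $v$ with $h(v)\neq 0$ must have its contact order equal to a \emph{positive} multiple of the single primitive vector $v_0$ pointing from $0$ towards $h(v)$. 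In particular your picture of root edges ``pointing into maximal cones adjacent to the ray'' is not right --- they are all tangent to the ray through $h(v)$ and all point the same way, so $\sum_i m_i v_0=0$ with $m_i>0$ is immediately impossible and no delicate analysis of admissibility or of $\NE(X,\bbZ)$ is needed (nor would those tools alone control the \emph{directions} of the attached wall types).

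In the second half, the claim that a ``standard log-toric computation'' gives $N_{\bomega}=1/k_{\omega}$ is precisely the nontrivial content and is left unproved; it is also not obviously reachable the way you set it up (your deduction $\bA(v)=0$ from $\bA(v)\cdot D_i=0$ needs positivity of a divisor supported on $D$, and even granting it, the virtual count of the resulting punctured maps with prescribed tangencies is not a one-line toric fact). The paper avoids any direct virtual computation: having shown $\bomega=\Sp(\bomega)$, it applies the splitting/propagation formula \cite[Eq.\,(4.2)]{Gross_Canonical-wall-structure-and-intrinsic-mirror-symmetry} inductively along the spine, splitting at each edge; since no wall types are attached, each step gives $k_{\omega}N_{\bomega}=k_{\omega_i}N_{\bomega_i}$, and after finitely many steps one reaches a \emph{trivial} broken line type, for which $kN=1$ by definition. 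If you want to keep a direct computation instead, you would need to actually produce it (including the appearance of the index $k_{\omega}=\vert\coker(h_{\ast})\vert$), which amounts to reproving a special case of the propagation rule.
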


\begin{proof}
    We first prove that there are no wall types attached to vertices of the spine. 
    In the $2$-dimensional case, wall types are $0$-dimensional tropical types.
    It follows that if $\gamma$ is a wall type with leg $L_{\out}$, then $h_{\gamma} (\gamma_{\out} )$ is a ray starting at the origin $0$ and with slope vector $\bu (L_{\out} )$.
    Now, fix a vertex $v$ in the spine of $\bomega$.
    Denote by $\tau_1,\dots, \tau_k$ the wall types attached to $v$ and by $u_1,\dots, u_k$ their contact orders.
    We can write $u_i = m_i v_0$ for a primitive vector $v_0$ and positive integers $m_i >0$.
    Since $v$ is balanced the balancing condition implies that $\sum_i u_i = 0$ in the tangent lattice to $v$.
    It follows that all the contact orders $u_i$ must vanish, which contradicts the definition of a wall type. 
    We conclude that there are no wall types attached to $v$, hence that $\bomega$ coincindes with its spine. 

    To prove that $k_{\omega} N_{\bomega} =1$, we apply the splitting formula from \cite[Eq.(4.2)]{Gross_Canonical-wall-structure-and-intrinsic-mirror-symmetry} inductively to reduce to a trivial broken line type. 
    Let $v_0,\dots, v_r$ denote the vertices in $\bomega$, with $v_0$ adjacent to the punctured leg and $v_i$ connected to $v_{i+1}$ by an edge for all $0\leq i\leq r-1$.
    For $i\geq 1$, splitting at the edge connecting $v_{i-1}$ to $v_{i}$ produces two tropical types, and the type containing the vertex $v_{i}$ is a decorated broken line type which we denote by $\bomega_{i}$.
    Splitting at the vertex $v_r$ produces a trivial broken line  type (see \cite[Definition 3.19]{Gross_Canonical-wall-structure-and-intrinsic-mirror-symmetry}) which we denote by $\bomega_r$.
    Since there are no wall types, the splitting formula for broken lines gives:
    \[k_{\omega}N_{\bomega} = k_{\omega_1}N_{\bomega_1} = \cdots = k_{\omega_r}N_{\bomega_r}  =1,\]
    the last equality because $\bomega_r$ is a trivial broken line type.
\end{proof}

Since in the surface case infinitesimal cylinder counts can be expressed as extended cylinder counts, we obtain the following formula relating non-archimedean and punctured log invariants. 

\begin{proposition} \label{prop:infinitesimal-cylinder-counts-to-log-counts}
    Let $U$ be a connected smooth affine log Calabi-Yau surface with a compactification $(X,D)$ as in \cref{assumption:compactification}.
    Let $S$ be an infinitesimal cylinder spine in $\Sk (U)$ of type $\beta =(A,\bu )$ with $A\in\NE (X,\bbZ )$ and $\bu = (\bu_1 ,\bu_2, 0)$.
    Then 
    \[N(S,A) =  \sum_{\ell\geq 0} \sum_{\mu_1,\dots ,\mu_{\ell}\geq 0}\sum_{\substack{\btau_1,\dots ,\btau_{\ell}  }} \frac{\prod_{i=1}^{\ell}  \big( k_{\tau_i} W_{\btau_i}\big)^{\mu_i}}{\mu_1!\cdots \mu_{\ell}!}, \]
    where the last sum is over decorated wall types $\btau_1,\dots ,\btau_{\ell}$ with contact order $u_{\tau_i}$ and total curve class $A_i$ that satisfy: 
    \begin{align}
        \mu_1 u_{\tau_1} + \cdots +\mu_{\ell} u_{\tau_{\ell}}  &= \bu_1 +\bu_2 , \label{eq:contact-oders-exp-formula}\\
        \mu_1 A_1 + \cdots + \mu_{\ell} A_{\ell} &= A. \label{eq:curve-class-exponential-formula}
    \end{align}
\end{proposition}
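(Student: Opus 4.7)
The plan is to reduce the infinitesimal cylinder count to an extended cylinder count via \cref{constr:extension-spine}, apply the main comparison \cref{thm:main-comparison}, and then iteratively apply the broken line splitting formula to extract wall type invariants.

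First, I would invoke the identification $N(S,A) = N(\hS,\hA)$ valid in the surface case, where $\hS$ is the cylinder spine obtained by extending both finite legs of $S$ to infinity and $\hA = A + \hdelta_1 + \hdelta_2$. Applying \cref{thm:main-comparison} then yields
\[
N(\hS,\hA) = \sum_{\btau} k_\tau N_\btau,
\]
where the sum is over decorated tropical cylinder types of total curve class $\hA$ with spine of type $\hS$. Any such $\btau$ has a central $3$-valent vertex $v_i$, a unique bending vertex $v_0$ on the leg of slope $\bu_1$, and a multiset of decorated wall types attached at $v_0$.

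Next, I would decompose each $\btau$ by two successive splittings. Splitting $\btau$ at $v_i$ via \cref{lemma:splitting-cylinder-type} produces two decorated broken line types $\bomega_1, \bomega_2$ with $k_\tau N_\btau = k_{\omega_1} k_{\omega_2} N_{\bomega_1} N_{\bomega_2}$. Since $\bomega_2$ corresponds to the leg of slope $\bu_2$ and hence has no bending, \cref{lemma:computation-straight-broken-line} gives $k_{\omega_2} N_{\bomega_2} = 1$. For $\bomega_1$, I would apply the broken line splitting formula \cite[Eq.(4.2)]{Gross_Canonical-wall-structure-and-intrinsic-mirror-symmetry} at $v_0$ to peel off each attached wall type $\btau_j$ one at a time; the residual straight broken line again contributes $1$ by \cref{lemma:computation-straight-broken-line}, yielding
\[
k_{\omega_1} N_{\bomega_1} = \prod_{j} k_{\tau_j} W_{\btau_j}.
\]

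Finally, I would regroup the sum over $\btau$ by identifying repeated wall types: writing the multiset as $\ell$ distinct decorated wall types $\btau_1,\dots,\btau_\ell$ with multiplicities $\mu_1,\dots,\mu_\ell$, the subgroup of $\Aut(\btau)$ permuting identical wall-type attachments contributes the denominator $\prod \mu_i!$. The contact order constraint $\sum_i \mu_i u_{\tau_i} = \bu_1 + \bu_2$ is the balancing condition at $v_0$, where balancing at $v_i$ first forces the slope along the edge $v_iv_0$ to be $-\bu_2$; the curve class constraint $\sum_i \mu_i A_i = A$ follows from $\hA = A + \hdelta_1 + \hdelta_2$ together with the observation that the two straight broken line components of $\bomega_1, \bomega_2$ carry curve classes exactly $\hdelta_1$ and $\hdelta_2$. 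The hard part will be the iterated broken line splitting at $v_0$ with correct bookkeeping of the automorphism factors producing the multinomial structure, and verifying the cancellation of $\hdelta_1 + \hdelta_2$ against the curve classes of the straight segments.
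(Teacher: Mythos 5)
Your proposal follows essentially the same route as the paper: reduce $N(S,A)$ to the extended count $N(\hS,\hA)$, apply \cref{thm:main-comparison}, split the resulting cylinder types at the $3$-valent vertex via \cref{lemma:splitting-cylinder-type}, kill the straight broken line with \cref{lemma:computation-straight-broken-line}, and split the bending side to extract the wall type invariants, matching contact orders by balancing and curve classes via $\hA = A+\hdelta_1+\hdelta_2$.

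One bookkeeping point needs care. The paper obtains the factor $\prod_i (k_{\tau_i}W_{\btau_i})^{\mu_i}/\prod_i\mu_i!$ in a single stroke from the propagation rule \cite[Eq.(4.2)]{Gross_Canonical-wall-structure-and-intrinsic-mirror-symmetry}, applied once at the bending vertex. In your version, the intermediate identity $k_{\omega_1}N_{\bomega_1}=\prod_j k_{\tau_j}W_{\btau_j}$ is off by exactly $\prod_i\mu_i!$: the invariants $N_{\bomega_1}$, $N_{\btau}$ and $W_{\btau_i}$ already carry their automorphism factors, and the permutations of identical attached wall types live in $\Aut(\bomega_1)$ (equivalently $\Aut(\btau)$), so the $1/\mu_i!$ is produced at this splitting step, not later. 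In particular it cannot come from ``regrouping the sum over $\btau$'': the sum in \cref{thm:main-comparison} is already over isomorphism classes of decorated types, so each multiset of attached wall types occurs once and no further symmetrization happens there. If you carry the automorphism groups correctly through your one-wall-at-a-time peeling you will simply rederive Eq.(4.2), and the final formula agrees; as written, though, the factor is first dropped and then reinserted at a stage where it does not arise.
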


\begin{proof}
    Let $\hS$ be the extended cylinder spine of type $\hbeta = (\hA , \bu)$ associated to $S$ (see \cref{constr:extension-spine}).
    By \cref{thm:main-comparison}, the non-archimedean count $N(S,A) = N(\hS ,\hA )$ decomposes as a sum of logarithmic cylinder counts
    \[N(S,A) = \sum_{\btau} k_{\tau} N_{\btau} ,\]
    where $\btau$ has total curve class $\hA$, contact orders specified by $\beta$, its $3$-valent vertex mapped to a maximal cone specified by $S$, and has spine type given by $\hS$.
    
    Let $\btau$ be one of the types appearing in the sum, and assume that $N_{\btau}\neq 0$.
    By \cref{lemma:splitting-cylinder-type}, splitting $\btau$ at the $3$-valent vertex produces two decorated broken line types $\bomega_1$ and $\bomega_2$ such that 
    \[ k_{\tau} N_{\btau}  =k_{\omega_1}k_{\omega_2}N_{\bomega_1}N_{\bomega_2} .\]
    Since $N_{\btau}\neq 0$, the counts $N_{\bomega_1}$ and $N_{\bomega_2}$ are nonvanishing.
    In particular, both $\bomega_1$ and $\bomega_2$ are admissible.
    By construction, one of the broken line types has a spine that contains only balanced $2$-valent vertices mapped to rays of $\Sigma^{\ess}$.
    We choose this type to be $\bomega_2$.
    By \cref{lemma:computation-straight-broken-line} we have $k_{\omega_2}N_{\bomega_2}=1$.
    For the type $\bomega_1$, applying the propagation rule \cite[Eq.(4.2)]{Gross_Canonical-wall-structure-and-intrinsic-mirror-symmetry} we find 
    \[k_{\omega_1}N_{\bomega_1} = k_{\omega_1'}N_{\bomega_1 '}\frac{\prod_{i=1}^{\ell}  \big( k_{\tau_i} W_{\btau_i}\big)^{\mu_i}}{\mu_1!\cdots \mu_{\ell}!} = \frac{\prod_{i=1}^{\ell}  \big( k_{\tau_i} W_{\btau_i}\big)^{\mu_i}}{\mu_1!\cdots \mu_{\ell}!},\]
    where $\bomega_1'$ is the broken line type obtained by splitting $\bomega_1$ at the bending vertex, and where $\btau_1,\dots, \btau_{\ell}$ are the decorated wall types attached to the bending vertex. 
    The second equality follows from \cref{lemma:computation-straight-broken-line}, because $\bomega_1'$ is admissible and every vertex in the spine of $\bomega_1'$ is balanced. 
    Furthermore, the balancing condition at the bending vertex guarantees that \eqref{eq:contact-oders-exp-formula} is satisfied, and \eqref{eq:curve-class-exponential-formula} is also satisfied because the total curve classes of $\bomega_1'$ and $\bomega_2$ correspond to the extension classes $\hdelta_1$ and $\hdelta_2$ in \cref{constr:extension-spine}.
    This can be seen by comparing \cite[Definition 4.10]{Gross_Canonical-wall-structure-and-intrinsic-mirror-symmetry} and \cite[\S 4]{Yu_Enumeration_of_holomorphic_cylinders_I}.
    The formula is proved.
\end{proof}

There are two wall-crossing functions attached to $x\in \Sk (U)\setminus 0$, one defined using wall types invariants and the other using infinitesimal cylinder counts. 
We first describe the ring in which those functions exist.
Let $M$ denote the integral tangent space to $x$, which is isomorphic to $\bbZ^2$, and fix a strictly convex toric monoid $Q\subset N_1 (X)$  containing $\NE (X,\bbZ )$.
We consider the completion of the ring $\bbZ [Q\oplus M]$ with respect to the maximal monoid ideal $I = \big( t^A z^u , A\in Q\setminus 0 , u\in M\big)$, that is:
\[\hR \coloneqq \lim_k \frac{\bbZ [Q\oplus M]}{I^k}.\]

The \emph{logarithmic wall-crossing function at $x$} is obtained as the exponential of a generating series of wall types invariants
\begin{equation} \label{eq:wall-crossing-function-log}
    f_x^{\log} (t,z)= \exp\bigg( \sum_{\btau} k_{\tau} W_{\btau} t^A z^{-u_{\tau}} \bigg) \in \hR,
\end{equation}
where the sum is over decorated wall types $\btau$ with total curve class $A$ and contact order $u_{\tau}$ at the unique leg $L_{\out}$, which satisfy $x\in h_{\tau} (\tau_{\out} )$.

The \emph{non-archimedean wall-crossing function at $x$} is obtained as a generating series of infinitesimal cylinder counts.
We denote by $n$ an integral normal vector to the ray connecting the origin $0$ to $x$, and define
\begin{equation} \label{eq:wall-crossing-function-NA}
    f_x^{\an} (t,z) = \sum_{\substack{\bw\in n^{\perp}\\ A\in \NE (X,\bbZ)}} N(S,A) t^A z^{\bw} \in \hR,
\end{equation}
where $S$ is any infinitesimal cylinder spine with contact orders $(\bu_1,\bu_2 , 0)$ such that $\bu_1+\bu_2=\bw$ and $\langle n ,\bu_1\rangle = 1$, and with the bending vertex mapped to $x$.

The main theorem of this section asserts that both wall-crossing functions agree. 
We call it the exponential formula, because it expresses the non-archimedean wall-crossing function as the exponential of a generating series of punctured log invariants.

\begin{theorem}[Exponential formula] \label{thm:exponential-formula}
    Let $U$ be a connected smooth affine log Calabi-Yau surface.
    For any $x\in \Sk (U)\setminus 0$, the non-archimedean and logarithmic wall-crossing functions at $x$ agree: 
    \[f_x^{\an} (t,z) = f_x^{\log} (t,z) .\]
    Consequently, the non-archimedean scattering diagram from \cite{Keel-Yu_Log-CY-mirror-symmetry} and the logarithmic scattering diagram from \cite{Gross_Canonical-wall-structure-and-intrinsic-mirror-symmetry} coincide.
    In particular, for any curve class $A\in \NE( X,\bbZ )$, we have $\Wall_A^{\an} = \Wall_A^{\log}$.
\end{theorem}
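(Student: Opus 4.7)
The plan is to derive the exponential formula $f_x^{\an}(t,z) = f_x^{\log}(t,z)$ by comparing coefficients of monomials $t^A z^\bw$ on both sides, and then deduce the equivalence of the two scattering diagrams and of the wall sets $\Wall_A^{\an}$ and $\Wall_A^{\log}$ as formal consequences.

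First I expand $f_x^{\log}$ using the power series of $\exp$ and the multinomial theorem. By \cref{lemma:finitely-many-wall-types} the formal sum in the exponent lies in the completion $\hR$, so the exponential and the rearrangement by distinct decorated wall types are legitimate. Grouping terms by their multiset of wall types yields
\begin{equation*}
    f_x^{\log}(t,z) = \sum_{\ell \geq 0} \sum_{\substack{\btau_1, \ldots, \btau_\ell \\ \text{distinct}}} \sum_{\mu_1, \ldots, \mu_\ell \geq 1} \frac{\prod_{i=1}^\ell (k_{\tau_i} W_{\btau_i})^{\mu_i}}{\mu_1! \cdots \mu_\ell!} \, t^{\sum_i \mu_i A_i} z^{-\sum_i \mu_i u_{\tau_i}},
\end{equation*}
where each $\btau_i$ has total curve class $A_i$ and contact order $u_{\tau_i}$, ranging over decorated wall types with $x \in h_{\tau_i}(\tau_{i,\out})$.

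Next I extract the coefficient of $t^A z^\bw$ on both sides. On the non-archimedean side, $[t^A z^\bw] f_x^{\an} = N(S,A)$ for any infinitesimal cylinder spine $S$ with bending vertex mapped to $x$, $\bu_1 + \bu_2 = \bw$, and $\langle n, \bu_1 \rangle = 1$. By \cref{prop:infinitesimal-cylinder-counts-to-log-counts} this count is exactly the coefficient of $t^A z^\bw$ in the expansion of $f_x^{\log}$ above, the constraints $\sum_i \mu_i A_i = A$ and $\sum_i \mu_i u_{\tau_i} = \bu_1 + \bu_2$ matching the $t$ and $z$ exponents respectively once the orientation convention for walls has been fixed. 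This proves the exponential formula. For the equivalence of scattering diagrams then, both the Gross--Siebert and the Keel--Yu scattering diagrams are in the surface case supported on rays through $0 \in \Sk(U)$ and are uniquely determined by their wall-crossing functions at interior points of rays; since $f_x^{\an} = f_x^{\log}$ for every $x \in \Sk(U) \setminus 0$, the two scattering diagrams coincide. Finally, $\Wall_A^{\an} = \Wall_A^{\log}$ is obtained by working modulo the ideal generated by $t^{A'}$ with $A - A'$ not effective: under this truncation each wall set becomes a finite collection of decorated rays recoverable from the monomials appearing in the wall-crossing function at any interior point, so equality of the functions forces equality of the wall sets.

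The main technical subtlety lies in the very first step, where one must carefully match the sign conventions, the multinomial factors, and the balancing constraints between the expansion of the exponential on the logarithmic side and the combinatorial formula of \cref{prop:infinitesimal-cylinder-counts-to-log-counts}. All subsequent steps reduce to standard manipulations with consistent two-dimensional scattering diagrams, using that in the surface case the wall-crossing function at any point of a ray determines the decoration of that ray in the scattering diagram.
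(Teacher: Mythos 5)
Your proposal is correct and follows essentially the same route as the paper, whose proof of \cref{thm:exponential-formula} consists of exactly this observation: the identity follows from \cref{prop:infinitesimal-cylinder-counts-to-log-counts} together with the definitions \eqref{eq:wall-crossing-function-NA} and \eqref{eq:wall-crossing-function-log} of the wall-crossing functions. You merely spell out the expansion of the exponential, the coefficient matching, and the (standard) deduction of the equality of scattering diagrams and of the sets $\Wall_A^{\an}=\Wall_A^{\log}$, which the paper leaves implicit.
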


\begin{proof}
    The theorem follows from the formula in \cref{prop:infinitesimal-cylinder-counts-to-log-counts} and the definition of the wall-crossing functions.     
\end{proof}

The theta functions of a scattering diagram are defined using broken lines and wall-crossing functions (see \cite[\S3]{Gross_Canonical_bases}).
It is a key result of the logarithmic and non-archimedean mirror constructions that the scattering diagrams are theta function consistent.
The mirror algebra can then be recovered from the scattering diagram by taking the algebra generated by theta functions.
Thus, as a direct corollary of the exponential formula, we obtain the equivalence of the non-archimedean and logarithmic mirror constructions.

\begin{corollary} \label{coro:mirror-algebras}
    Let $U$ be a connected smooth affine log Calabi-Yau surface.
    Then the non-archimedean and logarithmic mirror constructions provide the same mirror algebra.
\end{corollary}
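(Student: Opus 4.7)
The plan is to deduce this from the exponential formula (Theorem~\ref{thm:exponential-formula}) via the theta function formalism common to both constructions. Both the non-archimedean mirror construction of \cite{Keel-Yu_Log-CY-mirror-symmetry} and the intrinsic logarithmic mirror construction of \cite{Gross_Canonical-wall-structure-and-intrinsic-mirror-symmetry} build their mirror algebra by the same two-step procedure: one first produces a theta-consistent scattering diagram on $\Sk(U)$, and then defines theta functions $\vartheta_p$ indexed by integral points $p$ of the skeleton as generating series over broken lines with endpoint at a chosen generic basepoint. The mirror algebra is the free module on the theta functions, with multiplication encoded by the structure constants obtained by expanding $\vartheta_{p_1}\vartheta_{p_2}$ in the theta basis.

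Granted this common framework, the theta function construction depends only on the wall data of the scattering diagram: the supports of the walls together with their wall-crossing functions in $\hR$. Two theta-consistent scattering diagrams with identical wall data therefore produce identical structure constants, and hence canonically isomorphic mirror algebras. Theorem~\ref{thm:exponential-formula} provides exactly this input, since it shows that $\Wall_A^{\an}=\Wall_A^{\log}$ for every $A\in\NE(X,\bbZ)$ and that $f_x^{\an}=f_x^{\log}$ for every $x\in\Sk(U)\setminus 0$.

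The only point requiring care is verifying that the non-archimedean mirror algebra of \cite{Keel-Yu_Log-CY-mirror-symmetry}, initially defined through counts of non-archimedean analytic broken disks in $U^{\an}$, admits this description in terms of broken lines on the non-archimedean scattering diagram. On the logarithmic side this is the very definition; on the non-archimedean side it is a consequence of the consistency theorems of \cite{Keel-Yu_Log-CY-mirror-symmetry}, which identify non-archimedean theta functions with broken-line generating series against the non-archimedean scattering diagram. Once this identification is in hand, the equality of scattering diagrams supplied by Theorem~\ref{thm:exponential-formula} translates term-by-term into equality of theta functions and of their structure constants, yielding the corollary.
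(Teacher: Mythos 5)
Your proposal is correct and follows essentially the same route as the paper: the paper likewise deduces the corollary directly from the exponential formula, citing that both constructions recover the mirror algebra from their (theta-function-consistent) scattering diagrams via broken lines and theta functions, so equal scattering diagrams give equal algebras. Your extra remark about identifying the non-archimedean theta functions with broken-line series against the non-archimedean diagram is exactly the consistency input the paper invokes from \cite{Keel-Yu_Log-CY-mirror-symmetry}.
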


\begin{proof}
    The corollary follows from the fact that the mirror algebras are uniquely determined by the scattering diagrams through theta functions.
\end{proof}

\bibliographystyle{plain}
\bibliography{dahema}
\end{document}
 \newpage

\section*{Comparison in cylinder case (vertical log structure)}

Let $\bbk$ be an algebraically closed field of characteristic $0$.
Let $k^{\circ}$ be a noetherian $\bbk$-algebra which is a complete DVR of height $1$.
We denote by $k$ the fraction field of $k^{\circ}$, and by $\tk$ its residue field.
\personal{Concrete case: $\bbk =\bbC$, $k^{\circ} = \bbC\dbb{t}$, $k = \bbC\dbp{t}$, $\tk  =\bbC$.}

\subsection*{Degeneration setup}
We fix $(X,D)$ a log Calabi-Yau pair over $\tk$, with maximal boundary and $U\coloneqq X\setminus D$ is smooth and affine. \todo{minimal model assumption?}
We equip $X$ with the divisorial log structure coming from $D\subset X$.

Let $B \coloneqq \Spec (k^{\circ} )$ equipped with the canonical log structure, coming from the special fiber.
Let $s \coloneqq \Spec (\bbN\rightarrow \tk )$ denote the closed point, let $\eta \coloneqq \Spec (k)$ denote the generic point (which has the trivial log structure).

Let $X_B$ denote the log scheme whose underlying scheme is $\underline{X}\times \underline{B}$, with the divisorial log structure coming from $\underline{X}\times_{\underline{B}}\underline{s} + \underline{D}\times\underline{B}$.
We denote by $D_B\subset X_B$ the scheme $\underline{D}\times\underline{B}$ equipped with the log structure making the inclusion strict.
We denote by $X^{\dagger}$ the special fiber of $X_B$, equipped with the pullback log structure.
As a scheme, we have $\underline{X^{\dagger}} =\underline{X}$.

We have $\Sigma (X_B) = \Sigma (X^{\dagger} ) =  \Sigma (X)\times\bbR_{\geq 0}$, and the map $X_B\rightarrow B$ (resp. $X^{\dagger}\rightarrow s$) tropicalizes to the projection to $\bbR_{\geq 0}$.
A tropical type $\tau$ to $\Sigma (X)$ induces a tropical type $\tau^{\dagger}$ to $\Sigma (X^{\dagger} )$ by taking the cone over $\tau$.
The type $\tau^{\dagger}$ interpolates between the vertex type at height $0$, and the type $\tau$ at height $1$.
\personal{Does it also induce a type in $\Sigma (X_B)/\Sigma (B)$?}

\todo{Description of tropical types for $X_B/B$, relative to $B$.\\
Conjecture:
They are characterized by a tropical type $\tau$ for the special fiber (height $1$), a degeneration (face) $\sigma$ of this tropical type on the generic fiber (height $0$).
This data produces a type in $X_B/B$ by taking the cone connecting $\sigma$ to $\tau$.
In particular, if $\sigma = \beta$ is the vertex type, we simply get a cone over $\tau$ with vertex $0$.
}

\subsection*{Construction of the point constraint}

\subsubsection{Evaluation maps}
Fix a genus $0$ global tropical type $\tau$ to $\Sigma (X)$ relative to $B$ (meaning the contact orders project to $0$ in $\Sigma (B)$, see \cite[Definition 2.44(1)]{Abramovich_Punctured_logarithmic_maps}), with an interior marked point.

In general, evaluation on the contact order $0$ point produces a morphism of log stacks $\cM (X/B , \tau)\rightarrow \cP (X_B,0)= (X\times B)\times B\bbG_m$, with $B\bbG_m$ equipped with the trivial log structure.
In particular the projection $(X\times B)\times B\bbG_m\rightarrow X\times B$ is strict, and we obtain an evaluation map at the level of log stacks
\[\ev_X\colon \cM (X_B/B,\tau)\longrightarrow X_B.\]
We denote by $\cA_{X_B}$ the Artin fan of $X_B$ and by $\cA_B \simeq [\bbA^1/\bbG_m]$ the Artin fan of $B$.
\personal{We have $\cA_{X_B} \simeq \cA_X \times \cA_B$.}
We introduce the relative Artin fan $\cX_B \coloneqq B\times_{\cA_B} \cA_{X_B}$.
Similar to the case of $X_B$, we have an evaluation map at the level of log stacks
$\fM (\cX_B/B , \tau) \longrightarrow \cA_{X_B} $.
It induces an evaluation map to the relative Artin fan:
\[\ev_{\cX}\colon \fM (\cX_B/B ,\tau ) \longrightarrow\cX_B .\]
We define:
\[\fM^{\ev} (\cX_B/B ,\tau)\coloneqq \fM (\cX_B/B ,\tau) \times_{\underline{\cA_{X_B}}} \underline{X_B} .\]

\begin{lemma} \label{lemma:evaluation-stack-is-fine-pullback}
  There is a cartesian diagram in all categories
  \[\begin{tikzcd}
    \fM^{\ev} (\cX_B/B ,\tau )\ar[r] \ar[d] & X_B \ar[d] \\
    \fM (\cX_B/B ,\tau ) \ar[r] & \cX_B .
  \end{tikzcd}\]
\end{lemma}

\begin{proof}
  The vertical maps are strict, and the diagram is cartesian at the level of stacks.
  The result then follows from \todo{cite Ogus}.
\end{proof}

We have three morphisms fitting in the diagram
\[\cM (X_B/B,\tau ) \xrightarrow{\varepsilon^{\ev}} \fM^{\ev} (\cX_B/B,\tau )\xrightarrow{\varepsilon}  \fM (\cX_B/B , \tau ).\]
Each morphism admits a perfect obstruction theory, and those obstruction theories are compatible in the sense that $(\varepsilon\circ\varepsilon^{\ev})^! = (\varepsilon^{\ev})^! \circ\varepsilon^!$.

For a genus $0$ graph $G$ with $n$ legs, we define
\[\bM_{0,n} (G ,\cX_B/B)\coloneqq (\bM_{0,n} (G)\times B)\times_B \cX_B = (\bM_{0,n} (G)\times B)\times_{\cA_B} \cA_{X_B} ,\]
\personal{$\simeq (\bM_{0,n} (G)\times B) \times_{B\times B\bbG_m} (\cX_B \times B\bbG_m)$}
where $\bM_{0,n} (G)$ denotes the moduli space of $G$-marked genus $0$, $n$-pointed pre-stable curves equipped with its basic log structure.
When $G$ has a single vertex, no edge and $n$ legs, we simply write $\bM_{0,n} (\cX_B/B)$.

\todo{Analogue of \cref{lemma:domain-and-eval-log-etale} and \cref{prop:evaluation-map-log-smooth} for $\tau^{\dagger}$ instead of $\beta^{\dagger}$?}

\begin{lemma} \label{lemma:domain-and-eval-log-etale}
  There is a natural morphism
  \[\fM (\cX_B /B ,\beta^{\dagger}) \longrightarrow \bM_{0,3}(\cX_B/B)\]
  which is log étale.
\end{lemma}
\begin{proof}
    The morphism in the lemma is induced by the domain morphism $\fM (\cX_B:B ,\beta^{\dagger}) \rightarrow \bM_{0,3}\times B$, remembering the domain as a family of curves over $B$, and the evaluation morphism $\fM (\cX_B/B,\beta^{\dagger})\rightarrow \cX_B$.

    To prove the induced morphism is log étale, consider the diagram
    \[\fM (\cX_B/B , \beta^{\dagger} ) \longrightarrow \bM_{0,3} (\cX_B/B) \longrightarrow \bM_{0,3}\times B .\]
    The composite is the domain map, which is log étale by \cite[Theorem 3.25]{Abramovich_Punctured_logarithmic_maps} because $X_B$ has simple log structure.
    The second map is the base-change of the morphism of Artin stacks $\cA_{X_B}\rightarrow \cA_B$.
    Morphisms between Artin stacks are log étale \cite[Lemma A.7]{Abramovich_Comparison-GW-invariants},
    \personal{Here we use that $B$ and $X$ are noetherian over $\Spec\bbk$.
    See also \cite[\S3.1]{Abarmovich_Boundedness-stable-log-maps}, Artin fans are log étale over $\Spec \bbk$. Thus, any map between log étale algebraic stacks is log étale.} and log étale morphisms are stable under base-change.
    Hence $\bM_{0,3}(\cX_B/B)\rightarrow\bM_{0,3}\times B$ is log étale.
    We conclude that the first map is log étale (\cite[IV.3.1.2]{Ogus_Lectures-log-geometry}).
\end{proof}

\begin{proposition} \label{prop:evaluation-map-log-smooth}
  The evaluation morphisms
  \[ \fM (\cX_B/B , \beta^{\dagger})\longrightarrow \cX_B ,\quad  \fM^{\ev} (\cX_B/B , \beta^{\dagger})\longrightarrow X_B \]
  are log smooth.
\end{proposition}

\begin{proof}
  The first evaluation map factors as
  \[\fM (\cX_B/B ,\beta^{\dagger}) \longrightarrow \bM_{0,3} (\cX_B/B)\longrightarrow \cX_B .\]
  The first morphism is log étale by \cref{lemma:domain-and-eval-log-etale}.
  The second morphism is the base change of the projection $\bM_{0,3} \times B\rightarrow B$, which is log smooth since $\bM_{0,3}$ is log smooth over $\Spec \bbk$.
  We conclude that their composition is log smooth.
  By \cref{lemma:evaluation-stack-is-fine-pullback}, the morphism $\fM^{\ev} (\cX_B/B,\beta^{\dagger})\rightarrow X_B$ is a base change of $\fM (\cX_B/B,\beta^{\dagger})\rightarrow \cX_B$, hence it is also log smooth.
  The proof is complete.
\end{proof}

Given a morphism $T\rightarrow X_B$, with $T$ an fs-log scheme, we define point constrained moduli spaces by forming the fs-fibered products:
\begin{align*}
  \cM (X_B/B ,\tau )_T\coloneqq \cM (X_B /B ,\tau )\times_{X_B}^{\fs} T ,\\
  \fM^{\ev} (X_B/B ,\tau )_T\coloneqq \fM^{\ev} (\cX_B/B , \tau) \times_{X_B}^{\fs} T.
\end{align*}

\personal{
\begin{remark*}
  Assume that $k^{\circ}$ is a noetherian $\bbk$-algebra, for $\bbk$ an algebraically closed field.
  If there exists a global chart $P\rightarrow \ghost{\cM}_B$ inducing an isomorphism $P\simeq \Gamma (B,\ghost{\cM}_B)$, then for any global type for $X/B$ with connected graph we have $\cM (X/B  ,\tau ) \simeq \cM (X/\Spec \bbk ,\tau )$ and $\fM (\cX/B ,\tau ) \simeq \fM (\cX/\Spec \bbk ,\tau )$ by \cite[Proposition 5.13]{Abramovich_Punctured_logarithmic_maps}.
\end{remark*}}

\subsubsection{Point constrained moduli spaces}

Fix a triple of contact orders $(\bu_1,\bu_2,0)$ and a curve class $\underline{\beta}\in \NE (\underline{X})$.
Denote by $\beta$ denote the associated vertex type in $\Sigma (X)$.
We consider the moduli space of basic logarithmic stable maps $\cM (X_B/B , \beta^{\dagger})\rightarrow B$.

We fix a maximal cone $\sigma_0$ in $\Sigma (X)$, corresponding to a maximal cone $\sigma_0\times\bbR_{\geq 0}$ in $\Sigma (X_B)$.
Let $P_{\sigma_0}$ denote the stalk of $\ghost{\cM}_X$ at the $0$-stratum corresponding to $\sigma_0$.
The stalk of $\ghost{\cM}_{X_B}$ at the corresponding $0$-stratum is identified with $P_{\sigma_0}\oplus\bbN$.

\begin{construction}[Tropical point constraint]\label{constr:tropical-point-constraint}
\todo{The tropical point constraint should be the cone over a convex poyhdral domain at height $1$.
The tropical point consraint at height $0$ must map to $0$, to match the prescriptions of the geometric point constraint.
}
Let $n =\dim X$, and $R\coloneqq \bbN^n$ with generators $(e_1,\dots , e_n)$.
Fix an $n$-dimensional convex $(n+1)$-gon $K\subset \sigma_0$, with integral vertices $\lbrace v_0,\dots, v_n\rbrace$.
The \emph{tropical point constraint associated to $K$} is the morphism of monoid $\psi\colon R\oplus\bbN\rightarrow P_{\sigma_0}\oplus\bbN$ such that 
\[\psi_{\bbR}^{\vee}\colon R_{\bbR}^{\vee}\oplus\bbR_{\geq 0}\longrightarrow \sigma_0\oplus \bbR_{\geq 0}\]
is the cone over $K$ with vertex $0$.
It is given by $e_i^{\ast}\mapsto (v_i,1)$ for $1\leq i\leq n$ and $e_0^{\ast}\mapsto (0,1)$.

\personal{Replace the following.\\
Consider a monoid homomorphism $\psi_1\colon P_{\sigma_0}\rightarrow R$ such that the dual $\psi_1^{\vee}$ satisfies:
\begin{enumerate}
  \item $\psi_1^{\vee}\colon R^{\vee}\rightarrow P_{\sigma_0}^{\vee}$ is injective, and
  \item $\psi_1^{\vee}$ has image small enough (in a sense made precise in \cref{lemma:point-constraint-transverse}).
\end{enumerate}
We then define a monoid homomorphism $\psi\colon P_{\sigma_0}\oplus \bbN\rightarrow R\oplus \bbN$ such that the dual map takes the form:
\[ \psi^{\vee}\colon R^{\vee}\oplus\bbN\longrightarrow P_{\sigma_0}^{\vee}\oplus\bbN ,\quad (x,n)\mapsto (\psi_1^{\vee} (x) ,n ) .\]
In particular $\psi^{\vee}$ commutes with the projection to the second factor and coincides with $\psi_1^{\vee}$ at height $1$.}
We refer to $\psi$ as the \emph{tropical point constraint}.
\end{construction}

Now, we construct a geometric point constraint lifting the tropical point constraint.

\begin{construction}[Geometric point constraint]
Fix a scheme-theoretic section $\underline{g}\colon \underline{B}\rightarrow \underline{X_B}$ of $X_B\rightarrow B$, such that:
\begin{enumerate}
    \item $\underline{g} (\underline{\eta})$ lies in the Zariski open subset of $X_k$ where the evaluation map is étale, and \todo{Make this more precise}
    \item $\underline{g} (\underline{s})$ lies in the $0$-stratum of $X^{\dagger}$ corresponding to $\sigma_0$.
\end{enumerate}
We will construct an fs log structure $\cM_T$ on $\underline{B}$ together with a moprhism $\cM_B\rightarrow\cM_T$, producing a log scheme $T \coloneqq (\underline{B} ,\cM_T)$ over $B$, and lift $\underline{g}$ to a morphism of fs log schemes: 
\[g\colon T \longrightarrow X_B.\]

The log-structure $\cM_T$ is constructed using the description in \cite[Lemma A.10]{Gross_Intrinsic_mirror_symmetry}.
On the special fiber, the ghost sheaf is given by the monoid $R\oplus \bbN$.
After fixing a uniformizer $t$ for $k^{\circ}$, a Zariski fs log structure is specified up to isomorphism by choosing a face $F\subset R\oplus\bbN$ and an element $u\in \Int (F^{\vee})$.
\todo{Update the choice of face based on the tropical constraint, we want to get $\ghost{\cM}_{T,\eta} = 0$.}
We choose $F \coloneqq 0\oplus\bbN\subset R\oplus\bbN$ and $u \coloneqq (0,\dots, 0,1)$, producing the global chart 
\[R\oplus\bbN\longrightarrow k^{\circ},\quad (r_1,\dots, r_n,m)\longmapsto t^m\cdot 0^{r_1+\cdots + r_m}.\]
The special fiber $T_s$ (resp. generic fiber $T_{\eta}$) of $T$ is the log point $\Spec (R\oplus\bbN\rightarrow \tk)$ (resp. $\Spec (R\rightarrow k)$).

The inclusion of global charts $\varphi\colon \bbN\hookrightarrow R\oplus\bbN$ given by $n\mapsto (0,n)$ lifts the identity morphism to a morphism of log schemes $T\rightarrow B$.
This is a special case of \cite[Lemma A.11]{Gross_Intrinsic_mirror_symmetry}, because we have a commutative diagram
\[
\begin{tikzcd}
R\oplus\bbN\ar[d,"\chi"] & \bbN \ar[l,"\varphi" '] \ar[d] \\
R = \ghost{\cM}_{T,\eta} & {\ghost{\cM}_{B,\eta} = 0} \ar[l,"0" '],
\end{tikzcd}
\]
where $\chi$ is the natural projection, such that $u(\varphi (n)) = n$ for all $n\in \bbN$.

We construct a morphism $T\rightarrow X_B$ of log schemes over $B$ as follows.
We first specify a morphism $\underline{g}^{\ast}\ghost{\cM}_{X_B}\rightarrow \ghost{\cM}_T$ at the level of ghost sheaves by defining a morphism of cone complexes $\Sigma (T)\rightarrow \Sigma (X_B)$ whose combinatorics are compatible with $\underline{g}$, and which commutes with the projection to $\Sigma (B)$.
The map $\psi$ in \cref{constr:tropical-point-constraint} works.
Next, we choose any lift of this morphism to a morphism $\underline{g}^{\ast}\cM_{X_B}\rightarrow \cM_T$.
\personal{Is there a way to simpy work in a global chart?}
\end{construction}

\begin{lemma}\label{lemma:point-constraint-transverse}
  There exists a choice of $\psi$ in \cref{constr:tropical-point-constraint} such that $T\rightarrow X_B$ is transverse to the evaluation map $\fM^{\ev} (\cX_B/B ,\beta^{\dagger})\rightarrow X_B$ in the sense of \cite[Definition 2.6]{Gross_Intrinsic_mirror_symmetry}.
  \personal{Up to passing to the realizable part of the moduli stack}
\end{lemma}

\begin{proof}
\todo{Write the proof.}
\end{proof}

For any decorated tropical type $\btau$, we define the following point-constrained moduli spaces
\begin{align*}
  \cM (X_B/B,\btau )_T &\coloneqq \cM (X_B/B,\btau ) \times_{X_B}^{\fs} T ,\\
  \fM^{\ev} (\cX_B/B ,\btau )_T &\coloneqq \fM^{\ev} (\cX_B/B \btau) \times_{X_B}^{\fs} T.
\end{align*}

\begin{proposition}\label{prop:fiber-dimension-point-constrained-moduli-space}
  For a choice of tropical point constraint as in \cref{lemma:point-constraint-transverse}, the morphism
  \begin{align*}
    \fM^{\ev} (\cX_B/B ,\beta^{\dagger} )_T &\longrightarrow T
  \end{align*}
  is flat and integral, of equal fiber and log fiber dimension $0$.
\end{proposition}

\todo{What can we say of $\cM (X_B/B,\beta^{\dagger})_T\rightarrow \fM^{\ev} (\cX_B/B,\beta^{\dagger})_T$?}

\begin{proof}
   Both morphisms are log smooth because they are base-change of the evaluation morphisms, which are log smooth by \cref{prop:evaluation-map-log-smooth}.
   Under the transversality condition of \cref{lemma:point-constraint-transverse}, the map $\fM^{\ev} (\cX_B/B,\beta^{\dagger})_T\rightarrow T$ is furthermore integral.
   Hence, the underlying morphism of stacks is flat by \cite[Proposition 2.3(2)]{Gross_Intrinsic_mirror_symmetry}.
   
   For log flat integral morphisms, the fiber and log fiber dimensions agree \cite[Proposition A.7(2)]{Gross_Intrinsic_mirror_symmetry}.
   The log fiber dimension is stable under fs base-change, so the fiber dimension of $\fM^{\ev}(\cX_B/B,\beta^{\dagger})_T\rightarrow T$ is the same as the log fiber dimension of the evaluation morphism $\fM (\cX_B/B,\beta^{\dagger}) \rightarrow \cX_B$ by \cref{lemma:evaluation-stack-is-fine-pullback}.
   The latter factors as $\fM (\cX_B/B,\beta^{\dagger}) \rightarrow \bM_{0,3}(\cX_B/B)\rightarrow \cX_B$.
   The first morphism is log étale by \cref{lemma:domain-and-eval-log-etale}, and the second morphism $\bM_{0,3} (\cX_B/B)\rightarrow \cX_B$ is a base-change of $\bM_{0,3}\times B\rightarrow B$.
   This map is the base-change of the log smooth morphism $\bM_{0,3}\rightarrow \Spec\bbk$, and it is integral. 
   Hence, it is flat and its log fiber and fiber dimensions agree, and are equal to $0$.\todo{write this better.}
   The result follows by adding the log fiber dimensions.
\end{proof}

Let $T_s\coloneqq T\times_B^{\fs} s = \Spec (R\oplus\bbN\rightarrow \tk)$ and $T_{\eta}\coloneqq T\times_B^{\fs} \eta = \Spec (R\rightarrow k)$.
We have the following diagram of log stacks, where all squares are fs cartesian:
\[
\begin{tikzcd}
  \cM (X_B/B , \btau )_s \ar[r] \ar[d] & \cM (X_B /B ,\btau )_T \ar[d] & \cM (X_B/B , \btau)_{\eta}\ar[d] \ar[l] \\
  \fM^{\ev} (\cX_B/B ,\btau )_s \ar[r]\ar[d] & \fM^{\ev} (\cX_B/B , \btau )_T \ar[d] & \fM^{\ev} (\cX_B/B ,\btau )_{\eta} \ar[l] \ar[d]\\
  T_s \ar[r] \ar[d] & T \ar[d]& T_{\eta} \ar[l] \ar[d]\\
  s\ar[r] & B & \eta .\ar[l]
\end{tikzcd}
\]

\todo{Understand the restiction of point constraints to special and generic fiber.}
\begin{align*}
  \cM (X_B/B,\btau )_s &\coloneqq \cM (X_B/B,\btau )_T \times_T^{\fs} s ,\\
  \cM (X_B/B,\btau )_{\eta} &\coloneqq \cM (X_B/B,\btau )_T \times_T^{\fs} \eta ,
\end{align*}
and define in a similar way $\fM^{\ev} (\cX_B/B,\btau )_T$, $\fM^{\ev} (\cX_B/B , \btau )_s$ and $\fM^{\ev} (\cX_B/B , \btau)_s$.

\subsection*{Description of the special fiber}

We will obtain a description of the irreducible components of the tropical moduli space labeled by tropical types.
When we say that a tropical type is realizable, we mean tropically realizable.

\begin{definition}  \todo{Condition on essential skeleton? \\
Definition tropical cylinder type.}
  Let $S$ be a spine in $\Sk (X,D)$.
  An $S$-type is a tropical type $\tau$ to $\Sigma (X_s)$ such that:
  \begin{enumerate}
    \item $\tau$ is realizable and balanced, and has a distinguished leg $L_{i}$ with contact order $0$,
    \item $\dim \tau = \dim h(\tau_{v_i}) = n$, where $v_i$ is the edge adjacent to $L_i$, and
    \item $\Sp (\tau )$ has the same combinatorial type as $S$. \personal{this is after simplification.}
  \end{enumerate}
\personal{the restriction of $Q_{\tau}$ to the spine of $\tau$ admits an inclusion in the interior of $Q_S$.}
\end{definition}

\begin{definition}
    A \emph{cylinder type in $\Sigma (X^\dagger)$} is a tropical type $\tau = (G,\bu ,\bsigma)$ where:
    \begin{enumerate}
        \item $G$ is a genus $0$ graph, with three legs $\lbrace L_1,L_2,L_3\rbrace$,
        \item $\bu (L_3) =0$, $\dim\bsigma (L_3) = \dim h(\tau_{v_3}) = \dim X+1$, where $v_3$ is the vertex adjacent to $L_3$, and 
        \item $\tau$ is realizable, balanced and non-generic.
    \end{enumerate}
    \todo{Condition at height $0$?}
\end{definition}
\personal{
\begin{remark*}
    By \cite[Proposition 3.29]{Abramovich_Punctured_logarithmic_maps}, a tropical type in $\Sigma (X^{\dagger}) = \Sigma (X_B)$ is realizable over $s$ if and only if it is realizable over $B$.
\end{remark*}}

\begin{lemma}
    Let $\xi$ be the generic point of an irreducible component of $\fM^{\ev} (\cX_B/B , \beta^{\dagger} )_s$ which interesects the image of $\cM (X_B/B,\beta^{\dagger})_s\rightarrow \fM^{\ev} (\cX_B/B,\beta^{\dagger} )_s$.
    Then, the tropical type associated to $\xi$ is of the form $\btau^{\dagger}$ where $\btau$ is the tropical type in $\Sigma (X)$ obtained by gluing two broken line types to the vertex type, and having the interior leg varying in an $n$-dimensional family.
\end{lemma}

\begin{proof}
Let $Q_{\xi}$ denote the stalk of the ghost sheaf of $\fM^{\ev} (\cX_B/B,\beta^{\dagger} )_s$ at $\xi$, it parametrizes the universal family of tropical maps associated to $\xi$ subject to the tropical point constraint.
Since $\fM^{\ev} (\cX_B/B,\beta^{\dagger})\rightarrow T_s$ is the fs base-change of the map in \cref{prop:evaluation-map-log-smooth}, it is flat of fiber dimension $0$.
Furthermore, the point constraint $T_s\rightarrow X^{\dagger}$ is transverse to the evaluation map $\fM^{\ev} (\cX_B/B,\beta^{\dagger})\times_B^{\fs}s\rightarrow X^{\dagger}$, because the tropical picture is identical to that for the whole family.
Hence the morphism $\fM^{\ev} (\cX_B/B,\beta^{\dagger})\rightarrow T_s$ is integral, and in particular its log fiber dimension and fiber dimension agree.
As in \cite[Eq.(6.3)]{Gross_Canonical-wall-structure-and-intrinsic-mirror-symmetry} and \cite[Step 4]{Johnston_Comparison}, this provides the equality
\[\rk Q_{\xi}^{\gp} = \rk \ghost{\cM}_{T,s}^{\gp} = \dim X +1 ,\]
from which we deduce that if $\sigma$ denotes the tropical type associated to $\xi$, we have $\dim\sigma = \dim h(\sigma_{v_{out}}) = \dim X+1$.
Since $\sigma$ is the type of a log stable map in $X^{\dagger}$, it is automatically realizable, balanced, and non-generic.

\todo{Tropical type at height $1$ is a cylinder type for $X$.
Tropical type at height $0$ is vertex type.}
\end{proof}

\subsection*{Description of the generic fiber}

\subsubsection*{Wall structure and spines}

\todo{Contents:
\begin{itemize}
    \item Wall structure and transversality. 
    \item Spine of analytic stable maps for $f\in \cM^{\sm} (U_{\eta},\beta)^{\an}$.
    \item Skeletal curves.
\end{itemize}}

For $\beta\in \NE (X,\bbZ )$, a set of $\beta$-walls is a finite collection of walls satisfying \cite[Proposition 3.13]{Keel-Yu_Log-CY-mirror-symmetry}.
That is, for every analytic stable map $f\colon (C,(p_1,\dots, p_n))\rightarrow X_{\eta}^{\an}$ with $\beta - f_{\ast} [C]$ effective, the ios tropicalization $h\colon \Gamma\rightarrow \overline{\Sigma}$ satisfies:
\begin{enumerate}
    \item Each realizable twig is contained in a wall, i.e. for each pair $(e,v)$, there exists a wall $(\fd, w)$ with $h(v)\in\fd$ and $w$ is the slope of $h$ along $e$.
    \item For each $x$ in the domain of the spine, either $h$ is balanced at $x$, or there exists a wall $(\fd, v)$ with $h(x)\in\fd$ and either $h(x)\in \Sing (\Sigma )$, or the bend of $x$ is $v$.
\end{enumerate}

The notion of spine is defined with respect to a given set of $\beta$-walls. 

\begin{construction}[Analytic walls] 
    
\end{construction}

Given an analytic stable map $f\colon (C,(p_1,\dots ,p_n))\rightarrow X_{\eta}^{\an}$ such that $f^{-1} (D_{\eta} )$ is supported on the marked points, the composition $C\rightarrow X_{\eta}^{\an}\rightarrow \Skbar^{\ess} (U_{\eta}^{\an} )$ factors through the restriction $\Gamma\rightarrow \Skbar^{\ess} (U_{\eta}^{\an})$, where $\Gamma$ contains the convex hull of the marked points and all edges in $C$ which are not contracted by $\rhobar\circ f$.
The \emph{spine of $f$} is the restriction of $f$ to the subgraph $\Gamma^s\subset \Gamma$ given by the convex hull of the marked points.

\begin{definition} \label{def:transverse-spine}
  A spine $S = [\Gamma, (v_j), h]$ is transverse to a set of walls $W$ if:
  \begin{enumerate}
    \item $h(\Gamma )$ is transverse to the walls, i.e. the intersection of $h(\Gamma )\cap W$ is finite and does not contain points in $(n-2)$-dimensional strata of $W$,
    \item every vertex of $\Gamma$ whose image lies in $W$ is $2$-valent,
    \item for every finite marked point $v_j$, the image $h(v_j)$ lies in the interior of a maximal cone of $\Sk (X,D )$,
    \item $h(\Gamma )$ does not intersect the singular locus.
  \end{enumerate}
\end{definition}

Fix a tuple of contact order $(u_i)_{i\in J}$ indexed by a finite set $J$, and a curve class $\underline{\beta}\in\NE (X,\bbZ )$.
We denote by $\beta = ((u_i)_{i\in J} , \underline{\beta} )$ this data.
There are two important results: 
\begin{enumerate}
    \item If $\underline{\beta} - f_{\ast} [C]$ is effective, twigs of $f$ are contained in $\Wall_{\beta}$.
    \item If $[f]\in \cM^{\sm} (U_{\eta}^{\an} ,\beta )$, then the spine of $f$ is an element of $\SP (U,\underline{\beta} )$.
\end{enumerate}

\subsubsection*{Generic fiber}

The moduli space $\cM (X_B/B,\beta^{\dagger} )_{\eta}$ parametrizes log stable maps to the generic fiber $X_k$ with the fixed point constraint on the interior marked point, and whose tropical type admits a contraction to $\beta$.

Since $\eta\times_B \cX_B \simeq \eta\times_{\cA_B}\cA_{X_B}\simeq \eta\times\cA_X$, the tropical moduli space $\fM^{\ev} (\cX_B/B,\beta^{\dagger} )_{\eta}$ parametrizes tropical stable maps to $\Sigma (X)$ with the interior point mapped to $0$, whose type admits a contraction to $\beta$.
\personal{Recall that $\eta$ has the trivial log structure, so $\eta\times\cA_X$ is just $\cA_X$ base changed to $k$.}

Let $\xi$ be the generic point of an irreducible component of $\fM^{\ev} (\cX_B/B ,\beta^{\dagger} )_{\eta}$, denote by $Q_{\xi}$ the stalk of the ghost sheaf at $\xi$.
The equality of log fiber dimension and fiber dimension gives
\[\rk Q_{\xi}^{\gp} =  \rk \ghost{\cM}_{T,\eta}^{\gp} = 0.\]
Let $\tau$ be the tropical type corresponding to $\xi$.
The tropical point constraint induces an isomorphism $\ghost{\cM}_{T,\eta}\rightarrow 0$, and since the morphisms of monoids $0\rightarrow Q_{\xi}$ and $\ghost{\cM}_{T,\eta}\rightarrow Q_{\xi}$ are local we obtain
\[Q_{\xi,\bbR}^{\ast} = \tau^{\gp}\times_0 (\ghost{\cM}_{T,\eta})_{\bbR}^{\ast} = \tau .\]
Hence $\dim\tau = 0$, and $\tau$ is the vertex type $\beta$.

\personal{
The following lemma can probably be generalized to other transverse spines.
The version used in the paper is \cref{lemma:degeneration-transverse-spine}.
 \begin{lemma} 
  Let $S$ be a $\Wall_A$-transverse cylinder spine in $\Sk (X,D)$.
  Let $\tau$ be a tropical type with $\Sp (\tau )$ of type $S$.
  If $\tau'$ is a tropical type marked by $\tau$, then $\Sp (\tau ')$ has the same type as $\Sp (\tau )$. \personal{i.e. there is a contraction $\tau'\rightarrow \tau$.}
 \end{lemma}}

\section*{Notes}

\subsection*{Discussion Friday, May 28}

Fix a spine $S$, inducing connected components $F_1,\dots, F_k$ on the generic fiber.

Define a substack $\cM (S)_s\subset \cM_s$ of the special fiber by with irreducible components labeled by an $S$-type.
Let $\cM (S)_s^{\circ}$ denote the open substack given by removing the degenerations of tropical type.
Let $\overline{F}_i$ denote the closure of $F_i$ in the total moduli space $\cM_T$.
Let $C_i\coloneqq \overline{F}_i\vert_s$ denote the restriction to the special fiber.

Prove that there exists an $S$-type $\tau$ such that $C_k\subset \cM (\tau_i )^{\circ}\subset \cM (S)_s^{\circ}$.
We obtain an open substack $\fU (\tau )$ of the formal completion of the total moduli space $\cM_T$.
We take its closure, and obtain a proper formal stack.
By GAGF, we can algebraize it.

In the end we obtain $\deg F_i = \deg [\cM (\tau_i )]^{\vir}$, up to some combinatorial factor.
By summing everything, we get an expression for $N(S)$ in terms of log counts on the special fiber.

\end{document}